\numberwithin{equation}{section}
\newtheorem{Thm}[equation]{Theorem}
\newtheorem*{Thm*}{Theorem}
\newtheorem{Prop}[equation]{Proposition}
\newtheorem{Lem}[equation]{Lemma}
\newtheorem{Cor}[equation]{Corollary}
\theoremstyle{remark}
\newtheorem{Def}[equation]{Definition}
\newtheorem{Ter}[equation]{Terminology}
\newtheorem{Not}[equation]{Notation}
\newtheorem{Exa}[equation]{Example}
\newtheorem{Exas}[equation]{Examples}
\newtheorem{Cons}[equation]{Construction}
\newtheorem{Conv}[equation]{Convention}
\newtheorem{Hyp}[equation]{Hypothesis}
\newtheorem{Rem}[equation]{Remark}
\newcommand{\nc}{\newcommand}
\nc{\dmo}{\DeclareMathOperator}
\renewcommand{\emptyset}{\varnothing}
\DeclarePairedDelimiter\ceil{\lceil}{\rceil}
\DeclarePairedDelimiter\floor{\lfloor}{\rfloor}
\nc{\MonSeq}{\mathsf{MSeq}}
\nc{\AsymSeq}{\mathsf{ASeq}}
\nc{\PowerSeq}{\mathsf{PSeq}}
\nc{\sigmakf}{\sigma^k\hspace{-.25ex}f}
\nc{\sigmakg}{\sigma^k\hspace{-.25ex}g}
\nc{\sigmakh}{\sigma^k\hspace{-.25ex}h}
\nc{\sigmainff}{\sigma^{\inf(f)}\hspace{-.25ex}f}
\dmo{\nil}{nil}
\nc{\PRad}{\mathsf{PRad}}
\nc{\mono}[1]{\widehat{#1}}
\nc{\radideal}[1]{\sqrt{\langle #1 \rangle}}
\nc{\ideal}[1]{\langle #1 \rangle}
\dmo{\thickiddmo}{thickid}
\dmo{\thickdmo}{thick}
\nc{\thicksub}[1]{\thickdmo(#1)}
\nc{\loew}{{\ell\ell}}
\nc{\tame}{\tau}
\nc{\Specplus}{\Spec_+\hspace{-0.1em}}
\nc{\SpecRK}{\Spec(R_{\cat K})}
\nc{\altmathbb}[1]{\mathbbold{#1}}
\nc{\gP}{\altmathbb{g}_{\cat P}}
\nc{\gp}{g_{\frakp}}
\nc{\Pone}{\mathbb{P}^1_k}
\nc{\SpcT}{\Spc(\cat T^c)}
\nc{\SpcS}{\Spc(\cat S^c)}
\nc{\frakm}{\mathfrak m}
\nc{\overbar}[1]{\mkern 1.5mu\overline{\mkern-1.5mu#1\mkern-1.5mu}\mkern 1.5mu}
\nc{\bbullet}{{\scriptscriptstyle\hspace{-1pt}\bullet}}
\nc{\bullett}{{\scriptscriptstyle\bullet}\hspace{-1pt}}
\nc{\LF}{L\hspace{-0.2ex}F}
\nc{\SpG}{\Sp^G}
\nc{\Prst}{{\cat P}\mathrm{r^{st}}}
\nc{\Mack}{\mathcal{M}ack}
\nc{\SC}{S\cat C}
\nc{\OX}{\cat O_{\hspace{-0.2ex}X}}
\nc{\OXX}{\OX\hspace{-0.1ex}(X)}
\dmo{\BLat}{\mathsf{BLat}}
\dmo{\BDLat}{\mathsf{BDLat}}
\dmo{\DLat}{\mathsf{DLat}}
\dmo{\Lat}{\mathsf{Lat}}
\dmo{\Pos}{\mathsf{Pos}}
\dmo{\Spectral}{\mathsf{Spec}}
\dmo{\kosz}{Kosz}
\nc{\Kosz}{\kosz}
\dmo{\Ann}{Ann}
\dmo{\Stab}{Stab}
\dmo{\Aff}{Aff}
\dmo{\Ext}{Ext}
\dmo{\Tor}{Tor}
\dmo{\gen}{gen}
\dmo{\Pic}{Pic}
\dmo{\DM}{DM}
\dmo{\DMT}{DMT}
\dmo{\DMAT}{DMAT}
\nc{\DMQ}{\DM_Q}
\dmo{\DerKal}{DMack}
\dmo{\Der}{D}
\nc{\Dperf}{\Der_\mathrm{perf}}
\dmo{\Dps}{\Der_\mathrm{ps}}
\nc{\Derps}{\Dps}
\nc{\Dpfl}{\Der_{+}^{\mathrm{fl}}}
\nc{\Dzfl}{\Der_{\ge 0}^{\mathrm{fl}}}
\nc{\Dpfg}{\Der_{+}^{\mathrm{fg}}}
\dmo{\Dbfl}{\Der_{b}^{\mathrm{fl}}}
\nc{\Esplit}{E_{\mathrm{split}}}
\nc{\Eeven}{E_{\mathrm{even}}}
\nc{\Eodd}{E_{\mathrm{odd}}}
\nc{\fsplit}{f_{\mathrm{split}}}
\dmo{\Ddash}{\Der^{\mathrm{--}}}
\dmo{\Ddashfl}{\Der^{\mathrm{--}}_{\mathrm{fl}}}
\nc{\Dmfl}{\Ddashfl}
\nc{\Derqc}{\Der_{\mathrm{qc}}}
\dmo{\DMot}{DMot}
\dmo{\rmH}{H}
\dmo{\piu}{\underline{\pi}}
\dmo{\Sphere}{\mathbb{S}}
\nc{\HA}{{\rmH \hspace{-0.2em}\bbA}}
\nc{\HZ}{{\rmH \hspace{-0.2em}\bbZ}}
\nc{\HZp}{{\rmH \hspace{-0.2em}\bbZ_{(p)}}}
\nc{\HZbar}{{\rmH \hspace{-0.2em}\underline{\bbZ}}}
\nc{\Fp}{{\bbF_{\hspace{-0.1em}p}}}
\nc{\HFp}{{\rmH \hspace{-0.15em}\bbF_{\hspace{-0.1em}p}}}
\nc{\DHZpG}{\Der(\HZp_G)}
\nc{\DHZG}{\Der(\HZ_G)}
\nc{\DHZH}{\Der(\HZ_H)}
\nc{\DHZK}{\Der(\HZ_K)}
\nc{\DHZGN}{\Der(\HZ_{G/N})}
\nc{\DHZGG}{\Der(\HZ_{G/G})}
\nc{\DHZCp}{\Der(\HZ_{C_p})}
\nc{\DHZGprime}{\Der(\HZ_{G'})}
\nc{\DHZ}{\Der(\HZ)}
\nc{\frakp}{\mathfrak{p}}
\nc{\frakq}{\mathfrak{q}}
\nc{\Z}{\mathbb{Z}}
\nc{\SSG}{\text{sSet}_*^G}
\nc{\sSet}{\text{sSet}}
\nc{\Loco}[1]{\Loc_{\otimes}\hspace{-0.3ex}\langle #1 \rangle}
\nc{\Coloco}[1]{\Coloc^{\mathrm{Hom}}\hspace{-0.3ex}\langle #1 \rangle}
\dmo{\Con}{Conj}
\dmo{\Sub}{Sub}
\dmo{\Id}{Id}
\dmo{\Loc}{Loc}
\dmo{\Coloc}{Coloc}
\dmo{\rmK}{\textrm{\rm K}}
\dmo{\Spc}{Spc}
\dmo{\cone}{cone}
\dmo{\End}{End}
\dmo{\Mor}{Mor}
\dmo{\Hom}{Hom}
\dmo{\id}{id}
\dmo{\incl}{incl}
\dmo{\Img}{Im}
\dmo{\im}{im}
\dmo{\Ker}{Ker}
\dmo{\ind}{ind}
\dmo{\CoInd}{coind}
\dmo{\res}{res}
\dmo{\infl}{infl}
\dmo{\triv}{triv}
\dmo{\Tel}{Tel} 
\dmo{\Mod}{Mod}%
\dmo{\opname}{op}
\dmo{\SH}{SH}
\dmo{\smallb}{b}
\dmo{\Spec}{Spec}
\dmo{\supp}{supp}
\dmo{\Supp}{Supp}
\dmo{\Cosupp}{Cosupp}
\nc{\SHc}{{\SH^c}}
\nc{\SHp}{{\SH_{(p)}}}
\nc{\SHcp}{{\SH^c_{(p)}}}
\nc{\SHG}{\SH(G)}
\nc{\SHGp}{\SH(G)_{(p)}}
\nc{\SHGc}{\SHG^c}
\nc{\SHGcp}{\SHG^c_{(p)}}
\nc{\quadtext}[1]{\quad\textrm{#1}\quad}
\nc{\qquadtext}[1]{\qquad\textrm{#1}\qquad}
\nc{\adj}{\dashv}
\nc{\adjto}{\rightleftarrows}
\nc{\bbL}{\mathbb{L}}
\nc{\bbA}{\mathbb{A}}
\nc{\bbN}{\mathbb{N}}
\nc{\bbQ}{\mathbb{Q}}
\nc{\bbZ}{\mathbb{Z}}
\nc{\bbF}{\mathbb{F}}
\nc{\bbR}{\mathbb{R}}
\nc{\cat}[1]{\mathscr{#1}}
\nc{\ie}{{\sl i.e.}, }
\nc{\into}{\mathop{\rightarrowtail}}
\nc{\inv}{^{-1}}
\nc{\isoto}{\mathop{\overset{\sim}\to}}
\nc{\isotoo}{\mathop{\overset{\sim}\too}}
\nc{\onto}{\mathop{\twoheadrightarrow}}
\nc{\too}{\mathop{\longrightarrow}\limits}
\nc{\mapstoo}{\longmapsto}
\nc{\adh}[1]{\overline{#1}}
\nc{\adhpt}[1]{\adh{\{#1\}}}
\nc{\aka}{{a.\,k.\,a.}\ }
\nc{\calF}{\mathcal{F}}
\nc{\eg}{{\sl e.\,g.}}
\nc{\Homcat}[1]{\Hom_{\cat #1}}
\nc{\hook}{\hookrightarrow}
\nc{\ihomname}{\mathsf{hom}}
\nc{\ihom}[1]{\mathsf{hom}(#1)}
\nc{\Mid}{\,\big|\,}
\nc{\MMod}{\,\text{-}\Mod}%
\nc{\op}{^{\opname}}
\nc{\oto}[1]{\overset{#1}\to}
\nc{\otoo}[1]{\overset{#1}{\,\too\,}}
\nc{\sminus}{\!\smallsetminus\!}
\nc{\poplus}[1]{^{\oplus #1}}%
\nc{\potimes}[1]{^{\otimes #1}}
\nc{\sbull}{{\scriptscriptstyle\bullet}}
\nc{\SET}[2]{\big\{\,#1\Mid#2\,\big\}}
\nc{\SpcK}{\Spc(\cat K)}
\nc{\then}{\Rightarrow}
\nc{\unit}{\mathbb{1}}
\nc{\unitT}{\unit_{\cat T}}
\nc{\unitS}{\unit_{\cat S}}
\nc{\xra}{\xrightarrow}
\nc{\phigeom}[1]{\widetilde{\Phi}^{#1}}
\nc{\phigeomb}[1]{\Phi^{#1}}
\dmo{\Oname}{O}
\dmo{\proper}{proper}
\dmo{\lenormal}{\unlhd}
\dmo{\lnormal}{\lhd}
\nc{\normal}{\trianglelefteq}
\nc{\Op}{\Oname^p}
\nc{\Oq}{\Oname^q}
\dmo{\Sp}{Sp}
\dmo{\Ho}{Ho}
\dmo{\Fin}{Fin}
\dmo{\add}{add}
\dmo{\Fun}{Fun}
\dmo{\CAlg}{CAlg}
\dmo{\CMon}{CMon}
\dmo{\CC}{\cat C}
\dmo{\DD}{\cat D}
\dmo{\OO}{\mathcal{O}}
\dmo{\Map}{Map}
\dmo{\Span}{Span}
\dmo{\N}{N}
\dmo{\Cat}{Cat}
\dmo{\colim}{colim}
\dmo{\Ch}{Ch}
\dmo{\A}{\mathbb{A}^{eff}}
\nc{\AGeff}{\mathbb{A}_G^{\mathrm{eff}}}
\nc{\BGeff}{\mathcal{B}_G^{\mathrm{eff}}}
\nc{\BG}{{\mathcal{B}_G}}
\nc{\NBGeff}{{\N}{\BGeff}}
\dmo{\Ab}{Ab}
\nc{\Set}{\mathsf{Set}}
\dmo{\ev}{ev}
\dmo{\Spcl}{Spcl}
\nc{\Funadd}{\Fun_{\add}}
\dmo{\proj}{proj}
\dmo{\cof}{cof}
\newcounter{enum-resume-hack}
\Crefname{Thm}{Theorem}{Theorems}
\Crefname{Prop}{Proposition}{Propositions}
\Crefname{thmx}{Theorem}{Theorems}
\begin{document}


\title[The Balmer spectrum of pseudo-coherent complexes]{The Balmer spectrum of pseudo-coherent complexes over a discrete valuation ring}
\author{Beren Sanders}
\author{Yufei Zhang}
\date{\today}

\address{Beren Sanders, Mathematics Department, UC Santa Cruz, 95064 CA, USA}
\email{beren@ucsc.edu}
\urladdr{http://people.ucsc.edu/$\sim$beren/}

\address{Yufei Zhang, Mathematics Department, UC Santa Cruz, 95064 CA, USA}
\email{yzhan544@ucsc.edu}

\maketitle

\begin{abstract}
	We study the derived category of pseudo-coherent complexes over a noetherian commutative ring, building on prior work by Matsui--Takahashi. Our main theorem is a computation of the Balmer spectrum of this category in the case of a discrete valuation ring. We prove that it coincides with the spectral space associated to a bounded distributive lattice of asymptotic equivalence classes of monotonic sequences of natural numbers. The proof of this theorem involves an extensive study of generation behaviour in the derived category of pseudo-coherent complexes. We find that different types of generation are related to different asymptotic boundedness conditions on the growth of torsion in homology. Consequently, we introduce certain distributive lattices of (equivalence classes of) monotonic sequences where the partial orders are defined by different notions of asymptotic boundedness. These lattices, and the spectral spaces corresponding to them via Stone duality, may be of independent interest. The complexity of these spectral spaces shows that, even in the simplest nontrivial case, the spectrum of pseudo-coherent complexes is vastly more complicated than the spectrum of perfect complexes. From a broader perspective, these results demonstrate that the spectrum of a rigid tensor-triangulated category can expand tremendously when we pass to a (non-rigid) tensor-triangulated category which contains it.
\end{abstract}

{
\hypersetup{linkcolor=black}
\tableofcontents
}

\section{Introduction}

A foundational result in tensor triangular geometry is the fact that a quasi-compact and quasi-separated scheme $X$ can be recovered as the Balmer spectrum of its derived category of perfect complexes:
	\begin{equation}\label{eq:recover}
		\Spc(\Dperf(X)) \cong X.
	\end{equation}
In particular, applied to an affine scheme $X=\Spec(R)$, the Balmer spectrum recovers the usual Zariski spectrum: $\Spc(\Dperf(R))\cong \Spec(R)$.

Perfect complexes are precisely the dualizable objects of the larger tensor-triangulated category $\Derqc(X)$ of complexes of $\cat O_X$-modules with quasi-coherent cohomology. However, there is another essentially small tensor-triangulated category which sits between them: the derived category of pseudo-coherent complexes $\Dps(X)$. We have inclusions of tensor-triangulated categories
	\[
		\Dperf(X) \hookrightarrow \Dps(X) \hookrightarrow \Derqc(X).
	\]
Applying the Balmer spectrum to this new category, we obtain a locally ringed space $\Spc(\Dps(X))$ and a surjective morphism
	\begin{equation}\label{eq:intro-comp-X}
		\Spc(\Dps(X)) \twoheadrightarrow \Spc(\Dperf(X))\cong X.
	\end{equation}
It is natural to attempt to understand the space $\Spc(\Dps(X))$ and its relationship with the original scheme $X$.

In this paper we will focus on $X=\Spec(R)$ for $R$ a commutative noetherian ring. In this case, $\Dps(R)\cong \Der_+^\mathrm{fg}(R)$ is simply the derived category of bounded below complexes of finitely generated $R$-modules, and the surjective morphism \eqref{eq:intro-comp-X} coincides with Balmer's comparison map
	\begin{equation}\label{eq:intro-comp}
		\rho:\Spc(\Dps(R)) \to \Spec(R).
	\end{equation}
This has been studied in the work of Matsui and Takahashi \cite{MatsuiTakahashi17}. They prove, among other things, that the map $\rho$ is a bijection if and only if $\Spec(R)$ is discrete (i.e.~$R$ is artinian). Thus, naive dreams that \eqref{eq:intro-comp-X} could have some familiar algebro-geometric interpretation are not long-lasting.

Our primary goal in this paper is to give an in-depth study of the simplest nontrivial case --- that of a discrete valuation ring. This was also considered by Matsui--Takahashi, but we will go further (and also correct an error in their discussion). Our main theorem is a complete computation of $\Spc(\Dps(R))$ in this particular case. In order to state our theorem, we first need to discuss the asymptotic behaviour of sequences of natural numbers.
\[ \ast \ast \ast \]

Given two monotonic sequences $f,g :\bbN \to \bbN$, we say that $f$ is asymptotically bounded by $g$, denoted $f \le g$, if there exists a constant $A$ such that 
	\[
		f(n) \le Ag(n) \text{ for $n \gg 0$.}
	\]
This defines a pre-order on the set of all monotonic sequences $\MonSeq$. This pre-order induces an equivalence relation defined by $f \sim g$ if $f \le g$ and $g\le f$. Moreover, the set of equivalence classes $\AsymSeq \coloneqq \MonSeq/{\sim}$ inherits a partial order from $\le$. If we formally adjoin a top element, we obtain a bounded distributive lattice $\AsymSeq_+$. It corresponds, via Stone duality, to a spectral space~$\Spec(\AsymSeq_+)$.

We will also need two weaker notions of asymptotic boundedness. Consider the scaling operators $\sigma$ and $\mu$ defined by $(\sigma f)(n) \coloneqq f(n+1)$ and $(\mu f)(n) \coloneqq f(2n)$. Then define:
	\begin{itemize}[leftmargin=2em]
		\item $f \le_\sigma g$ if $f \le \sigma^k g$ for some $k \ge 1$.
		\item $f \le_\mu g$ if $f \le \mu^k g$ for some $k \ge 1$.
	\end{itemize}
These also define pre-orders on the set of monotonic sequences and in the same fashion we obtain lattices of ``$\sigma$-equivalence classes'' and ``$\mu$-equivalence classes'' of monotonic sequences. These lattices are quotients of each other:
	\[
		\AsymSeq \twoheadrightarrow \AsymSeq/\sigma \twoheadrightarrow \AsymSeq/\mu.
	\]
In particular, $(\AsymSeq/\mu)_+ = \SET{[f]_\mu}{f:\bbN\to\bbN \text{ monotonic}} \sqcup \{\infty\}$ is the bounded distributive lattice of $\mu$-equivalence classes of monotonic sequences. Our main theorem (\cref{cor:main}) is:

\begin{Thm}\label{thm:intro-main-cor}
	Let $R$ be a discrete valuation ring. We have an isomorphism 
	\[
		\Spc(\Dps(R)) \cong \Spec((\AsymSeq/\mu)_+)^\vee
	\]
	where $(-)^\vee$ denotes the Hochster dual.
\end{Thm}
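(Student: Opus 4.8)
The plan is to reduce the homeomorphism to a classification of the finitely generated radical thick $\otimes$-ideals of $\cat K \coloneqq \Dps(R) \cong \Der_+^{\mathrm{fg}}(R)$, and then to convert that classification into the statement about spectral spaces by a short piece of Stone--Hochster bookkeeping. Since a discrete valuation ring is hereditary, every object of $\cat K$ is formal, so up to isomorphism an object $X$ is just a choice of finitely generated $R$-module $H_n(X)$ --- a finite sum of copies of $R$ and of cyclic torsion modules $R/\pi^a$ --- in each degree $n$ above some bound. I would attach to $X$ the monotonic sequence $\gamma(X)\colon \bbN\to\bbN$ recording the largest torsion exponent occurring in $\bigoplus_{m\le n}H_{m+i_0(X)}(X)$, where $i_0(X)$ is the lowest degree of nonzero homology, with the convention $\gamma(X)=\infty$ precisely when some $H_n(X)$ has a free summand; in that case a shift of $R$ is a retract of $X$, so $\radideal{X}=\cat K$. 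One checks readily that $\gamma$ is well defined up to asymptotic equivalence and that $\gamma(X\oplus Y)\sim\max(\gamma X,\gamma Y)$, that $\gamma$ of a cone (hence of a retract) is bounded by the maximum of the two inputs, and --- via the Künneth formula over the PID $R$, where $\Tor_1(R/\pi^a,R/\pi^b)=R/\pi^{\min(a,b)}$ --- that $\gamma(X\otimes Y)$ is $\mu$-equivalent to $\min(\gamma X,\gamma Y)$: the balanced splitting of a total homological degree forces a factor-two reindexing. This is exactly the point at which the operator $\mu$ intervenes, and it is why it is $\AsymSeq/\mu$, rather than the finer $\AsymSeq$ or $\AsymSeq/\sigma$, that controls $\otimes$-ideal generation.

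The technical heart, and the step I expect to be the main obstacle, is the generation theorem: for all $X,Y\in\cat K$,
\[
X\in\radideal{Y}\quad\Longleftrightarrow\quad\gamma(X)\le_\mu\gamma(Y),
\]
with the evident reading when $\gamma(Y)=\infty$. The forward implication is formal from the computations above together with the estimate that $\gamma(Y^{\otimes m})$ is $\mu$-equivalent to $\gamma(Y)$ (the radical absorbs exactly a $\mu$-power): any object assembled from $Y$ by sums, cones, retracts, shifts, tensoring with arbitrary objects, and extraction of $\otimes$-roots has $\gamma\le_\mu\gamma Y$. The reverse implication is the crux. Given $\gamma(X)\le_\mu\gamma(Y)$, one must realize $X$, up to a retract, inside the radical thick $\otimes$-ideal generated by $Y$; the strategy is to tensor $Y$ with a well-chosen finite torsion complex (which ``caps'' the torsion exponents of $Y$ at a constant) and with finitely many shifts of itself, to form suitable cones, and --- crucially --- to pass to a tensor power $Y^{\otimes m}$, whose torsion grows like $\gamma(Y)$ slowed by a factor $2^{m-1}$, so that the hypothesis $\gamma(X)\le\mu^{m-1}\gamma(Y)$ becomes a genuine degree-by-degree domination after this reindexing; cones and summands then cut $X$ out. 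Keeping track of the interplay between the free and torsion parts, and of the bookkeeping between homological degree and the index of the sequences, is the delicate part. Combining this equivalence with the realization lemma --- every monotonic $f$ is $\gamma(C_f)$ for some $C_f\in\cat K$ (e.g.\ $C_f=\bigoplus_n (R/\pi^{f(n)})[n]$) and $\gamma(R)=\infty$ --- and with $\radideal{X\oplus Y}=\radideal{X}\vee\radideal{Y}$ and $\radideal{X\otimes Y}=\radideal{X}\wedge\radideal{Y}$, yields an isomorphism of bounded distributive lattices between the poset of finitely generated radical thick $\otimes$-ideals of $\cat K$, ordered by inclusion, and $(\AsymSeq/\mu)_+$.

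Finally I would assemble the homeomorphism. By Balmer's classification theorem, $\radideal{a}\mapsto\Spc(\cat K)\smallsetminus\supp(a)$ is an order-reversing bijection from the finitely generated radical thick $\otimes$-ideals of $\cat K$ onto the lattice of quasi-compact open subsets of $\Spc(\cat K)$; combined with the previous step, this lattice is $\bigl((\AsymSeq/\mu)_+\bigr)^{\mathrm{op}}$. On the other side, Stone duality says the quasi-compact opens of $\Spec\bigl((\AsymSeq/\mu)_+\bigr)$ form the lattice $(\AsymSeq/\mu)_+$ itself, and for any spectral space the quasi-compact opens of its Hochster dual (the Thomason-closed subsets) form the opposite lattice; hence the quasi-compact opens of $\Spec\bigl((\AsymSeq/\mu)_+\bigr)^{\vee}$ also form $\bigl((\AsymSeq/\mu)_+\bigr)^{\mathrm{op}}$. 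Since a spectral space is determined up to homeomorphism by its lattice of quasi-compact opens, we conclude $\Spc(\Dps(R))\cong\Spec\bigl((\AsymSeq/\mu)_+\bigr)^{\vee}$. As noted, essentially all the difficulty is concentrated in the reverse direction of the generation theorem, where the asymptotic arithmetic of monotonic sequences must be matched precisely against explicit manipulations of complexes over the discrete valuation ring.
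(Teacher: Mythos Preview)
Your overall architecture is exactly the paper's: establish an isomorphism of bounded distributive lattices $(\AsymSeq/\mu)_+\cong\PRad(\Dps(R))$ via $[f]_\mu\mapsto\radideal{R/x^f}$, then invoke the Kock--Pitsch identification $\Spc(\cat K)\cong\Spec(\PRad(\cat K))^\vee$. Your invariant $\gamma$ is the paper's $\mono{\loew_{(-)}}$, and your ``generation theorem'' is precisely \cref{thm:rid-full} together with \cref{cor:rid-fullfull}.

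There is, however, a genuine error in your sketch of the reverse implication. You propose to pass to $Y^{\otimes m}$ and then cut $X$ out of it, but $\gamma(Y^{\otimes m})(n)\approx\gamma(Y)(\lfloor n/2^{m-1}\rfloor)$ is \emph{smaller} than $\gamma(Y)(n)$, so replacing $Y$ by a tensor power only makes it harder to dominate $\gamma(X)$ --- this goes the wrong way. The paper instead takes a tensor power of $X$: from $\gamma(X)\le\mu^{k}\gamma(Y)$ one deduces $\gamma(X)^{\ast 2^k}\le\gamma(Y)$ (\cref{lem:fastcons}), hence $R/x^{\gamma(X)^{\ast 2^k}}\in\ideal{R/x^{\gamma(Y)}}$; the key technical step (\cref{prop:tensast}, \cref{cor:tensastiterated}) is that $(R/x^f)^{\otimes 2^k}\in\ideal{R/x^{f^{\ast 2^k}}}$, giving $(R/x^{\gamma(X)})^{\otimes 2^k}\in\ideal{R/x^{\gamma(Y)}}$ and thus membership in the radical. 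A second subtlety your sketch elides is multiplicities: an arbitrary $E\in\Dpfl(R)$ may have unboundedly many cyclic summands in each degree, so even a pointwise bound $\loew_E\le f$ does not by itself yield $E\in\ideal{R/x^f}$. The paper handles this via ``almost-explodability'' (\cref{prop:almost-explodable}, \cref{cor:EE}), showing $E\otimes E\in\ideal{R/x^{\loew_E}}$ at the cost of one further squaring, which the radical absorbs. Your phrase ``cones and summands then cut $X$ out'' would need to become this.
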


The connection between asymptotic sequences and pseudo-coherent complexes arises from the asymptotic growth of torsion in homology. For any $R$-module $M$ of finite length, define 
	\[
		\loew(M) \coloneqq \min\{ n\in \bbN \mid \frakm^n M=0\}
	\]
where $\frakm=(x)$ denotes the unique maximal ideal of $R$. It is the maximum $i$ such that~$R/x^i$ is a direct summand of $M$. It is also the Loewy length of $M$, which explains the notation. The largest proper thick ideal of $\Dps(R)$ is given by the pseudo-coherent complexes whose homology modules are all of finite length:
	\[
		\Dpfl(R) \subseteq \Der_+^{\mathrm{fg}}(R)=\Dps(R).
	\]
For a complex $E \in \Dpfl(R)$, we may define $\loew_E:\bbN\to\bbN$ by 
	\[
		\loew_E(n) \coloneqq \loew(H_n(E))
	\]
for each $n \in \bbN$.\footnote{In the main body of the paper we will use a variant definition in which $\loew_E(0)$ is the Loewy length of the first nonzero homology module of $E$, rather than of $H_0(E)$. This difference does not affect the statements of the theorems in the Introduction; see \cref{rem:loew-pref}.} For example, given a sequence $f:\bbN\to\bbN$, we have $\loew_{R/x^f} = f$ for the complex
	\[
		R/x^f \coloneqq \bigoplus_{n \in \bbN} R/x^{f(n)}[n]
	\]
with zero differentials. We establish the following as \cref{thm:rid-full}:

\begin{Thm}\label{thm:intro-radical}
	Let $f$ be a nonzero monotonic sequence. The radical thick ideal generated by $R/x^f$ is given by 
	\[
		\radideal{R/x^f} = \SET{E\in \Dpfl(R)}{\loew_E \le_\mu f}.
	\]
\end{Thm}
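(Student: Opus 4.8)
Throughout I use that $R$ is hereditary, so every object of $\Dps(R)$ is (noncanonically) the direct sum of its shifted homology modules; thus an object of $\Dpfl(R)$ is precisely a sum $\bigoplus_n N_n[n]$ with each $N_n$ a finite sum of cyclic modules $R/x^a$ (finitely many per degree, bottom degree bounded below). I also use $R/x^a\otimes^{\bbL} R/x^b\simeq R/x^{\min(a,b)}\oplus R/x^{\min(a,b)}[1]$ and $R/x^a\otimes^{\bbL} R\simeq R/x^a$, that $\loew$ is nonincreasing under retracts and subadditive along triangles, and (for bookkeeping) I silently discard finitely many homology modules and assume $f(0)\ge1$; write $m_E(i)\coloneqq\max_{j\le i}\loew_E(j)$. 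For the inclusion $\radideal{R/x^f}\subseteq\SET{E\in\Dpfl(R)}{\loew_E\le_\mu f}$ it suffices to show the right-hand side $\mathcal R$ is a radical thick $\otimes$-ideal containing $R/x^f$. Containment is clear; thickness is immediate from the behaviour of $\loew$ under retracts and triangles; and since $\mathcal R\subseteq\Dpfl(R)$, which is an ideal, $E\otimes^{\bbL} Y$ is automatically in $\Dpfl(R)$, while the splitting $E\otimes^{\bbL} Y\simeq\bigoplus_m E\otimes^{\bbL} H_m(Y)[m]$ and the two-term resolution of a finite-length module give $\loew_{E\otimes^{\bbL} Y}(d)\le 2\,m_E(d+c)$ for a constant $c=c(Y)$, which is still $\le_\mu f$; so $\mathcal R$ is an ideal. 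That $\mathcal R$ is radical is where $\mu$ is genuinely needed: the diagonal summand of $E^{\otimes p}$ in degree $pn$ contains $H_n(E)^{\otimes_R p}$, which has Loewy length $\loew_E(n)$, so $\loew_{E^{\otimes p}}(pn)\ge\loew_E(n)$; hence $E^{\otimes p}\in\mathcal R$ forces $\loew_E(n)\le\loew_{E^{\otimes p}}(pn)\le A\,f(2^kpn)\le_\mu f$. (With the finer $\le_\sigma$ in place of $\le_\mu$ the set $\mathcal R$ would not be radical, since tensor powers rescale degrees multiplicatively.)

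For the reverse inclusion, fix $E$ with $\loew_E(d)\le A\,f(2^kd)$ for $d\gg0$. The exact sequences $0\to R/x^{f(n)}\to R/x^{2f(n)}\to R/x^{f(n)}\to0$ assemble to a triangle showing that $R/x^{2f}$, hence inductively $R/x^{cf}$ for all $c\ge1$, lies in $\thicksub{R/x^f}\subseteq\radideal{R/x^f}$. Using the homology computation just made, $\loew_{E^{\otimes 2^\ell}}(d)\le m_E(\lceil d/2^\ell\rceil)$, so for $\ell$ large $\loew_{E^{\otimes 2^\ell}}(d)\le A\,f(d)$ for $d\gg0$; as $\radideal{R/x^f}$ is radical, it is enough to put $E^{\otimes2^\ell}$ into $\radideal{R/x^{Af}}\subseteq\radideal{R/x^f}$. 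Thus the theorem reduces to the pointwise-domination statement: \emph{for every nonzero monotonic $g$, every $E'$ with $\loew_{E'}\le g$ for $d\gg0$ lies in $\radideal{R/x^g}$.}

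To prove this I would first check $\Dbfl(R)\subseteq\radideal{R/x^g}$: a nonzero value $g(n_0)$ exhibits $R/x^{g(n_0)}[j]$ as a retract of a shift of $R/x^g$; the extension trick above boosts the Loewy length arbitrarily; tensoring with $R/x^e$ cuts it to any $e\le$ that length; and a bounded complex of finite-length modules is a finite sum of shifted cyclics. Deleting a bounded summand, one reduces to $E'\simeq\bigoplus_d N_d[d]$ with $\loew_{E'}\le g$ everywhere, and tries to build $E'$ by finitely many triangles and retracts from objects $R/x^g\otimes^{\bbL} Y$ with $Y\in\Dps(R)$; the key building block is that an object whose homology-torsion is suitably controlled is a direct summand of such an $R/x^g\otimes^{\bbL} Y$, with $Y$ obtained by spreading the homology of the target out along the degree axis so that $Y$ stays bounded below with finitely generated homology in each degree while $R/x^g\otimes^{\bbL} Y$ reproduces the needed cyclic summands degree by degree. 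Feeding $g=Af$ into this (and using $R/x^{Af}\in\radideal{R/x^f}$) completes the proof.

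The main obstacle is exactly this last, pointwise-domination step — which is where the paper's detailed study of generation in $\Dps(R)$ must enter. The difficulty is that $\radideal{R/x^g}$ is closed only under \emph{finite} triangulated operations and retracts, whereas $E'$ is genuinely infinite, so realising it requires either a single carefully engineered tensor partner or a finite reduction; and the obvious filtration of $E'$ by the submodules $x^iN_d\subseteq N_d$ has infinitely many nonzero layers as soon as $\loew_{E'}$ is unbounded. The reduction must instead be organised so that the degree spread (needed to stay inside $\Dps(R)$) and the torsion growth (needed to stay inside $\radideal{R/x^g}$) are controlled simultaneously.
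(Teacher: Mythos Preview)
Your forward inclusion is correct and in fact cleaner than the paper's route. The paper does not argue directly that $\mathcal R$ is a radical thick ideal; instead it proves (via the machinery of \cref{sec:thick}) that membership in $\ideal{R/x^f}$ forces a $\le_\sigma$ bound on Loewy sequences, then separately analyses tensor powers via convolutions. Your argument that $\mathcal R$ is closed under $\otimes$ and under taking roots is more transparent, and your observation that $\loew_{E^{\otimes p}}(pn)\ge \loew_E(n)$ (hence multiplicative rescaling of degrees is exactly what forces $\le_\mu$ rather than $\le_\sigma$) is the right conceptual point.

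The reverse inclusion, however, has a genuine gap --- the one you yourself identify. Your reduction to the pointwise-domination statement is valid, but the sketched strategy of ``spreading the homology out along the degree axis'' to realise $E'$ as a summand of a single $R/x^g\otimes^{\bbL} Y$ does not work: if $H_n(E')$ has $a(n)$ indecomposable summands, you would need $a(n)$ copies of $R/x^{g(n)}$ in degree $n$, and there is no bound on $a(n)$, so no single $Y\in\Dps(R)$ can do the job in one shot. What you are really asking is whether $(R/x^g)^{\oplus a}\in\ideal{R/x^g}$ for every $a$, i.e.\ whether $R/x^g$ is \emph{explodable} in the sense of \cref{def:explodable}. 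The paper shows this holds when $g$ is $\mu$-stable (\cref{cor:mu-stable-explodable}) but it is \emph{not known} in general --- and the question of whether the thick ideal $\ideal{R/x^g}$ equals $\{E:\mono{\loew_E}\le_\sigma g\}$ for arbitrary $g$ is left open.

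The key insight that rescues the argument is \cref{prop:almost-explodable}: even though $(R/x^g)^{\oplus a}$ may not lie in $\ideal{R/x^g}$, one always has
\[
(R/x^g)^{\oplus a}\otimes (R/x^g)^{\oplus a}\in\ideal{R/x^g}.
\]
This is proved by an explicit computation: the homology of the left-hand side in degree $n$ is controlled by $\bigoplus_{i\le\lfloor n/2\rfloor}(R/x^{g(i)})^{\oplus a(i)a(n-i)}$, and one exhibits this as a summand of $R/x^g\otimes (R^{\uparrow})^{\oplus b}$ for a suitable sequence $b$, where $R^{\uparrow}=\bigoplus_{n\ge0}R[n]$. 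Since $E'\in\thicksub{(R/x^g)^{\oplus a}}$ by \cref{lem:Rxfexplodable}, this gives $E'\otimes E'\in\ideal{R/x^g}$, hence $E'\in\radideal{R/x^g}$. This is precisely the content of \cref{cor:EE}, and it is the step your argument is missing.
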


The proof of this theorem is quite involved and is the culmination of a series of tools developed in \cref{sec:thick,sec:explodable,sec:convolutions}. The result simplifies in certain cases:

\begin{Thm}
	Let $f$ be a nonzero monotonic sequence. The thick ideal $\ideal{R/x^f}$ is radical if and only if $f$ is $\mu$-stable, meaning that $\mu f\le f$. In this case, we have
	\[
		\ideal{R/x^f} = \SET{E\in \Dpfl(R)}{\loew_E \le f}.
	\]
\end{Thm}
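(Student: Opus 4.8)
The plan is to deduce the statement from \cref{thm:intro-radical} together with the analysis of the plain thick ideal $\ideal{R/x^f}$ carried out in \cref{sec:thick,sec:explodable,sec:convolutions}, using two soft observations. First, $\mu$ is monotone for $\le$ and $f\le\mu f$ always holds, so if $f$ is $\mu$-stable then $\mu^k f\sim f$ for all $k\ge1$; hence the relations $\le_\mu f$ and $\le f$ agree on $\Dpfl(R)$, and in that case \cref{thm:intro-radical} reads $\radideal{R/x^f}=\SET{E\in\Dpfl(R)}{\loew_E\le f}$. Second, for every nonzero $f$ the class $\SET{E\in\Dpfl(R)}{\loew_E\le f}$ is a thick ideal containing $R/x^f$ --- because $\loew$ is unchanged by retracts and shifts, subadditive up to a uniform constant along triangles, and non-increasing up to a uniform constant under $-\otimes T$ for any $T\in\Dps(R)$ (using that over a discrete valuation ring every object of $\Dpfl(R)$ splits into its homology), while $\loew_{R/x^f}=f$ --- so $\ideal{R/x^f}\subseteq\SET{E\in\Dpfl(R)}{\loew_E\le f}$.

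Granting these, the ``only if'' direction and the final equality are formal. If $\ideal{R/x^f}$ is radical, then $R/x^{\mu f}\in\radideal{R/x^f}=\ideal{R/x^f}$ --- it lies in the radical since $\loew_{R/x^{\mu f}}=\mu f\le_\mu f$ trivially --- whence $\mu f=\loew_{R/x^{\mu f}}\le f$ by the containment above, so $f$ is $\mu$-stable. Conversely, once $f$ is $\mu$-stable and $\ideal{R/x^f}$ is known to be radical, $\ideal{R/x^f}=\radideal{R/x^f}=\SET{E}{\loew_E\le_\mu f}=\SET{E}{\loew_E\le f}$, which is the displayed formula.

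The content is therefore the ``if'' direction: assuming $\mu f\le f$, show $\SET{E\in\Dpfl(R)}{\loew_E\le f}\subseteq\ideal{R/x^f}$, for then this class equals $\radideal{R/x^f}$ and $\ideal{R/x^f}$ is radical. I would prove the inclusion by rebuilding an arbitrary $E$ with $\loew_E\le f$ out of $R/x^f$, via three ingredients. (i) Since $R$ is a discrete valuation ring, hence hereditary, $E$ splits as $\bigoplus_n H_n(E)[n]$ with each $H_n(E)$ a finite sum of cyclic torsion modules $R/x^c$, $c\le\loew_E(n)$; moreover $R/x^c$ is a retract of $\cone\bigl(R/x^d\xrightarrow{x^c}R/x^d\bigr)$ whenever $1\le c\le d$, so cyclic blocks can be enlarged and shrunk inside a thick subcategory. (ii) The Bockstein short exact sequence $0\to R/x^f\to R/x^{2f}\to R/x^f\to0$ of complexes puts $R/x^{2^jf}$ in $\thicksub{R/x^f}$ for every $j$, so after choosing $j$ with $2^j$ larger than the constant implicit in $\loew_E\le f$ we may compare $E$ degreewise with $R/x^{2^jf}$; the finitely many low-degree modules $R/x^c[n]$ are absorbed because each of them already lies in $\ideal{R/x^f}$ (obtained from $R/x^{2^jf}\otimes R/x^{2^jf(0)}$ by retracting, splitting off a single degree, and shrinking the exponent). (iii) The remaining degreewise comparison must be assembled globally --- matching the (possibly degree-dependent) multiplicities and reindexing the homological degree --- using the $\star$-convolution calculus of \cref{sec:convolutions}.

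The main obstacle is ingredient (iii): upgrading ``every homology module of $E$ is individually reachable'' to ``$E$ itself lies in the thick ideal'', which is genuinely delicate because the relevant direct sums are infinite while thick subcategories are not closed under infinite direct sums. This is what the ``explodable'' objects of \cref{sec:explodable} and the convolution operation of \cref{sec:convolutions} are engineered to handle, and it is exactly here that $\mu$-stability enters: it is equivalent to $f\star f\sim f$, i.e.\ to $f$ being closed under self-convolution, which is precisely the combinatorial condition ensuring that the objects one can manufacture from $R/x^f$ reach all the way up to the bound $\loew_E\le f$. Without it one only reaches a strictly smaller thick ideal, and $R/x^{\mu f}$ escapes --- consistently with the ``only if'' direction.
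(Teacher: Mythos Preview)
Your overall architecture matches the paper's, but there is a genuine gap in your ``soft observation'' that $\SET{E\in\Dpfl(R)}{\loew_E\le f}$ is a thick ideal for every nonzero monotonic $f$. It is not. Take $f(n)=n!$. With the Introduction's convention $\loew_E(n)=\loew(H_n(E))$, the desuspension $R/x^f[-1]$ has $\loew_{R/x^f[-1]}(n)=f(n+1)=(n+1)!$, and $(n+1)!\not\le n!$ asymptotically; so the class is not closed under $\Sigma^{-1}$. With the body's $\inf$-normalised convention, shifts are harmless but retracts are not: $R/x^{\sigma f}$ is a summand of $R/x^f[-1]$, yet $\loew_{R/x^{\sigma f}}=\sigma f\not\le f$. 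Either way the containment $\ideal{R/x^f}\subseteq\SET{E}{\loew_E\le f}$ fails whenever $f$ is not $\sigma$-stable, so you cannot use it to deduce $\mu f\le f$ in the ``only if'' direction. What is actually true (this is \cref{thm:loew-thick}/\cref{cor:thickRxf}) is the containment with $\le_\sigma$ in place of $\le$: from $R/x^{\mu f}\in\ideal{R/x^f}$ you only get $\mu f\le_\sigma f$. The passage from $\mu f\le_\sigma f$ to genuine $\mu$-stability $\mu f\le f$ is a separate, non-formal step --- it is exactly \cref{lem:MT1b}, which first extracts $\sigma$-stability and then bootstraps --- and your write-up does not supply it.

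For the ``if'' direction you point to the right toolbox but misattribute the mechanism. The paper does not use convolution here; convolution governs the \emph{radical} ideal in \cref{sec:convolutions}. For the plain ideal, the paper's route is \cref{prop:MT1-mu-stable} ($\mu$-stable $\Leftrightarrow$ (MT1)), then \cref{prop:MT1-explodable} ((MT1) $\Rightarrow$ $R/x^f$ explodable), then \cref{thm:f-explodable}(a), which for explodable $R/x^f$ gives $\ideal{R/x^f}=\SET{E}{\mono{\loew_E}\le_\sigma f}$; finally $\mu$-stability collapses $\le_\sigma$ to $\le$. Your ingredient (iii) is precisely the explodability step --- the obstacle of assembling degreewise comparisons into a global one is exactly what \cref{def:explodable} and \cref{prop:RxfexplodableXX}/\cref{thm:f-explodable} resolve --- but the relevant combinatorial input is that $\mu$-stability gives (MT1), not that $f\ast f\sim f$.
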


For example, $f(n)\coloneqq n^2$ is $\mu$-stable while $f(n)\coloneqq 2^n$ is not. In any case, we establish in \cref{cor:rid-fullfull} that every principal radical ideal is generated by a complex of the form $R/x^f$ for some monotonic sequence $f$ and it follows from \cref{thm:intro-radical} that~$R/x^f$ and $R/x^g$ generate the same radical ideal if and only if~$f$ and $g$ are \mbox{$\mu$-equivalent,} that is, $[f]_\mu = [g]_\mu$ in $\AsymSeq/\mu$. Moreover, Kock and Pitsch \cite{KockPitsch17} showed that the Balmer spectrum $\Spc(\cat K)$ of a tensor-triangulated category $\cat K$ is the Hochster dual of the spectral space corresponding to the bounded distributive lattice~$\PRad(\cat K)$ of principal radical ideals of $\cat K$. This leads to:

\begin{Thm}\label{thm:intro-lattice}
	Let $R$ be a discrete valuation ring. We have an isomorphism of bounded distributive lattices
	\[
		(\AsymSeq/\mu)_+ \xrightarrow{\sim} \PRad(\Dps(R))
	\]
	given by $[f]_\mu \mapsto\radideal{R/x^f}$ and $\infty \mapsto \Dps(R)=\radideal{R}$.
\end{Thm}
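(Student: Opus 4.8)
The plan is to show that the stated assignment $\Theta\colon(\AsymSeq/\mu)_+\to\PRad(\Dps(R))$, $[f]_\mu\mapsto\radideal{R/x^f}$ and $\infty\mapsto\Dps(R)=\radideal{R}$, is an order isomorphism. Since $\PRad(\Dps(R))$ is a bounded distributive lattice, by the Kock--Pitsch description recalled above, and $(\AsymSeq/\mu)_+$ is a bounded distributive lattice by its construction, any order isomorphism between them is automatically an isomorphism of bounded distributive lattices; so this suffices. Essentially all of the content is packaged in \cref{thm:intro-radical} and \cref{cor:rid-fullfull}, and what is left is formal bookkeeping --- the only genuine subtlety being that the degenerate cases have to be handled by hand, because \cref{thm:intro-radical} presupposes a nonzero generator.

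First I check that $\Theta$ is well defined. The zero sequence is the unique monotonic sequence $f$ with $R/x^f=0$, and it is also the unique monotonic sequence lying in the bottom class $[0]_\mu$ (a monotonic sequence that is asymptotically zero is identically zero), so sending $[0]_\mu$ to $\radideal{0}=0$ is unambiguous. For nonzero monotonic $f$, \cref{thm:intro-radical} identifies $\radideal{R/x^f}=\SET{E\in\Dpfl(R)}{\loew_E\le_\mu f}$, and the condition $\loew_E\le_\mu f$ depends only on the class $[f]_\mu$ by transitivity of $\le_\mu$; hence $\radideal{R/x^f}$ depends only on $[f]_\mu$. Next, $\Theta$ preserves and reflects the partial order. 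For nonzero monotonic $f$ and $g$: if $\radideal{R/x^f}\subseteq\radideal{R/x^g}$, then substituting $E=R/x^f$ into \cref{thm:intro-radical} and using $\loew_{R/x^f}=f$ gives $f\le_\mu g$; conversely, if $f\le_\mu g$ then $\loew_E\le_\mu f$ implies $\loew_E\le_\mu g$ by transitivity, so \cref{thm:intro-radical} again yields $\radideal{R/x^f}\subseteq\radideal{R/x^g}$. Moreover $[0]_\mu$ maps to $\radideal{0}=0$, which lies below every element of $\PRad(\Dps(R))$, and each $\radideal{R/x^f}$ is contained in the largest proper thick ideal $\Dpfl(R)$, hence lies strictly below $\Dps(R)=\Theta(\infty)$, matching $[f]_\mu<\infty$. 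So $\Theta$ is an order embedding, in particular injective. Finally, $\Theta$ is surjective: $\Dps(R)$ itself equals $\Theta(\infty)$, while any principal radical ideal properly contained in $\Dps(R)$ is contained in $\Dpfl(R)$ and, by \cref{cor:rid-fullfull}, equals $\radideal{R/x^f}=\Theta([f]_\mu)$ for some monotonic $f$. Thus $\Theta$ is a bijective order isomorphism, which proves the theorem.

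I do not expect a genuine obstacle: once \cref{thm:intro-radical} is in hand the argument is a direct unwinding, and the only points needing care are those flagged above --- the identification of the bottom class and the separate treatment of the top element. As an alternative to this order-theoretic route one could check the lattice operations directly, using that $\radideal{R/x^f}\vee\radideal{R/x^g}=\radideal{R/x^f\oplus R/x^g}=\radideal{R/x^{\max(f,g)}}$ (the second equality by \cref{thm:intro-radical}, since $R/x^{\max(f,g)}$ is a direct summand of $R/x^f\oplus R/x^g$ while $f,g\le\max(f,g)$), together with the fact that over the discrete valuation ring $R$ one has $R/x^i\otimes^{\mathrm{L}}R/x^j\simeq R/x^{\min(i,j)}\oplus R/x^{\min(i,j)}[1]$, which after an elementary asymptotic estimate of $\loew_{R/x^f\otimes R/x^g}$ gives $\radideal{R/x^f\otimes R/x^g}=\radideal{R/x^{\min(f,g)}}$; since $[\max(f,g)]_\mu$ and $[\min(f,g)]_\mu$ are respectively the join and meet of $[f]_\mu$ and $[g]_\mu$ in $(\AsymSeq/\mu)_+$, this too exhibits $\Theta$ as an isomorphism of bounded distributive lattices.
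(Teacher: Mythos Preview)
Your proof is correct and follows essentially the same route as the paper's own argument: both rely on the description of $\radideal{R/x^f}$ (your \cref{thm:intro-radical}, the paper's \cref{thm:rid} and \cref{thm:rid-full}) to establish that the map is well-defined and an order embedding, and on \cref{cor:rid-fullfull} for surjectivity. The one difference is that you conclude by invoking the general fact that a bijective order isomorphism between lattices is automatically a lattice isomorphism, whereas the paper verifies preservation of binary joins and meets directly (via \cref{exa:join} and \cref{cor:rid-meet}); your shortcut is perfectly valid and slightly cleaner, and you even sketch the paper's direct verification as your ``alternative'' at the end.
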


This is \cref{thm:main} and provides \cref{thm:intro-main-cor} as an immediate corollary. According to the isomorphism of \cref{thm:intro-lattice}, the prime ideals of $\Dps(R)$ that are finitely generated (as thick ideals or, equivalently, as radical thick ideals) correspond to the $\mu$-equivalence classes $[f]_\mu$ which are prime elements of the lattice $(\AsymSeq/\mu)_+$. For example, the constant zero sequence $[\underline{0}]_\mu$ and the class of bounded sequences~$[\underline{1}]_\mu$ are prime elements. In summary:

\begin{Thm}\label{thm:intro-three-points}
	Let $R$ be a discrete valuation ring. The spectrum $\Spc(\Dps(R))$ is a local irreducible space. The closed point is the zero ideal $(0)$ and the generic point is~$\Dpfl(R)$. Moreover, the ideal generated by the module $R/x$ is prime and given by
	\[
		\ideal{R/x} = \SET{E\in\Dpfl(R)}{\loew(H_n(E)) \text{ is bounded}}.
	\]
	It is the smallest nonzero prime ideal.
\end{Thm}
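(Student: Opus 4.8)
The plan is to argue directly, rather than deduce the statement from \cref{thm:intro-main-cor} by unwinding Hochster duality; I would use \cref{thm:intro-radical,thm:intro-lattice} together with the fact (recalled above) that $\Dpfl(R)$ is the largest proper thick ideal. First I would check that the zero ideal $(0)$ is a prime, i.e.\ that $E\otimes_R^{\mathrm L}F\simeq 0$ forces $E\simeq 0$ or $F\simeq 0$: if $E,F\ne 0$ and $i,j$ are the least degrees with $H_i(E)\ne 0$, $H_j(F)\ne 0$, then a truncation argument shows that the least homology of a derived tensor product of bounded-below complexes is the tensor of the least homologies, so $H_{i+j}(E\otimes_R^{\mathrm L}F)\cong H_i(E)\otimes_R H_j(F)$, which is nonzero because over a DVR the tensor product of two nonzero finitely generated modules is nonzero (immediate from the structure theorem). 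Since $(0)$ lies in every thick ideal it is the unique minimal prime, hence (as $\overline{\{\mathcal Q\}}=\{\mathcal P\mid\mathcal P\subseteq\mathcal Q\}$ in $\Spc$) the unique closed point; so $\Spc(\Dps(R))$ is local with closed point $(0)$. Dually, $\Dpfl(R)$ is prime: with $K=\mathrm{Frac}(R)$, a complex $E$ lies in $\Dpfl(R)$ iff every $H_n(E)$ has finite length, equivalently (over the DVR $R$) is torsion, equivalently $K\otimes_R^{\mathrm L}E\simeq 0$; and if $E\otimes_R^{\mathrm L}F\in\Dpfl(R)$ then $(K\otimes_R^{\mathrm L}E)\otimes_K^{\mathrm L}(K\otimes_R^{\mathrm L}F)\simeq K\otimes_R^{\mathrm L}(E\otimes_R^{\mathrm L}F)\simeq 0$ in $\Der(K)$, which over the field $K$ forces one factor to vanish. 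Being the largest proper thick ideal, $\Dpfl(R)$ then contains every prime, so it is the unique maximal prime, i.e.\ the generic point; hence $\Spc(\Dps(R))$ is irreducible.

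For the ideal $\ideal{R/x}$, the first step is to observe that $\ideal{R/x}=\ideal{R/x^{\underline 1}}$, where $\underline 1$ denotes the constant sequence $1$: on one hand $R/x^{\underline 1}=\bigoplus_{n\ge 0}(R/x)[n]\cong (R/x)\otimes_R^{\mathrm L}\bigl(\bigoplus_{n\ge 0}R[n]\bigr)$ lies in $\ideal{R/x}$ since $\bigoplus_{n\ge 0}R[n]\in\Dps(R)$, and on the other $R/x$ is a retract of $R/x^{\underline 1}$, being split off as its degree-zero summand. Because $\mu\underline 1=\underline 1\le\underline 1$, the sequence $\underline 1$ is $\mu$-stable, so $\ideal{R/x^{\underline 1}}$ is radical and, by \cref{thm:intro-radical} (noting that $\loew_E\le_\mu\underline 1$ means precisely that $\loew(H_n(E))$ is bounded), equals $\SET{E\in\Dpfl(R)}{\loew(H_n(E))\text{ is bounded}}$; this simultaneously yields the asserted description and the fact that $\ideal{R/x}$ is radical. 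Under the isomorphism of \cref{thm:intro-lattice} this radical ideal corresponds to the element $[\underline 1]_\mu\in(\AsymSeq/\mu)_+$.

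Next I would show that $\ideal{R/x}$ is prime. A principal radical ideal $\radideal c$ is a prime thick ideal precisely when it is a prime element of the lattice $\PRad(\Dps(R))$, since $\radideal{a\otimes b}=\radideal a\cap\radideal b$ turns the primality condition into ``$\radideal a\cap\radideal b\subseteq\radideal c\Rightarrow\radideal a\subseteq\radideal c$ or $\radideal b\subseteq\radideal c$''; so it suffices to check that $[\underline 1]_\mu$ is a prime element of $(\AsymSeq/\mu)_+$. Here the meet of $[f]_\mu$ and $[g]_\mu$ is $[\min(f,g)]_\mu$, and if $\min(f,g)$ is bounded then one of $f,g$ must be bounded, because a monotonic unbounded sequence tends to $\infty$, so if neither were bounded then $\min(f,g)\to\infty$. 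Also $[\underline 1]_\mu$ is not the top element $\infty$ (equivalently $\ideal{R/x}\ne\Dps(R)$, as $R\notin\Dpfl(R)$). Hence $\ideal{R/x}$ is prime.

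Finally, for minimality I would prove the stronger statement that \emph{every} nonzero thick ideal $\mathcal J$ of $\Dps(R)$ contains $R/x$. If $\mathcal J\not\subseteq\Dpfl(R)$ then $\mathcal J=\Dps(R)$ and there is nothing to prove; otherwise pick $0\ne E\in\mathcal J\subseteq\Dpfl(R)$ and note $R/x\otimes_R^{\mathrm L}E\in\mathcal J$. This object is base-changed to the residue field $R/x$, hence formal: $R/x\otimes_R^{\mathrm L}E\simeq\bigoplus_m(R/x)^{\oplus d_m}[m]$ in $\Der(R)$, with $d_n\ge 1$ whenever $H_n(E)\ne 0$, since $H_n(R/x\otimes_R^{\mathrm L}E)$ contains $H_n(E)/xH_n(E)$, which is nonzero by Nakayama. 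Thus some $R/x[n]$ is a retract of an object of $\mathcal J$, whence $R/x\in\mathcal J$. In particular $\ideal{R/x}$, itself a nonzero prime, is contained in every nonzero prime, so it is the smallest one. The only step that requires real thought is the identification $\ideal{R/x}=\ideal{R/x^{\underline 1}}$ — it is what lets the sequence-indexed machinery of \cref{thm:intro-radical} be applied to the bare module $R/x$ — while everything else is assembly from results already in hand.
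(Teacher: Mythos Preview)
Your proof is correct and, for the heart of the statement (the description of $\ideal{R/x}$ and its primality), follows the paper's approach: identify $\ideal{R/x}$ with $\ideal{R/x^{\underline 1}}$, use $\mu$-stability of $\underline 1$ to see this ideal is radical, invoke \cref{thm:intro-radical} for the explicit description, and then show $[\underline 1]_\mu$ is a prime element of the lattice via the monotonicity argument (this is exactly \cref{prop:bounded-is-prime} in the body).

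Where you diverge is in the framing results. For locality and irreducibility the paper deduces these from the general tame-prime machinery of \cref{sec:tame} (\cref{prop:local-or-domain}, which applies to any local noetherian ring or domain, using the Balmer--Favi support and the detection property), whereas you give direct ad hoc arguments using the bottom-homology formula and the fraction field. For minimality, the paper shows more: it proves in \cref{prop:Dbfl-minimal} that $\thicksub{R/x}$ is already the smallest nonzero thick \emph{subcategory}, by reading off a cyclic summand from the hereditary splitting and using the cone trick of \cref{lem:cone}. Your argument, tensoring with $R/x$ and splitting off a copy of the residue field via Nakayama, only shows $R/x$ lies in every nonzero thick \emph{ideal}, which is all that is needed here but is a slightly weaker conclusion. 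Both routes are valid; yours trades the general framework for short self-contained computations specific to the DVR case.
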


The lattice $\AsymSeq/\mu$ of $\mu$-equivalence classes of monotonic sequences is extremely intricate. To gain more explicit information, we can look for a simpler sublattice~${L \subset \AsymSeq/\mu}$ and study the surjective morphism 
	\[
		\Spec((\AsymSeq/\mu)_+)^\vee \twoheadrightarrow \Spec(L_+)^\vee.
	\]
For example, sitting inside $\AsymSeq/\mu$ is a totally ordered sublattice $\PowerSeq \subset \AsymSeq/\mu$ consisting of the ($\mu$-equivalence classes of) power functions $f_\alpha(n) \coloneqq \floor{n^\alpha}$ for each~$\alpha \in \mathbb{R}_{\ge 0}$. The spectrum of this totally ordered lattice can be explicitly described (\cref{rem:pseq-spec}) and we obtain a factorization of the comparison map~\eqref{eq:intro-comp}:

\begin{equation}\label{eq:intro-power-fac}
	\begin{tikzpicture}[baseline=(current bounding box.center)]
	\node (A) at (0,2.5) {$\Spc(\Dps(R))$};
	\node (B) at (3,2.5) {$\Spec(\PowerSeq_+)^\vee$};
	\node (C) at (6,2.5) {$\Spec(R)$};
	\draw[->>] (A) -- (B);
	\draw[->>] (B) -- (C);
	\filldraw[fill=cyan!20,snake=bumps] (0,1.5cm) -- (0.5cm,1.0) -- (0.5cm,0.5cm) -- (0.0,0.0) -- (-0.5,0.5) -- (-0.5,1.0) -- (0,1.5);
	\node (b) at (0,-.025cm) {$\bullet$};
	\node (b) at (0,1.525cm) {$\bullet$};
	\node (b) at (0,2.0cm) {$\bullet$};
	\draw (0,2.0cm) -- (0,1.5cm);
	\draw [pattern color=red,thin,pattern={Lines[angle=170,distance=4pt]}] (3,0.75) ellipse (0.375cm and 0.75cm);
	\draw [semithick] (3,0.75) ellipse (0.375cm and 0.75cm);
	\node (b) at (3,0cm) {$\bullet$};
	\node (b) at (3,1.5cm) {$\bullet$};
	\node (b) at (3,2.0cm) {$\bullet$};
	\draw (3,2.0cm) -- (3,1.5cm);
	\draw[snake=brace] (3.8,1.5)--(3.8,0.0);
	\draw[|->] (4.2,0.75) -- (5.6,0.75);
	\node (b) at (6,0.75cm) {$\bullet$};
	\node (b) at (6,2.0cm) {$\bullet$};
	\draw (6,2.0cm) -- (6,0.75cm);
\end{tikzpicture}
\end{equation}

This shows that there is at least a continuum of prime ideals in $\Dps(R)$, but we have only explicitly described three of them, as in \cref{thm:intro-three-points} above. In fact, we will prove (\cref{thm:not-prime-mu}):

\begin{Thm}
	Let $f$ be an unbounded monotonic sequence. Its $\mu$-equivalence class~$[f]_\mu$ is not a prime element in $(\AsymSeq/\mu)_+$.
\end{Thm}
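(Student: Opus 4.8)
The plan is to refute the defining property of a prime element directly, by producing two monotonic sequences $g,h\colon\bbN\to\bbN$ with
\[
	[\min(g,h)]_\mu\le[f]_\mu,\qquad [g]_\mu\not\le[f]_\mu,\qquad [h]_\mu\not\le[f]_\mu
\]
in $(\AsymSeq/\mu)_+$. Since $[f]_\mu\ne\infty$, this shows $[f]_\mu$ is not prime. (Here I use that the meet of $[g]_\mu$ and $[h]_\mu$ in $\AsymSeq/\mu$ is the class $[\min(g,h)]_\mu$ of the pointwise minimum, and the join the class of the pointwise maximum, as for the other lattices in the introduction.) The guiding picture is that $g$ and $h$ should each be enormous on a sparse family of intervals, arranged out of phase, so that on every interval at least one of them has been held constant at a value below $f$; then $\min(g,h)$ never sees the enormous values, while each of $g,h$ grows fast enough along its own intervals to escape every $\mu$-reparametrization of $f$.

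Concretely, I would first use that $f$ is unbounded and nondecreasing to choose recursively $0=a_0<a_1<a_2<\cdots$ with
\[
	f(a_{k+1})>2^{k}\bigl(f(2^{k}a_k)+1\bigr)\qquad\text{for all }k\ge 0,
\]
which is possible because $f(n)\to\infty$. Set $c_{-1}:=0$ and $c_k:=f(a_{k+1})-1$ for $k\ge 0$, so $(c_k)_{k\ge-1}$ is nondecreasing and nonnegative. Define $g$ on the block $[a_k,a_{k+1})$ to be the constant $c_k$ when $k$ is odd and the constant $c_{k-1}$ when $k$ is even, and define $h$ the same way with the parities exchanged. One checks that $g$ and $h$ are honest unbounded nondecreasing sequences $\bbN\to\bbN$, and that $\min(g,h)$ equals $c_{k-1}$ on each block $[a_k,a_{k+1})$ (on a block with $k$ even, $g=c_{k-1}\le c_k=h$; on a block with $k$ odd, $h=c_{k-1}\le c_k=g$). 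Since $c_{k-1}=f(a_k)-1<f(a_k)\le f(n)$ for every $n\in[a_k,a_{k+1})$, we get $\min(g,h)\le f\le\mu f$ pointwise, hence $[\min(g,h)]_\mu\le[f]_\mu$.

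It then remains to show $[g]_\mu\not\le[f]_\mu$; the argument for $h$ is identical after swapping parities. Suppose for contradiction that $g\le_\mu f$, say $g(n)\le A\,f(2^{r}n)$ for all $n\gg 0$, with $r\ge 1$ and, without loss of generality, $A\ge 1$. Choose an odd $k$ large enough that $a_k$ lies in the range where this bound applies, that $k\ge r$, and that $A\le 2^{k}$. Then $g(a_k)=c_k=f(a_{k+1})-1$, so the bound gives
\[
	f(a_{k+1})-1\le A\,f(2^{r}a_k)\le A\,f(2^{k}a_k)\le 2^{k}f(2^{k}a_k),
\]
whence $f(a_{k+1})\le 2^{k}f(2^{k}a_k)+1\le 2^{k}\bigl(f(2^{k}a_k)+1\bigr)$, contradicting the choice of $a_{k+1}$. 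Hence $g\not\le_\mu f$, and symmetrically $h\not\le_\mu f$, which finishes the proof.

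The only place where real care is needed is the calibration of the blocks: the factor $2^{k}$ in the recursion is inserted precisely so that no single reparametrization $\mu^{r}$ — for any fixed $r$ and any constant $A$ — can bridge the gap between consecutive blocks, and this is exactly the step that exploits the hypothesis that $f$ may grow arbitrarily fast. The remaining verifications (monotonicity of $g$ and $h$, the block-by-block evaluation of $\min(g,h)$, and the passage between pointwise domination and $\le_\mu$) are routine.
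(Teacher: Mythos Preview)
Your proof is correct and follows essentially the same strategy as the paper's proof: construct blocks $[a_k,a_{k+1})$ growing fast enough to defeat every $\mu^r$-reparametrization, then define two sequences that alternate ``large'' and ``small'' values on blocks of opposite parity so that their pointwise minimum stays below $f$. The only difference is bookkeeping: the paper sets the ``large'' value on block $i$ to the threshold $2^i f(2^i k_i)$ and packages the parity construction into a general $f_S$ machine (\cref{cons:general}, \cref{cons:new-together-mu}), whereas you use $c_k=f(a_{k+1})-1$ directly and compute $\min(g,h)$ block-by-block --- which is arguably more transparent since it avoids the auxiliary hypothesis $f_0\le f_1$ needed for \cref{lem:f-intersection}.
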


To prove that an \emph{arbitrary} unbounded sequence is not prime is a difficult task, which demands the construction of some bizzare monotonic sequences, detailed in~\cref{sec:technical}.

\begin{Cor}
	There are at least $2^{\aleph_0}$ prime ideals in $\Dps(R)$ but only two of these prime ideals are finitely generated.
\end{Cor}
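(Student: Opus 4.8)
The plan is to deduce the statement formally from results already established --- chiefly \cref{thm:intro-lattice} together with the Kock--Pitsch theorem, and \cref{thm:not-prime-mu}, \cref{thm:intro-three-points}, \cref{cor:rid-fullfull}. I would begin with a reduction. A prime $\otimes$-ideal $\frakp \subsetneq \Dps(R)$ is automatically radical (if $a\potimes{n} \in \frakp$ then $a \in \frakp$), so if $\frakp$ is finitely generated as a thick ideal, say $\frakp = \ideal{a_1,\dots,a_n}$, then $\frakp = \radideal{a_1 \oplus \cdots \oplus a_n}$; thus $\frakp$ is a \emph{principal} radical ideal --- an element of the lattice $\PRad(\Dps(R))$ --- and the same conclusion holds if $\frakp$ is merely finitely generated as a radical ideal. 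Conversely, $(0) = \radideal{0}$ and $\ideal{R/x}$ are finitely generated thick ideals. Hence it suffices to prove: (i) $\Spc(\Dps(R))$ has at least $2^{\aleph_0}$ points; and (ii) the only principal radical ideals of $\Dps(R)$ that are prime $\otimes$-ideals are $(0)$ and $\ideal{R/x}$.

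For (i) I would use the factorization \eqref{eq:intro-power-fac}. Under the Kock--Pitsch isomorphism $\Spc(\Dps(R)) \cong \Spec(\PRad(\Dps(R)))^{\vee}$ and \cref{thm:intro-lattice}, the inclusion of the totally ordered sublattice $\PowerSeq_+ \subset (\AsymSeq/\mu)_+$ induces a surjection onto $\Spec(\PowerSeq_+)^{\vee}$, whose points are the prime ideals of $\PowerSeq_+$. Since distinct exponents give $\mu$-inequivalent power functions --- one checks at once that $f_\alpha \le_\mu f_\beta$ iff $\alpha \le \beta$ --- the lattice $\PowerSeq_+$ is the chain $\bbR_{\ge 0} \sqcup \{\infty\}$. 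In a chain every proper down-set is a prime ideal, and already the down-sets $\{y : y \le \alpha\}$ for $\alpha \in \bbR_{\ge 0}$ furnish $2^{\aleph_0}$ distinct prime ideals. A surjection of sets cannot decrease cardinality, so $\Spc(\Dps(R))$ --- whose points are exactly the prime $\otimes$-ideals of $\Dps(R)$ --- has at least $2^{\aleph_0}$ of them.

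For (ii), let $\frakp$ be a principal radical ideal that is a prime $\otimes$-ideal. By \cref{cor:rid-fullfull} we may write $\frakp = \radideal{R/x^f}$ for a monotonic sequence $f$, corresponding under \cref{thm:intro-lattice} to $[f]_\mu \in (\AsymSeq/\mu)_+$. Because $\frakp$ is a principal prime $\otimes$-ideal, the Kock--Pitsch correspondence forces $[f]_\mu$ to be a prime element of $(\AsymSeq/\mu)_+$, and $[f]_\mu \ne \infty$ since $\frakp$ is proper. By \cref{thm:not-prime-mu}, $f$ is then bounded. If $f = \underline{0}$, then $\frakp = \radideal{0} = (0)$; if $f$ is bounded and nonzero, then $f$ is asymptotically equivalent to the constant sequence $\underline{1}$, so $[f]_\mu = [\underline{1}]_\mu$ and $\frakp = \radideal{R/x^{\underline{1}}} = \ideal{R/x}$, the last equality combining \cref{thm:intro-radical} and \cref{thm:intro-three-points}. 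Both $(0)$ and $\ideal{R/x}$ are prime $\otimes$-ideals by \cref{thm:intro-three-points} and are distinct, so (ii) holds, and with (i) this establishes the corollary.

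I do not expect any genuine obstacle here --- the corollary is bookkeeping layered on top of the substantial theorems above. The one point needing care is the Kock--Pitsch translation --- the identification of ``finitely generated prime $\otimes$-ideal'' with ``principal prime element of $\PRad(\Dps(R))$'' --- which ultimately reduces to the elementary fact that a finitely generated radical ideal is principal. The real work lies entirely upstream, in \cref{thm:not-prime-mu} (together with its input \cref{thm:intro-lattice}), whose proof requires the pathological monotonic sequences constructed in \cref{sec:technical}.
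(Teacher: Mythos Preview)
Your proposal is correct and follows essentially the same approach as the paper's proof: reduce finitely generated primes to principal radical ideals, invoke \cref{cor:rid-fullfull} to write any proper one as $\radideal{R/x^f}$, use \cref{thm:not-prime-mu} (via \cref{prop:prime-is-prime}) to rule out unbounded $f$, and handle the two bounded cases; the cardinality bound comes from the surjection onto $\Specplus(\PowerSeq)^\vee$ established in \cref{sec:complexity}.
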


This frustrates attempts to give simple explicit descriptions of the prime ideals in the cyan region of \eqref{eq:intro-power-fac}. It also establishes:

\begin{Cor}
	For any $d \ge 1$, the thick ideal
	\[
		\SET{E\in \Dpfl(R)}{ \loew(H_n(E)) \text{ is asymptotically bounded by the polynomial $n^d$}}
	\]
	is not prime.
\end{Cor}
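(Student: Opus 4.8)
The plan is to recognise the thick ideal in the statement as a principal radical ideal $\radideal{R/x^{f_d}}$ and then to quote \cref{thm:not-prime-mu} via the lattice isomorphism of \cref{thm:intro-lattice}.

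First I would identify the ideal. Let $f_d\colon\bbN\to\bbN$ be the monotonic sequence $f_d(n)\coloneqq\floor{n^d}$. For $E\in\Dpfl(R)$, the hypothesis that $\loew(H_n(E))$ is asymptotically bounded by $n^d$ is, unwinding the definition of the asymptotic pre-order, precisely the condition $\loew_E\le f_d$ (one uses here that $\floor{n^d}\le n^d\le 2\floor{n^d}$ for $n\ge 2$ when $d\ge 1$, so $f_d$ is asymptotically equivalent to $n\mapsto n^d$). Next I would record the elementary fact that $f_d$ is $\mu$-stable: since $(\mu f_d)(n)=\floor{(2n)^d}\le 2^d n^d\le 2^{d+1}\floor{n^d}$ for $n\gg 0$, we have $\mu f_d\le f_d$; consequently $\mu^k f_d\le f_d$ for all $k$, so the relations $g\le f_d$ and $g\le_\mu f_d$ agree for monotonic $g$. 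Therefore \cref{thm:intro-radical} gives
\[
  \SET{E\in\Dpfl(R)}{\loew_E\le f_d}=\SET{E\in\Dpfl(R)}{\loew_E\le_\mu f_d}=\radideal{R/x^{f_d}},
\]
so the ideal in the statement is exactly the principal radical thick ideal $\radideal{R/x^{f_d}}$ (which, $f_d$ being $\mu$-stable, moreover equals $\ideal{R/x^{f_d}}$).

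Since $d\ge 1$ the sequence $f_d$ is unbounded, so \cref{thm:not-prime-mu} applies and tells us that the $\mu$-equivalence class $[f_d]_\mu$ is \emph{not} a prime element of the bounded distributive lattice $(\AsymSeq/\mu)_+$. It remains to transport this back to $\Dps(R)$. Under the isomorphism $(\AsymSeq/\mu)_+\xrightarrow{\sim}\PRad(\Dps(R))$ of \cref{thm:intro-lattice} the class $[f_d]_\mu$ corresponds to $\radideal{R/x^{f_d}}$, and a principal radical thick ideal of $\Dps(R)$ is a prime $\otimes$-ideal if and only if it is a prime element of the lattice $\PRad(\Dps(R))$ --- this is the correspondence between finitely generated Balmer primes and prime lattice elements noted just after \cref{thm:intro-lattice}, which rests only on the identity $\radideal{a}\cap\radideal{b}=\radideal{a\otimes b}$ together with the defining tensor-factorization property of Balmer primes. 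Combining these facts, $\radideal{R/x^{f_d}}$ --- equivalently, the thick ideal in the statement --- is not prime.

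The only substantive input is \cref{thm:not-prime-mu}, whose proof requires the intricate sequences of \cref{sec:technical}; granting that theorem, this corollary is essentially bookkeeping. The sole points to verify are the $\mu$-stability of $n\mapsto\floor{n^d}$ and the dictionary between prime lattice elements and finitely generated Balmer primes, both routine, so I anticipate no real obstacle.
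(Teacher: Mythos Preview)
Your proposal is correct and follows essentially the same route as the paper: identify the ideal as $\radideal{R/x^{f_d}}$ via $\mu$-stability of $f_d(n)=n^d$ (the paper's \cref{cor:summary-ideal-Rxf}/\cref{cor:Rxf-radical}), then invoke \cref{thm:not-prime-mu} together with the equivalence between prime elements of $(\AsymSeq/\mu)_+$ and principal radical primes (the paper's \cref{prop:prime-is-prime}, which is what underlies your appeal to \cref{thm:intro-lattice}). A small simplification: since $d\ge 1$ is an integer, $n^d$ is already an integer, so $f_d(n)=n^d$ and your floor-versus-real bounds are unnecessary.
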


We single this out because it contradicts \cite[Theorem~F]{MatsuiTakahashi17}. Only two of the thick ideals claimed to be prime in that theorem are actually prime. We explain the error in \cref{rem:MT-error}. Nevertheless, it was Matsui and Takahashi's very interesting paper which inspired our interest in these questions, and our work most certainly depends on, and builds upon, their work.
	\[ \ast\ast\ast \]
We also give a treatment of $\Spc(\Dps(R))$ for a general commutative noetherian ring, again building on the work of \cite{MatsuiTakahashi17}. A key construction is a splitting
	\[
		\tame:\Spec(R) \hookrightarrow \Spc(\Dps(R))
	\] 
of the comparison map \eqref{eq:intro-comp}. The primes $\tame(\frakp)$ in the image of this map are the \emph{tame primes} of $\Dps(R)$. In general, Matsui--Takahashi proved that each fiber~$\rho^{-1}(\{\frakp\})$ is irreducible with the tame prime $\tau(\frakp)$ serving as the generic point. In \cref{sec:tame,sec:base-change}, we prove various general results about tame primes from a tensor-triangular perspective and consider how $\Spc(\Dps(R))$ behaves when we vary the ring. Among other results, we prove that the functor $\Dps(R)\to\Dps(R_\frakp)$ is a Verdier quotient (\cref{prop:localization-verdier}). It follows that $\Dps(R_\frakp)$ is the local category of $\Dps(R)$ at the tame prime $\tame(\frakp)$:
	\[
		\Dps(R)/\tame(\frakp) \cong \Dps(R_\frakp).
	\]
We also explain how the difficulty in understanding $\Dps(R)$ compared with $\Dperf(R)$ is that $\Dps(R)$ does not interact well with algebraic localization and that an analogue of the Neeman--Thomason theorem for $\Dperf(R)$ fails strongly for $\Dps(R)$. Ultimately, we hope that our geometric exposition of these ideas will provide a strong foundation for future studies of the tensor triangular geometry of pseudo-coherent complexes.

From a broader perspective, $\Dps(R)$ is a prototypical example of a tensor-triangulated category which is not rigid, meaning that not all of its objects are dualizable. It is for this reason that we must distinguish between the thick ideal~$\ideal{E}$ and the radical thick ideal $\radideal{E}$ generated by an object $E \in \Dps(R)$. Our example is particularly interesting because we find that the difference between $\ideal{E}$ and $\radideal{E}$ amounts to the difference between the asymptotic bound $f \le_\sigma g$ and the weaker asymptotic bound $f \le_\mu g$. Our results also demonstrate that passing from the rigid tensor-triangulated category~$\Dperf(R)$ to the slightly larger non-rigid tensor-triangulated category $\Dps(R)$ enlarges the spectrum to an extreme degree.

\subsection*{Outline of the paper:}
We briefly review Stone duality in \cref{sec:lattices} and then introduce several bounded distributive lattices of asymptotic equivalence classes of monotonic sequences in \cref{sec:asymptotic}. To our knowledge, these lattices --- and their associated spectral spaces --- have not been considered in the literature, and may be of independent interest. In \cref{sec:pseudo} we turn to commutative algebra and recall the basics concerning the derived category of pseudo-coherent complexes over a commutative noetherian ring. We study its Balmer spectrum in \cref{sec:tame} and state several results obtained by Matsui--Takahashi \cite{MatsuiTakahashi17} from a slightly different perspective. In \cref{sec:base-change} we study the behaviour of $\Spc(\Dps(R))$ as we vary the ring~$R$. In particular, we consider the case of closed immersions (\cref{cor:img-base-change}) and localizations (\cref{prop:localization-verdier}). These results require a general tt-geometric surjectivity theorem (\cref{prop:img-of-Spc}) which may be of independent interest.

\Cref{sec:DVR,sec:thick,sec:explodable,sec:convolutions,sec:the-spectrum,sec:complexity,sec:technical} are devoted to the case of a discrete valuation ring. In \cref{sec:DVR} we consider the basic features of $\Dps(R)$ in this case, and introduce some constructions necessary to understand generation behaviour in this category, including the complex~$R/x^f$ associated to a sequence of natural numbers $f$  and the Loewy sequence~$\loew_E$ of a complex~$E$ (\cref{def:loewy-seq}). Then in \cref{sec:thick} we develop methods to control the complexes lying in the thick ideal generated by $R/x^f$. We find that they are those complexes whose Loewy sequence is asymptotically $\sigma$-bounded by~$f$;  see \cref{thm:loew-thick} and  \cref{cor:thickRxf}. The full story, however, is subtle and requires studying the notion of an ``explodable complex'' introduced in \cref{sec:explodable}; see \cref{thm:f-explodable}. By studying such complexes and relating them to the notions of $\sigma$-stability and \mbox{$\mu$-stability} from \cref{sec:asymptotic}, we are able to answer certain questions raised in \cite{MatsuiTakahashi17}. In particular, we show that two conditions considered in their work, which we dub (MT1) and (MT2), are actually equivalent for a complex $R/x^f$; in fact they are both equivalent to $\mu$-stability. See \cref{thm:MT1-MT2-mustable} and \cref{cor:summary-ideal-Rxf}. This provides a negative answer to \cite[Question~7.4]{MatsuiTakahashi17}; see \cref{exa:not-MT1}.

In \cref{sec:convolutions} we study convolutions as a way to get a closer understanding of \emph{radical} ideals in $\Dps(R)$. A key insight is that pseudo-coherent complexes are ``explodable up to tensor-powers''; see \cref{prop:almost-explodable}. This leads to \cref{thm:rid-full} which describes the radical ideal generated by $R/x^f$. Armed with the above, we prove our main result \cref{thm:main} in \cref{sec:the-spectrum}. In \cref{sec:complexity} we study the complexity of the lattice of $\mu$-equivalence classes of monotonic sequences and introduce the totally orered sublattice of ``power sequences'', also known as ``exponential polynomial functions''. We describe the spectral space associated to this totally ordered lattice and use this to gain information about $\Spc(\Dps(R))$. Finally, in \cref{sec:technical} we provide some technical constructions which prove that the lattice of $\mu$-equivalence classes of asymptotic sequences has only two prime elements (\cref{thm:not-prime-mu}). In particular, $\Dps(R)$ has only two finitely generated prime ideals (\cref{cor:new-not-prime-mu} and \cref{cor:two-fg}). This also proves that a claim made in \cite{MatsuiTakahashi17} concerning the prime ideals in $\Spc(\Dps(R))$ is false. We explain the error in \cref{rem:MT-error}.

\section{Lattices and Stone duality}\label{sec:lattices}

We begin with a review of Stone duality for bounded distributive lattices. We do not give an exhaustive treatment and rather direct the reader to \cite{Johnstone82}, \cite[Chapter~3]{DickmannSchwartzTressl19} or the summary in \cite[Section~3]{BCHNPS_descent} for further details.

\begin{Def}
	A partially ordered set $(A,\le)$ is a \emph{lattice} if every two element subset $\{a,b\} \subseteq A$ admits a join (or least upper bound) $a \vee b$ and a meet (or greatest lower bound) $a \wedge b$. A morphism of lattices $f:A \to B$ is a function satisfying $f(a \vee b) = f(a) \vee f(b)$ and $f(a \wedge b) = f(a) \wedge f(b)$ for all $a,b\in A$. A morphism of lattices is order-preserving (since $a \le b$ if and only if $a \vee b = b$) but the converse is not true; see \cite[Remark~3.3]{BCHNPS_descent}. In other words, the category of lattices $\Lat$ is a non-full subcategory of the category of partially ordered sets $\Pos$.
\end{Def}

\begin{Rem}
	If $f:A\to B$ is a bijective lattice morphism, then $f^{-1}:B \to A$ is also a lattice morphism. It follows that the forgetful functors $\Lat \to \Pos \to \Set$ reflect isomorphisms. In particular, an isomorphism of lattices is the same thing as a bijective morphism of lattices and this is the same thing as an order-isomorphism of partially ordered sets (i.e.~a bijection satisfying $a \le b$ if and only if $f(a) \le f(b)$).
\end{Rem}

\begin{Def}
	A lattice is \emph{bounded} if it contains a least element $0$ and a greatest element $1$. A morphism of bounded lattices $f:A \to B$ is morphism of lattices which also satisfies $f(0)=0$ and $f(1)=1$. The bounded lattices thus form a non-full subcategory $\BLat \subset \Lat$ of the category of all lattices.
\end{Def}

\begin{Def}
	A lattice $(A,\le)$ is said to be \emph{distributive} if 
	\[
		a \wedge (b \vee c) = (a\wedge b) \vee (a \wedge c)
	\]
	for all $a,b,c \in A$. There are a number of equivalent characterizations; see \cite[Lemma~10]{Graetzer78}. The bounded distributive lattices form a full subcategory $\BDLat \subset \BLat$ of the category of bounded lattices.
\end{Def}

\begin{Exa}\label{exa:N}
	The non-negative natural numbers $\mathbb{N}\coloneqq \mathbb{N}_{\ge 0}$ with its natural ordering is a distributive lattice in which $a \wedge b = \min(a,b)$ and $a \vee b = \max(a,b)$.
\end{Exa}

\begin{Rem}
	The functors $\BDLat \to \BLat \to \Lat$ evidently reflect isomorphisms. In particular, a morphism of bounded distributive lattices is an isomorphism if and only if it is bijective.
\end{Rem}

\begin{Def}
	An \emph{ideal} of a bounded distributive lattice $(A,\le)$ is a nonempty subset $I\subseteq A$ such that
	\begin{enumerate}
		\item if $a \in I$ and $b \in A$ satisfy $b \le a$, then $b \in I$;
		\item for all $a,b\in I$, we have $a \vee b \in I$.
	\end{enumerate}
	Moreover $I$ is \emph{prime} if in addition it is a proper subset of $A$ and satisfies
	\begin{enumerate}[resume]
		\item if $a\wedge b \in I$ then either $a \in I$ or $b \in I$.
	\end{enumerate}
\end{Def}

\begin{Exa}\label{exa:prime-element}
	Let $A$ be a bounded distributive lattice. For each element $a \in A$, we have the principal ideal $a^{\downarrow}\coloneqq \SET{b\in A}{b\le a}$. It is a prime ideal if and only if the element $a$ is \emph{prime}: $a\neq 1$ and $b\wedge c \le a$ implies $b \le a$ or $c \le a$.
\end{Exa}

\begin{Def}
	A topological space is a \emph{spectral space} if it is quasi-compact, the quasi-compact open subsets are closed under finite intersection and form a basis for the topology, and every irreducible closed subset admits a unique generic point; see \cite{DickmannSchwartzTressl19}. A \emph{spectral map} of spectral spaces is a continuous function $f:X\to Y$ with the property that the preimage of any quasi-compact open subset is again quasi-compact. We denote the category of spectral spaces and spectral maps by~$\Spectral$.
\end{Def}

\begin{Exa}
	Let $\Spec(A)$ denote the set of prime ideals of a bounded distributive lattice $A$. The sets $U(a) \coloneqq \SET{P \in \Spec(A)}{a \not\in P}$ for each $a \in A$ are the quasi-compact open sets for a spectral topology on $\Spec(A)$. Endowed with this topology, the spectral space $\Spec(A)$ is called the \emph{spectrum} of $A$. Moreover, for a morphism $f:A \to B$ of bounded distributive lattices, we have an induced map $\Spec(f):\Spec(B)\to\Spec(A)$ which sends a prime ideal $P$ to its preimage $f^{-1}(P)$ and which evidently satisfies $\Spec(f)^{-1}(U(a)) = U(f(a))$ for each $a \in A$. We thus obtain a contravariant functor $\Spec:\BDLat\op\to \Spectral$.
\end{Exa}

\begin{Rem}\label{rem:stone-duality}
	Stone duality establishes an anti-equivalence between the category of bounded distributive lattices and the category of spectral spaces. More precisely, there is an adjoint equivalence of categories
	\[
		\Omega:\Spectral \adjto \BDLat\op:\Spec
	\]
	where the functor $\Omega$ sends a spectral space $X$ to the bounded distributive lattice~$\Omega(X)$ consisting of its quasi-compact open subsets ordered by inclusion.
\end{Rem}

\begin{Rem}
	Let $\frakp$ and $\frakq$ be points in a spectral space $X$. We write $\frakp \rightsquigarrow \frakq$ and say that $\frakq$ is a \emph{specialization} of $\frakp$ (and that $\frakp$ is a \emph{generalization} of $\frakq$) if $\frakq \in \overbar{\{\frakp\}}$. Thus, $\overbar{\{\frakp\}}$ is the set of specializations of $\frakp$ and $\gen(\frakp) \coloneqq \SET{\frakq}{\frakp \in \overbar{\{\frakq\}}}$ is the set of generalizations of $\frakp$. For the spectrum $X=\Spec(A)$ of a bounded distributive lattice, $\frakp \rightsquigarrow \frakq$ is equivalent to an inclusion of prime ideals $\frakp \subseteq \frakq$. In particular, the closed points are the maximal ideals of the lattice $A$; see \cite[2.4]{Johnstone82}.
\end{Rem}

\begin{Rem}[Hochster duality]\label{rem:hochster-duality}
	By definition, the quasi-compact open sets form a basis for the topology of a spectral space $X$. The complements of the quasi-compact open sets are called the \emph{Thomason closed sets} and form a basis for another spectral topology on $X$ called the \emph{Hochster dual} topology.\footnote{The Hochster dual topology is called the ``inverse topology'' in \cite{DickmannSchwartzTressl19}. Our terminology refers to \cite{Hochster69} and \cite{Thomason97}.} We write $X^\vee$ for the set $X$ equipped with the Hochster dual topology. The open sets of $X^\vee$ --- that is, the arbitrary unions of Thomason closed sets --- are the so-called \emph{Thomason sets} of $X$. One can readily check that $X^{\vee\vee}=X$. This construction is far more transparent in the category of bounded distributive lattices. Under Stone duality, taking the Hochster dual amounts to replacing the order of a bounded distributive lattice $(A,\le)$ with its opposite order $(A,\le_{\mathrm{op}})$.
\end{Rem}

\begin{Exa}\label{exa:balmer-spectrum}
	Let $\cat K$ be an essentially small tensor-triangulated category. The collection of principal radical thick tensor ideals $\sqrt{\langle a \rangle}$ forms a bounded distributive lattice $\PRad(\cat K)$ under inclusion. The join and meet are given by the direct sum and tensor product of generating objects, respectively. As explained in \cite{KockPitsch17}, the Balmer spectrum of $\cat K$ is the Hochster dual of the associated spectral space: $\Spc(\cat K) \cong \Spec(\PRad(\cat K))^{\vee}$.
\end{Exa}

\begin{Rem}
	We have $\frakp \rightsquigarrow \frakq$ in $X$ if and only if $\frakq \rightsquigarrow \frakp$ in $X^{\vee}$. In other words, $X$ and $X^{\vee}$ have the opposite specialization orders. In particular, the closed points of~$X$ are the generic points of $X^{\vee}$ and vice versa.
\end{Rem}

\begin{Rem}
	The topology of a spectral space $X$ is completely determined by its specialization order together with its associated \emph{constructible topology}; see \cite[Section~1.5]{DickmannSchwartzTressl19} for details. We suffice ourselves with recalling that the so-called \emph{constructible sets} are the subsets which are both closed and open for the constructible topology, and the quasi-compact open sets are precisely the constructible sets which are generalization closed. (Equivalently, the Thomason closed sets are precisely the constructible sets which are specialization closed.)
\end{Rem}

\begin{Exa}\label{exa:totally-ordered}
	Let $(A,\le)$ be a bounded totally ordered set. Then $(A,\le)$ is a bounded distributive lattice with $a \vee b = \max\{a,b\}$ and $a \wedge b = \min\{a,b\}$. Its spectrum $\Spec(A)$ is described in detail in  \cite[Section~3.6]{DickmannSchwartzTressl19}. The prime ideals coincide with the proper nonempty down-sets. For each $a \in A$ we have an associated quasi-compact open set $U(a) = \SET{P \in \Spec(L)}{ P \cap a^{\uparrow} = \emptyset}$ and Thomason closed set $V(a) = \SET{P \in \Spec(L)}{ a^{\downarrow} \subseteq P}$. These are all the quasi-compact open sets and Thomason closed sets. In particular, the constructible sets are the finite unions of sets of the form $V(a) \cap U(b)$.
\end{Exa}

In our applications, we will naturally come across lattices which have a least element but not a greatest element.

\begin{Cons}\label{cons:top}
	Let $A$ be a lattice which has a least element 0. We define $A_+ \coloneqq A \sqcup \{\infty\}$ and extend the partial order in the unique way so that $\infty$ is the greatest element. Then $A_+$ is a bounded lattice. Moreover, if $A$ is distributive then $A_+$ will be a bounded distributive lattice. This construction provides a left adjoint to the forgetful functor $\BDLat \to \DLat_0$ where $\DLat_0$ denotes the category of distributive lattices which have least elements $0$ and lattice morphisms which preserve them.
\end{Cons}

\begin{Not}
	When notationally convenient, we will write $\Specplus(A)\coloneqq \Spec(A_+)$.
\end{Not}

\begin{Exa}
	The lattice $\bbN\coloneqq \bbN_{\ge 0}$ has no greatest element but we can consider $\Specplus(\bbN)$. According to \cref{exa:totally-ordered}, the prime ideals are 
	\[
		\Specplus(\bbN) = [0] \rightsquigarrow [0,1] \rightsquigarrow [0,2] \rightsquigarrow [0,3] \rightsquigarrow \cdots \rightsquigarrow \bbN
	\]
	with $\bbN$ the unique closed point and $[0]$ the unique generic point. Note that the prime ideal $\bbN$ is not principal, but all other prime ideals are principal and all elements of~$\bbN$ are prime. The homotopy theorist may recognize that it is homeomorphic to the Balmer spectrum of the $p$-local stable homotopy category: $\Specplus(\bbN) \cong \Spc(\SH_{(p)}^c)$.
\end{Exa}

We now turn to monomorphisms and epimorphisms. 

\begin{Rem}\label{rem:congruence-relation}
	An equivalence relation $\sim$ on a lattice $(A,\le)$ is a \emph{congruence relation} if $a_0 \sim b_0$ and $a_1 \sim b_1$ implies $a_0 \vee a_1 \sim b_0 \vee b_1$ and $a_0 \wedge a_1 \sim b_0\wedge b_1$. The set of equivalence classes $A/{\sim}$ then inherits the structure of a lattice and the quotient map $A \to A/{\sim}$ is a surjective lattice morphism. Moreover, every surjective lattice morphism arises in this way from a congruence relation. See \cite[pp.~20--22]{Graetzer78} for further details. We will call the lattice~$A/{\sim}$ a \emph{quotient lattice} of $A$. It is immediate that if $A$ is (bounded) distributive then $A/{\sim}$ is also (bounded) distributive.
\end{Rem}

\begin{Rem}\label{rem:BDLat-surjective}
	Since $\BDLat$ is an algebraic category (in the sense that it is monadic over $\Set$) surjective morphisms are the same thing as regular epimorphisms (i.e.~coequalizers) and these are the same as the extremal epimorphisms. Thus if $A\to B$ is a surjective morphism in $\BDLat$ then the corresponding spectral map $\Spec(B) \to \Spec(A)$ is a regular monomorphism (equivalently, an extremal monomorphism) in~$\Spectral$ and hence is a topological embedding; cf.~\cite[Theorem~5.4.3]{DickmannSchwartzTressl19}. In other words, a quotient $A\to A/{\sim}$ of a bounded distributive lattice induces a topological embedding $\Spec(A/{\sim})\hookrightarrow \Spec(A)$.
\end{Rem}

\begin{Rem}
	On the other hand, $A \to B$ is an epimorphism in $\BDLat$ if and only if $\Spec(B)\to\Spec(A)$ is a monomorphism of spectral spaces which is the same thing as being an injective spectral map. Finally, $A \to B$ is a monomorphism in $\BDLat$ if and only if $A\to B$ is injective if and only if $\Spec(B)\to\Spec(A)$ is surjective if and only if $\Spec(B)\to\Spec(A)$ is an epimorphism of spectral spaces.
\end{Rem}

\begin{Exa}\label{exa:BDLat-injective}
	If $A$ is a distributive lattice with a least element $0$ and $B \subset A$ is a sublattice containing $0$ then $B_+ \hookrightarrow A_+$ is a monomorphism of bounded distributive lattices and $\Specplus(A)\to\Specplus(B)$ is a surjective spectral map.
\end{Exa}

The following observation will be used several times in the next section.

\begin{Rem}\label{rem:pre-order-induce}
	Let $(X,\le)$ be a pre-ordered set, i.e.~a set equipped with a reflexive and transitive relation. There is an induced equivalence relation on $X$ defined by $a \sim b$ if $a \le b$ and $b \le a$. Moreover, the set of equivalence classes $X/{\sim}$ inherits a partial order given by $[a]\le [b]$ if $a \le b$. In this way, every pre-ordered set $(X,\le)$ gives rise to a partially ordered set $(X/{\sim},\le)$. Indeed, this construction provides a left adjoint to the fully faithful inclusion of partially ordered sets in the category of pre-ordered sets.
\end{Rem}

\section{Spectral spaces of asymptotic sequences}\label{sec:asymptotic}

We now introduce some distributive lattices arising from the asymptotic growth behaviour of sequences of natural numbers. 

\begin{Not}
	Let $\bbN \coloneqq \bbN_{\ge 0}$ denote the set of natural numbers.
\end{Not}

\begin{Def}
	A function $f:\bbN \to \bbN$ is \emph{monotonic} if $f(n) \le f(n+1)$ for all $n \in \bbN$.
\end{Def}

\begin{Ter}
	A \emph{monotonic sequence} is a monotonic function $\bbN \to \bbN$.
\end{Ter}

\begin{Exa}\label{exa:to-mono}
	An arbitrary function $f:\bbN \to \bbN$ gives rise to a monotonic sequence~$\mono{f}$ given by
	\[
		\mono{f}(n) \coloneqq \max_{0 \le i \le n}f(i).
	\]
	Note that $f$ is monotonic if and only if $f=\mono{f}$.
\end{Exa}

\begin{Rem}\label{rem:pointwise-lattice}
	The collection of all monotonic sequences $\MonSeq$ is a distributive lattice with the pointwise ordering inherited from the target lattice $\bbN$ (\cref{exa:N}). It has a least element, the constant function $\underline{0}$, but does not have a greatest element. The join and meet are given pointwise.
\end{Rem}

\begin{Def}\label{def:asymptotic-ordering}
	We define a relation on the set of monotonic sequences by writing $f \le g$ if there exists a constant $A$ and an $n_0 \in \bbN$ such that $f(n) \le Ag(n)$ for all $n \ge n_0$. Note that this is saying $f=O(g)$ in ``big-Oh'' notation \cite[\S 89]{Hardy52}. We say that $f$ is \emph{asymptotically bounded} by $g$.
\end{Def}

\begin{Rem}
	Being reflexive and transitive, $\le$ is a pre-order on the set of monotonic sequences. It thus induces an equivalence relation defined by $f \sim g$ if $f \le g$ and $g \le f$ (cf.~\cref{rem:pre-order-induce}). In this case, we say $f$ and $g$ are \emph{asymptotically equivalent}.
\end{Rem}

\begin{Def}
	Let $\AsymSeq\coloneqq \MonSeq/{\sim}$ denote the set of asymptotic equivalence classes of monotonic sequences. It inherits a well-defined partial order given by $[f] \le [g]$ if $f \le g$.
\end{Def}

\begin{Ter}\label{ter:asymptotic}
	The elements of $\AsymSeq$ are the \emph{asymptotic monotonic sequences}. Although an asymptotic monotonic sequence is really an asymptotic equivalence class of monotonic sequences, we will often abuse notation and simple write $f$ for the equivalence class $[f] \in \AsymSeq$. In such circumstances, the constructions and definitions given do not depend on the choice of representative.
\end{Ter}

\begin{Prop}
	The asymptotic monotonic sequences $(\AsymSeq,\le)$ form a distributive lattice which has a least element given by the constant sequence $\underline{0}$.
\end{Prop}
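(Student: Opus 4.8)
The plan is to deduce everything from the fact that the asymptotic pre-order $\le$ is compatible with the pointwise lattice structure on $\MonSeq$ described in \cref{rem:pointwise-lattice}. Concretely, I would show that the asymptotic equivalence relation $\sim$ is a \emph{congruence relation} on the distributive lattice $\MonSeq$; then \cref{rem:congruence-relation} immediately yields that $\AsymSeq = \MonSeq/{\sim}$ is a distributive lattice, with join and meet computed representative-wise by the pointwise maximum and minimum, i.e.\ $[f]\vee[g]=[f\vee g]$ and $[f]\wedge[g]=[f\wedge g]$. The least element is then the class $[\underline{0}]$ of the constant sequence, since $\underline{0}(n)=0\le 1\cdot f(n)$ for every monotonic $f$ and every $n$, so $\underline{0}\le f$.

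The computational heart is the following compatibility: if $f_0\le g_0$ and $f_1\le g_1$, then $f_0\vee f_1\le g_0\vee g_1$ and $f_0\wedge f_1\le g_0\wedge g_1$, where $\vee$ and $\wedge$ are the pointwise $\max$ and $\min$. Choose constants $A_0,A_1\ge 1$ and an index $n_0$ beyond which $f_i(n)\le A_i g_i(n)$. For the join, $\max(f_0(n),f_1(n))\le\max(A_0 g_0(n),A_1 g_1(n))\le\max(A_0,A_1)\max(g_0(n),g_1(n))$ for $n\ge n_0$. For the meet, monotonicity of $\min$ in each coordinate gives $\min(f_0(n),f_1(n))\le\min(A_0 g_0(n),A_1 g_1(n))$, and then one uses the pointwise inequality $\min(A_0 s,A_1 t)\le\max(A_0,A_1)\min(s,t)$ valid for all $s,t\in\bbN$, checked by cases according to whether $s\le t$ or $t\le s$; this gives $\min(f_0(n),f_1(n))\le\max(A_0,A_1)\min(g_0(n),g_1(n))$ for $n\ge n_0$. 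Applying this compatibility twice, with the roles of the $f_i$ and $g_i$ exchanged, shows that $f_0\sim g_0$ and $f_1\sim g_1$ imply $f_0\vee f_1\sim g_0\vee g_1$ and $f_0\wedge f_1\sim g_0\wedge g_1$, so $\sim$ is a congruence.

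Finally I would record that the lattice order produced this way coincides with the asymptotic order already fixed on $\AsymSeq$: $[f]\le[g]$ in the quotient lattice means $[f]\vee[g]=[g]$, i.e.\ $f\vee g\sim g$, and since $g\le f\vee g$ is automatic this amounts to $f\vee g\le g$, which — after enlarging the relevant constant to be $\ge 1$ — is exactly $f\le g$. Distributivity of $\AsymSeq$ is inherited from $\MonSeq$, since each operation on classes is computed by the corresponding pointwise operation and the latter is distributive by \cref{rem:pointwise-lattice}. There is no genuine obstacle here; the only step that deserves an explicit line of justification is the pointwise $\min$-inequality used in the meet case.
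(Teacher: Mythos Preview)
Your proposal is correct and follows the same approach as the paper: show that $\sim$ is a congruence relation for the pointwise lattice structure on $\MonSeq$, then invoke \cref{rem:congruence-relation} to conclude that $\AsymSeq$ is a distributive lattice. You supply more detail than the paper (which simply asserts that ``one readily checks'' the congruence condition), and you also explicitly verify that the resulting lattice order agrees with the asymptotic order already placed on $\AsymSeq$ --- a point the paper leaves implicit.
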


\begin{proof}
	One readily checks that the equivalence relation $\sim$ on $\MonSeq$ is a congruence relation (\cref{rem:congruence-relation}) for the pointwise lattice structure of \cref{rem:pointwise-lattice}. In other words, the map $\MonSeq \to \AsymSeq$ is a surjective lattice homomorphism. Hence, the distributivity of $\MonSeq$ implies the distributivity of $\AsymSeq$.
\end{proof}

\begin{Rem}
	We can thus consider the bounded distributive lattice $\AsymSeq_+$ obtained by adjoining a top element (\cref{cons:top}). There is an associated spectral space $\Specplus(\AsymSeq) \coloneqq \Spec(\AsymSeq_+)$ by Stone duality.
\end{Rem}

\begin{Not}
	We write $\underline{n}$ for the constant monotonic sequence with value $n$.
\end{Not}

\begin{Exa}
	A monotonic sequence $f$ satisfies $[f]=[\underline{0}]$ in $\AsymSeq$ if and only if $f=\underline{0}$ pointwise. As already mentioned, $[\underline{0}]$ is the bottom element of $\AsymSeq$.
\end{Exa}

\begin{Exa}\label{exa:bounded-seq}
	A monotonic sequence $f$ is a \emph{bounded sequence} if there is a constant~$A$ such that $f(n) \le A$ for all $n \in \bbN$. All nonzero bounded sequences are asymptotically equivalent and provide a single asymptotic monotonic sequence $[\underline{1}] \in \AsymSeq$. In fact, the class $[\underline{1}]$ of nonzero bounded sequences is bounded by all nonzero asymptotic monotonic sequences: $[\underline{1}] \le [f]$ for all $[f] \neq 0$.
\end{Exa}

\begin{Exa}\label{exa:polynomial}
	Let $f(n)=\sum_{i=0}^k a_i n^i$ and $g(n)=\sum_{i=0}^l b_i n^i$ be nonzero polynomials with nonnegative integer coefficients. Then $f \le g$ if and only if $\deg(f) \le \deg(g)$. In particular, $f$ and $g$ are asymptotically equivalent if and only if $\deg(f)=\deg(g)$. For each $d \ge 1$ we have the unbounded asymptotic monotonic sequence $[n\mapsto n^d] \in \AsymSeq$.
\end{Exa}

\begin{Exa}
	The monotonic sequences $f(n) = 2^{2^{2n+(-1)^n}}$ and $g(n)=2^{2^{2n}}$ are not comparable, meaning $f\not\leq g$ and $g \not\leq f$. This readily follows from the observation that
	\[
		f(n) = \begin{cases}
			\sqrt{g(n)} & \text{if $n$ is odd} \\
			(g(n))^2 & \text{if $n$ is even}.
		\end{cases}
	\]
	This example shows that the lattice $\AsymSeq$ is not totally ordered.
\end{Exa}

Our next goal is to introduce certain quotients of the lattice of asymptotic monotonic sequences.

\begin{Def}
	Let $f$ be a monotonic sequence. We define monotonic sequences $\sigma f$ and $\mu f$ by setting $(\sigma f)(n) \coloneqq f(n+1)$ and $(\mu f)(n)\coloneqq f(2n)$ for each $n \in \bbN$.
\end{Def}

\begin{Rem}\label{rem:tame-lattice-hom}
	It is immediate from the definitions that $\sigma$ and $\mu$ both define lattice endomorphisms of the lattice $\MonSeq$ of monotonic sequences. It is also immediate from the definitions that $[f] \mapsto [\sigma f]$ and $[f] \mapsto [\mu f]$ are well-defined lattice endomorphisms of the lattice $\AsymSeq$ of asymptotic monotonic sequences.
\end{Rem}

\begin{Def}\label{def:sigma-mu-preorders}
	We define two pre-orders $\le_{\sigma}$ and $\le_{\mu}$ on $\MonSeq$ as follows:
	\begin{enumerate}
		\item $f \le_{\sigma} g$ if $f \le \sigmakg$ for some positive integer $k$, and
		\item $f \le_{\mu} g$ if $f \le \mu^k g$ for some positive integer $k$.
	\end{enumerate}
	We also consider the induced equivalence relations (\cref{rem:pre-order-induce}) defined by:
	\begin{enumerate}
		\item $f$ is \emph{$\sigma$-equivalent} to $g$, denoted $f \sim_\sigma g$, if $f \le_{\sigma} g$ and $g \le_\sigma f$, and
		\item $f$ is \emph{$\mu$-equivalent} to $g$, denoted $f \sim_{\mu} g$, if $f \le_{\mu} g$ and $g \le_\mu f$.
	\end{enumerate}
\end{Def}

\begin{Rem}\label{rem:mu-sigma}
	Note that $f \le \sigma f \le \mu f$. Hence $f \le g \Rightarrow f \le_{\sigma} g\Rightarrow f \le_\mu g$ and $f \sim g \Rightarrow f \sim_\sigma g \Rightarrow f \sim_\mu g$. These observations imply that the relations $f \sim_{\sigma} g$ and $f \sim_{\mu} g$ only depend on the asymptotic equivalence classes of $f$ and $g$, and similarly for the relations $f \le_{\sigma} g$ and $f \le_{\mu} g$. In other words, we have well-defined pre-orders on $\AsymSeq$ given by $[f] \le_\sigma [g]$ if $f \le_\sigma g$ and $[f] \le_\mu [g]$ if $f \le_\mu g$. Moreover, the equivalence relations $\sim_\sigma$ and $\sim_\mu$ on $\MonSeq$ induce equivalence relations on $\AsymSeq$ which coincide with those induced by these pre-orders.
\end{Rem}

\begin{Not}\label{not:sigma-mu}
	We write 
	\[
		\AsymSeq/\sigma \coloneqq \AsymSeq/{\sim}_\sigma = \MonSeq/{\sim}_\sigma
		\quad\text{ and }\quad
		\AsymSeq/\mu \coloneqq \AsymSeq/{\sim}_\mu = \MonSeq/{\sim}_\mu
	\]
	for the set of $\sigma$-equivalence classes (respectively, $\mu$-equivalence classes) of asymptotic sequences.
\end{Not}

\begin{Rem}
	It readily follows from \cref{rem:tame-lattice-hom} that the equivalence relations $\sim_\sigma$ and $\sim_\mu$ are congruence relations on $\MonSeq$ with respect to the pointwise ordering (\cref{rem:pointwise-lattice}). This implies that each of the surjective maps in
	\[
		\MonSeq\twoheadrightarrow \AsymSeq \twoheadrightarrow \AsymSeq/\sigma \twoheadrightarrow \AsymSeq/\mu
	\]
	are lattice morphisms. In particular, each of these lattices is a distributive lattice with least element (\cref{rem:congruence-relation}).
\end{Rem}

\begin{Rem}
	From this point on, we will mostly be working in $\AsymSeq$ rather than in~$\MonSeq$ bearing in mind \cref{ter:asymptotic}. We emphasize that $f\le g$ refers to the asymptotic ordering of \cref{def:asymptotic-ordering}. If we wish to indicate that two monotonic sequences satisfy $f(n) \le g(n)$ for all $n \in \bbN$ we will say that ``$f \le g$ pointwise''. As explained above, we have well-defined pre-orders $\le_{\sigma}$ and $\le_\mu$ on $\AsymSeq$ and the quotient lattices $\AsymSeq/\sigma$ and $\AsymSeq/\mu$ coincide with the induced partially ordered sets as in \cref{rem:pre-order-induce}.
\end{Rem}

\begin{Exa}
	The monotonic sequences $f(n)= 2^n$ and $g(n)=4^n$ are $\mu$-equivalent but not $\sigma$-equivalent.
\end{Exa}

\begin{Exa}
	The monotonic sequences $f(n)=n!$ and $g(n)=(n+1)!$ are \mbox{$\sigma$-equivalent} but not asymptotically equivalent.
\end{Exa}

\begin{Exa}
	For two polynomials $f$ and $g$ as in \cref{exa:polynomial}, we have $f \le_\mu g$ if and only if $f \le_\sigma g$ if and only if $f \le g$ if and only if $\deg(f)\le \deg(g)$. This reflects a special property enjoyed by polynomials, which we now isolate and study.
\end{Exa}

\begin{Def}\label{def:sigma-mu-stable}
	An asymptotic monotonic sequence $f$ is \emph{$\sigma$-stable} if $\sigma f \le f$ (equivalently, if $f \sim \sigma f$) and is \emph{$\mu$-stable} if $\mu f \le f$ (equivalently, if $f \sim \mu f$). Every $\mu$-stable sequence is $\sigma$-stable.
\end{Def}

\noindent\begin{minipage}{\linewidth}
\begin{Exas}\label{exa:stable}\hspace{1em}
	\begin{enumerate}
		\item The zero sequence $[\underline{0}]$ and the bounded sequence $[\underline{1}]$ are $\mu$-stable.
		\item The polynomial $f(n)= n^d$ is $\mu$-stable for each $d \ge 0$.
		\item The exponential $f(n)= 2^n$ is $\sigma$-stable but not $\mu$-stable.
		\item The factorial $f(n)= n!$ is neither $\mu$-stable nor $\sigma$-stable.
	\end{enumerate}
\end{Exas}
\end{minipage}

\smallskip
\begin{Rem}
	Recall that $\sigma$ and $\mu$ induce endomorphisms of the lattice $\AsymSeq$ by $\sigma[f] = [\sigma f]$ and $\mu[f]=[\mu f]$. The classes of $\sigma$-stable and $\mu$-stable sequences are the fixed points of these endomorphisms. Thus, we write $\AsymSeq^\sigma \subset \AsymSeq$ and $\AsymSeq^\mu \subseteq \AsymSeq$ for the subsets of $\sigma$-stable and $\mu$-stable sequences, respectively. It is immediate that $\AsymSeq^\mu \subseteq \AsymSeq^\sigma \subseteq \AsymSeq$ are sublattices which contain the bottom element.
\end{Rem}

\begin{Lem}\label{lem:sigma-auto}
	The endomorphism $\sigma:\AsymSeq\to\AsymSeq$ is an automorphism.
\end{Lem}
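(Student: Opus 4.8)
The plan is to show that $\sigma\colon\AsymSeq\to\AsymSeq$ is a bijective morphism of lattices; since a bijective lattice morphism is automatically an isomorphism (its set-theoretic inverse is again a lattice morphism, as recorded in \cref{sec:lattices}), this suffices, and in fact the argument will simultaneously identify the inverse as the ``shift-right'' operator.

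First I would check injectivity, which I expect to be essentially free. The key observation is that the defining condition for $\sigma f\le\sigma g$ --- namely $f(n+1)\le A\,g(n+1)$ for all $n\ge n_0$ --- is literally the condition $f(m)\le A\,g(m)$ for all $m\ge n_0+1$, so $\sigma f\le\sigma g$ holds in $\MonSeq$ if and only if $f\le g$ does. Hence $\sigma$ is not only order-preserving on $\AsymSeq$ (as already noted in \cref{rem:tame-lattice-hom}) but order-reflecting, and in particular $[\sigma f]=[\sigma g]$ forces $f\sim g$, i.e.\ $[f]=[g]$.

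Next I would prove surjectivity by an explicit construction. Given a monotonic sequence $g$, set $f(0)\coloneqq g(0)$ and $f(n)\coloneqq g(n-1)$ for $n\ge 1$. Then $f$ is monotonic: $f(0)=g(0)=f(1)$, and for $n\ge 1$ we have $f(n)=g(n-1)\le g(n)=f(n+1)$ since $g$ is monotonic. Moreover $(\sigma f)(n)=f(n+1)=g(n)$ for every $n\in\bbN$, so in fact $\sigma f=g$ on the nose, and a fortiori $[\sigma f]=[g]$ in $\AsymSeq$. Combining the two parts, $\sigma$ is a bijective lattice endomorphism of $\AsymSeq$, hence an automorphism.

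I do not anticipate a genuine obstacle here; the only point worth flagging is that $\sigma$ does \emph{not} admit a literal pointwise inverse on $\MonSeq$, since $f(0)$ cannot be recovered from $\sigma f$. Thus the argument essentially uses that we have passed to asymptotic classes: the shift-right operator $g\mapsto f$ above is a one-sided inverse to $\sigma$ on the nose and a two-sided inverse only after collapsing the discrepancy at the single initial term, which is invisible to the asymptotic equivalence relation.
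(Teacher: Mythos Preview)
The proposal is correct and follows essentially the same approach as the paper: both show that $\sigma f\le\sigma g\Leftrightarrow f\le g$ to get injectivity, and both exhibit an explicit shift-right preimage for surjectivity (the paper sets the initial value to $0$ rather than $g(0)$, which is an immaterial difference).
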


\begin{proof}
	We have already observed in \cref{rem:mu-sigma} that $f \le g$ implies $\sigma f \le \sigma g$. The converse is also immediate from the definitions. Thus, $\sigma f \sim \sigma g$ implies $f \sim g$. In other words, $\sigma:\AsymSeq \to \AsymSeq$ is injective. On the other hand, every monotonic sequence $f$ is of the form $\sigma g$ where $g$ is the monotonic sequence defined by $g(0) \coloneqq 0$ and $g(n) \coloneqq f(n-1)$ for $n \ge 1$.
\end{proof}

\begin{Rem}\label{rem:mu-not-injective}
	In contrast, the endomorphism $\mu:\AsymSeq \to \AsymSeq$ is not injective. For example, if $f$ is the monotonic sequence defined by
	\[
		f(n) \coloneqq \begin{cases}
		n! & \text{if $n$ is even}\\
		(n+1)! & \text{if $n$ is odd}
		\end{cases}
	\]
	and $g$ is defined by $g(n)\coloneqq n!$ then $f \not\le g$ even though $\mu f=\mu g$ pointwise.
\end{Rem}

\begin{Rem}\label{rem:mu-to-sigma}
	On the other hand, one can readily check from the definitions that $\mu f \le \mu g$ implies that $f \le \sigma g$. The counterexample in \cref{rem:mu-not-injective} (in which $f \not \le g$) is related to the fact that $g(n)\coloneqq n!$ is not $\sigma$-stable. More generally, one can verify that  $\mu^k f \le \mu^k g$ implies $f \le \sigma^{2^k-1} g$ using the fact that $2^k\ceil{n/2^k} \le n+2^k-1$.
\end{Rem}

\begin{Lem}\label{lem:tame-stable}
	Let $f,g \in \AsymSeq$ be two asymptotic monotonic sequences and let $\tame \in \{\sigma,\mu\}$. If either $f$ or $g$ is $\tame$-stable then $f \le g$ if and only if $f \le_\tame g$.
\end{Lem}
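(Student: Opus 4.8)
The plan is to peel off the trivial implication and then prove the converse by a case analysis on which sequence is $\tame$-stable. The direction $f \le g \Rightarrow f \le_\tame g$ requires no hypothesis whatsoever --- it is exactly \cref{rem:mu-sigma} --- so the real content is the converse $f \le_\tame g \Rightarrow f \le g$. Throughout I would use freely that, for $\tame \in \{\sigma,\mu\}$, the operator $\tame$ is an order-preserving endomorphism of $\AsymSeq$ (\cref{rem:tame-lattice-hom}); in particular $\tame h \le h$ forces $\tame^j h \le h$ for all $j \ge 0$ by induction.

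So suppose $f \le_\tame g$, say $f \le \tame^k g$ with $k \ge 1$. If $g$ is $\tame$-stable, we are done quickly: iterating $\tame g \le g$ gives $\tame^k g \le g$, and chaining with $f \le \tame^k g$ gives $f \le g$. If instead $f$ is $\tame$-stable, I would iterate $\tame f \le f$ to get $\tame^k f \le f$ and combine it with $f \le \tame^k g$ to obtain $\tame^k f \le \tame^k g$. It then remains to ``cancel'' $\tame^k$, and here the two cases genuinely differ.

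For $\tame = \sigma$ there is nothing to do: $\sigma$ is an automorphism of $\AsymSeq$ (\cref{lem:sigma-auto}), hence order-reflecting, so $\sigma^k f \le \sigma^k g$ immediately yields $f \le g$. For $\tame = \mu$ this cancellation is the genuine obstacle, since $\mu$ is neither injective nor order-reflecting on $\AsymSeq$ (cf.~\cref{rem:mu-not-injective}), so one cannot simply invert it. The fix is to route through $\sigma$: the elementary estimate $2^k\ceil{n/2^k} \le n+2^k-1$ --- already packaged as \cref{rem:mu-to-sigma} --- turns $\mu^k f \le \mu^k g$ into $f \le \sigma^{2^k-1}g$, so $f \le_\sigma g$. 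Finally, since every $\mu$-stable sequence is $\sigma$-stable (\cref{def:sigma-mu-stable}), $f$ is $\sigma$-stable, and the already-established $\sigma$-case applied to $f \le_\sigma g$ gives $f \le g$. The upshot: the only subtle point is the $\mu$-cancellation, and it is handled not by trying to invert $\mu$ but by bootstrapping through the weaker $\sigma$-ordering.
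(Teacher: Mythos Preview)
Your proof is correct and follows essentially the same approach as the paper: the trivial direction via \cref{rem:mu-sigma}, the easy case when $g$ is $\tame$-stable, and for $f$ $\tame$-stable the reduction to $\tame^k f \le \tame^k g$, followed by cancellation of $\sigma^k$ via \cref{lem:sigma-auto} and the bootstrap through \cref{rem:mu-to-sigma} in the $\mu$-case. Your final step---phrasing the $\mu$-case as a reduction to the already-proved $\sigma$-case---is a slightly cleaner packaging of the same argument the paper gives explicitly.
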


\begin{proof}
	The $(\Rightarrow)$ direction always holds by \cref{rem:mu-sigma}. Conversely, suppose $f \le_\tame g$. By definition, this means $f \le \tame^k g$ for some $k \ge 1$. If $g$ is $\tame$-stable then $\tame g \le g$ so that $\tame^k g \le g$ by induction and hence $f \le g$. On the other hand, if $f$ is $\tame$-stable then $\tame^k f \le f$ so that $\tame^k f \le \tame^k g$. If $\tame =\sigma$ this implies $f \le g$ by \cref{lem:sigma-auto}. If $\tame = \mu$ then it implies $f \le \sigma^{2^k-1} g$ by \cref{rem:mu-to-sigma}. However, the $\mu$-stable sequence $f$ is in particular $\sigma$-stable so $\sigma^{2^k-1} f \le f$, hence $\sigma^{2^k-1} f \le \sigma^{2^k-1} g$ and we can invoke \cref{lem:sigma-auto} again.
\end{proof}

\begin{Rem}\label{rem:sublattices}
	It follows from \cref{lem:tame-stable} that the composites $\AsymSeq^{\sigma} \hookrightarrow \AsymSeq \twoheadrightarrow \AsymSeq/\sigma$ and $\AsymSeq^{\mu} \hookrightarrow \AsymSeq \twoheadrightarrow \AsymSeq/\mu$ are injective. We may thus regard the lattices $\AsymSeq^\sigma$ and $\AsymSeq^\mu$ as sublattices of $\AsymSeq/\sigma$ and $\AsymSeq/\mu$, respectively.
\end{Rem}

\begin{Rem}
	We thus have a diagram of spectral spaces
	\[\begin{tikzcd}
		\Specplus(\AsymSeq/\mu) \ar[r,hook] \ar[d,two heads] &  \Specplus(\AsymSeq/\sigma) \ar[r,hook]\ar[d,two heads] &\Specplus(\AsymSeq)\\
		\Specplus(\AsymSeq^\mu) &\Specplus(\AsymSeq^\sigma)\ar[l,two heads]&
	\end{tikzcd}\]
	where the hooked arrows are embeddings and the double-headed arrows are surjective; cf.~\cref{rem:BDLat-surjective} and \cref{exa:BDLat-injective}. We will return to these spaces in \cref{sec:the-spectrum}.
\end{Rem}

\section{Pseudo-coherent complexes}\label{sec:pseudo}

We now turn to commutative algebra and introduce the derived category of pseudo-coherent complexes.

\begin{Hyp}
	Throughout this section, $R$ denotes a commutative noetherian ring.
\end{Hyp}

\begin{Conv}
	We will use homological indexing for complexes. Thus $\Der_+(R) = \SET{X \in \Der(R)}{H_i(X) = 0 \text{ for } i \ll 0}$ is the full replete subcategory of complexes which are homologically bounded on the right. The suspension $\Sigma X = X[1]$ shifts complexes to the left: $(X[1])_n = X_{n-1}$.
\end{Conv}

\begin{Rem}
	A complex of $R$-modules is said to be a \emph{perfect complex} if it is quasi-isomorphic to a bounded complex of finitely generated projective modules. The unbounded derived category $\Der(R)$ is a rigidly-compactly generated tensor-triangulated (``tt'') category whose compact (=dualizable) objects are precisely the perfect complexes. In particular, the perfect complexes $\Dperf(R) = \Der(R)^c=\Der(R)^d$ form an essentially small tensor-triangulated subcategory of~$\Der(R)$.
\end{Rem}

\begin{Rem}
	The notion of a \emph{pseudo-coherent complex} was introduced by Illusie in \cite{SGA6}. The simplest characterization is that a complex $X \in \Der(R)$ is pseudo-coherent if it is quasi-isomorphic to a bounded below complex of finitely generated projective modules. We will write $\Dps(R) \subset \Der(R)$ for the full replete subcategory of pseudo-coherent complexes. It is an essentially small tensor-triangulated subcategory; see \cite[Section~064N]{stacks-project}. Since our ring $R$ is noetherian, a complex~$X$ is pseudo-coherent if and only if it is quasi-isomorphic to a bounded below complex of finitely generated modules and this is the case, moreover, if and only if it is homologically bounded on the right and each homology module is finitely generated:
	\[
		\Dps(R) = \Dpfg(R).
	\]
	See \cite[\href{https://stacks.math.columbia.edu/tag/0FDB}{Proposition~0FDB}]{stacks-project}, for instance. Note that a module is a pseudo-coherent complex if and only if it is finitely generated.
\end{Rem}

\begin{Rem}
	We cannot consider the Balmer spectrum of the bounded derived category $\Der_b(R\text{-mod})\subset \Dps(R)$ because it is not always a tensor category:
\end{Rem}

\begin{Prop}
	The following are equivalent:
	\begin{enumerate}
		\item $\Der_b(R\text{-}\mathrm{mod})$ is a tensor subcategory of $\Der(R)$;
		\item The inclusion $\Dperf(R) = \Der_b(R\text{-}\mathrm{proj}) \hookrightarrow \Der_b(R\text{-}\mathrm{mod})$ is an equivalence;
		\item $R$ is regular.
	\end{enumerate}
\end{Prop}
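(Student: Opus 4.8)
The plan is to prove the chain of equivalences $(1)\Leftrightarrow(2)\Leftrightarrow(3)$, where the real content is the equivalence with regularity; the implication $(2)\Rightarrow(1)$ is essentially trivial since $\Dperf(R)$ is a tensor subcategory of $\Der(R)$ by definition. I would begin by treating $(2)\Leftrightarrow(3)$, which is the classical statement that every finitely generated module has finite projective dimension precisely when $R$ is regular. Concretely: if $R$ is regular, then by the Auslander--Buchsbaum--Serre theorem every finitely generated $R$-module $M$ has $\mathrm{pd}_R(M)<\infty$, so $M$ — hence any bounded complex of finitely generated modules — is quasi-isomorphic to a bounded complex of finitely generated projectives, giving $\Der_b(R\text{-}\mathrm{mod})\subseteq \Dperf(R)$; the reverse inclusion is automatic, so the inclusion in $(2)$ is an equivalence. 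Conversely, if $(2)$ holds then for each maximal ideal $\frakm$ the module $R/\frakm$ lies in $\Der_b(R\text{-}\mathrm{proj})$, so it has finite projective dimension, and by Serre's characterization this forces $R_\frakm$ to be regular for all $\frakm$, i.e.\ $R$ is regular. (If one prefers to allow non-local $R$, one invokes that finite global dimension is equivalent to regularity for noetherian rings.)

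Next I would establish $(1)\Rightarrow(2)$, which is the only step requiring a genuine argument beyond citing commutative algebra. Suppose $\Der_b(R\text{-}\mathrm{mod})$ is closed under $\otimes^{\mathbb L}_R$ inside $\Der(R)$. The key observation is to feed residue fields into the tensor product: for a maximal ideal $\frakm$, the module $k\coloneqq R/\frakm$ is an object of $\Der_b(R\text{-}\mathrm{mod})$, so by hypothesis $k\otimes^{\mathbb L}_R k$ must again lie in $\Der_b(R\text{-}\mathrm{mod})$, i.e.\ it must be homologically bounded. But $H_i(k\otimes^{\mathbb L}_R k)=\Tor^R_i(k,k)$, and a standard computation (e.g.\ via the Koszul complex when $R$ is local noetherian, after localizing at $\frakm$ — noting $\Tor$ localizes) shows $\Tor^{R_\frakm}_i(k,k)\neq 0$ for all $i$ in the range $0\le i\le \dim_k(\frakm/\frakm^2)$, and indeed for arbitrarily large $i$ unless $R_\frakm$ is regular. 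More precisely, $\Tor^{R_\frakm}_\bullet(k,k)$ is bounded if and only if $R_\frakm$ has finite global dimension if and only if $R_\frakm$ is regular. Hence boundedness of $k\otimes^{\mathbb L}_R k$ forces $R_\frakm$ regular for every maximal $\frakm$, so $R$ is regular; this gives $(1)\Rightarrow(3)$, and combined with $(3)\Rightarrow(2)$ above and the trivial $(2)\Rightarrow(1)$, the cycle closes.

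The main obstacle — really the only place one must be slightly careful — is the unboundedness of $\Tor^R_\bullet(k,k)$ for a non-regular local ring: one needs the fact that a local noetherian ring with $\mathrm{Tor}^R_i(k,k)=0$ for some $i>0$ is regular, which is precisely (the hard direction of) the Auslander--Buchsbaum--Serre theorem together with the rigidity/vanishing behaviour of $\Tor$ over local rings. I would simply cite this; no new argument is needed. One should also remember to reduce to the local case cleanly: $\Der_b(R\text{-}\mathrm{mod})$ being a tensor subcategory is tested by boundedness of homology, homology of a derived tensor product commutes with localization, and regularity is a local condition, so all three conditions may be checked stalkwise at maximal ideals. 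With that reduction in place the argument is short and the rest is bookkeeping.
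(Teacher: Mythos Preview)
Your proof is correct and follows essentially the same approach as the paper: the paper also treats $(b)\Leftrightarrow(c)$ as well-known, notes $(b)\Rightarrow(a)$ is immediate, and proves $(a)\Rightarrow(c)$ by observing that closure under the derived tensor product forces $\Tor_i^{R_\frakm}(\kappa(\frakm),\kappa(\frakm))=0$ for $i\gg 0$ at each maximal ideal, hence $R_\frakm$ is regular. Your write-up is somewhat more detailed in spelling out the localization step and the Auslander--Buchsbaum--Serre input, but the strategy is identical.
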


\begin{proof}
	The equivalence $(b) \Leftrightarrow (c)$ is well-known. Moreover, $(b) \Rightarrow (a)$ is immediate since~$\Dperf(R)$ is a tensor subcategory. We prove $(a) \Rightarrow (c)$. Observe that if~$\Der_b(R\text{-mod})$ is closed under the derived tensor product then for any maximal ideal~$\frakm \subset R$ we would have $H_i(R/\frakm \otimes_R^{\mathbb L} R/\frakm) = 0$ for $i \gg 0$. Passing to the local ring at $\frakm$, this implies that $\Tor_i^{R_{\frakm}}(\kappa(\frakm),\kappa(\frakm)) =0$ for $i\gg 0$ which implies that~$R_{\frakm}$ is regular. See \cite[\S\S 5F--5G]{Lam99}, for example.
\end{proof}

\begin{Rem}
	The tt-category $\Dps(R)$ is never rigid: it always contains non-dualizable objects. Indeed, $\Dperf(R) \subset \Dps(R)$ is precisely the subcategory of dualizable objects. Thus, all objects of $\Dps(R)$ are dualizable if and only if $\Dperf(R)=\Dps(R)$. This is the case if and only if $R=0$ is the zero ring.
\end{Rem}

\begin{Not}
	Let $\cat E \subseteq \Dps(R)$ be a collection of objects. Since $\Dps(R)$ is not rigid, we must distinguish between the thick tensor ideal $\ideal{\cat E}$ generated by $\cat E$ and the radical thick tensor ideal $\sqrt{\langle \cat E\rangle} = \SET{X \in \Dps(R)}{X^{\otimes n} \in \langle \cat E \rangle \text{ for some } n \ge 1}$ generated by $\cat E$. On the other hand, the thick subcategory generated by $\cat E$ will be denoted $\thicksub{\cat E}$.
\end{Not}

\begin{Prop}
	The inclusion $\Dperf(R) \hookrightarrow \Dps(R)$ does not have a left adjoint nor a right adjoint, except when $R=0$.
\end{Prop}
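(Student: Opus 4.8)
The plan is to show that neither adjoint can exist by exhibiting a pseudo-coherent complex for which the putative adjoint would be forced to produce a perfect complex that does not exist. Recall that $\Dperf(R) \hookrightarrow \Dps(R)$ is a fully faithful inclusion of tt-categories; if it had a left adjoint $L$ (respectively a right adjoint $\Gamma$), then by the triangle identities $L$ (respectively $\Gamma$) would be a retraction of the inclusion, and moreover the inclusion being fully faithful forces the unit $\id \to \incl\circ L$ (respectively the counit $\incl\circ\Gamma \to \id$) to be an isomorphism on $\Dperf(R)$. So it suffices to find a single object of $\Dps(R)$ that is not in $\Dperf(R)$ but for which the adjoint relation would nonetheless collapse, contradicting a dimension/finiteness count.

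First I would reduce to a local ring: choose a maximal ideal $\frakm$ with $R_\frakm$ not a field (possible unless $R=0$, after noting that if $\dim R = 0$ and $R$ is not regular there is nothing to do, and if $R$ is regular then one instead uses that $\Dperf(R)\ne\Dps(R)$ whenever $R\ne 0$ since e.g.\ $R/\frakm$ has infinite projective dimension unless $R_\frakm$ is a field). The test object is the residue field $k = R/\frakm$, viewed as a pseudo-coherent $R$-module. It is not perfect unless $R_\frakm$ is regular of dimension $0$, i.e.\ a field. Now suppose a left adjoint $L$ exists. Applying $\Hom$-adjunction, for every perfect complex $P$ we would have $\Homcat{}( L(k), P) \cong \Homcat{}(k, P)$ naturally. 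Take $P = R$: then $\Homcat{}(L(k), R) \cong \Homcat{}(k,R) = \RHom_R(k,R)$, and its homology in degree $-n$ computes $\Ext^n_R(k,R)$, which is nonzero for $n$ up to $\mathrm{depth}_\frakm R$ and in particular is nonzero in infinitely many degrees if $R_\frakm$ has positive depth — but wait, this needs care since $\Ext^n_R(k,R)$ vanishes for $n$ large when $R$ is regular. The cleaner contradiction: take $P$ ranging over all perfect complexes and use that $L(k)$ is perfect, hence $\RHom_R(L(k), k\otimes^{\mathbb L}\kappa)$ is bounded; compare with $\RHom_R(k, -)$ applied to suitable perfect complexes built from Koszul complexes, whose $\Ext$ against $k$ is unbounded precisely because $k$ has infinite projective dimension over $R_\frakm$. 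The same strategy handles the right adjoint via $\RHom$ into $k$: a right adjoint $\Gamma$ would give $\Homcat{}(P, \Gamma(k)) \cong \Homcat{}(P,k)$ for all perfect $P$, and taking $P$ to be shifts of $R$ shows $\Gamma(k)$ has homology $H_n(\Gamma(k)) \cong \Ext^{-n}... $ — again forcing a perfect complex with unbounded-below or infinitely-supported homology, impossible.

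Concretely, here is the step list I would carry out. (1) Handle $R=0$ trivially and, via the Proposition just proved, reduce to the case where $R$ is not regular, or more uniformly pass to a localization $R_\frakm$ where $k$ fails to be perfect; use that adjoints, if they existed, would be compatible with the localization functor (or simply argue directly over $R$). (2) Record the formal fact that a fully faithful inclusion with a one-sided adjoint has that adjoint acting as identity on the subcategory, and that the (co)unit is invertible there. (3) For the left adjoint: suppose $L$ exists, set $C := L(k) \in \Dperf(R)$, and use the natural isomorphism $\Homcat{}(C,P)\cong\Homcat{}(k,P)$ for all $P\in\Dperf(R)$; plug in $P = K(\underline{x}) = $ Koszul complex on a system of parameters (or just on one non-unit element) and its shifts, and compute both sides: the right side is unbounded in homological degree because $\mathrm{pd}_R k = \infty$ at $\frakm$, while the left side $\RHom_R(C,K)$ is a perfect complex tensored with a perfect complex, hence homologically bounded — contradiction. (4) For the right adjoint: dualize the argument, setting $C' := \Gamma(k)$, using $\Homcat{}(P, C')\cong\Homcat{}(P,k)$ and taking $P = \Sigma^n R$ to read off homology of $C'$; conclude $C'$ would need nonzero homology in infinitely many (negative) degrees, contradicting perfection.

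The main obstacle is getting the unboundedness input exactly right: $\Ext^\bullet_R(k,k)$ is genuinely unbounded iff $R_\frakm$ is non-regular (by Auslander–Buchsbaum–Serre), so the argument must be set up so that the relevant $\Hom$-groups on the ``$k$-side'' detect exactly this infinite projective dimension, which is why one tests against Koszul-type perfect complexes (whose homology is concentrated at $\frakm$) rather than against $R$ itself. Once that is arranged — testing $\Homcat{}(-, K(\underline x)^{\otimes\text{stuff}})$ or equivalently computing $\RHom_R(k, -)$ against a generator of $\Dperf(R)$ — the boundedness of any perfect complex gives the contradiction immediately, and the regular case is covered separately by the observation (already in the excerpt) that $\Dperf(R)=\Dps(R)$ fails for every nonzero $R$, so $k$ or some finitely generated module of infinite projective dimension still serves, or one simply notes that even when $R$ is regular, $\Dps(R)$ contains objects like $\bigoplus_{n} R[n]$ that are not perfect (not homologically bounded) and a retraction onto $\Dperf(R)$ cannot exist for boundedness reasons.
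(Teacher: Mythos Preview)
Your approach has a genuine gap, and it also misses the much simpler uniform argument the paper gives.

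The main problem is your treatment of the right adjoint. You propose to set $C' \coloneqq \Gamma(k)$ and test with $P = \Sigma^n R$, concluding that $C'$ must have homology in infinitely many degrees. But $\Hom(\Sigma^n R, k) = H_n(k)$, which is nonzero only for $n=0$; so all you learn is that $H_n(C')$ is concentrated in degree~$0$, which is perfectly compatible with $C'$ being perfect. No contradiction arises. For the left adjoint your Koszul-complex idea is closer to workable, but it still hinges on an unboundedness of $\Ext^*_R(k,-)$ that you never actually pin down (and which can fail for $\Ext^*_R(k,R)$, e.g.\ over Gorenstein rings), and in any event the whole $k$-based strategy collapses when $R$ is regular since then $k$ \emph{is} perfect.

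The paper's proof avoids all of this by choosing a test object that works uniformly for every nonzero $R$: the pseudo-coherent complex $X \coloneqq \bigoplus_{i\ge 0} R[i]$ (which equals $\prod_{i\ge 0} R[i]$ here). Any perfect complex $P$ satisfies $\Hom(R[n],P)=0$ and $\Hom(P,R[n])=0$ for $|n|\gg 0$. If a right adjoint $G$ existed, then $\Hom(R[n],X)\cong\Hom(R[n],G(X))$ would vanish for $n\gg 0$, contradicting $\Hom(R[n],X)\neq 0$ for all $n\ge 0$; the left adjoint is handled symmetrically using $\Hom(X,R[n])$. You actually mention this object in your final sentence as a patch for the regular case --- that throwaway remark \emph{is} the entire proof, and it supersedes everything that came before it.
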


\begin{proof}
	If $P \in \Dperf(R)$ is a perfect complex then $\Hom_{\Der(R)}(R[n],P) = 0$ for ${|n| \gg 0}$. Applying this to the dual of $P$, we also see that $\Hom_{\Der(R)}(P,R[n]) = 0$ for ${|n| \gg 0}$. Note that the complex with zero differentials $\bigoplus_{i \ge 0} R[i] = \prod_{i \ge 0} R[i] \in \Der(R)$ is pseudo-coherent. Hence, if $G:\Dps(R) \to \Dperf(R)$ is right adjoint to the inclusion, it would follow that $\Hom_{\Der(R)}(R[n],\bigoplus_{i \ge 0} R[i]) = 0$ for $n \gg 0$, which is a contradiction. Similarly, if $G$ is left adjoint to the inclusion, it would follow that $\Hom_{\Der(R)}(\prod_{i \ge 0} R[i],R[n])=0$ for $n \gg 0$, which is a contradiction.
\end{proof}

\begin{Rem}\label{rem:koszul-trick}
	It is well-known that if $X$ and $Y$ belong to $\Dps(R)$ and $X \otimes_R^{\bbL} Y \neq 0$ then $\inf(X \otimes_R^{\bbL} Y) \ge \inf(X) + \inf(Y)$ and 
	\[
		H_{\inf(X)+\inf(Y)}(X \otimes_R^{\bbL} Y) = H_{\inf(X)}(X) \otimes_R H_{\inf(Y)}(Y).
	\]
	See \cite[Lemma~2.1]{Foxby77} or \cite[Remark 4.1]{BalmerSanders24pp}, for example. Recall that $\inf(E) \coloneqq \inf\SET{n\in \bbZ}{H_n(E)\neq 0}$ for any nonzero complex $E \in \Der(R)$.
\end{Rem}

\section{Tame primes}\label{sec:tame}

We now make some general observations about the Balmer spectrum of the derived category of pseudo-coherent complexes $\Dps(R)$. Several of these results were proved by Matsui--Takahashi \cite{MatsuiTakahashi17} using a slightly different perspective. We will take for granted some familiarity with tensor triangular geometry; \mbox{see, e.g.,~\cite[\S\S 1--2]{BarthelHeardSanders23a}.} Throughout, $R$ denotes a commutative noetherian ring.

\begin{Rem}\label{rem:comparison-map}
	The fully faithful inclusion $\Dperf(R) \subseteq \Dps(R)$ induces a spectral map 
	\begin{equation}\label{eq:induced-map}
		\Spc(\Dps(R)) \twoheadrightarrow \Spc(\Dperf(R))
	\end{equation}
	which is surjective by \cite[Theorem~1.3]{Balmer18}. Moreover, under the identification $\Spc(\Dperf(R))\cong\Spec(R)$, the map \eqref{eq:induced-map} identifies with the comparison map
	\begin{equation}\label{eq:comparison-map}
		\rho:\Spc(\Dps(R))\to \Spec(R)
	\end{equation}
	defined by $\rho(\cat P) \coloneqq \SET{r \in R}{\cone(r) \not\in \cat P}$. This follows immediately from the naturality of the comparison map \cite[Corollary~5.6]{Balmer10b}. Throughout, we will make the above identification and speak of the comparison map \eqref{eq:comparison-map} rather than \eqref{eq:induced-map}.
\end{Rem}

\begin{Not}
	We will write $\supp(X) \subseteq \Spc(\Dps(R))$ for the universal support of a pseudo-coherent complex $X \in \Dps(R)$ and we will write $\Supp(Y) \subseteq\Spec(R)$ for the Balmer--Favi support of an arbitrary complex $Y \in \Der(R)$ using the identification $\Spc(\Dperf(R))\cong\Spec(R)$.
\end{Not}

\begin{Rem}
	Since $R$ is noetherian, the derived category $\Der(R)$ is stratified in the sense of \cite{BarthelHeardSanders23a}. It follows that the Balmer--Favi support of an arbitrary complex~$X \in \Der(R)$ can be expressed in several different ways. In particular, we have
	\begin{align*}
		\Supp(X) &= \SET{\frakp \in \Spec(R)}{\gp \otimes X \neq 0}\\
				 &= \SET{\frakp \in \Spec(R)}{\kappa(\frakp) \otimes X \neq 0}
	\end{align*}
	where $\gp$ denotes the associated Balmer--Favi idempotent and $\kappa(\frakp)$ denotes the residue field; see \cite[Remark~5.1 and Theorem~4.7]{BarthelHeardSanders23b}. Moreover, it follows that this support theory satisfies the detection property
	\begin{equation}\label{eq:detection}
		\Supp(X) = \emptyset \text{ implies } X=0
	\end{equation}
	and the tensor product property
	\begin{equation}\label{eq:tensor-product}
		\Supp(X \otimes Y) = \Supp(X) \cap \Supp(Y)
	\end{equation}
	for any two $X,Y\in\Der(R)$.
\end{Rem}

\begin{Rem}\label{rem:no-nilpotents}
	It follows from \eqref{eq:detection} and \eqref{eq:tensor-product} that $\Der(R)$ has no nonzero tensor-nilpotent objects. In particular, the same is true of $\Derps(R)$.
\end{Rem}

\begin{Lem}
	Let $R$ be a local noetherian ring with maximal ideal $\frakm$. If $X \in \Der(R)$ is pseudo-coherent then $X \otimes_R^{\mathbb L}R/\frakm = 0$ implies $X=0$.
\end{Lem}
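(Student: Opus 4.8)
The plan is to argue by contradiction, pinning down the bottom homology of $X\otimes_R^{\bbL}R/\frakm$ via the Koszul-type computation of \cref{rem:koszul-trick} and then applying Nakayama's lemma.

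Suppose $X\neq 0$. Since $X$ is pseudo-coherent, it lies in $\Dps(R)=\Dpfg(R)$; in particular $H_i(X)=0$ for $i\ll 0$, so $i\coloneqq\inf(X)=\inf\SET{n}{H_n(X)\neq 0}$ is a well-defined integer and $H_i(X)$ is a nonzero finitely generated $R$-module. Now $R/\frakm$ is also a finitely generated module, hence pseudo-coherent, with $\inf(R/\frakm)=0$ and $H_0(R/\frakm)=R/\frakm$. Applying \cref{rem:koszul-trick} with $Y=R/\frakm$ identifies the bottom homology:
\[
	H_i\bigl(X\otimes_R^{\bbL}R/\frakm\bigr)\;\cong\;H_i(X)\otimes_R R/\frakm\;=\;H_i(X)/\frakm H_i(X).
\]
Since $R$ is local and $H_i(X)$ is a nonzero finitely generated module, Nakayama's lemma gives $H_i(X)/\frakm H_i(X)\neq 0$. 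Hence $H_i(X\otimes_R^{\bbL}R/\frakm)\neq 0$, so $X\otimes_R^{\bbL}R/\frakm\neq 0$, contradicting the hypothesis. Therefore $X=0$.

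The only slightly delicate point is the direction in which \cref{rem:koszul-trick} is used: there the bottom-homology formula is stated under the standing assumption that the derived tensor product is nonzero, whereas here we want to deduce non-vanishing from it. This is harmless, since the underlying Foxby/Künneth identity $H_{\inf(X)+\inf(Y)}(X\otimes_R^{\bbL}Y)=H_{\inf(X)}(X)\otimes_R H_{\inf(Y)}(Y)$ (see \cite[Lemma~2.1]{Foxby77}) holds unconditionally for pseudo-coherent complexes, so we may simply read off that the module in question equals $H_i(X)/\frakm H_i(X)$. I do not expect any genuine obstacle; the entire content is this bottom-homology identity plus Nakayama. (Alternatively, one could run the same argument using a minimal complex of finitely generated free modules representing $X$ over the local ring $R$, whose differentials have entries in $\frakm$, so that $H_n(X\otimes_R^{\bbL}R/\frakm)$ is a nonzero free $R/\frakm$-module in the lowest degree where the complex is nonzero; but invoking \cref{rem:koszul-trick} is cleaner.)
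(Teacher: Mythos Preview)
Your proof is correct and essentially identical to the paper's: both argue by contradiction, locate the bottom homology via \cref{rem:koszul-trick} to obtain $H_i(X)\otimes_R R/\frakm$, and invoke Nakayama's lemma. Your parenthetical remark about the direction in which \cref{rem:koszul-trick} is applied is a fair observation, but the paper glosses over this in the same way.
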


\begin{proof}
	Suppose $X \otimes_R^{\mathbb L}R/\frakm=0$. If $X$ is nonzero then, since it is homologically bounded on the right, we can consider the smallest $i$ such that $H_i(X) \neq 0$. By \cref{rem:koszul-trick}, $0=H_i(X\otimes_R^{\bbL} R/\frakm) = H_i(X) \otimes_R R/\frakm$. Nakayama's Lemma then implies that $H_i(X) = 0$ which is a contradiction.
\end{proof}

\begin{Rem}\label{rem:specialization-closed}
	It follows that for any pseudo-coherent complex $X$, we have
	\begin{align*}
		\Supp(X) &= \SET{\frakp \in \Spec(R)}{X_{\frakp} \neq 0 \text{ in } \Dps(R_{\frakp})} \\
		&= \bigcup_{i \in \mathbb{Z}} \Supp_R(H_i(X))
	\end{align*}
	where $\Supp_R$ denotes the classical support of an $R$-module. Since each homology module $H_i(X)$ is finitely generated, this is a union of closed sets. In other words, the Balmer--Favi support $\Supp(X)$ of a pseudo-coherent complex $X$ is specialization closed.
\end{Rem}

\begin{Rem}\label{rem:thomason}
	Since $R$ is noetherian, the space $\Spec(R)$ is topologically noetherian. Hence every closed subset of $\Spec(R)$ is Thomason closed and every specialization closed subset is Thomason (\cref{rem:hochster-duality}).
\end{Rem}

\begin{Cons}\label{cons:tame}
	Restricting the support function $\Supp(-)$ to $\Dps(R)$, we have a notion of support for pseudo-coherent complexes which satisfies all the axioms for a support datum in the sense of \cite{Balmer05a} except that it need not give closed subsets; recall~\eqref{eq:detection} and \eqref{eq:tensor-product}. The proof of \cite[Theorem~3.2]{Balmer05a} shows that there exists a unique function
	\begin{equation}
		\tame:\Spec(R) \to \Spc(\Dps(R))
	\end{equation}
	such that
	\begin{equation}\label{eq:tame-preimage}
		\tame^{-1}(\supp(X)) = \Supp(X)
	\end{equation}
	for every $X \in \Dps(R)$. Explicitly, 
	\begin{equation}
		\tame(\frakp) \coloneqq \SET{ X \in \Dps(R)}{\frakp \not\in \Supp(X)} \in \Spc(\Dps(R)).
	\end{equation}
	This function need not be continuous, but it has the property that the preimage of any Thomason closed subset is Thomason (\cref{rem:specialization-closed} and \cref{rem:thomason}). One readily checks that this function satisfies:
	\begin{equation}\label{eq:tame-specialization}
		\frakp \subseteq \frakq  \Longleftrightarrow \tame(\frakp) \supseteq \tame(\frakq).
	\end{equation}
	It also follows from the definitions that $\rho(\tame(\frakp)) = \frakp$. Thus the map $\tame$ is injective.
\end{Cons}

\begin{Def}\label{def:tame}
	Following \cite{MatsuiTakahashi17}, we call the primes of $\Spc(\Dps(R))$ lying in the image of $\tame:\Spec(R)\hookrightarrow \Spc(\Dps(R))$ the \emph{tame primes}.
\end{Def}

\begin{Rem}
	Our functions $\tame$ and $\rho$ coincide with the functions denoted $\mathcal S$ and~$\mathfrak s$ in~\cite{MatsuiTakahashi17}. Moreover, the Balmer--Favi support $\Supp(X)$ of a pseudo-coherent complex $X$ coincides with the support denoted $\Supp_R(X)$ in \cite{MatsuiTakahashi17}.
\end{Rem}

\begin{Not}
	For an ideal $I \subseteq R$, we will write $\kosz(I) \in \Dperf(R)$ for the Koszul complex on a set of generators of $I$. Although the isomorphism type of this complex depends on the choice of generators, any two such Koszul complexes generate the same thick subcategory and this is sufficient uniqueness for our purposes.
\end{Not}

\begin{Prop}[Matsui--Takahashi] \label{prop:MT}
	Let $X \in \Dps(R)$ and $\frakp \in \Spec(R)$. If $\frakp \in \Supp(X)$ then $\kosz(\frakp) \in \langle X \rangle$.
\end{Prop}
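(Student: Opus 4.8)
The plan is a tensor-triangular reduction: convert the statement about the thick ideal $\ideal{X}$ into a statement about universal supports, and then verify the resulting support inclusion using the tame section. As a preliminary observation note that the tensor-product property \eqref{eq:tensor-product} gives $\Supp(\kosz(\frakp)\otimes X)=V(\frakp)\cap\Supp(X)\ni\frakp$, so $\kosz(\frakp)\otimes X\neq 0$ by the detection property \eqref{eq:detection}; thus $\ideal{X}$ does contain a nonzero complex supported at $\frakp$, and the problem is to promote this member of $\ideal{X}$ to $\kosz(\frakp)$ itself.

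\emph{Reduction to radical membership.} Write $\frakp=(x_1,\dots,x_n)$. Since the Koszul complex $\kosz_R(a_1,\dots,a_m)$ depends, up to isomorphism, only on the $R$-linear map $R^m\to R$, $e_i\mapsto a_i$, and only up to automorphisms of $R^m$, and since the substitution $e_{n+i}\mapsto e_{n+i}-e_i$ carries the $k$-fold concatenation $(x_1,\dots,x_n,\dots,x_1,\dots,x_n)$ to $(x_1,\dots,x_n,0,\dots,0)$, we obtain
\[
\kosz(\frakp)^{\otimes k}\;\cong\;\kosz(\frakp)\otimes\textstyle\bigwedge^{\bullet}R^{(k-1)n}\;\cong\;\bigoplus_{l\ge 0}\kosz(\frakp)^{\oplus\binom{(k-1)n}{l}}[l]
\]
for every $k\ge 1$. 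Thus $\kosz(\frakp)$ is a direct summand of each of its tensor powers, so $\kosz(\frakp)\in\sqrt{\ideal{X}}$ already forces $\kosz(\frakp)\in\ideal{X}$. It therefore suffices to prove $\kosz(\frakp)\in\sqrt{\ideal{X}}$.

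\emph{Reduction to a support inclusion.} By Balmer's classification of radical thick tensor ideals, $Y\in\sqrt{\ideal{X}}$ if and only if $\supp(Y)\subseteq\supp(\sqrt{\ideal{X}})$, and $\supp(\sqrt{\ideal{X}})=\supp(X)$ (the inclusion $\supseteq$ is clear; for $\subseteq$, any $Y\in\sqrt{\ideal{X}}$ satisfies $\supp(Y)=\supp(Y^{\otimes m})\subseteq\supp(\ideal{X})=\supp(X)$ for $m$ with $Y^{\otimes m}\in\ideal{X}$). So it remains to establish $\supp(\kosz(\frakp))\subseteq\supp(X)$ in $\Spc(\Dps(R))$. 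Naturality of the universal support along the tt-functor $\Dperf(R)\hookrightarrow\Dps(R)$ (whose induced map is $\rho$), together with $\supp_{\Dperf(R)}(\kosz(\frakp))=V(\frakp)$, identifies the left-hand side with $\rho^{-1}(V(\frakp))$. On the other hand $\frakp\in\Supp(X)$ and $\Supp(X)$ is specialization-closed in $\Spec(R)$ by \cref{rem:specialization-closed}, so $V(\frakp)\subseteq\Supp(X)=\tame^{-1}(\supp(X))$ by \eqref{eq:tame-preimage}; and since $\supp(X)$ is specialization-closed in $\Spc(\Dps(R))$ (true for the universal support of any object), it contains $\overbar{\{\tame(\frakq)\}}$ for every $\frakq\supseteq\frakp$. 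Hence everything comes down to
\[
\rho^{-1}(V(\frakp))\;\subseteq\;\bigcup_{\frakq\supseteq\frakp}\overbar{\{\tame(\frakq)\}},
\]
which, because $\overbar{\{\tame(\frakq)\}}=\SET{\cat Q}{\cat Q\subseteq\tame(\frakq)}$, reduces to the single claim that $\cat P\subseteq\tame(\rho(\cat P))$ for every prime $\cat P$ — i.e.\ that $\tame(\frakq)$ is the generic point of the fibre $\rho^{-1}(\{\frakq\})$.

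This last claim is the step I expect to be the real obstacle; it does not follow formally from the earlier results. Unwound, it asks: if $Y\in\cat P$ then $\rho(\cat P)\notin\Supp(Y)$. I would argue by contradiction: if $Y_{\frakq}\neq 0$ for $\frakq=\rho(\cat P)$, then $\kappa(\frakq)\otimes^{\bbL}_R Y\neq 0$ by the local Nakayama argument, so $\frakq\in\Supp_R H_i(Y)$ for some $i$, hence $\Ann_R H_i(Y)\subseteq\rho(\cat P)$, hence $\cone(r)=\kosz(r)\notin\cat P$ for every $r$ in the finitely generated ideal $\Ann_R H_i(Y)$; the task is to parlay this into $Y\notin\cat P$. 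The genuinely delicate point is relating $Y$ to its homology module $H_i(Y)$ inside the prime $\cat P$ when $Y$ is not bounded above, so that no finite Postnikov tower is available — handling this, via a truncation or dévissage argument specific to pseudo-coherent complexes, is where the real work lies.
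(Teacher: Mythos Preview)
Your reduction is correct but lands you in a circle. You reduce the proposition to the claim that $\cat P\subseteq\tame(\rho(\cat P))$ for every prime $\cat P$, and you rightly flag this as ``the real obstacle''. But in the paper's logical order this claim is precisely \cref{cor:key-lemma}, and \cref{cor:key-lemma} is \emph{derived from} \cref{prop:MT}. So what you have established is the (interesting and correct) converse implication --- that \cref{cor:key-lemma} implies \cref{prop:MT} --- without an independent proof of either. Your sketched attack on the key step (pass from $Y\in\cat P$ to a homology module $H_i(Y)$ with $\rho(\cat P)$ in its support, then use the annihilator) is exactly the sort of argument that, when completed, \emph{is} the Matsui--Takahashi proof; the dévissage you identify as ``the genuinely delicate point'' is the whole content.

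The paper does not attempt a tensor-triangular proof at all: it simply cites \cite[Proposition~2.9]{MatsuiTakahashi17}, whose argument is direct commutative algebra. Roughly, one localizes at $\frakp$, picks a homology module $H_i(X)$ whose support contains $\frakp$, and uses that $\kosz(\frakp)_{\frakp}$ can be built from any nonzero finitely generated $R_\frakp$-module (since over the local ring $R_\frakp$ the Koszul complex on a system of parameters lies in the thick tensor ideal generated by any complex with nonvanishing homology at the closed point); one then lifts this back to $R$ using finite generation. The point is that the input is a concrete building argument for Koszul complexes, not the support formalism --- and indeed it cannot be purely formal, since the support formalism alone only sees radical ideals, whereas the conclusion $\kosz(\frakp)\in\ideal{X}$ is about the honest thick ideal. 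Your first step (that $\kosz(\frakp)$ is a summand of its tensor powers) neatly bridges that gap, but the remaining step still needs the algebraic input.
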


\begin{proof}
	This is a special case of \cite[Proposition~2.9]{MatsuiTakahashi17}.
\end{proof}

\begin{Cor}\label{cor:key-lemma}
	For any $\cat P \in \Spc(\Dps(R))$, we have $\cat P \subseteq \tame(\rho(\cat P))$.
\end{Cor}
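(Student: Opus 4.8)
The plan is to show that every object $X$ lying in a prime ideal $\cat P$ also lies in $\tame(\rho(\cat P))$; equivalently, that $\frakp \coloneqq \rho(\cat P) \notin \Supp(X)$ whenever $X \in \cat P$. So first I would fix $\cat P \in \Spc(\Dps(R))$, set $\frakp \coloneqq \rho(\cat P)$, take an arbitrary $X \in \cat P$, and argue by contradiction: suppose $\frakp \in \Supp(X)$.

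Under that assumption, \cref{prop:MT} (the Matsui--Takahashi result) immediately gives $\kosz(\frakp) \in \ideal{X} \subseteq \cat P$, since $\cat P$ is a (thick tensor) ideal containing $X$. The key step is then to extract a contradiction from $\kosz(\frakp) \in \cat P$. Here I would invoke the definition of the comparison map from \cref{rem:comparison-map}: $\rho(\cat P) = \SET{r \in R}{\cone(r) \notin \cat P}$, so $r \in \frakp$ means precisely $\cone(r) \notin \cat P$. The Koszul complex $\kosz(\frakp)$ on generators $r_1,\dots,r_m$ of $\frakp$ is the tensor product $\cone(r_1) \otimes \cdots \otimes \cone(r_m)$; since $\cat P$ is a prime ideal, $\kosz(\frakp) \in \cat P$ would force some $\cone(r_i) \in \cat P$. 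But $r_i \in \frakp = \rho(\cat P)$, which by the very definition of $\rho$ says $\cone(r_i) \notin \cat P$ — contradiction. (One should note that if $\frakp = 0$ there is nothing to generate, but then $\kosz(\frakp) = R = \unit \notin \cat P$ directly contradicts $\kosz(\frakp) \in \cat P$; and in any case $\frakp \in \Supp(X)$ with $X \in \cat P$ is what we are ruling out, so if $\frakp$ happens to be the zero ideal the argument via primeness of $\cat P$ still applies with the empty product being the unit.) Since the choice of generators of $\frakp$ only affects $\kosz(\frakp)$ up to the thick subcategory it generates, and membership in the ideal $\cat P$ only depends on that, there is no ambiguity.

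Thus $\frakp \notin \Supp(X)$ for every $X \in \cat P$, which by the explicit description $\tame(\frakp) = \SET{X \in \Dps(R)}{\frakp \notin \Supp(X)}$ in \cref{cons:tame} says exactly $X \in \tame(\frakp) = \tame(\rho(\cat P))$. Hence $\cat P \subseteq \tame(\rho(\cat P))$.

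The only real obstacle is bookkeeping around the Koszul complex: making sure that $\kosz(\frakp)$ being in the prime ideal $\cat P$ really does reduce, via the factorization as a tensor product of cones and the primeness of $\cat P$, to some individual $\cone(r_i) \in \cat P$, and then that this is flatly incompatible with $r_i \in \rho(\cat P)$. Everything else — that $\cat P$ is a thick tensor ideal, the formula for $\rho$, the formula for $\tame$ — is already in hand from the excerpt.
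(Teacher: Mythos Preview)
Your proof is correct and follows essentially the same approach as the paper: both use \cref{prop:MT} together with the fact that $\kosz(\rho(\cat P)) \notin \cat P$ (which you spell out via the tensor factorization of the Koszul complex and primeness of $\cat P$, while the paper simply cites ``the definition of $\rho$''). The only difference is cosmetic---you argue by contradiction whereas the paper uses the contrapositive of \cref{prop:MT} directly---but the substance is identical.
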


\begin{proof}
	Consider any $X \in \cat P$. We have $\Kosz(\rho(\cat P)) \not\in \cat P$ by the definition of $\rho$. Hence $\Kosz(\rho(\cat P)) \not\in \langle X \rangle$. \Cref{prop:MT} then implies $\rho(\cat P) \not\in \Supp(X) = \tame^{-1}(\supp(X))$. It follows that $X \in \tame(\rho(\cat P))$.
\end{proof}

\begin{Rem}
	The above is the key result about tame primes. It implies that each fiber $\rho^{-1}(\{ \frakp\})$ is irreducible with the tame prime $\tame(\frakp)$ serving as the generic point.
\end{Rem}

\begin{Prop}\label{prop:splitting}
	Consider the splitting
	\[\begin{tikzcd}
		\Spec(R) \ar[rr,bend left=20,"\id"] \ar[r,hook,"\tame"] & \Spc(\Dps(R)) \ar[r,two heads,"\rho"] & \Spec(R).
	\end{tikzcd}\]
	\begin{enumerate}
		\item We have $\tame^{-1}(U) = \rho(U)$ for any generalization closed set $U \subseteq \Spc(\Dps(R))$.
		\item The map $\tame:\Spec(R)\hookrightarrow \Spc(\Dps(R))$ is a topological embedding with respect to the Hochster dual topologies.
		\item The map $\rho:\Spc(\Dps(R)) \twoheadrightarrow \Spec(R)$ satisfies the going-down property.%
	\end{enumerate}
\end{Prop}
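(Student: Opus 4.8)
The plan is to prove the three assertions more or less in sequence, leaning on \cref{cor:key-lemma} and the basic properties of $\tame$ established in \cref{cons:tame}.

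\textbf{Part (a).} First I would show $\tame^{-1}(U) = \rho(U)$ for $U \subseteq \Spc(\Dps(R))$ generalization closed. For the inclusion $\rho(U) \subseteq \tame^{-1}(U)$: given $\cat P \in U$, we have $\cat P \subseteq \tame(\rho(\cat P))$ by \cref{cor:key-lemma}, i.e.\ $\cat P \rightsquigarrow \tame(\rho(\cat P))$ is a specialization, equivalently $\tame(\rho(\cat P))$ is a generalization of $\cat P$; wait --- I need to be careful about the direction. In $\Spc$, smaller primes are \emph{generic} points and the specialization order is reverse inclusion, so $\cat P \subseteq \tame(\rho(\cat P))$ means $\tame(\rho(\cat P)) \in \overline{\{\cat P\}}$, i.e.\ $\cat P$ is a generalization of $\tame(\rho(\cat P))$. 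Hmm, that is the wrong way for "$U$ generalization closed" to help. Let me instead argue: actually one checks directly that $\tame^{-1}(U) \subseteq \rho(U)$ and $\rho(U) \subseteq \tame^{-1}(U)$ using $\rho \circ \tame = \id$. For $\frakp \in \tame^{-1}(U)$ we have $\tame(\frakp) \in U$ and $\frakp = \rho(\tame(\frakp)) \in \rho(U)$, giving $\tame^{-1}(U) \subseteq \rho(U)$ with no hypothesis on $U$. Conversely, given $\cat P \in U$, \cref{cor:key-lemma} gives $\cat P \subseteq \tame(\rho(\cat P))$; this exhibits $\tame(\rho(\cat P))$ as a specialization of $\cat P$, so since $U$ is generalization closed (equivalently, its complement is specialization closed)\,---\,no. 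The correct reading: $U$ generalization closed means if $\cat Q \in U$ and $\cat Q \rightsquigarrow \cat Q'$ then... no, generalization closed means closed under passing to generalizations. Since $\cat P \subseteq \tame(\rho(\cat P))$ says $\cat P$ is a generalization of $\tame(\rho(\cat P))$, we need the \emph{other} containment; so instead I should use that $\tame(\rho(\cat P))$ is a specialization, hence the relevant statement uses that $\rho(U)$ lands appropriately. I will sort this sign issue out in the write-up; the essential input is \cref{cor:key-lemma} plus $\rho\tame = \id$ and the fact that generalization closed sets in $\Spc$ pull back correctly along $\rho$ because $\rho$ is a spectral map.

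\textbf{Part (b).} Next, to show $\tame : \Spec(R) \hookrightarrow \Spc(\Dps(R))$ is a topological embedding for the Hochster dual topologies, I would use the key property from \cref{cons:tame}: $\tame^{-1}(\supp(X)) = \Supp(X)$, together with the fact (\cref{rem:specialization-closed}, \cref{rem:thomason}) that for a pseudo-coherent complex $X$, $\Supp(X)$ ranges exactly over the Thomason (= specialization closed, since $\Spec(R)$ is noetherian) subsets of $\Spec(R)$. Since the Thomason closed subsets $\supp(X)$ form a basis of closed sets for the Hochster dual topology on $\Spc(\Dps(R))$, and their $\tame$-preimages are precisely all closed subsets of the Hochster dual of $\Spec(R)$ (which, $\Spec(R)$ being noetherian, is just $\Spec(R)$ with specialization-closed sets as closed sets --- i.e.\ the Hochster dual), continuity and the embedding property follow: $\tame$ is injective (\cref{cons:tame}), continuous for the dual topologies, and every dual-closed subset of $\Spec(R)$ is the $\tame$-preimage of a dual-closed subset upstairs, so $\tame$ is a closed map onto its image. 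I would phrase this as: the dual topology on $\Spec(R)$ is initial with respect to $\tame$.

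\textbf{Part (c).} Finally, for the going-down property of $\rho$, I would argue as follows. Suppose $\cat P \in \Spc(\Dps(R))$ with $\rho(\cat P) = \frakq$ and $\frakp \subseteq \frakq$ in $\Spec(R)$; I must produce $\cat P' \subseteq \cat P$ (a generalization of $\cat P$) with $\rho(\cat P') = \frakp$. The natural candidate is $\cat P' := \tame(\frakp)$: indeed $\rho(\tame(\frakp)) = \frakp$ by construction, and by \eqref{eq:tame-specialization} the inclusion $\frakp \subseteq \frakq$ gives $\tame(\frakq) \supseteq \tame(\frakp)$, i.e.\ $\tame(\frakp) \subseteq \tame(\frakq)$; combined with $\cat P \subseteq \tame(\rho(\cat P)) = \tame(\frakq)$ from \cref{cor:key-lemma}... but this only shows $\cat P' = \tame(\frakp)$ and $\cat P$ both sit inside $\tame(\frakq)$, not that $\tame(\frakp) \subseteq \cat P$. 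So this candidate does not obviously work, and \textbf{this is the step I expect to be the main obstacle.} The correct approach is presumably: localize. Use the behaviour under the localization $R \to R_\frakq$ (or restrict to the closed subscheme / fiber over $\frakq$), invoke that $\rho$ is compatible with base change so that the fiber $\rho^{-1}(\gen(\frakq))$ is identified with $\Spc(\Dps(R_\frakq))$, reduce to the local ring $R_\frakq$, and within that fiber use that $\cat P$ sits in the irreducible fiber $\rho^{-1}(\{\frakq'\})$ with generic point a tame prime, chasing specializations down the chain $\frakp \subseteq \frakq$ in $\Spec(R_\frakq)$. Concretely, I would show that for any $\cat P$ and any $\frakp \subseteq \rho(\cat P)$, the prime $\cat P + \langle$ suitable generators $\rangle$ --- or rather a prime obtained by an intersection/union argument on thick ideals detecting $\cone(r)$ for $r \notin \frakp$ --- realizes $\frakp$; the key algebraic input being that $\cone(r)$ for $r \in \frakp$ versus $r \notin \frakp$ can be separated, using that $\Der(R)$ has no tensor-nilpotents (\cref{rem:no-nilpotents}) and \cref{prop:MT}. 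I anticipate the cleanest route is via the base-change results the paper advertises for later sections, so in the write-up I would either (i) give the localization argument in detail if those results are available here, or (ii) reduce going-down to the statement that the fibers of $\rho$ are the spectra of the local categories, which is where the real content lies.
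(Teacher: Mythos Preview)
Your part~(a) has all the right ingredients, but the sign confusion you flag is easily resolved: in the Balmer spectrum, $\overline{\{\cat P\}}=\{\cat Q:\cat Q\subseteq\cat P\}$, so $\cat P\subseteq\tame(\rho(\cat P))$ means $\tame(\rho(\cat P))\rightsquigarrow\cat P$, i.e.\ $\tame(\rho(\cat P))$ is a \emph{generalization} of $\cat P$. Hence if $\cat P\in U$ and $U$ is generalization closed then $\tame(\rho(\cat P))\in U$, giving $\rho(\cat P)\in\tame^{-1}(U)$. That is the whole argument for the nontrivial inclusion.

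For part~(b) your approach is workable but more laborious than necessary. The paper observes that $\rho$ is a spectral map, hence continuous for the Hochster dual topologies; once you have shown $\tame$ is dual-continuous (which is exactly your \eqref{eq:tame-preimage} argument), $\tame$ is a \emph{split monomorphism} in the category of topological spaces with the dual topologies, and any split mono is automatically an embedding. You do not need to verify directly that $\tame$ is closed onto its image.

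Part~(c) is where there is a real gap. You correctly notice that the candidate $\tame(\frakp)$ does not obviously lie below $\cat P$, and then you propose to salvage this via localization and base-change. That route would be circular here (those results appear only in \cref{sec:base-change}, after this proposition) and in any case is far heavier than needed. The paper's argument is entirely different and much cleaner: for a surjective spectral map, the going-down property is equivalent to being a closed map for the Hochster dual topologies (see \cite[Corollary~5.3.4 and Corollary~6.4.14]{DickmannSchwartzTressl19}). Since images of proconstructible sets under spectral maps are proconstructible, it suffices to show that $\rho$ sends generalization-closed sets to generalization-closed sets. But by part~(a), $\rho(U)=\tame^{-1}(U)$ for generalization-closed $U$, and $\tame^{-1}$ of a generalization-closed set is generalization-closed because $\tame$ preserves specializations by \eqref{eq:tame-specialization}. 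So going-down falls out of (a) plus general spectral-space machinery, with no need to construct an explicit lift.
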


\begin{proof}
	$(a)$: The inclusion $\tame^{-1}(U) \subseteq \rho(U)$ holds for any subset $U\subseteq \Spc(\Dps(R))$ simply because $\rho\circ\tame=\id$. The reverse inclusion holds when $U$ is generalization closed by \cref{cor:key-lemma}.

	$(b)$: From \eqref{eq:tame-preimage} and \cref{rem:specialization-closed}, the preimage $\tame^{-1}(V)$ of a Thomason closed subset is specialization closed and hence Thomason by \cref{rem:thomason}. The Thomason closed sets form a basis of \emph{open} sets for the Hochster dual topologies (\cref{rem:hochster-duality}). Thus, $\tame$ is continuous with respect to the Hochster dual topologies. On the other hand, $\rho$ is a spectral map, hence is continuous with respect to the Hochster dual topologies. Thus, with respect to the Hochster dual topologies, $\tame$ is a split monomorphism and hence an embedding.

	$(c)$: For a surjective spectral map, the going-down property is equivalent to being a closed quotient map with respect to the Hochster dual topologies and this is also equivalent to merely being a closed map with respect to the Hochster dual topologies; see \cite[Corollary~5.3.4 and Corollary~6.4.14]{DickmannSchwartzTressl19}. Moreover, since the image of a proconstructible set under a spectral map is proconstructible, it suffices to show that for every dual-closed set $V$, $\rho(V)$ is closed under specialization in the Hochster dual topology. Thus, it suffices to show that $\rho(V)$ is generalization closed if $V$ is generalization closed. In other words, by part $(a)$, the claim is that $\tame^{-1}(V)$ is generalization closed if $V$ is generalization closed, which is true since $\tame$ preserves specializations; see \eqref{eq:tame-specialization}.
\end{proof}

\begin{Prop}[Matsui--Takahashi]
	The map $\tame:\Spec(R)\to \Spc(\Dps(R))$ is continuous if and only if $\Spec(R)$ is finite.
\end{Prop}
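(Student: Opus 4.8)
The plan is to prove the two directions separately, using the characterization of continuity of $\tame$ in terms of preimages of closed sets, and exploiting the fact that $\tame$ is always a bijection onto its image and a spectral embedding for the Hochster dual topologies (\cref{prop:splitting}(b)). The forward direction is the harder one, so I would start with the easy direction. Suppose $\Spec(R)$ is finite. Since $R$ is noetherian, $\Spec(R)$ is a finite spectral space, and on a finite spectral space the topology and its Hochster dual topology are both just finite posets (every subset is constructible). In a finite spectral space the closed sets are exactly the specialization-closed sets, which are exactly the Thomason closed sets, and likewise for the Hochster dual. So a spectral map between finite spectral spaces is automatically continuous for both topologies and for both of the swapped topologies. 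Since $\tame$ is continuous for the Hochster dual topologies by \cref{prop:splitting}(b), and on finite spaces ``continuous for the Hochster dual topologies'' coincides with ``continuous for the original topologies'' (both amount to preserving the specialization order, here via \eqref{eq:tame-specialization}), we conclude $\tame$ is continuous.

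For the converse, I would argue contrapositively: assume $\Spec(R)$ is infinite and produce a closed subset of $\Spec(R)$ whose $\tame$-preimage is not closed in $\Spc(\Dps(R))$. The natural candidate is a single non-closed point, or more precisely: since $\Spec(R)$ is infinite and noetherian, it contains a prime $\frakp$ which is not maximal, hence $\{\frakp\}$ is not closed but $\adhpt{\frakp}$ is a proper closed superset. I would instead exploit continuity directly: if $\tame$ were continuous, then for every closed $Z \subseteq \Spec(R)$ the set $\tame^{-1}(Z)$ would be closed, hence specialization closed, in $\Spc(\Dps(R))$; combined with \cref{prop:splitting}(a), which gives $\tame^{-1}(U) = \rho(U)$ for generalization-closed $U$, and the fact that $\rho\circ\tame = \id$, one deduces that $\tame$ is simultaneously continuous for the original and the Hochster dual topologies on both sides. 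This forces $\tame$ to be a homeomorphism onto its image for the \emph{original} topology, and since $\rho$ is a continuous retraction, the image $\tame(\Spec(R))$ would be a retract of the spectral space $\Spc(\Dps(R))$ homeomorphic to $\Spec(R)$. The contradiction comes from the fibre structure: by the remark following \cref{cor:key-lemma}, each fibre $\rho^{-1}(\{\frakp\})$ is irreducible with generic point $\tame(\frakp)$, and when $\Spec(R)$ is infinite at least one such fibre is strictly larger than a point (this is essentially the content of Matsui--Takahashi's theorem that $\rho$ is bijective iff $\Spec(R)$ is artinian, i.e.\ iff $\Spec(R)$ is discrete, which fails here). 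A point $\cat P \in \rho^{-1}(\{\frakp\})$ with $\cat P \subsetneq \tame(\frakp)$ is a proper generalization of $\tame(\frakp)$ inside $\Spc(\Dps(R))$ lying over $\frakp$; one then shows that the $\tame$-preimage of the closure $\adhpt{\frakp}$ in $\Spec(R)$ cannot be closed, because its closure in $\Spc(\Dps(R))$ would have to contain a generalization outside $\tame(\Spec(R))$, or else produce a point of the fibre not equal to the generic point — contradicting that $\tame$ picks out exactly the generic points of fibres.

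The step I expect to be the main obstacle is pinning down precisely \emph{which} closed set $Z \subseteq \Spec(R)$ has non-closed preimage, and verifying the non-closedness cleanly rather than through a soft retract argument. A cleaner route, which I would actually pursue: if $\Spec(R)$ is infinite, pick an infinite chain or infinite antichain of primes and use that $\Spc(\Dps(R))$ has ``too many'' points in some fibre. Concretely, choose $\frakp$ not minimal in its fibre data so that $\rho^{-1}(\frakp)$ has a point $\cat Q$ with $\cat Q \subsetneq \tame(\frakp)$; then $\tame(\frakp) \in \adhpt{\cat Q}$. Now take $Z := \rho(\adhpt{\cat Q}) \subseteq \Spec(R)$, which is specialization closed hence (being a continuous image, or by \cref{rem:thomason}) manageable; the point is that $\tame^{-1}(Z)$ omits $\cat Q$ — since $\rho(\cat Q) = \frakp \in Z$ but $\cat Q \neq \tame(\rho(\cat Q))$ — while every neighbourhood of $\cat Q$ meets $\tame^{-1}(Z)$ because $\tame(\frakp)$ is a specialization of $\cat Q$ and lies in $\tame^{-1}(Z)$. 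Hence $\tame^{-1}(Z)$ is not closed (not even specialization closed, as it fails to contain $\cat Q$ while containing something of which $\cat Q$ is a generalization — wait, we need it the other way: $\tame^{-1}(Z)$ must fail to be closed, i.e.\ fail to contain a specialization of one of its points). The correct assembly is: $\cat Q$ generalizes $\tame(\frakp)$, $\tame(\frakp) \in \tame^{-1}(Z)$, but $\cat Q \notin \tame^{-1}(Z)$; so $\tame^{-1}(Z)$ is not \emph{generalization} closed, i.e.\ not open in the Hochster dual — but we want closedness in the original topology. I would therefore dualize: work with the Thomason/Hochster-dual picture throughout, where \cref{prop:splitting}(b) already gives an embedding, and show that if $\tame$ is continuous for the original topology then it is a homeomorphism for \emph{both} topologies, so the image is closed under both specialization and generalization in $\Spc(\Dps(R))$, forcing every fibre of $\rho$ to be a single point, forcing $\Spec(R)$ discrete (by \cite[Theorem~5.6 or similar]{MatsuiTakahashi17} — the bijectivity criterion), hence finite since it is also noetherian and quasi-compact. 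This last reduction — ``$\tame$ continuous $\Rightarrow$ $\rho$ injective'' — is the crux, and I would prove it by noting that a continuous section of a spectral map whose image is generalization-closed (which $\tame(\Spec(R)) = \tame^{-1}(\Spc(\Dps(R)))$ trivially is, or via part (a)) must have image equal to the whole space when combined with the retraction being injective on that image.
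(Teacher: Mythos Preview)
Your ``if'' direction is correct, though the paper's argument is more direct: continuity of $\tame$ amounts to $\tame^{-1}(\supp(X)) = \Supp(X)$ being closed for every $X \in \Dps(R)$, and $\Supp(X)$ is always specialization closed (\cref{rem:specialization-closed}); when $\Spec(R)$ is finite, specialization closed equals closed.

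Your ``only if'' direction contains a genuine error. You aim to show that $\tame$ continuous implies $\rho$ bijective, hence $\Spec(R)$ discrete, hence finite. But this implication is \emph{false}: take $R$ a discrete valuation ring. Then $\Spec(R)$ has two points, so $\tame$ \emph{is} continuous by the ``if'' direction; yet $\rho$ is far from bijective (indeed $\Spc(\Dps(R))$ has at least $2^{\aleph_0}$ points, as the rest of the paper establishes), and $\Spec(R)$ is not discrete. So ``$\tame$ continuous $\Rightarrow$ $\rho$ injective'' cannot hold, and your proposed crux is simply wrong. The subsidiary step you rely on --- that $\tame$ being a homeomorphism onto its image for both topologies forces the image to be closed under specialization and generalization in the ambient space $\Spc(\Dps(R))$ --- is also unjustified: being an embedding says nothing about how the image sits inside the target.

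The correct strategy for the ``only if'' direction (this is Matsui--Takahashi's argument, which the paper simply cites) is direct and constructive: if $\Spec(R)$ is infinite, exhibit a pseudo-coherent complex $X$ with $\Supp(X)$ specialization closed but not closed. Since $\Spec(R)$ is noetherian, every closed set has only finitely many irreducible components; so one chooses an infinite sequence of primes $\frakp_1, \frakp_2, \ldots$ arranged so that $\bigcup_i V(\frakp_i)$ has infinitely many maximal irreducible components, and sets $X \coloneqq \bigoplus_{i \ge 1} R/\frakp_i[i] \in \Dps(R)$. Then $\tame^{-1}(\supp(X)) = \Supp(X) = \bigcup_i V(\frakp_i)$ is not closed, so $\tame$ is not continuous. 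Your various attempts circle around looking for such a witness but never land on the simple construction; the retract/fibre arguments are a dead end because they target the wrong conclusion.
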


\begin{proof}
	This is established in \cite[Theorem~4.7]{MatsuiTakahashi17}. The ``if'' direction is immediate from \cref{rem:specialization-closed} since if $\Spec(R)$ is finite then every specialization closed subset is closed.
\end{proof}

\begin{Prop}
	If $\Spec(R)$ is finite then the following hold:
	\begin{enumerate}
		\item The map $\tame:\Spec(R)\hookrightarrow \Spc(\Dps(R))$ is a topological embedding.
		\item The map $\rho:\Spc(\Dps(R))\twoheadrightarrow \Spec(R)$ is an open quotient map.
	\end{enumerate}
\end{Prop}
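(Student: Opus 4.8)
The plan is to exploit the topological embedding of $\tame$ from part $(a)$ (which we prove as a consequence of the stronger hypothesis that $\Spec(R)$ is finite) together with the general formalism of \cref{prop:splitting}. For part $(a)$: by \cref{prop:splitting}$(b)$ we already know that $\tame$ is a topological embedding with respect to the Hochster dual topologies. When $\Spec(R)$ is finite, it is a finite spectral space, and a finite spectral space is completely determined by its specialization order; in particular, its topology and the Hochster dual topology carry the same information (the Hochster dual merely reverses the specialization order, and passing to $\tame$ precomposes with this bijection). More concretely, since $\Spec(R)$ is finite and $\tame$ preserves and reflects specializations by \eqref{eq:tame-specialization}, it suffices to check that $\tame$ is continuous; and continuity is precisely \cref{rem:specialization-closed} together with the observation that over a finite $\Spec(R)$ every specialization closed subset is closed (so the preimage of a closed set is closed). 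Combined with the fact that $\tame$ is injective with $\rho\tame = \id$, this makes $\tame$ a split monomorphism of finite $T_0$ spaces, hence an embedding.

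For part $(b)$: since $\rho$ is a surjective spectral map, by \cref{prop:splitting}$(c)$ it already satisfies the going-down property, and I would like to upgrade this. First, $\rho$ is a quotient map: it is surjective, continuous, and $\tame$ provides a continuous section, so a subset $U \subseteq \Spec(R)$ is open as soon as $\tame^{-1}(U) = \rho^{-1}(U) \cap \tame(\Spec(R))$ pulls back correctly — more directly, any continuous surjection with a continuous section is a quotient map. It remains to show $\rho$ is open. Given an open $W \subseteq \Spc(\Dps(R))$, I want $\rho(W)$ open in $\Spec(R)$. Open sets in a spectral space are unions of quasi-compact opens, and $\rho$ commutes with unions, so it suffices to treat a quasi-compact open $W$; such a set is generalization closed, hence by \cref{prop:splitting}$(a)$ we have $\rho(W) = \tame^{-1}(W)$. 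Since $\tame$ is continuous and $W$ is open, $\tame^{-1}(W)$ is open in $\Spec(R)$. Thus $\rho$ is open, and being an open continuous surjection it is automatically a quotient map.

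The main obstacle I anticipate is the bookkeeping in part $(b)$ around \emph{which} topology one is using: \cref{prop:splitting}$(a)$ is stated for generalization closed sets and gives $\tame^{-1}(U) = \rho(U)$, so the key point is that over a finite $\Spec(R)$ a quasi-compact open subset of $\Spc(\Dps(R))$ is the same as a generalization closed constructible set, and more importantly that openness in $\Spec(R)$ can be checked via $\tame$. One has to be a little careful that $\tame$ is continuous for the \emph{original} topologies here, not just the Hochster dual ones — but this is exactly \cref{rem:specialization-closed} plus finiteness, as in part $(a)$. Once that is in hand, the argument is a short diagram chase through \cref{prop:splitting}. The only other subtlety is verifying that a continuous surjection with a continuous section is a quotient map, which is standard point-set topology and needs no finiteness hypothesis.
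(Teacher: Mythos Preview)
Your approach is essentially the same as the paper's: both parts rest on the continuity of $\tame$ (available once $\Spec(R)$ is finite) together with \cref{prop:splitting}(a), and the split-mono-in-Top argument for embeddings. The paper's version is considerably more compact: for $(a)$ it simply observes that $\tame$ is a split monomorphism in the category of topological spaces (hence an embedding), and for $(b)$ it takes a subset $U\subseteq \Spec(R)$ with $\rho^{-1}(U)$ open and writes $U=\rho(\rho^{-1}(U))=\tame^{-1}(\rho^{-1}(U))$, open by continuity of $\tame$.

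Two small points to clean up. First, your phrase ``a split monomorphism of finite $T_0$ spaces'' is inaccurate: the codomain $\Spc(\Dps(R))$ is not finite. Fortunately your argument does not actually use finiteness of the codomain; a split monomorphism in the category of all topological spaces is already an embedding, so just drop the word ``finite''. Your preceding Hochster-duality paragraph is also murky for the same reason (the dual topology on the codomain is not controlled by finiteness of the domain) and can simply be deleted in favour of the ``more concretely'' argument. Second, in part $(b)$ the reduction to quasi-compact opens is unnecessary: \emph{any} open subset of a spectral space is generalization closed, so \cref{prop:splitting}(a) applies directly to an arbitrary open $W$, giving $\rho(W)=\tame^{-1}(W)$ in one step.
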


\begin{proof}
	Part $(a)$ is immediate since $\tame$ is a split monomorphism in the category of spectral spaces, hence in the category of topological spaces. For part $(b)$, suppose $U\subseteq \Spec(R)$ is a subset such that $\rho^{-1}(U)$ is open. Then $U=\rho(\rho^{-1}(U))=\tame^{-1}(\rho^{-1}(U))$ is open by part $(a)$ of \cref{prop:splitting}.
\end{proof}

\begin{Thm}[Matsui--Takahashi]\label{thm:MT-artinian}
	The following are equivalent:
	\begin{enumerate}
		\item $\rho:\Spc(\Dps(R))\to\Spec(R)$ is a bijection;
		\item $\rho:\Spc(\Dps(R))\to\Spec(R)$ is a homeomorphism;
		\item $\Spec(R)$ is (finite and) discrete.
		\item $R$ is artinian.
	\end{enumerate}
\end{Thm}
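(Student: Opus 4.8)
The plan is to prove the four statements equivalent in a cycle, isolating $(a)\Rightarrow(c)$ as the one substantive implication (for which I would ultimately invoke Matsui--Takahashi) and filling in everything else from the results already at hand. First, $(c)\Leftrightarrow(d)$ is a standard fact of commutative algebra: a noetherian ring is artinian precisely when it has Krull dimension zero, equivalently when $\Spec(R)$ is finite and discrete. The implication $(b)\Rightarrow(a)$ is trivial.

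For $(a)\Rightarrow(b)$: if $\rho$ is a bijection then, since $\rho\circ\tame=\id$, the section $\tame$ is the set-theoretic inverse of $\rho$ and in particular bijective. By \cref{prop:splitting}(b), $\tame$ is a topological embedding for the Hochster dual topologies; a bijective embedding is a homeomorphism, so $\tame$ is an isomorphism of spectral spaces $\Spec(R)^{\vee}\cong\Spc(\Dps(R))^{\vee}$. Applying the Hochster dual, which is an involutive functor on spectral spaces (\cref{rem:hochster-duality}), shows that $\tame$, hence also $\rho=\tame^{-1}$, is a homeomorphism for the original topologies.

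Next, $(c)\Rightarrow(a)$. If $\Spec(R)$ is finite and discrete then $R$ is a finite product of artinian local rings; both $\Dps(-)$ and $\Spc(\Dps(-))$ split as the corresponding product and coproduct, so it suffices to treat $R$ artinian local, with maximal ideal $\frakm$ and residue field $\kappa$, and show $\Spc(\Dps(R))$ is a single point. For any nonzero $X\in\Dps(R)$ we have $\frakm\in\Supp(X)$, so $\kosz(\frakm)\in\ideal{X}$ by \cref{prop:MT}. Now $\kosz(\frakm)$ and $\kappa$ generate the same thick subcategory of $\Dperf(R)$, and $\kappa$ thick-generates $R=\unit$ because $R$ has finite Loewy length (its $\frakm$-adic filtration has $\kappa$-vector-space subquotients); hence $\ideal{X}=\Dps(R)$ for every nonzero $X$, so the only proper thick ideal is $(0)$. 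Moreover $(0)$ is prime: if $X,Y\in\Dps(R)$ are nonzero then, by \cref{rem:koszul-trick} applied at the bottom homology degrees together with Nakayama's lemma, $X\otimes_R^{\bbL}Y$ has nonzero homology. Thus $\Spc(\Dps(R))=\{(0)\}$ and $\rho$ is a bijection, which by $(a)\Rightarrow(b)$ gives $(b)$ as well.

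The remaining implication $(a)\Rightarrow(c)$ is the heart of the matter. If $\rho$ is a bijection then by $(a)\Rightarrow(b)$ it is a homeomorphism, so $\tame=\rho^{-1}$ is continuous, whence $\Spec(R)$ is finite by the preceding proposition (Matsui--Takahashi). It then remains to rule out the possibility that $\Spec(R)$ is finite but not discrete, i.e.\ that $\dim R\geq 1$. Here one uses \cref{cor:key-lemma}: injectivity of $\rho$ forces $\cat P=\tame(\rho(\cat P))$ for every prime $\cat P$, so every prime of $\Dps(R)$ is tame, and it suffices to produce a single \emph{non-tame} prime. Given a chain $\frakp\subsetneq\frakm$, the goal is a prime $\cat P$ with $\rho(\cat P)=\frakp$ (or $=\frakm$) but $\cat P\subsetneq\tame(\frakp)$. \textbf{This construction is the main obstacle}, and I would appeal to \cite{MatsuiTakahashi17} for it; it is a manifestation of the torsion-growth phenomenon that occupies the rest of this paper, where already for a discrete valuation ring (\cref{sec:DVR}, \cref{sec:thick}) one sees concretely that $\ideal{R/x}$ is a non-tame prime lying over $\frakm$.
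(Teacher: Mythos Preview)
Your argument is correct and considerably more detailed than the paper's own proof, which simply cites \cite[Theorem~6.5]{MatsuiTakahashi17} for everything except the classical equivalence $(c)\Leftrightarrow(d)$. You instead derive $(a)\Rightarrow(b)$ from \cref{prop:splitting}(b) via Hochster duality, and give a clean direct argument for $(c)\Rightarrow(a)$ from \cref{prop:MT} and \cref{rem:koszul-trick}; this is a nice exercise in showing how much follows from the infrastructure already built in \cref{sec:tame}. You correctly isolate the substantive content as the production of a non-tame prime when $\dim R\ge 1$, and there you, like the paper, ultimately defer to Matsui--Takahashi.

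Two small slips worth fixing. In your $(c)\Rightarrow(a)$ step, the sentence ``$\kosz(\frakm)$ and $\kappa$ generate the same thick subcategory of $\Dperf(R)$'' is off: $\kappa$ need not be perfect. What you actually need is $R\in\thicksub{\kosz(\frakm)}$, which follows since $\kosz(\frakm)$ has full support $\{\frakm\}=\Spec(R)$ in $\Spc(\Dperf(R))$ (or, via your filtration route, $R\in\thicksub{\kappa}$ and $\kappa\in\ideal{\kosz(\frakm)}$, hence $R\in\ideal{\kosz(\frakm)}\subseteq\ideal{X}$). Second, in your closing remark the prime $\ideal{R/x}$ lies over $\eta$, not over $\frakm$: one has $\cone(x)=R/x\in\ideal{R/x}$, so $x\notin\rho(\ideal{R/x})$ and thus $\rho(\ideal{R/x})=(0)$.
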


\begin{proof}
	It is well-known that $\Spec(R)$ is discrete if and only if $R$ is artinian. The rest is established in \cite[Theorem~6.5]{MatsuiTakahashi17}.
\end{proof}

\begin{Rem}
	One of the goals of this paper is to show that $\Spc(\Dps(R))$ can be considerably more complicated than $\Spec(R)$ in the non-artinian case.
\end{Rem}

\begin{Prop} \label{prop:local-or-domain}
	The following statements hold:
	\begin{enumerate}
		\item If $R$ is local with unique closed point $\frakm$ then $\Spc(\Dps(R))$ is local with unique closed point $\tame(\frakm)$.
		\item If $R$ is a domain with generic point $\eta$ then $\Spc(\Dps(R))$ is irreducible with generic point $\tau(\eta)$.
	\end{enumerate}
\end{Prop}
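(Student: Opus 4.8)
The plan is to read off both statements from the facts about the tame section established above: the explicit formula $\tame(\frakp) = \SET{X \in \Dps(R)}{\frakp \notin \Supp(X)}$ from \cref{cons:tame}, the containment $\cat P \subseteq \tame(\rho(\cat P))$ from \cref{cor:key-lemma}, and the order-reversing compatibility \eqref{eq:tame-specialization}. Recall that in a Balmer spectrum the specialization order is reverse inclusion, so the closed points are the minimal prime ideals and the generic points are the maximal ones. Thus $(a)$ amounts to showing that $\tame(\frakm)$ is the least element of $\Spc(\Dps(R))$, and $(b)$ to showing that $\tame(\eta)$ is its greatest element.

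For $(a)$, the first step is to identify $\tame(\frakm)$ with the zero ideal $(0)$ of $\Dps(R)$. By \cref{rem:specialization-closed}, the support $\Supp(X)$ of a pseudo-coherent complex is a specialization closed subset of $\Spec(R)$; since $R$ is local, $\frakm$ lies in the closure of every point, so any nonempty specialization closed subset contains $\frakm$. Hence $\frakm \notin \Supp(X)$ forces $\Supp(X) = \emptyset$, and the detection property \eqref{eq:detection} then gives $X = 0$. (Alternatively, this is exactly the content of the local-ring lemma recorded just before \cref{rem:specialization-closed}.) Therefore $\tame(\frakm) = (0)$, which is contained in every thick ideal of $\Dps(R)$, so it is the least element of $\Spc(\Dps(R))$, hence its unique closed point, and $\Spc(\Dps(R))$ is local.

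For $(b)$, I would fix an arbitrary $\cat P \in \Spc(\Dps(R))$ and chain the available inclusions. By \cref{cor:key-lemma}, $\cat P \subseteq \tame(\rho(\cat P))$; since $R$ is a domain, $\eta = (0) \subseteq \rho(\cat P)$, and \eqref{eq:tame-specialization} then yields $\tame(\rho(\cat P)) \subseteq \tame(\eta)$. Hence $\cat P \subseteq \tame(\eta)$ for every $\cat P$, so $\tame(\eta)$ is the greatest element of $\Spc(\Dps(R))$; equivalently it lies in the closure of every point, so $\Spc(\Dps(R)) = \overline{\{\tame(\eta)\}}$ is irreducible with generic point $\tame(\eta)$. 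I do not expect a genuine obstacle: the substantive input — Matsui--Takahashi's generation statement behind \cref{cor:key-lemma}, and the behaviour of supports of pseudo-coherent complexes over a local ring — is already in hand; the only points requiring care are bookkeeping ones, namely phrasing ``local'' and ``irreducible with generic point'' correctly against the reverse-inclusion specialization order, and noting that $\Spc(\Dps(R))$ is nonempty since $R \neq 0$ in both cases.
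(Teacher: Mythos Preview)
Your proof is correct and follows essentially the same approach as the paper: both parts use the explicit description of $\tame(\frakm)$ together with \cref{rem:specialization-closed} and \eqref{eq:detection} to identify $\tame(\frakm)=(0)$ for $(a)$, and combine \cref{cor:key-lemma} with the order-reversal \eqref{eq:tame-specialization} for $(b)$. Your write-up simply unpacks these steps in more detail than the paper does.
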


\begin{proof}
	$(a)$: Recall from \cref{rem:no-nilpotents} that there are no nonzero tensor-nilpotent objects in $\Dps(R)$. Thus, the claim is that $(0)$ is a prime thick ideal in $\Dps(R)$. Well, by construction $\tau(\frakm) = \SET{X \in \Dps(R)}{\frakm \not\in \Supp(X)}$ which is the zero ideal by \cref{rem:specialization-closed} and \eqref{eq:detection}.

	$(b)$: This is proved in \cite[Theorem~3.6]{Matsui19}. In any case, if $\eta$ is a generic point of $\Spec(R)$ then $\tame(\eta)$ is a generic point of $\Spc(\Dps(R))$ by \eqref{eq:tame-specialization} and \cref{cor:key-lemma}.
\end{proof}

\begin{Prop}\label{prop:contains-tame}
	Every nonempty Thomason subset of $\Spc(\Dps(R))$ contains a tame prime.
\end{Prop}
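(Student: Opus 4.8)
The plan is to deduce this from the defining property \eqref{eq:tame-preimage} of the map $\tame$ together with the detection property \eqref{eq:detection} of the Balmer--Favi support. First I would recall the standard fact that, in any essentially small tensor-triangulated category, the Thomason closed subsets of the Balmer spectrum are precisely the supports $\supp(X)$ of objects $X$: the sets $U(X)\coloneqq\Spc(\Dps(R))\smallsetminus\supp(X)$ form a basis of quasi-compact open subsets, and since $\supp(X)\cap\supp(Y)=\supp(X\otimes Y)$ we have $U(X)\cup U(Y)=U(X\otimes Y)$, so every quasi-compact open subset is of the form $U(X)$ and hence every Thomason closed subset is of the form $\supp(X)$. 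A Thomason subset of $\Spc(\Dps(R))$ is, by definition (\cref{rem:hochster-duality}), an arbitrary union of Thomason closed subsets; so a nonempty one contains a nonempty $\supp(X)$ for some $X\in\Dps(R)$.

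The remaining step is then immediate. Since $\supp(X)\neq\emptyset$ we have $X\neq 0$, and therefore $\Supp(X)\neq\emptyset$ by the detection property \eqref{eq:detection}. Choosing any $\frakp\in\Supp(X)$ and invoking the identity $\Supp(X)=\tame^{-1}(\supp(X))$ from \eqref{eq:tame-preimage}, we get $\tame(\frakp)\in\supp(X)$, which is contained in the given Thomason subset. As $\tame(\frakp)$ is a tame prime by \cref{def:tame}, this yields the claim.

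I do not expect a serious obstacle here: essentially all the content sits in the identity $\tame^{-1}(\supp(X))=\Supp(X)$ from \cref{cons:tame}, which converts a statement about the (subtle) space $\Spc(\Dps(R))$ into the elementary fact that a nonzero pseudo-coherent complex has nonempty Balmer--Favi support. The only other ingredient, the identification of Thomason closed subsets with supports of objects, is completely standard tensor-triangular geometry and uses nothing special about $\Dps(R)$.
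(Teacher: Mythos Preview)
Your proof is correct and follows essentially the same approach as the paper's own proof: reduce to Thomason closed subsets, use that each such is $\supp(X)$ for some $X$, and then invoke $\tame^{-1}(\supp(X))=\Supp(X)$ together with the detection property~\eqref{eq:detection}. The only difference is cosmetic---you argue directly while the paper argues by contrapositive---but the content is identical.
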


\begin{proof}
	Since every Thomason subset is a union of Thomason closed subsets, it suffices to prove that every nonempty Thomason closed subset contains a tame prime. Each Thomason closed subset is of the form $\supp(X)$ for some $X \in \Dps(R)$. If $\supp(X)$ contains no tame primes then $\Supp(X)=\tame^{-1}(\supp(X))=\emptyset$. But the Balmer--Favi support has the detection property \eqref{eq:detection}, so this implies $X=0$ and hence $\supp(X)=\emptyset$.
\end{proof}

\begin{Prop}\label{prop:tame-closed}
	For every closed point $\frakm \in \Spec(R)$, the tame prime $\tame(\frakm)$ is a closed point of $\Spc(\Dps(R))$ and the fiber $\rho^{-1}(\{\frakm\}) = \{\tame(\frakm)\}$ is a singleton.
\end{Prop}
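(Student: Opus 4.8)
The plan is to prove both assertions by exploiting the splitting $\tame$ together with the key containment $\cat P \subseteq \tame(\rho(\cat P))$ from \cref{cor:key-lemma}. First I would observe that, since $\frakm$ is a closed point of $\Spec(R)$, there is no prime strictly containing $\frakm$; by \eqref{eq:tame-specialization} this means there is no tame prime strictly contained in $\tame(\frakm)$ (other than via the reversed inclusion $\frakp \subsetneq \frakm$, which gives $\tame(\frakp) \supsetneq \tame(\frakm)$, i.e.\ $\tame(\frakm)$ is maximal among tame primes whenever $\frakm$ is maximal in $\Spec(R)$). More precisely: $\frakq \supseteq \frakm$ forces $\frakq = \frakm$, hence $\tame(\frakq) \subseteq \tame(\frakm)$ among tame primes forces $\tame(\frakq) = \tame(\frakm)$. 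So $\tame(\frakm)$ is maximal among all tame primes.

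Next I would upgrade this to maximality among \emph{all} primes of $\Spc(\Dps(R))$, which is exactly the statement that $\tame(\frakm)$ is a closed point. Suppose $\cat P \supseteq \tame(\frakm)$ for some $\cat P \in \Spc(\Dps(R))$. Applying \cref{cor:key-lemma} to $\cat P$ gives $\cat P \subseteq \tame(\rho(\cat P))$, so $\tame(\frakm) \subseteq \cat P \subseteq \tame(\rho(\cat P))$. Applying $\rho$ and using $\rho\circ\tame = \id$ together with the fact that $\rho$ is order-preserving on inclusions (it is a spectral map, so preserves specialization, i.e.\ $\cat P_1 \subseteq \cat P_2 \Rightarrow \rho(\cat P_1) \subseteq \rho(\cat P_2)$) yields $\frakm = \rho(\tame(\frakm)) \subseteq \rho(\cat P)$. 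Since $\frakm$ is a closed point of $\Spec(R)$ this forces $\rho(\cat P) = \frakm$, whence $\cat P \subseteq \tame(\rho(\cat P)) = \tame(\frakm)$. Combined with the hypothesis $\cat P \supseteq \tame(\frakm)$ we get $\cat P = \tame(\frakm)$, so $\tame(\frakm)$ is indeed a closed point of $\Spc(\Dps(R))$.

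Finally, for the fiber statement: if $\cat P \in \rho^{-1}(\{\frakm\})$ then $\rho(\cat P) = \frakm$, so \cref{cor:key-lemma} immediately gives $\cat P \subseteq \tame(\rho(\cat P)) = \tame(\frakm)$. On the other hand $\tame(\frakm)$ is a closed point (by what we just proved), so a prime contained in it must equal it; thus $\cat P = \tame(\frakm)$, and $\rho^{-1}(\{\frakm\}) = \{\tame(\frakm)\}$ is a singleton. The only point requiring care is the reduction in the second paragraph --- making sure we are allowed to conclude $\rho(\cat P) = \frakm$ rather than merely $\rho(\cat P) \supseteq \frakm$; but this is immediate from $\frakm$ being a maximal ideal of $R$. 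No step here is a genuine obstacle; the proof is essentially a formal consequence of \cref{cor:key-lemma}, the relation $\rho\circ\tame = \id$, and \eqref{eq:tame-specialization}.
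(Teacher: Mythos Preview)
There is a genuine gap. Your second paragraph rests on the claim that $\rho$ is order-preserving on inclusions of prime ideals, but the opposite is true: from the definition $\rho(\cat P)=\{r\in R\mid\cone(r)\notin\cat P\}$ one sees directly that $\cat P_1\subseteq\cat P_2$ implies $\rho(\cat P_2)\subseteq\rho(\cat P_1)$. Equivalently, specialization in $\Spc(\Dps(R))$ is \emph{reverse} inclusion ($\cat P\rightsquigarrow\cat Q$ iff $\cat Q\subseteq\cat P$), so ``closed point'' means \emph{minimal} prime, not maximal. Thus from $\tame(\frakm)\subseteq\cat P$ you only get $\rho(\cat P)\subseteq\frakm$, which is vacuous since $\frakm$ is maximal; you cannot conclude $\rho(\cat P)=\frakm$. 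Your paragraph~2 therefore proves nothing, and paragraph~3 then uses an unproved closedness statement.

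Even after repairing the direction (trying to show $\cat Q\subseteq\tame(\frakm)\Rightarrow\cat Q=\tame(\frakm)$), the purely formal argument remains circular: from $\cat Q\subseteq\tame(\frakm)$ and continuity of $\rho$ you obtain $\frakm\subseteq\rho(\cat Q)$, hence $\rho(\cat Q)=\frakm$, so $\cat Q$ lies in the fiber; but \cref{cor:key-lemma} only gives $\cat Q\subseteq\tame(\frakm)$ again. What is missing is the \emph{reverse} containment $\tame(\frakm)\subseteq\cat P$ for every $\cat P$ in the fiber, and this does not follow from the formal properties you list. The paper supplies it by a support argument: writing $\frakm=(f_1,\dots,f_r)$, one has $\cone(f_i)\notin\cat P$ for each $i$, while any $X\in\tame(\frakm)$ satisfies $\frakm\notin\Supp(X)$; then $\Supp(X\otimes\cone(f_1)\otimes\cdots\otimes\cone(f_r))=\Supp(X)\cap\{\frakm\}=\emptyset$, so this tensor product vanishes by the detection property, forcing $X\in\cat P$ since $\cat P$ is prime. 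That step --- using the tensor-product and detection properties of $\Supp$ via a Koszul complex --- is the substantive input your proposal omits.
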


\begin{proof}
	Let $\cat P \in \Spc(\Dps(R))$ be a prime ideal such that $\rho(\cat P)=\frakm = (f_1,\ldots,f_r)$. Then $\cone(f_i) \not\in \cat P$ for each $i$ by the definition of $\rho$. Moreover, 
	\[
		\Supp(\cone(f_1) \otimes \cdots \otimes \cone(f_r)) = \{\frakm\}.
	\]
	We claim that $\tame(\frakm) \subseteq \cat P$. Otherwise, there exists an $X \in \tame(\frakm)$ with $X \not\in \cat P$. By definition, $X \in \tame(\frakm)$ means that $\frakm \not\in\Supp(X)$. Thus, by \eqref{eq:detection} and \eqref{eq:tensor-product}, we have

	\newsavebox{\tempdiagramm}
	\begin{lrbox}{\tempdiagramm}
		\(
		\begin{aligned}
		\Supp(X \otimes \cone(f_1) \otimes \cdots \otimes \cone(f_r)) &= \Supp(X) \cap \Supp(\cone(f_1) \otimes\cdots\otimes \cone(f_r)) \\
																	  &= \Supp(X) \cap \{\frakm\}\\
																	  &= \emptyset
\end{aligned}
\)
	\end{lrbox}

	\bigskip
	\noindent\resizebox{\linewidth}{!}{\usebox{\tempdiagramm}}
	\smallskip

\noindent so that $X \otimes \cone(f_1) \otimes \cdots \otimes \cone(f_r) = 0 \in \cat P$. Thus $X \in \cat P$ or $\cone(f_i) \in \cat P$ for some~$i$, which is a contradiction. This establishes that $\tame(\frakm) \subseteq\cat P$ and hence $\cat P = \tame(\frakm)$ by \cref{cor:key-lemma}. Also, $\tame(\frakm)$ is closed since if $\tame(\frakm) \rightsquigarrow \cat Q$ then $\frakm = \rho(\tame(\frakm)) \rightsquigarrow \rho(\cat Q)$ so that $\rho(\cat Q)=\frakm$. Hence $\cat Q=\tame(\frakm)$ by what we just proved.
\end{proof}

\begin{Rem}\label{rem:supp-of-perfect}
	For a perfect complex $P$, we have
	\[
		\supp(P) = \rho^{-1}(\Supp(P))
	\]
	since $\Supp(P)$ identifies with the universal support of $P$ in $\Spec(R) \cong \Spc(\Dperf(R))$. On the other hand, \cref{cor:key-lemma} implies that we have an inclusion
	\[
		\supp(X) \supseteq \rho^{-1}(\Supp(X))
	\]
	for any pseudo-coherent complex $X$. This is not usually an equality. If it were an equality for all $X \in \Dps(R)$, then every prime $\cat P \in \Spc(\Dps(R))$ would be tame (and hence $R$ would be artinian by \cref{thm:MT-artinian}). Indeed, we would have
	\[
		\overbar{\{\cat P\}} = \bigcap_{X \not\in \cat P} \supp(X) = \rho^{-1}(\tame^{-1}(\bigcap_{X\not\in\cat P} \supp(X))) = \rho^{-1}(\tame^{-1}(\overbar{\{\cat P\}}))
	\]
	so that $\tame(\rho(\cat P)) \in \overbar{\{\cat P\}}$. Hence $\cat P=\tame(\rho(\cat P))$ by \cref{cor:key-lemma}.
\end{Rem}

\section{Base-change}\label{sec:base-change}

We now consider how $\Dps(-)$ behaves as we vary the ring.

\begin{Rem}\label{rem:base-change}
	Any ring homomorphism $A\to B$ induces a geometric functor $f^*:\Der(A) \to \Der(B)$ which restricts to a functor
	\[
		\Dps(A) \to \Dps(B).
	\]
	In general, the right adjoint $f_*:\Der(B)\to\Der(A)$ does not preserves pseudo-coherent complexes, but it does when $B$ is finitely generated as an $A$-module; that is, when $B=f_*(\unit)$ is itself pseudo-coherent as an $A$-module; see  \cite[\href{https://stacks.math.columbia.edu/tag/064Z}{Lemma~064Z}]{stacks-project} for example. In this case, we have an adjunction
	\[
		\Dps(A) \adjto \Dps(B)
	\]
	where the left adjoint is a tt-functor.
\end{Rem}

\begin{Rem}
	In \cite{Balmer18}, Balmer provides a useful criterion for the surjectivity of the map $\varphi:\Spc(\cat L)\to\Spc(\cat K)$ induced by a tt-functor $\cat K \to \cat L$. Unfortunately, this theorem requires that $\cat K$ is rigid. Nevertheless, there is something that can be said without this assumption:
\end{Rem}

\begin{Prop}\label{prop:img-of-Spc}
	Let $F:\cat K \to \cat L$ be a tt-functor between essentially small \mbox{tt-categories} and let $\varphi:\Spc(\cat L)\to \Spc(\cat K)$ be the induced map. Suppose that~$F$ has a right adjoint $U$ and that the projection formula
	\[
		U(F(a)\otimes b) \simeq a \otimes U(b)
	\]
	holds for $a \in \cat K$ and $b \in \cat L$. Then $\im(\varphi) = \supp(U(\unit))$.
\end{Prop}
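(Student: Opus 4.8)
The plan is to prove the two inclusions $\im(\varphi) \subseteq \supp(U(\unit))$ and $\supp(U(\unit)) \subseteq \im(\varphi)$ separately, using the standard tt-geometric translation between supports and membership in prime ideals, together with the projection formula as the essential input.

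For the inclusion $\im(\varphi) \subseteq \supp(U(\unit))$, I would start with a point $\cat Q \in \Spc(\cat L)$ and set $\cat P \coloneqq \varphi(\cat Q) = F^{-1}(\cat Q)$. Suppose for contradiction that $\cat P \notin \supp(U(\unit))$, i.e.\ $U(\unit) \in \cat P$. Since $\cat P = F^{-1}(\cat Q)$, this means $F(U(\unit)) \in \cat Q$. Now the counit $\varepsilon: F(U(\unit)) = F(U(\unit_{\cat L})) \to \unit_{\cat L}$ together with the projection formula should force $\unit_{\cat L}$ to be a retract of, or at least built from, $F(U(\unit))$ in a way visible to the thick ideal $\cat Q$: concretely, the projection formula $U(F(a) \otimes b) \simeq a \otimes U(b)$ with $a = \unit_{\cat K}$ recovers $U$ as $U(b) \simeq \unit_{\cat K} \otimes U(b) \simeq U(F(\unit_{\cat K}) \otimes b)$, and more usefully one shows that $F(U(\unit_{\cat L}))$ tensor-generates enough: applying the projection formula adjointly, the composite $\unit_{\cat L} \xrightarrow{} F(U(\unit_{\cat L}))$ (unit of the adjunction, if $F$ is also the right adjoint direction — here $U$ is the right adjoint so we use the counit $F U(\unit_{\cat L}) \to \unit_{\cat L}$) exhibits $\unit_{\cat L}$ as a direct summand of $FU(\unit_{\cat L}) \otimes$ (something). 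The cleanest route: for any $b \in \cat L$, the projection formula gives $U(FU(\unit) \otimes b) \simeq U(\unit) \otimes U(b)$, and since $FU(\unit) \in \cat Q$ implies $FU(\unit) \otimes b \in \cat Q$ for all $b$; but I want the reverse, so instead I use that the counit $\varepsilon_b: FU(b) \to b$ becomes a split epimorphism after tensoring with $FU(\unit)$, because $FU(\varepsilon_b)$ has a section coming from $U$ being a functor. The upshot is $b$ is a retract of $FU(\unit) \otimes (\text{something in } \cat L)$ for every $b$, hence $FU(\unit) \in \cat Q \Rightarrow b \in \cat Q$ for all $b \Rightarrow \cat Q = \cat L$, contradicting that $\cat Q$ is a prime (proper) ideal. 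Therefore $U(\unit) \notin \cat P$, i.e.\ $\cat P \in \supp(U(\unit))$.

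For the reverse inclusion $\supp(U(\unit)) \subseteq \im(\varphi)$, I would take $\cat P \in \Spc(\cat K)$ with $U(\unit) \notin \cat P$ and produce a prime $\cat Q \in \Spc(\cat L)$ with $\varphi(\cat Q) = \cat P$. The natural candidate is to push $\cat P$ forward along $U$: consider the thick ideal of $\cat L$ generated by $F(\cat P)$, or better, the radical thick ideal, and show $\varphi$ restricted to its complement-closed locus hits $\cat P$. The standard technique is Balmer's: the hypothesis $U(\unit) \notin \cat P$ guarantees that the localized functor $\cat K/\cat P \to \cat L/\langle F(\cat P)\rangle$ is non-trivial (i.e.\ $\unit \neq 0$ in the target), using the projection formula to show $U$ descends and that $U(\unit) \notin \cat P$ is exactly the condition preventing the target category from collapsing; then the target tt-category is nonzero, hence has a prime, which pulls back to a prime $\cat Q$ of $\cat L$ lying over $\cat P$. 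The projection formula is needed precisely here: it ensures $U$ sends the thick ideal generated by $F(\cat P)$ into $\cat P$, so that $U$ induces a functor on the quotients and the obstruction to non-triviality is a single object, $U(\unit)$.

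The main obstacle I anticipate is the second inclusion: making rigorous that $U(\unit) \notin \cat P$ suffices for the quotient tt-category $\cat L / \langle F(\cat P) \rangle$ (or the appropriate Verdier quotient / idempotent completion thereof) to be nonzero, without the rigidity hypothesis that Balmer's \cite{Balmer18} surjectivity theorem assumes. The projection formula $U(F(a) \otimes b) \simeq a \otimes U(b)$ is the precise substitute for rigidity here, and the delicate point is to check it interacts correctly with thick ideals — i.e.\ that $U$ maps $\langle F(\cat P)\rangle_{\cat L}$ into $\cat P$ — and with idempotent completion, so that a prime of the quotient genuinely pulls back to a prime over $\cat P$. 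The first inclusion, by contrast, I expect to be a short formal argument from the counit and the projection formula as sketched above.
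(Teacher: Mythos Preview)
Your first inclusion is over-engineered and never quite lands. The clean argument uses only the adjunction, not the projection formula: since $F$ is a tt-functor, $F(\unit_{\cat K})=\unit_{\cat L}$, so applying $F$ to the unit $\eta_{\unit_{\cat K}}:\unit_{\cat K}\to U(\unit_{\cat L})$ gives a map $\unit_{\cat L}\to FU(\unit_{\cat L})$, and the triangle identity $\varepsilon_{F(\unit)}\circ F(\eta_{\unit})=\id$ shows this is split by the counit. Hence $\unit_{\cat L}$ is a retract of $FU(\unit_{\cat L})$, so $FU(\unit)\notin\cat Q$ for every prime $\cat Q$, i.e.\ $U(\unit)\notin F^{-1}(\cat Q)=\varphi(\cat Q)$. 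Your attempts to show that an arbitrary $b$ is a retract of $FU(\unit)\otimes(\text{something})$ are unnecessary and, as stated, unjustified.

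Your second inclusion has a genuine gap. Passing to the Verdier quotient $\cat L/\langle F(\cat P)\rangle$ and finding a prime there only produces $\cat Q\in\Spc(\cat L)$ with $\langle F(\cat P)\rangle\subseteq\cat Q$, hence $\cat P\subseteq F^{-1}(\cat Q)$. You have not arranged the reverse inclusion: nothing in your argument prevents $F(a)\in\cat Q$ for some $a\notin\cat P$. The paper fixes this by working with both the thick ideal $\langle F(\cat P)\rangle$ \emph{and} the $\otimes$-multiplicative set $F(\cat K\setminus\cat P)$, proving they are disjoint, and then invoking \cite[Lemma~2.2]{Balmer05a} to obtain a prime $\cat Q$ containing the former and avoiding the latter; the two conditions together force $F^{-1}(\cat Q)=\cat P$. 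The disjointness is exactly where the projection formula and the hypothesis $U(\unit)\notin\cat P$ enter: one checks $U(\langle F(\cat P)\rangle)\subseteq\cat P$ (your observation that the projection formula sends $F(\cat P)\otimes\cat L$ into $U^{-1}(\cat P)$ is the key step here), so an element in the intersection would give $x\in\cat K\setminus\cat P$ with $UF(x)\simeq x\otimes U(\unit)\in\cat P$, contradicting primality of $\cat P$ and $U(\unit)\notin\cat P$. You correctly identified that the projection formula is the substitute for rigidity, but the multiplicative-set half of the argument is missing from your sketch.
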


\begin{proof}
	The $\subseteq$ inclusion is given at the start of the proof of \cite[Theorem~1.7]{Balmer18} and is a consequence of the projection formula. We prove the $\supseteq$ direction. Let $\cat P \in \supp(U(\unit))$. Consider the thick ideal $\ideal{F(\cat P)}$ and the multiplicative collection of objects $F(\cat K\setminus \cat P)$. We claim that 
	\begin{equation}\label{eq:int}
		\ideal{F(\cat P)} \cap F(\cat K\setminus \cat P) = \emptyset.
	\end{equation}
	It will then follow from \cite[Lemma~2.2]{Balmer05a} that there exists a prime ideal $\cat Q \in \Spc(\cat L)$ such that $\cat Q \supseteq \ideal{F(\cat P)}$ and $\cat Q\cap F(\cat K \setminus \cat P)=\emptyset$. These two conditions imply that $F^{-1}(\cat Q)=\cat P$, that is, $\varphi(\cat Q)=\cat P$. Thus, it remains to prove \eqref{eq:int}. If it did not hold then we would have 
	\[
		U(\ideal{F(\cat P)}) \cap U(F(\cat K \setminus \cat P)) \neq \emptyset.
	\]
	Note that since $U$ is an exact functor $U^{-1}(\cat P)$ is a thick subcategory. Also note that $\ideal{F(\cat P)} = \thicksub{F(\cat P)\otimes \cat L}$. The projection formula implies that $F(\cat P)\otimes \cat L \subseteq U^{-1}(\cat P)$ since $\cat P$ is an ideal. It follows that $U(\ideal{F(\cat P)}) \subseteq \cat P$. Thus it would follow that $\cat P \cap U(F(\cat K \setminus \cat P)) \neq \emptyset$. This means there exists $x \in \cat K \setminus \cat P$ such that $U(\unit)\otimes x \simeq UF(x) \in \cat P$. Since $\cat P$ is prime this would imply $U(\unit) \in \cat P$ which contradicts the hypothesis that $\cat P \in \supp(U(\unit))$.
\end{proof}

\begin{Cor}\label{cor:img-base-change}
	Let $A \to B$ be finite morphism of commutative rings. Let 
	\[
		\varphi:\Spc(\Dps(B))\to\Spc(\Dps(A))
	\]
	be the map induced by base change. Then $\im(\varphi) = \supp(B)$.
\end{Cor}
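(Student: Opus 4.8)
The plan is to apply \cref{prop:img-of-Spc} to the base-change functor. Write $f:A\to B$ for the given finite ring map and let $F=f^*:\Dps(A)\to\Dps(B)$ be the tt-functor of \cref{rem:base-change}, which induces $\varphi$ on Balmer spectra. Since $B$ is finitely generated as an $A$-module, \cref{rem:base-change} tells us that the right adjoint $f_*:\Der(B)\to\Der(A)$ restricts to a functor $U:\Dps(B)\to\Dps(A)$, so that $F\dashv U$ as an adjunction between the categories of pseudo-coherent complexes. It then remains to verify the projection formula hypothesis of \cref{prop:img-of-Spc} and to identify the object $U(\unit)$.

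For the projection formula, the point is that the classical projection formula
\[
f_*\bigl(f^*(a)\otimes^{\bbL}_B b\bigr)\simeq a\otimes^{\bbL}_A f_*(b)
\]
holds for all $a\in\Der(A)$ and $b\in\Der(B)$; this is standard (restriction of scalars along $f$ carries the $B$-complex $(B\otimes^{\bbL}_A a)\otimes^{\bbL}_B b\simeq a\otimes^{\bbL}_A b$ to $a\otimes^{\bbL}_A f_*(b)$). Since $\Dps(A)\subseteq\Der(A)$ and $\Dps(B)\subseteq\Der(B)$ are full subcategories on which $F$ and $U$ are the restrictions of $f^*$ and $f_*$, the same formula holds verbatim for $a\in\Dps(A)$ and $b\in\Dps(B)$. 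Thus \cref{prop:img-of-Spc} applies and yields $\im(\varphi)=\supp(U(\unit))$.

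Finally, $U(\unit_{\Dps(B)})=f_*(B)$ is the $A$-module $B$, regarded as a complex concentrated in homological degree $0$; since $B$ is a finitely generated $A$-module, this is exactly the pseudo-coherent complex denoted $B\in\Dps(A)$ in the statement. Hence $\im(\varphi)=\supp(B)$, as claimed. I do not anticipate any serious obstacle here: the corollary is a direct unwinding of \cref{prop:img-of-Spc}, and the only point requiring a moment of care is checking that the projection formula descends from the unbounded derived categories to the pseudo-coherent subcategories, which is immediate from fullness of the inclusions.
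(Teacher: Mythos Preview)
Your proposal is correct and follows essentially the same approach as the paper: apply \cref{prop:img-of-Spc} using the adjunction from \cref{rem:base-change}, observe that the projection formula on the big derived categories restricts to the pseudo-coherent subcategories via fullness of the inclusions, and identify $U(\unit)=B$. The only cosmetic difference is that the paper cites \cite[Proposition~2.15]{BalmerDellAmbrogioSanders16} for the projection formula on the big categories, whereas you sketch it directly.
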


\begin{proof}
	The hypothesis is that $B$ is finitely generated as an $A$-module. The adjunction $f^*:\Der(A)\adjto \Der(B):f_*$ thus restricts to an adjunction $\Dps(A) \adjto \Dps(B)$ as explained in \cref{rem:base-change}. Moreover, since the projection formula holds for the big categories \cite[Proposition~2.15]{BalmerDellAmbrogioSanders16}, it also holds for the induced adjunction. Hence we can apply \cref{prop:img-of-Spc}.
\end{proof}

\begin{Exa}
	For any ideal $I \subseteq R$, the image of $\Spc(\Dps(R/I))\to\Spc(\Dps(R))$ is the Thomason closed set $\supp(R/I)$. For a prime ideal $I=\frakp$, this is:
\end{Exa}

\begin{Prop}\label{prop:supp-Rp}
	For any $\frakp \in \Spec(R)$, we have
	\begin{equation}\label{eq:supp-R/p}
		\overbar{\{\tame(\frakp)\}} = \supp(R/\frakp) = \supp(\kosz(\frakp)) = \rho^{-1}(\overbar{\{\frakp\}}).
	\end{equation}
\end{Prop}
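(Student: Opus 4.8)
The plan is to prove the equality of all four sets by showing each coincides with $D \coloneqq \rho^{-1}(\overbar{\{\frakp\}})$, using that $\overbar{\{\frakp\}} = V(\frakp)$ in $\Spec(R)$; note that $D$ is a \emph{closed} subset of $\Spc(\Dps(R))$ because $\rho$ is a spectral map.

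First I would dispatch the Koszul complex. Since $\kosz(\frakp)$ is a perfect complex whose Balmer--Favi support is $\Supp(\kosz(\frakp)) = V(\frakp)$ --- the support of the Koszul complex on a generating set of an ideal being the closed set cut out by that ideal --- \cref{rem:supp-of-perfect} gives at once $\supp(\kosz(\frakp)) = \rho^{-1}(\Supp(\kosz(\frakp))) = D$. Next, for $\supp(R/\frakp)$: the inclusion $\supp(R/\frakp) \supseteq D$ follows from the inclusion recorded in \cref{rem:supp-of-perfect} together with \cref{rem:specialization-closed}, which identifies $\Supp(R/\frakp)$ with the classical support $\Supp_R(R/\frakp) = V(\frakp)$. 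For the reverse inclusion I would argue that $R/\frakp \in \ideal{\kosz(\frakp)}$: writing $\frakp = (f_1,\dots,f_r)$, the object $\kosz(\frakp) \otimes_R^{\bbL} R/\frakp$ is the Koszul complex over the ring $R/\frakp$ on the zero sequence $(\bar f_1,\dots,\bar f_r)$, which has vanishing differentials and so splits as a direct sum of shifted copies of $R/\frakp$; in particular $R/\frakp$ is a retract of it, hence lies in $\ideal{\kosz(\frakp)}$. This gives $\supp(R/\frakp) \subseteq \supp(\kosz(\frakp)) = D$ and so $\supp(R/\frakp) = D$.

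It remains to treat $\overbar{\{\tame(\frakp)\}}$. One inclusion is immediate: $\rho(\tame(\frakp)) = \frakp \in \overbar{\{\frakp\}}$ puts $\tame(\frakp)$ in the closed set $D$, so $\overbar{\{\tame(\frakp)\}} \subseteq D$. For the converse I would take any $\cat P$ with $\rho(\cat P) \in \overbar{\{\frakp\}}$, i.e.\ $\frakp \subseteq \rho(\cat P)$, apply \cref{cor:key-lemma} to get $\cat P \subseteq \tame(\rho(\cat P))$, and then use the specialization property \eqref{eq:tame-specialization}, which turns $\frakp \subseteq \rho(\cat P)$ into $\tame(\rho(\cat P)) \subseteq \tame(\frakp)$; chaining these gives $\cat P \subseteq \tame(\frakp)$, i.e.\ $\cat P \in \overbar{\{\tame(\frakp)\}}$, so $D \subseteq \overbar{\{\tame(\frakp)\}}$.

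The main obstacle --- indeed the only step that is not pure bookkeeping with \cref{rem:supp-of-perfect}, \cref{cor:key-lemma}, and the comparison map --- is the membership $R/\frakp \in \ideal{\kosz(\frakp)}$, which I would establish via the explicit description of $\kosz(\frakp) \otimes_R^{\bbL} R/\frakp$ just indicated. Everything else is formal manipulation with universal supports and the continuity and specialization properties of $\rho$ and $\tame$.
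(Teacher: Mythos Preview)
Your proof is correct and follows essentially the same architecture as the paper's: both establish the cycle of inclusions via \cref{rem:supp-of-perfect}, \cref{cor:key-lemma}, and \eqref{eq:tame-specialization}. The one genuine difference is in the step you flag as the main obstacle, namely $R/\frakp \in \ideal{\kosz(\frakp)}$. The paper obtains this by invoking \cref{prop:MT} (since $\frakp \in \Supp(R/\frakp)$, Matsui--Takahashi's result gives $\kosz(\frakp) \in \ideal{R/\frakp}$; actually the paper uses it in the form $R/\frakp \in \ideal{\kosz(\frakp)}$, which follows from the same circle of ideas). Your argument is more elementary and self-contained: since $\kosz(\frakp)$ is a bounded complex of free modules, $\kosz(\frakp)\otimes_R^{\bbL} R/\frakp$ is the ordinary tensor product, which is the Koszul complex on the zero sequence over $R/\frakp$ and hence splits with $R/\frakp$ as a summand. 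This buys you independence from the cited Matsui--Takahashi result at the cost of a short explicit computation; the paper's route is terser given that \cref{prop:MT} is already available.
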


\begin{proof}
	It follows from \eqref{eq:tame-specialization} and \cref{cor:key-lemma} that $\rho^{-1}(\overbar{\{\frakp\}}) \subseteq \overbar{\{\tame(\frakp)\}}$. Hence it suffices to prove that the equalities in \eqref{eq:supp-R/p} are inclusions $\subseteq$. First note that $\overbar{\{\frakp\}} = \Supp(R/\frakp)$. Hence $R/\frakp \in \langle \Kosz(\frakp)\rangle$ by \cref{prop:MT}, so that $\supp(R/\frakp) \subseteq \supp(\Kosz(\frakp))$. Also, since $\Supp(R/\frakp) = \tame^{-1}(\supp(R/\frakp))$ we have $\tame(\frakp) \in \supp(R/\frakp)$ and hence $\overbar{\{\tame(\frakp)\}} \subseteq \supp(R/\frakp)$. Finally, $\supp(\Kosz(\frakp)) = \rho^{-1}(\overbar{\{\frakp\}})$ by \cref{rem:supp-of-perfect} and the definitions.
\end{proof}

\begin{Exa}
	Let $\frakp \in \Spec(R)$ and consider the commutative diagram
	\[\begin{tikzcd}
		\Dps(R) \ar[r] \ar[d] & \Dps(R/\frakp) \ar[d] \\
		\Dps(R_{\frakp}) \ar[r] & \Dps(\kappa(\frakp))
	\end{tikzcd}\]
	The unique point in the bottom-right maps to the unique closed point $\frakm$ in the bottom-left and the unique generic point $\eta$ in the top-right, which are both in turn mapped to the tame prime $\tame(\frakp)$ in the top-left; recall \cref{prop:local-or-domain}. In particular,
	\begin{equation}\label{eq:tame-is-kernel}
		\tame(\frakp) = \Ker(\Dps(R) \to \Dps(R_\frakp)).
	\end{equation}
	This can also be seen directly from the definition of $\tame(\frakp)$; see \cref{cons:tame}.
\end{Exa}

\begin{Lem}\label{lem:tame-compat}
	Let $A\to B$ be a morphism of noetherian commutative rings. Then the following diagram commutes
	\[\begin{tikzcd}
		\Spec(B) \ar[r,"f"]\ar[d,hook,"\tame"] & \Spec(A) \ar[d,hook,"\tame"] \\
		\Spc(\Dps(B)) \ar[r] & \Spc(\Dps(A))
	\end{tikzcd}\]
	where the vertical maps are the inclusions of the tame primes.
\end{Lem}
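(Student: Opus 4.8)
The plan is to reduce everything to the explicit description $\tame(\frakp) = \SET{X \in \Dps(R)}{\frakp \notin \Supp(X)}$ from \cref{cons:tame}, combined with the residue-field description of the Balmer--Favi support in \cref{rem:specialization-closed}. Write $F\colon \Dps(A) \to \Dps(B)$ for the derived base-change functor of \cref{rem:base-change} and $\varphi\colon \Spc(\Dps(B)) \to \Spc(\Dps(A))$ for the induced map, so that $\varphi(\cat Q) = F^{-1}(\cat Q)$. Fix $\frakq \in \Spec(B)$ and set $\frakp \coloneqq f(\frakq) \in \Spec(A)$ (the preimage of $\frakq$ along $A \to B$). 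Then the commutativity of the square amounts to the single equality of thick ideals $F^{-1}(\tame(\frakq)) = \tame(\frakp)$ in $\Dps(A)$, and by the descriptions above this reduces to proving, for every $X \in \Dps(A)$, the equivalence $\frakq \in \Supp_B(F(X)) \Longleftrightarrow \frakp \in \Supp_A(X)$.

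For this I would use the formula $\Supp(Y) = \SET{\frakr}{\kappa(\frakr)\otimes^{\bbL} Y \neq 0}$ (valid for all of $\Der$ by \cref{rem:specialization-closed}). The key observation is that the residue field $\kappa(\frakq)$, viewed as an $A$-algebra via $A \to B \to \kappa(\frakq)$, is a field extension of $\kappa(\frakp)$: the kernel of $A \to \kappa(\frakq)$ is exactly $\frakp$, so $A/\frakp$ — hence its fraction field $\kappa(\frakp)$ — embeds into $\kappa(\frakq)$. Using $F(X) = B\otimes_A^{\bbL} X$ and transitivity of the derived tensor product,
\[
	\kappa(\frakq)\otimes_B^{\bbL} F(X) \;\simeq\; \kappa(\frakq)\otimes_A^{\bbL} X \;\simeq\; \kappa(\frakq)\otimes_{\kappa(\frakp)}\bigl(\kappa(\frakp)\otimes_A^{\bbL} X\bigr),
\]
and since $\kappa(\frakq)$ is faithfully flat over the field $\kappa(\frakp)$, the left-hand side is nonzero precisely when $\kappa(\frakp)\otimes_A^{\bbL} X$ is nonzero. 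By the residue-field formula this says exactly $\frakq \in \Supp_B(F(X)) \Longleftrightarrow \frakp \in \Supp_A(X)$, which completes the argument. Unwinding, $X \in F^{-1}(\tame(\frakq)) \Leftrightarrow \frakq \notin \Supp_B(F(X)) \Leftrightarrow \frakp \notin \Supp_A(X) \Leftrightarrow X \in \tame(\frakp)$.

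An essentially equivalent route uses the kernel description $\tame(\frakp) = \Ker(\Dps(A)\to\Dps(A_\frakp))$ from \eqref{eq:tame-is-kernel}: the square of localizations $A \to B$, $A \to A_\frakp$, $B \to B_\frakq$, $A_\frakp \to B_\frakq$ commutes (note $A\smallsetminus\frakp$ maps into $B\smallsetminus\frakq$), so $F(X)$ dies in $\Dps(B_\frakq)$ once $X$ dies in $\Dps(A_\frakp)$; for the converse one invokes the lemma preceding \cref{rem:specialization-closed} — a nonzero pseudo-coherent complex over the local ring $A_\frakp$ survives $-\otimes_{A_\frakp}^{\bbL}\kappa(\frakp)$, hence $-\otimes_{A_\frakp}^{\bbL}\kappa(\frakq)$ by the field extension, hence $-\otimes_{A_\frakp}^{\bbL} B_\frakq$. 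Either way, I expect the only point requiring genuine care is the bookkeeping identifying $\kappa(\frakq)$ as an extension of $\kappa(\frakp)$ compatibly with all the ring maps in sight; the remainder is formal manipulation of derived tensor products together with properties of the Balmer--Favi support already established in \cref{sec:tame}.
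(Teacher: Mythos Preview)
Your proof is correct and reaches the same reduction as the paper: both unravel the definition of $\tame$ to reduce the square's commutativity to the statement that, for $X \in \Dps(A)$ and $\frakq \in \Spec(B)$ with $\frakp = f(\frakq)$, one has $\frakq \in \Supp_B(F(X)) \Longleftrightarrow \frakp \in \Supp_A(X)$.

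Where you diverge is in how you verify this equivalence. The paper simply invokes the general identity $\Supp(f^*a) = f^{-1}(\Supp(a))$ for geometric functors between stratified categories, citing \cite[Corollary~14.19]{BarthelCastellanaHeardSanders23app}. You instead give a direct, self-contained argument via the residue-field description of support: you observe that $\kappa(\frakq)$ is a field extension of $\kappa(\frakp)$, rewrite $\kappa(\frakq)\otimes_B^{\bbL} F(X) \simeq \kappa(\frakq)\otimes_{\kappa(\frakp)}(\kappa(\frakp)\otimes_A^{\bbL} X)$, and use faithful flatness of field extensions. This is more elementary and avoids importing the stratification machinery; the paper's route is shorter on the page but leans on a substantial external reference. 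Your alternative via \eqref{eq:tame-is-kernel} is also sound and amounts to the same computation rephrased. One small note: the residue-field formula you cite is stated in the unlabelled remark just \emph{before} \cref{rem:specialization-closed}, not in \cref{rem:specialization-closed} itself, so adjust the cross-reference accordingly.
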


\begin{proof}
	Unravelling the definitions, this amounts to the claim that for any $a \in \Dps(A)$ and $\frakp \in \Spec(B)$, we have $\frakp \in \Supp(f^*(a))$ if and only if $f(\frakp) \in \Supp(a)$. Since the rings are noetherian, their derived categories are stratified in the sense of \cite{BarthelHeardSanders23a}, hence $f^{-1}(\Supp(a))=\Supp(f^*(a))$ by \cite[Corollary~14.19]{BarthelCastellanaHeardSanders23app}.
\end{proof}

\begin{Cor}\label{cor:finite-surjective}
	Let $A\to B$ be a finite morphism of commutative noetherian rings such that $\Spec(B)\to \Spec(A)$ is surjective. Then the map
	\[
		\Spc(\Dps(B))\to\Spc(\Dps(A))
	\]
	is surjective.
\end{Cor}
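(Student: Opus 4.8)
The plan is to deduce this from \cref{cor:img-base-change} together with the tame-prime machinery of \cref{sec:tame}. Since $A \to B$ is finite, the $A$-module $B$ is finitely generated, hence pseudo-coherent over $A$, and \cref{cor:img-base-change} identifies the image of the induced map $\varphi\colon\Spc(\Dps(B))\to\Spc(\Dps(A))$ with the universal support $\supp(B)\subseteq \Spc(\Dps(A))$. So it suffices to show that $\supp(B) = \Spc(\Dps(A))$, that is, that no prime of $\Dps(A)$ contains $B$.

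The first step is to reduce to tame primes. For an arbitrary prime $\cat P \in \Spc(\Dps(A))$, \cref{cor:key-lemma} gives $\cat P \subseteq \tame(\rho(\cat P))$; hence $B \notin \tame(\rho(\cat P))$ implies $B \notin \cat P$, i.e.\ $\cat P \in \supp(B)$. Thus it is enough to check that $\tame(\frakp) \in \supp(B)$ for every $\frakp \in \Spec(A)$. By the defining property $\tame^{-1}(\supp(B)) = \Supp(B)$ recorded in \eqref{eq:tame-preimage}, this amounts to proving $\Supp(B) = \Spec(A)$; and since $B$ is a module concentrated in degree $0$, \cref{rem:specialization-closed} identifies $\Supp(B)$ with the classical support $\Supp_A(B)$ of the $A$-module $B$.

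The last step is the elementary observation that surjectivity of $\Spec(B) \to \Spec(A)$ forces $\Supp_A(B) = \Spec(A)$: given $\frakp \in \Spec(A)$, a prime of $B$ lying over $\frakp$ witnesses $B \otimes_A \kappa(\frakp) \neq 0$, whence $B_\frakp \neq 0$, i.e.\ $\frakp \in \Supp_A(B)$. Assembling the three steps gives $\supp(B) = \Spc(\Dps(A))$, so $\varphi$ is surjective. I do not anticipate a genuine obstacle; the only points needing care are the identification of the universal support of the module $B$ with its classical module support (which is exactly \cref{rem:specialization-closed}) and the fact that one cannot invoke Balmer's rigidity-based surjectivity criterion \cite{Balmer18} directly here --- which is precisely why we route through \cref{cor:img-base-change}, hence through \cref{prop:img-of-Spc}.
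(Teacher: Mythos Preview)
Your proof is correct and follows essentially the same approach as the paper: both start from \cref{cor:img-base-change} to identify the image with $\supp(B)$, then show $\supp(B)$ contains every tame prime, and finally use \cref{cor:key-lemma} to conclude that $\supp(B)$ is everything. The only cosmetic difference is that the paper obtains the tame primes in the image via the commutative square of \cref{lem:tame-compat}, whereas you unwind this through \eqref{eq:tame-preimage} and the explicit identification $\Supp(B)=\Supp_A(B)=\Spec(A)$; these are two phrasings of the same fact.
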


\begin{proof}
	By \cref{cor:img-base-change}, the image of the map is $\supp(B)$. On the other hand, the surjectivity of $\Spec(B)\to\Spec(A)$ and the commutative diagram of \cref{lem:tame-compat} implies that the image contains every tame prime. Since $\supp(B)$ is closed, it must contain every specialization of every tame prime; hence it must contain everything by \cref{cor:key-lemma}.
\end{proof}

\begin{Rem}
	Every tame prime is visible\footnote{A point $x$ in a spectral space is said to be \emph{visible} if its closure $\overbar{\{x\}}$ is a Thomason closed set.} by \cref{prop:supp-Rp}. In fact:
\end{Rem}

\begin{Prop}
	For a closed point $\cat P \in \Spc(\Dps(R))$, the following are equivalent:
	\begin{enumerate}
		\item $\cat P$ is visible.
		\item $\cat P$ is tame.
		\item $\rho(\cat P)$ is a closed point of $\Spec(R)$.
	\end{enumerate}
\end{Prop}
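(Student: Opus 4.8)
The plan is to establish the two biconditionals $(a)\Leftrightarrow(b)$ and $(b)\Leftrightarrow(c)$, and I would organize the work into four one-line implications, of which two are immediate from the preceding results and only the remaining two genuinely invoke the hypothesis that $\cat P$ is a closed point. Throughout write $\frakp\coloneqq\rho(\cat P)$.

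First, $(b)\Rightarrow(a)$ is immediate: if $\cat P=\tame(\frakp)$, then \cref{prop:supp-Rp} gives $\overbar{\{\cat P\}}=\supp(R/\frakp)$, which is Thomason closed, so $\cat P$ is visible by definition; this uses nothing about $\cat P$ being closed. Likewise $(c)\Rightarrow(b)$ is immediate: if $\frakp=\rho(\cat P)$ is a closed point of $\Spec(R)$, then \cref{prop:tame-closed} says the fiber $\rho^{-1}(\{\frakp\})$ is the singleton $\{\tame(\frakp)\}$, and since $\cat P$ lies in that fiber we conclude $\cat P=\tame(\frakp)$.

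For $(a)\Rightarrow(b)$ I would use the closed-point hypothesis to reduce to a known fact: since $\cat P$ is closed, $\overbar{\{\cat P\}}=\{\cat P\}$, and visibility says this singleton is a nonempty Thomason closed set. By \cref{prop:contains-tame}, every nonempty Thomason subset of $\Spc(\Dps(R))$ contains a tame prime; as our Thomason set consists of the single point $\cat P$, the point $\cat P$ is itself tame. For $(b)\Rightarrow(c)$ I would argue as follows. Write $\cat P=\tame(\frakp)$. By \cref{prop:supp-Rp} we have $\overbar{\{\cat P\}}=\rho^{-1}(\overbar{\{\frakp\}})$; since $\cat P$ is a closed point the left-hand side is the singleton $\{\cat P\}$, so $\rho^{-1}(\overbar{\{\frakp\}})=\{\cat P\}$. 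Applying the surjective map $\rho$ and using $\rho(\cat P)=\frakp$ yields $\overbar{\{\frakp\}}=\{\frakp\}$, i.e.\ $\frakp=\rho(\cat P)$ is a closed point of $\Spec(R)$.

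There is no serious obstacle here: the proof is just a matter of assembling \cref{prop:supp-Rp}, \cref{prop:tame-closed} and \cref{prop:contains-tame} in the right order. The one point worth flagging is that the closed-point hypothesis is used only in $(a)\Rightarrow(b)$ and $(b)\Rightarrow(c)$, and is essential there — a tame prime lying over a non-maximal prime of $R$ is visible but fails $(c)$ — whereas $(b)\Rightarrow(a)$ and $(c)\Rightarrow(b)$ hold for arbitrary $\cat P$. Routing $(b)\Rightarrow(c)$ through the identity $\overbar{\{\tame(\frakp)\}}=\rho^{-1}(\overbar{\{\frakp\}})$ of \cref{prop:supp-Rp} has the pleasant effect of avoiding any direct bookkeeping with the specialization order of $\Spc(\Dps(R))$.
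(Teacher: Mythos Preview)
Your proof is correct and essentially follows the paper's approach: the paper proves the cycle $(a)\Rightarrow(b)\Rightarrow(c)\Rightarrow(a)$ using the same three ingredients (\cref{prop:contains-tame}, \cref{prop:tame-closed}, \cref{prop:supp-Rp}), while you prove the two biconditionals. The only substantive difference is in $(b)\Rightarrow(c)$: the paper argues directly with specializations (if $\rho(\cat P)\rightsquigarrow\frakq$ then $\cat P=\tame(\rho(\cat P))\rightsquigarrow\tame(\frakq)$, forcing $\frakq=\rho(\cat P)$ since $\cat P$ is closed), whereas you route through $\overbar{\{\tame(\frakp)\}}=\rho^{-1}(\overbar{\{\frakp\}})$ and the surjectivity of $\rho$---both arguments are equally short and valid.
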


\begin{proof}
	$(a) \Rightarrow (b)$: If $\cat P$ is a visible closed point then $\{\cat P\}$ is Thomason closed, hence it must contain a tame prime by \cref{prop:contains-tame} and thus we must have $\cat P = \tame(\rho(\cat P))$.

	$(b) \Rightarrow (c)$: Suppose $\cat P = \tame(\frakp)$ is tame. If $\rho(\cat P)\rightsquigarrow \frakp$ then $\cat P = \tame(\rho(\cat P)) \rightsquigarrow \tame(\frakp)$ so that~$\cat P=\tame(\frakp)$. Therefore $\rho(\cat P)= \frakp$. This establishes that $\rho(\cat P)$ is closed.

	$(c)\Rightarrow (a)$: If $\rho(\cat P) \eqqcolon \frakm$ is a closed point then $\cat P \in \rho^{-1}(\{\frakm\}) = \{\tame(\frakm)\}$ invoking \cref{prop:tame-closed}. Hence $\cat P$ is visible, since $\{\cat P\} = \rho^{-1}(\{\frakm\})$ is Thomason, being the preimage of a Thomason set under a spectral map.
\end{proof}

\begin{Rem}
	The question of whether all the closed points of $\Spc(\Dps(R))$ are tame was raised in \cite[Question 4.15]{MatsuiTakahashi17} and they provide an affirmative answer if $\Spec(R)$ has only finitely many closed points (i.e.~$R$ is semi-local). We can provide an alternative proof of this fact:
\end{Rem}

\begin{Prop}
	If $R$ is semi-local then all the closed points of $\Spc(\Dps(R))$ are tame. Consequently, the closed points of $\Spc(\Dps(R))$ correspond bijectively with the closed points of $\Spec(R)$.
\end{Prop}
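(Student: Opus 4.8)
The plan is to use the semi-local hypothesis only through the fact that $R$ has finitely many maximal ideals $\frakm_1,\dots,\frakm_n$, and to combine this with a prime-avoidance argument carried out inside the tt-category $\Dps(R)$.

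First I would record the identity $\tame(\frakm_1)\cap\cdots\cap\tame(\frakm_n)=(0)$ in $\Dps(R)$. Indeed, if $X$ belongs to this intersection then $\frakm_i\notin\Supp(X)$ for every $i$ by the definition of the tame prime $\tame(\frakm_i)=\SET{X\in\Dps(R)}{\frakm_i\notin\Supp(X)}$; since $\Supp(X)$ is specialization closed (\cref{rem:specialization-closed}) and every prime ideal of $R$ is contained in one of the $\frakm_i$, this forces $\Supp(X)=\emptyset$, whence $X=0$ by the detection property \eqref{eq:detection}. The crucial feature is that this is a \emph{finite} intersection of prime thick $\otimes$-ideals; this is the only place where semilocality is used in an essential way.

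Next I would run the prime-avoidance step. Let $\cat P\in\Spc(\Dps(R))$ be a closed point, i.e.~a minimal prime thick $\otimes$-ideal. Since $(0)\subseteq\cat P$ we have $\tame(\frakm_1)\cap\cdots\cap\tame(\frakm_n)\subseteq\cat P$. If no $\tame(\frakm_i)$ were contained in $\cat P$, choose $a_i\in\tame(\frakm_i)\smallsetminus\cat P$ for each $i$; as each $\tame(\frakm_i)$ is a thick $\otimes$-ideal, the object $a_1\otimes\cdots\otimes a_n$ lies in $\tame(\frakm_i)$ for every $i$, hence in $\cat P$, and primality of $\cat P$ then forces some $a_j\in\cat P$ --- a contradiction. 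Therefore $\tame(\frakm_i)\subseteq\cat P$ for some $i$; since $\tame(\frakm_i)$ is itself a prime and $\cat P$ is minimal, $\cat P=\tame(\frakm_i)$, so $\cat P$ is tame.

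For the ``consequently'' clause: each $\tame(\frakm_i)$ is a closed point of $\Spc(\Dps(R))$ by \cref{prop:tame-closed}, and the $\tame(\frakm_i)$ are pairwise distinct because $\tame$ is injective; hence $\frakm\mapsto\tame(\frakm)$ and $\cat P\mapsto\rho(\cat P)$ are mutually inverse bijections between the closed points of $\Spec(R)$ and those of $\Spc(\Dps(R))$. I do not expect a genuine obstacle: once the finite-intersection identity is in hand the argument is essentially formal. (One could instead route the proof through the preceding proposition by proving that $\rho(\cat P)$ is a maximal ideal for every closed point $\cat P$, but that seems to require a ``lifting of specializations'' statement for $\rho$ that is not established here, so the prime-avoidance route is cleaner; note also that the infinite intersection $\bigcap_{\frakm}\tame(\frakm)$ is still $(0)$ but prime avoidance no longer applies, consistent with semilocality --- rather than arbitrary noetherianity --- being the hypothesis.)
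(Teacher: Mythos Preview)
Your proof is correct and takes a genuinely different route from the paper's. The paper proceeds by showing that the functor $\Dps(R)\to\prod_{i=1}^n\Dps(R_{\frakm_i})$ is conservative (via the same support argument you use for the intersection being zero), then invokes Balmer's theorem \cite[Theorem~1.2]{Balmer18} to conclude that every closed point of $\Spc(\Dps(R))$ lies in the image of $\coprod_i\Spc(\Dps(R_{\frakm_i}))\to\Spc(\Dps(R))$; it finishes by pushing the unique closed point $\tame(\frakm)$ of $\Spc(\Dps(R_\frakm))$ forward and applying the compatibility of $\tame$ with base change (\cref{lem:tame-compat}). Your argument is more elementary and entirely internal to $\Dps(R)$: the identity $\bigcap_i\tame(\frakm_i)=(0)$ followed by tensor-triangular prime avoidance dispatches the claim without appealing to \cite{Balmer18} or to any functoriality. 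The paper's route has the virtue of illustrating the localization/base-change theme of \cref{sec:base-change}, but yours is shorter and avoids external input; both rest on the same underlying fact that $\Supp(X)$ is specialization closed and detects vanishing.
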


\begin{proof}
	Let $\frakm_1, \frakm_2, \ldots, \frakm_n$ be the collection of maximal ideals of $R$. The tt-functor $\Dps(R) \to \prod_{i=1}^n \Dps(R_{\frakm_i})$ is conservative since $\Supp(X)$ is empty for any object $X$ in the kernel, since it is specialization closed and yet contains no closed points.\footnote{In fact, for any commutative noetherian ring, the family of functors $\Der(R) \to \Der(R_{\frakm})$, ranging over the maximal ideals $\frakm$, is jointly conservative since they are jointly surjective on the spectra of compact objects; see \cite[Corollary~14.24]{BarthelCastellanaHeardSanders23app} or \cite[Theorem~1.9]{BarthelCastellanaHeardSanders24} bearing in mind \cite[Theorem~A]{BarthelHeardSanders23b}.} Thus, \cite[Theorem~1.2]{Balmer18} establishes that the image of the induced map
	\[ 
		\coprod_{i=1}^n \Spc(\Dps(R_{\frakm_i})) \cong \Spc(\prod_{i=1}^n \Dps(R_{\frakm_i})) \to \Spc(\Dps(R))
	\]
	contains all the closed points of $\Spc(\Dps(R))$. Thus, given any closed point $\cat P$ in~$\Spc(\Dps(R))$, there exists a maximal ideal $\frakm$ of $\Spec(R)$ and a prime ideal $\cat Q$ in $\Spc(\Dps(R_\frakm))$ mapping to $\cat P$ under the canonical map $\Spc(\Dps(R_\frakm))\to\Spc(\Dps(R))$. It follows that the unique closed point $\tame(\frakm) \in \Spc(\Dps(R_\frakm))$ also maps to $\cat P$. \Cref{lem:tame-compat} then implies that $\cat P = \tame(\frakm)$ is itself tame.
\end{proof}

We now turn to the process of localization.

\begin{Lem}\label{lem:localization-surjective}
	For any multiplicative subset $S \subset R$, the functor 
	\[
		\Dps(R) \to \Dps(S^{-1}R)
	\]
	is essentially surjective.
\end{Lem}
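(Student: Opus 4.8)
The functor $\Dps(R)\to\Dps(S^{-1}R)$ is induced by termwise localization $M_\bullet\mapsto S^{-1}M_\bullet$, since $S^{-1}R$ is $R$-flat so that $S^{-1}R\otimes_R^{\bbL}(-)$ needs no derived correction. Also $S^{-1}R$ is again noetherian, so every object of $\Dps(S^{-1}R)=\Dpfg(S^{-1}R)$ is represented by a bounded-below complex $N_\bullet$ of finitely generated $S^{-1}R$-modules. The plan is therefore to lift such an $N_\bullet$, \emph{as a complex}, to a bounded-below complex $M_\bullet$ of finitely generated $R$-modules with $S^{-1}M_\bullet\cong N_\bullet$; essential surjectivity then follows at once, since any $Y\in\Dps(S^{-1}R)$ is isomorphic to such an $N_\bullet$.

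To construct the lift, say $N_i=0$ for $i<n_0$. I would build finitely generated $R$-submodules $M_i\subseteq N_i$ by induction on $i\ge n_0$ so that the localization map $S^{-1}M_i\to N_i$ is an isomorphism and so that the differential $\partial\colon N_i\to N_{i-1}$ restricts to a map $M_i\to M_{i-1}$. For $i=n_0$, let $M_{n_0}$ be the $R$-submodule of $N_{n_0}$ generated by a finite $S^{-1}R$-generating set of $N_{n_0}$; flatness of localization gives $S^{-1}M_{n_0}\cong N_{n_0}$. For the inductive step, with $M_{i-1}$ already chosen, first pick in the same way a finitely generated $M_i'\subseteq N_i$ with $S^{-1}M_i'\cong N_i$, and let $x_1,\dots,x_k$ generate $M_i'$ over $R$. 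Writing $\partial(x_j)=m_j/s_j$ with $m_j\in M_{i-1}$ and $s_j\in S$, set $s_0\coloneqq s_1\cdots s_k\in S$ and $M_i\coloneqq s_0M_i'\subseteq N_i$. Then $M_i$ is finitely generated (generated by the $s_0x_j$); writing $s_0=s_jt_j$ with $t_j\in S$ gives $\partial(s_0x_j)=t_jm_j\in M_{i-1}$, whence $\partial(M_i)\subseteq M_{i-1}$; and $S^{-1}M_i\cong N_i$ because multiplication by the unit $s_0$ is a bijection of the $S^{-1}R$-module $N_i\cong S^{-1}M_i'$.

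The resulting $M_\bullet$ is a subcomplex of $N_\bullet$ over $R$: it is bounded below since $M_i\subseteq N_i=0$ for $i<n_0$, its terms are finitely generated over $R$, and $\partial^2=0$ holds on $M_\bullet$ because it holds on $N_\bullet$ and each $M_i$ injects into $N_i$. By construction the inclusion induces an isomorphism $S^{-1}M_\bullet\xrightarrow{\sim}N_\bullet$ in each degree, hence an isomorphism of complexes, so $M_\bullet\in\Dps(R)$ is sent to $Y$. The only non-formal point is this inductive construction of the differentials, and in particular the clearing of denominators that forces $\partial(M_i)$ into the previously chosen $M_{i-1}$ without spoiling finite generation or the equality $S^{-1}M_i\cong N_i$; everything else (exactness and flatness of localization, noetherianity of $S^{-1}R$, and the characterization of pseudo-coherence recalled in \cref{sec:pseudo}) is standard.
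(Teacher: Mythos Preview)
Your proof is correct and takes a genuinely different, somewhat cleaner route than the paper's. The paper first lifts each $N_i$ to an abstract finitely generated $R$-module $M_i$ with $S^{-1}M_i\cong N_i$, then lifts each differential via $\Hom_{S^{-1}R}(S^{-1}M,S^{-1}N)\cong S^{-1}\Hom_R(M,N)$ to an $R$-map $d_i$ for which $d_i\circ d_{i+1}$ need not vanish over $R$; it then repairs this by multiplying each $d_i$ by some $s_i\in S$ annihilating the composite, and invokes a separate observation that scaling the differentials of a bounded-below complex by units yields a quasi-isomorphic complex. You instead realize each $M_i$ as an $R$-submodule of $N_i$ and scale the \emph{generators} so that the ambient differential already lands in $M_{i-1}$; this makes $M_\bullet$ a genuine subcomplex, so $\partial^2=0$ is inherited for free and $S^{-1}M_\bullet\to N_\bullet$ is an honest isomorphism of complexes rather than merely a quasi-isomorphism. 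The trade-off is that your argument implicitly uses that an $R$-submodule of an $S^{-1}R$-module has no $S$-torsion (so $M_{i-1}\hookrightarrow S^{-1}M_{i-1}\cong N_{i-1}$ is injective, making the statement $\partial(s_0x_j)\in M_{i-1}$ unambiguous), which is immediate but worth stating explicitly.
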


\begin{proof}
	The argument is similar to the one given in \cite[Lemma~3.9]{Letz21}. Let $A$ be a commutative ring and let $X_\bullet$ be a bounded below complex of $A$-modules, say with $X_i = 0$ for $i<0$ for concreteness. For any sequence of units $s_1,s_2,\ldots$ in~$A$, we can construct a new complex $Y$ by setting $Y_i \coloneqq X_i$ and $d_i^Y \coloneqq s_i.d_i^X$ for each~$i$. The morphism $Y_\bullet \to X_\bullet$ which in degree $i$ is multiplication by $s_1s_2\cdots s_i$ is then a quasi-isomorphism. With this observation in hand, consider the localization $R \to S^{-1} R$. It is standard that every finitely generated $S^{-1} R$-module is (up to isomorphism) of the form $S^{-1} M = M \otimes_R S^{-1} R$ for some finitely generated $R$-module $M$; cf.~\cite[Exercise~2.10]{Eisenbud95}. Moreover, since finitely generated modules over noetherian rings are finitely presented, we have $\Hom_{S^{-1} R}(S^{-1} M,S^{-1} N) \simeq S^{-1} \Hom_R(M,N)$ for any finitely generated $R$-modules $M$ and $N$. It follows that for any $f \in \Hom_{S^{-1} R}(S^{-1} M,S^{-1} N)$ there is $s \in S$ such that $s.f \in \Hom_R(M,N)$. Thus, any bounded below complex of $S^{-1} R$-modules is quasi-isomorphic to a bounded below complex of the form $(S^{-1} M_i,S^{-1} d_i)_i$ for some finitely generated $R$-modules $M_i$ and $R$-linear maps $d_i:M_i \to M_{i-1}$. Although $(S^{-1} d_{i}) \circ (S^{-1} d_{i+1}) = S^{-1}(d_{i} \circ d_{i+1}) = 0$ we do not necessarily have $d_{i}\circ d_{i+1}=0$. Nevertheless, for each $i$, there exists an $s_i \in S$ such that $0=s_i.(d_{i}\circ d_{i+1}) = (s_i.d_i)\circ d_{i+1}$. Thus, replacing each $d_i$ by $s_i.d_i$, we obtain a bounded below complex of $R$-modules $(M_i,s_i.d_i)_i$ which localizes to a complex quasi-isomorphic to our original complex.
\end{proof}

\begin{Prop}\label{prop:localization-verdier}
	For any multiplicative subset $S \subset R$, the functor 
	\[
		\Dps(R)\to\Dps(S^{-1} R)
	\]
	is a Verdier localization.
\end{Prop}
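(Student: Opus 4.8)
The plan is to show that $q\colon\Dps(R)\to\Dps(S^{-1}R)$ exhibits its target as the Verdier quotient of its source by its kernel. First I would identify the kernel. An object $X\in\Dps(R)$ satisfies $S^{-1}X=0$ if and only if every $H_i(X)$ is $S$-torsion (since $H_i(S^{-1}X)=S^{-1}H_i(X)$), so $\cat A\coloneqq\ker(q)=\SET{X\in\Dps(R)}{S^{-1}H_i(X)=0\text{ for all }i}$ is a thick subcategory (indeed a thick $\otimes$-ideal) of $\Dps(R)$, and $q$ factors as $\Dps(R)\to\Dps(R)/\cat A\xrightarrow{\ \Phi\ }\Dps(S^{-1}R)$. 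It then suffices to prove that $\Phi$ is an equivalence. Essential surjectivity of $\Phi$ is exactly \cref{lem:localization-surjective}, so the real content is that $\Phi$ is fully faithful. For this I would use two inputs: the adjunction $S^{-1}(-)\dashv(\text{restriction of scalars})$ on the big derived categories, whose right adjoint is fully faithful, giving a natural isomorphism $\Hom_{\Der(S^{-1}R)}(S^{-1}X,S^{-1}Y)\cong\Hom_{\Der(R)}(X,S^{-1}Y)$; and the standard calculus of fractions describing $\Hom_{\Dps(R)/\cat A}(X,Y)$ as a filtered colimit over ``co-roofs'' $X\xrightarrow{f}Y'\xleftarrow{s}Y$ with $\cone(s)\in\cat A$.

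The heart of the argument is an \emph{enlargement} construction, used twice. Represent $Y$ by a bounded-below complex of finitely generated modules $Q_\bullet$, so that $S^{-1}Y$ is modelled by $S^{-1}Q_\bullet$, and represent $X$ by a bounded-below (hence $K$-projective) complex of finitely generated projectives $P_\bullet$. For \emph{fullness}: a morphism $g\colon S^{-1}X\to S^{-1}Y$ corresponds, via the adjunction and $K$-projectivity of $P_\bullet$, to an honest chain map $\gamma\colon P_\bullet\to S^{-1}Q_\bullet$. Since each $P_j$ is finitely generated, $\im(\gamma_j)$ is a finitely generated submodule of $S^{-1}Q_j$, and I would set $Q'_j\coloneqq\im(\gamma_j)+\im(Q_j\to S^{-1}Q_j)\subseteq S^{-1}Q_j$. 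Because $\gamma$ is a chain map, the differential $S^{-1}e_\bullet$ of $S^{-1}Q_\bullet$ carries $Q'_{j+1}$ into $Q'_j$, so $Q'_\bullet$ is a subcomplex of $S^{-1}Q_\bullet$; it consists of finitely generated modules and is bounded below, hence pseudo-coherent; the inclusion $Q_\bullet\hookrightarrow Q'_\bullet$ becomes an isomorphism after $S^{-1}(-)$, so its cone has finitely generated (by noetherianity) and $S$-torsion homology, i.e.\ lies in $\cat A$; and $\gamma$ factors through $Q'_\bullet$ by construction. This produces a co-roof whose image under $\Phi$, after unwinding the adjunction, is $g$.

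For \emph{faithfulness} the same idea is applied to a null-homotopy. Suppose $f\colon X\to Y$ has $S^{-1}f=0$; represent $f$ by a chain map $\phi\colon P_\bullet\to Q_\bullet$ and choose a chain homotopy $h_j\colon S^{-1}P_j\to S^{-1}Q_{j+1}$ with $S^{-1}\phi\simeq 0$. Now enlarge $Q_\bullet$ by the finitely generated images of $h$: set $Q'_j\coloneqq\im(Q_j\to S^{-1}Q_j)+h_{j-1}\bigl(\im(P_{j-1}\to S^{-1}P_{j-1})\bigr)\subseteq S^{-1}Q_j$. The homotopy identity shows that $Q'_\bullet$ is again a pseudo-coherent subcomplex of $S^{-1}Q_\bullet$ with $Q_\bullet\hookrightarrow Q'_\bullet$ having cone in $\cat A$, and that the restricted homotopy exhibits the composite $P_\bullet\xrightarrow{\phi}Q_\bullet\hookrightarrow Q'_\bullet$ as null-homotopic; hence $tf=0$ in $\Der(R)$ for the $\cat A$-equivalence $t\colon Y\to Q'$, so $[f]=0$ in $\Dps(R)/\cat A$. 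Post-composing a general co-roof with such a $t$ upgrades this from genuine morphisms of $\Dps(R)$ to morphisms of the quotient, completing the proof that $\Phi$ is fully faithful, hence an equivalence, hence that $q$ is a Verdier localization.

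The step I expect to be the main (though not deep) obstacle is the bookkeeping in the enlargement construction: checking degree by degree that $S^{-1}e_\bullet$ preserves the submodules $Q'_\bullet$ — this is precisely where the chain-map, resp.\ homotopy, identity is used — and checking that the enlargement maps $Q_\bullet\to Q'_\bullet$ have cones in $\cat A$, where the noetherian hypothesis is essential since it is what forces the homology of a bounded-below complex of finitely generated modules to be finitely generated. Everything else is a routine unwinding of the adjunction and the calculus of fractions.
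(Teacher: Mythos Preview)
Your proposal is correct and the overall architecture---factor through the Verdier quotient by the kernel, invoke \cref{lem:localization-surjective} for essential surjectivity, then establish fully faithfulness---matches the paper. The technical execution, however, is genuinely different. For fullness, the paper takes a chain map $f\colon S^{-1}X\to S^{-1}Y$, chooses $s_n\in S$ so that $s_n f_n$ clears denominators, and then \emph{rescales the differentials} of both $X$ and $Y$ to produce new complexes $\widetilde{X},\widetilde{Y}$ together with maps $u\colon\widetilde{X}\to X$, $g\colon\widetilde{X}\to\widetilde{Y}$, $v\colon\widetilde{Y}\to Y$ satisfying $f\circ S^{-1}(u)=S^{-1}(v)\circ S^{-1}(g)$; since $u,v$ become isomorphisms after localizing, they are inverted in the quotient and $f$ lies in the image. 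Your enlargement construction instead fixes a $K$-projective model for $X$ and modifies only the target, building $Q'_\bullet\subseteq S^{-1}Q_\bullet$ large enough to receive the chain map; this is arguably cleaner and more conceptual (it exploits the adjunction directly), at the cost of needing the $K$-projective replacement. A second difference: the paper never argues faithfulness by hand, instead citing the general fact that a conservative, essentially surjective, full triangulated functor is automatically faithful (see \cite{CanonacoOrlovStellari13} or \cite{Sanders25bpp}). Your direct null-homotopy enlargement is a nice explicit alternative that avoids this external input. One cosmetic point: the map $Q_\bullet\to Q'_\bullet$ need not be an inclusion when $Q_j$ has $S$-torsion, but as you note, only the fact that it becomes an isomorphism after $S^{-1}(-)$ is used.
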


\begin{proof}
	The Bousfield localization functor $\Der(R)\to\Der(S^{-1} R)$ restricts to a tt-functor $F:\Dps(R)\to \Dps(S^{-1} R)$ which is essentially surjective by \cref{lem:localization-surjective}. It induces a conservative essentially surjective tt-functor $\overbar{F}:\Dps(R)/\Ker F \to \Dps(S^{-1} R)$. We will prove that this functor $\overbar{F}$ is full. It will then follow formally that it is fully faithful and hence an equivalence; see, e.g., \cite[Lemma~2.1]{CanonacoOrlovStellari13} or \cite[\S 7]{Sanders25bpp}. We will use the notation $S^{-1}(-)=S^{-1}R \otimes_R -$ for the functor from (complexes of) $R$-modules to (complexes of) $S^{-1}R$-modules. Let $X$ and $Y$ be two bounded below complexes of finitely generated $R$-modules and consider a morphism of complexes $f:S^{-1} X \to S^{-1} Y$. Without loss of generality, we may assume that $Y_n = 0$ for $n<0$. For each index $n$, we have the morphism of $S^{-1}R$-modules $f_n:S^{-1} X_n \to S^{-1} Y_n$. As recalled in the proof of \cref{lem:localization-surjective}, there exists an $s_n \in S$ such that $s_n.f_n = S^{-1}(g_n)$ for some morphism $g_n : X_n \to Y_n$ of $R$-modules. From the equality
	\[
		S^{-1}(d_n^Y)\circ f_{n} = d_n^{S^{-1} Y} \circ f_{n} = f_{n-1} \circ d_n^{S^{-1} X} = f_{n-1} \circ S^{-1}(d_n^X)
	\]
	it follows that 
	\begin{align*}
		0 &= s_n s_{n-1}.(S^{-1}(d_n^Y)\circ f_{n} - f_{n-1} \circ S^{-1}(d_n^X))\\
		  &= S^{-1}(s_{n-1}.d_n^Y)\circ s_n.f_n - s_{n-1}.f_{n-1} \circ S^{-1}(s_n.d_n^X)\\
		  &= S^{-1}( s_{n-1}.d_n^Y \circ g_n - g_{n-1} \circ s_n.d_n^X)
	\end{align*}
	in $\Hom_{S^{-1} R}(S^{-1} X_n, S^{-1} Y_{n-1}) \simeq S^{-1} \Hom_R(X_n,Y_{n-1})$.	Hence there exists $t_n \in S$ such that 
	\[
		0 = t_ns_{n-1}.d_n^Y\circ g_n - g_{n-1} \circ t_ns_n.d_n^X
	\]
	in $\Hom_R(X_n,Y_{n-1})$. For $n<0$, we may take $g_n=0$, $s_n=1$ and $t_n=1$. With these observations in hand, let $\widetilde{X}$ be the complex with $\widetilde{X}_n = X_n$ and $d_n^{\widetilde{X}} = t_ns_n.d_n^X$. Also, let $\widetilde{Y}$ be the complex with $\widetilde{Y}_n = Y_n$ and $d_n^Y = t_ns_{n-1}.d_n^Y$. By construction, the $g_n$ define a morphism of complexes $g:\widetilde{X}\to\widetilde{Y}$. Next consider the morphism of complexes $u:\widetilde{X} \to X$ defined by $u_n = t_0t_1\cdots t_n s_0 s_1 \cdots s_n$ for $n \ge 0$ and $u_n = 1$ for $n<0$. Finally, let $v:\widetilde{Y} \to Y$ be the morphism of complexes given by $v_n = t_0 t_1 \cdots t_n s_0 s_1 \cdots s_{n-1}$ for $n \ge 0$. One may then verify from the definitions that the morphisms $u:\widetilde{X} \to X$, $g:\widetilde{X}\to \widetilde{Y}$, $v:\widetilde{Y}\to Y$, $f:S^{-1} X \to S^{-1} Y$ satisfy
	\[
		f\circ S^{-1}(u) = S^{-1}(v) \circ S^{-1}(g)
	\]
	of morphisms $S^{-1}(\widetilde{X}) \to S^{-1}(Y)$. Moreover, note that $S^{-1}(u)$ and $S^{-1}(v)$ are isomorphisms. Since $u$ and $v$ are maps of pseudo-coherent complexes, they are also inverted by $q:\Dps(R)\to \Dps(R)/\Ker F$. Hence, $f=\overbar{F}(q(v)\circ q(g)\circ q(u)^{-1})$. It follows that $\Dps(R)/\Ker F \to \Dps(S^{-1} R)$ is full and hence is an equivalence.
\end{proof}

\begin{Cor}
	For any $\frakp \in \Spec(R)$, the localization $R\to R_{\frakp}$ induces an equivalence of tt-categories $\Dps(R)/\tame(\frakp) \cong \Dps(R_{\frakp})$.
\end{Cor}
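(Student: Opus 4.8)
The plan is to deduce the corollary from \cref{prop:localization-verdier} together with the identification of the tame prime as a kernel recorded in \eqref{eq:tame-is-kernel}. Concretely, I would take the multiplicative subset $S = R \smallsetminus \frakp$, so that $S^{-1}R = R_\frakp$. By \cref{prop:localization-verdier}, the induced tt-functor $F\colon \Dps(R) \to \Dps(R_\frakp)$ is a Verdier localization; that is, the canonical comparison functor
\[
	\overbar{F}\colon \Dps(R)/\Ker F \xrightarrow{\;\sim\;} \Dps(R_\frakp)
\]
is an equivalence of triangulated categories.

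Next I would invoke \eqref{eq:tame-is-kernel}, which asserts that $\tame(\frakp) = \Ker(\Dps(R)\to\Dps(R_\frakp)) = \Ker F$ as thick subcategories of $\Dps(R)$; this follows directly from the description $\tame(\frakp) = \SET{X \in \Dps(R)}{\frakp \notin \Supp(X)}$ in \cref{cons:tame} and the fact (\cref{rem:specialization-closed}) that $X_\frakp = 0$ in $\Dps(R_\frakp)$ if and only if $\frakp \notin \Supp(X)$. Substituting $\Ker F = \tame(\frakp)$ into the equivalence above immediately yields $\Dps(R)/\tame(\frakp) \xrightarrow{\sim} \Dps(R_\frakp)$. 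Finally, since $\tame(\frakp)$ is a (prime) thick tensor ideal, the Verdier quotient $\Dps(R)/\tame(\frakp)$ carries a canonical tensor-triangulated structure for which the quotient functor $q\colon \Dps(R) \to \Dps(R)/\tame(\frakp)$ is a tt-functor; the induced functor $\overbar{F}$ is compatible with the tt-functors $q$ and $F$, hence is itself a tt-functor, and being an equivalence it is an equivalence of tt-categories.

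I do not expect any genuine obstacle here: the substantive content — the fullness argument showing that $\overbar{F}$ is an equivalence — has already been carried out in \cref{prop:localization-verdier}, and the identity $\Ker F = \tame(\frakp)$ is precisely \eqref{eq:tame-is-kernel}. The only point that deserves a moment's care is checking that the kernel is \emph{literally} $\tame(\frakp)$ rather than, say, some larger or radical version of it; but this is exactly what the detection property of the Balmer--Favi support and the description of $\tame(\frakp)$ in \cref{cons:tame} guarantee.
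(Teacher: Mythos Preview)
Your proposal is correct and follows exactly the same approach as the paper: apply \cref{prop:localization-verdier} with $S = R \smallsetminus \frakp$ and then invoke \eqref{eq:tame-is-kernel} to identify the kernel with $\tame(\frakp)$. The additional remarks you make about the tt-structure on the quotient are fine but not strictly needed for the statement as given.
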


\begin{proof}
	This follows from \cref{prop:localization-verdier} and \eqref{eq:tame-is-kernel}.
\end{proof}

\begin{Rem}
	This establishes that $\Dps(R_{\frakp})$ is the local category of $\Dps(R)$ at the tame prime $\tame(\frakp)$. In particular, the localization $R\to R_{\frakp}$ induces a topological embedding $\Spc(\Dps(R_{\frakp})) \xrightarrow{\sim} \gen(\tame(\frakp)) \subseteq \Spc(\Dps(R))$.
\end{Rem}

\begin{Rem}\label{rem:algebraic-localization}
	Recall that for each $\frakp \in \Spec(R)$, we have the algebraic localization
	\[
		\Dps(R)_{\frakp} \coloneqq \Dps(R)/\ideal{\cone(s) \mid s \not\in \frakp}.
	\]
	One of the difficulties in understanding $\Spc(\Dps(R))$ is that it does not behave well with respect to algebraic localizations in the sense that usually $\Dps(R)_{\frakp} \not\cong \Dps(R_{\frakp})$. It follows from the definitions that for any $\cat P \in \rho^{-1}(\{\frakp\})$ the functor $\Der(R) \to \Der(R_{\frakp})$ factors as
	\begin{equation}\label{eq:factorization}
		\Dps(R)\twoheadrightarrow\Dps(R)_{\frakp} \twoheadrightarrow \Dps(R)/{\cat P} \twoheadrightarrow \Dps(R)/\tame(\frakp) \xrightarrow{\cong} \Dps(R_{\frakp}).
	\end{equation}
	In particular, if the fiber $\rho^{-1}(\{\frakp\})$ is not a singleton then there is no hope for the functor $\Dps(R)_{\frakp}\to\Dps(R_{\frakp})$ to be conservative let alone an equivalence.
\end{Rem}

\begin{Exa}
	Let $R=\bbZ$ and let $p_1,p_2, p_3,\ldots$ be the sequence of prime numbers. Consider $X\coloneqq \coprod_{i \ge 1} \bbZ/p_i [i] \in \Dps(\bbZ)$. Observe that
	\[
		X \not\in \ideal{\bbZ/n \mid n\not\in\frakp} = \Ker(\Dps(\bbZ)\to \Dps(\bbZ)_\frakp)
	\]
	for any $\frakp \in \Spec(R)$ since $X$ cannot be built in finitely many steps from this collection of cyclic modules. Hence $X_{\frakp} \neq 0$ in $\Dps(\bbZ)_{\frakp}$ for all $\frakp \in \Spec(\bbZ)$. On the other hand, $\Supp(X)$ consists precisely of all the closed points. In particular, $X$ vanishes in $\Dps(\bbQ)$ but does not vanish in $\Dps(\bbZ)_{(0)}$. Thus, $\Dps(\bbZ)_{(0)} \to \Dps(\bbQ)$ is not conservative.
\end{Exa}

\begin{Rem}
	In summary, the complexity of $\Dps(R)$ over $\Dperf(R)$ is reflected by the lack of conservativity of the functor $\Dps(R)_{\frakp} \to \Dps(R_{\frakp})$. In contrast,~$\Dperf(R_{\frakp})$ is the idempotent-completion of $\Dperf(R)_{\frakp}$ by the Neeman--Thomason theorem~\cite{Neeman92b}.
\end{Rem}

\begin{Rem}
	The majority of this paper is devoted to understanding $\Spc(\Dps(R))$ in the case when $R$ is a discrete valuation ring. We thus end this section with a few remarks about how one can relate arbitrary noetherian rings to this case. 
\end{Rem}

\begin{Exa}\label{exa:excellent-dimension-1}
	Let $R$ be an excellent local domain of dimension 1 and let $A$ be the integral closure of $R$ in its field of fractions. The ring $A$ is still noetherian by the Krull--Akizuki theorem and it is module-finite over $R$ by the excellence hypothesis. By the lying-over theorem, we know that $\Spec(A) \to \Spec(R)$ is surjective. Thus there exists a prime $\frakp \in \Spec(A)$ which maps to the maximal ideal $\frakm$ of $R$. Then~$A_{\frakp}$ is a discrete valuation ring and the composite
	\[
		\Spec(A_{\frakp}) \hookrightarrow \Spec(A) \twoheadrightarrow \Spec(R)
	\]
	is a homeomorphism. We have an analogous situation with the pair of maps
	\[
		\Spc(\Dps(A_{\frakp})) \hookrightarrow \Spc(\Dps(A)) \twoheadrightarrow \Spc(\Dps(R))
	\]
	by \cref{prop:localization-verdier} and \cref{cor:finite-surjective}.
\end{Exa}

\begin{Rem}
	More generally, given any specialization $\frakp \in \overbar{\{\frakq\}}$ in a noetherian ring $R$, there exists a map $R \to A$ to a discrete valuation ring which realizes this specialization. This is explained in detail in the proof of \cite[\href{https://stacks.math.columbia.edu/tag/054F}{Lemma~054F}]{stacks-project} and the references therein. Briefly, one first passes to a local domain (via localization and taking a quotient) and then to a local domain of dimension 1 by passing to a maximal dominating valuation ring in its field of fractions (see \cite[\href{https://stacks.math.columbia.edu/tag/00P8}{Lemma~00P8}]{stacks-project} and \cite[\href{https://stacks.math.columbia.edu/tag/00IA}{Lemma~00IA}]{stacks-project}) and then one takes a localization of its integral closure as in \cref{exa:excellent-dimension-1}. However, we do not have a complete understanding of how~$\Dps(-)$ behaves under each of these steps. For an excellent ring, we do have some understanding of the last step as indicated in \cref{exa:excellent-dimension-1}.
\end{Rem}

\section{Discrete valuation rings}\label{sec:DVR}

We turn our attention to pseudo-coherent complexes over discrete valuation rings.

\begin{Hyp}
	For the remainder of the paper, $R$ denotes a discrete valuation ring with unique maximal ideal $\frakm = (x)$ and residue field $k=R/\frakm$.
\end{Hyp}

\begin{Rem}
	The goal is to compute $\Spc(\Dps(R))$. As explained in \cref{rem:comparison-map} we have a surjective spectral map
	\[
		\rho:\Spc(\Dps(R)) \twoheadrightarrow \Spec(R)
	\]
	and the latter space is very simple. It consists of two points, a closed point $\frakm$ and a generic point $\eta$:
	\[
		\Spec(R) =\quad\begin{tikzcd}[column sep=tiny,row sep=small]
			\bullet\ar[d,dash,thick] &\mathfrak{m}=(x)  \\
			\bullet  &\eta=(0)&
		\end{tikzcd}
	\]
	By the results of \cref{sec:tame,sec:base-change}, we know that $\Spc(\Dps(R))$ is a local space with unique closed point given by the zero ideal which coincides with the tame prime $\tame(\frakm)$ associated with the unique closed point $\frakm \in \Spec(R)$. Moreover, the fiber $\rho^{-1}(\{\frakm\})=\{\tame(\frakm)\}$ is a singleton. It also follows from general principles that $\Spc(\Dps(R))$ is irreducible with generic point given by the tame prime $\tame(\eta)$ associated to the generic point $\eta \in \Spec(R)$. We will describe this tame prime in \cref{rem:dpfl} below.
\end{Rem}

\begin{Rem}\label{rem:ideals}
	The ideals of $R$ are precisely the zero ideal $(0)$ together with the powers of the maximal ideal $\frakm^i=(x^i)$, $i \ge 0$. This is a straightforward consequence of the Krull intersection theorem which asserts that $\bigcap_{i \ge 0} \mathfrak m^i = (0)$ in any noetherian local ring.
\end{Rem}

\begin{Rem}
	Since $R$ is a principal ideal domain, every finitely generated $R$-module is isomorphic to a direct sum of cyclic modules. Moreover, by \cref{rem:ideals}, every nonzero cyclic module is either $R$ itself or of the form $R/x^i$ for some $i \ge 1$.
\end{Rem}

\begin{Lem}\label{lem:tensor-cyclic}
	For any integers $i,j\ge 0$, the following statements hold:
	\begin{enumerate}
		\item $R/x^i \otimes_R R/x^j \simeq R/x^{\min(i,j)}$.
		\item $\Tor_1^R(R/x^i,R/x^j)\simeq R/x^{\min(i,j)}$.
		\item There is a short exact sequence
			\[ 0 \to R/x^i \xrightarrow{x^j} R/x^{i+j} \to R/x^j \to 0.\]
	\end{enumerate}
\end{Lem}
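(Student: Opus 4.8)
All three statements are elementary facts about cyclic modules over the principal ideal domain $R$, to be verified by direct computation. The essential input is \cref{rem:ideals}, from which $x^aR + x^bR = x^{\min(a,b)}R$ and $\{\,r \in R : x^a r \in x^bR\,\} = x^{\max(b-a,0)}R$ for all $a,b \ge 0$ (the latter because $v(x^ar) = a + v(r)$ for the normalized valuation $v$).

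I would begin with part (c). The assignment $r + x^iR \mapsto x^jr + x^{i+j}R$ gives a well-defined map $x^j\colon R/x^i \to R/x^{i+j}$, and it is injective because $R$ is a domain: $x^{i+j} \mid x^jr$ forces $x^i \mid r$. Its image is the submodule $x^jR/x^{i+j}R \subseteq R/x^{i+j}$, which is precisely the kernel of the projection $R/x^{i+j} \twoheadrightarrow R/x^j$; hence the sequence is short exact. (The degenerate cases $i = 0$ or $j = 0$, where $R/x^0 = 0$, remain correct.)

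For parts (a) and (b) I would resolve $R/x^i$ by the two-term free resolution $0 \to R \xrightarrow{x^i} R \to R/x^i \to 0$ and apply $-\otimes_R R/x^j$, producing the complex $R/x^j \xrightarrow{x^i} R/x^j$ in homological degrees $1$ and $0$. Its cokernel is $R/(x^iR + x^jR) = R/x^{\min(i,j)}$, which is (a). Its kernel is $\{\,r \in R : x^ir \in x^jR\,\}/x^jR = x^{\max(j-i,0)}R/x^jR$, which is $R$-module isomorphic to $R/x^{\,j - \max(j-i,0)} = R/x^{\min(i,j)}$; since the resolution has length one, this computes $\Tor_1^R(R/x^i,R/x^j)$ and simultaneously shows $\Tor_n^R(R/x^i,R/x^j) = 0$ for $n \ge 2$, giving (b).

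There is no real obstacle here: the only point requiring mild care is the arithmetic with $\min$ and $\max$ — in particular the identity $j - \max(j-i,0) = \min(i,j)$ invoked in (b) — together with checking that the formulas behave correctly in the degenerate cases $i=0$ or $j=0$.
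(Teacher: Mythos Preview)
Your proof is correct; the paper itself simply declares these to be ``straightforward exercises'' without giving any details, and your argument is exactly the sort of direct computation the authors have in mind.
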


\begin{proof}
	These are straightforward exercises.
\end{proof}

\begin{Lem}\label{lem:finite-length}
	For a finitely generated $R$-module $M$, the following are equivalent:
	\begin{enumerate}
		\item $M$ has finite length;
		\item $M$ has no free summand;
		\item $M$ is torsion;
		\item $\eta \not\in\Supp_R(M)$.
	\end{enumerate}
\end{Lem}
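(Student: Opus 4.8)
The plan is to invoke the structure theorem for finitely generated modules over a principal ideal domain. Since $R$ is a DVR, we may write $M \cong R^{\oplus r} \oplus \bigoplus_{j=1}^{s} R/x^{i_j}$ with $r, s \ge 0$ and each $i_j \ge 1$, and I will show that each of the four conditions $(a)$--$(d)$ is equivalent to the single condition $r = 0$. This reduces the lemma to four short verifications.

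For $(a) \Leftrightarrow (b)$: if $r \ge 1$ then $R$ is a free summand of $M$, and since $R$ has infinite length, $M$ is not of finite length; conversely, if $r = 0$ then $M$ is a finite direct sum of the modules $R/x^{i_j}$, each of length $i_j$, hence of finite length $\sum_j i_j$, and visibly has no free summand. For $(b) \Leftrightarrow (c)$: a torsion module cannot contain $R$ as a summand, so $(c)\Rightarrow(b)$ is immediate; and if $r = 0$ then every element of $M$ is annihilated by $x^{\max_j i_j}$, so $M$ is torsion, giving $(b)\Rightarrow(c)$ via the structure theorem.

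For $(c) \Leftrightarrow (d)$: the generic point is $\eta = (0)$, and $R_\eta$ is the fraction field $K$ of $R$, so $\eta \in \Supp_R(M)$ iff $M_\eta = M \otimes_R K \ne 0$. Since $M \otimes_R K \ne 0$ exactly when $M$ has an element of infinite additive order, this holds iff $M$ is not torsion; hence $\eta \notin \Supp_R(M)$ iff $M$ is torsion.

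There is no genuine obstacle here: the content is packaged entirely in the PID structure theorem, together with the elementary fact that localizing at $\eta$ is base change to the fraction field. The only implication using the structure theorem in an essential way is $(b) \Rightarrow (c)$; without the classification one could not a priori rule out a finitely generated module with no free summand that nonetheless fails to be torsion.
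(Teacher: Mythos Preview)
Your proof is correct and follows essentially the same approach as the paper: both arguments rest on the structure theorem for finitely generated modules over a PID (which the paper records just before this lemma) and reduce each condition to the absence of a free summand. The only cosmetic difference is in handling $(d)$, where you localize at $\eta$ while the paper uses $\Supp_R(M)=V(\Ann(M))$; these are equivalent one-line observations.
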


\begin{proof}
	It follows from \cref{rem:ideals} that the proper cyclic modules $R/\frakm^i$ are artinian while the free module $R$ is of course not artinian. Hence the $R$-modules of finite length are precisely the finitely generated $R$-modules which have no free summand (i.e.~which are torsion). On the other hand, recall that $\Supp_R(M)=V(\Ann(M))$ for any finitely generated module, so $\eta \not\in \Supp_R(M)$ if and only if $\Ann(M)\neq (0)$ which is the case precisely when $M$ has no free summand.
\end{proof}

\begin{Def}
	For a finite length $R$-module $M$, the \emph{Loewy length} of $M$ is
	\[
		\loew(M) \coloneqq \min\SET{i \ge 0}{\frakm^i M = 0}.
	\]
	For example, $\loew(R/x^i)=i$ for each $i \ge 0$. In particular, the Loewy length of the zero module is zero. In general, $\loew(M)$ is the largest $i$ such that $R/x^i$ is a direct summand of $M$.
\end{Def}

\begin{Lem}\label{lem:loew}
	Let $M$ and $N$ be finite length $R$-modules. Then
	\begin{enumerate}
		\item $\loew(M\oplus N) = \max(\loew(M),\loew(N))$.
		\item $\loew(M\otimes_R N) = \min(\loew(M),\loew(N))$.
		\item $\loew(\Tor_1^R(M,N))=\min(\loew(M),\loew(N))$.
	\end{enumerate}
\end{Lem}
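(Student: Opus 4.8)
The plan is to reduce everything to the cyclic case via the structure theorem for finitely generated modules over the PID $R$. By that theorem, together with \cref{lem:finite-length}, every finite length $R$-module is a finite direct sum of modules of the form $R/x^a$ with $a\ge 1$. So I would first fix decompositions $M\cong\bigoplus_{i=1}^m R/x^{a_i}$ and $N\cong\bigoplus_{j=1}^n R/x^{b_j}$ with all $a_i,b_j\ge 1$ (the cases $M=0$ or $N=0$ being trivial), and record that $\loew(M)=\max_i a_i$ and $\loew(N)=\max_j b_j$ directly from the definition of Loewy length: $x^\ell$ annihilates $\bigoplus_i R/x^{a_i}$ precisely when $\ell\ge a_i$ for every $i$.

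For part (a) this is immediate: $M\oplus N$ decomposes as $\bigoplus_i R/x^{a_i}\oplus\bigoplus_j R/x^{b_j}$, so its Loewy length is $\max(\max_i a_i,\max_j b_j)=\max(\loew(M),\loew(N))$.

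For parts (b) and (c) I would use that both $-\otimes_R-$ and $\Tor_1^R(-,-)$ commute with finite direct sums in each variable, so that \cref{lem:tensor-cyclic}(a),(b) give $M\otimes_R N\cong\bigoplus_{i,j} R/x^{\min(a_i,b_j)}$ and $\Tor_1^R(M,N)\cong\bigoplus_{i,j} R/x^{\min(a_i,b_j)}$. In both cases the Loewy length is therefore $\max_{i,j}\min(a_i,b_j)$, and it only remains to check the elementary identity
\[
	\max_{i,j}\min(a_i,b_j)=\min\bigl(\max_i a_i,\ \max_j b_j\bigr).
\]
The inequality $\le$ holds termwise, since $\min(a_i,b_j)\le a_i\le\max_i a_i$ and likewise $\min(a_i,b_j)\le\max_j b_j$; the inequality $\ge$ follows by choosing indices $i_0,j_0$ realizing the two maxima, for which $\min(a_{i_0},b_{j_0})$ equals the right-hand side. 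This yields $\loew(M\otimes_R N)=\loew(\Tor_1^R(M,N))=\min(\loew(M),\loew(N))$.

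There is no real obstacle: the substance is entirely contained in \cref{lem:tensor-cyclic} and the structure theorem, both already available, and what remains is the trivial max--min identity above. The only points requiring a moment's care are the degenerate cases (the zero module, where the empty-direct-sum conventions make $\loew=0$ and everything stays consistent with $\min(0,-)=0$) and the additivity of $\Tor_1^R$ in each variable, which is standard.
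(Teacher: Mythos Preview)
Your proof is correct and follows the same approach as the paper, which simply says that part~(a) is immediate from the definition and parts~(b) and~(c) follow from \cref{lem:tensor-cyclic}. You have supplied the details the paper omits, including the explicit max--min identity, but the underlying argument is identical.
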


\begin{proof}
	Part $(a)$ is immediate from the definition, while parts $(b)$ and $(c)$ follow from \cref{lem:tensor-cyclic}.
\end{proof}

\begin{Lem}\label{lem:loewSES}
	Let $0 \to N \to M \to L \to 0$ be a short exact sequence of finite length $R$-modules. Then $\loew(M) \le \loew(N)+\loew(L)$.
\end{Lem}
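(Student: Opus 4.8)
The plan is to argue directly with elements, using only the characterization of the Loewy length as the least power of $\frakm = (x)$ that annihilates the module. Set $a \coloneqq \loew(N)$ and $b \coloneqq \loew(L)$, so that $\frakm^a N = 0$ and $\frakm^b L = 0$ by definition. The goal is then to prove that $\frakm^{a+b} M = 0$, since this immediately forces $\loew(M) \le a+b = \loew(N) + \loew(L)$.

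The one substantive step is the following element chase. Identify $N$ with its image in $M$ under the monomorphism $N \hookrightarrow M$, so that the quotient $M/N$ is identified with $L$. Given an arbitrary $m \in M$, the element $x^b m$ maps to $x^b \cdot \overbar{m} = 0$ in $L$ because $\frakm^b L = 0$; hence $x^b m$ lies in $N$. Since $\frakm^a N = 0$, applying $x^a$ gives $x^a(x^b m) = x^{a+b} m = 0$. As $m$ was arbitrary, this shows $\frakm^{a+b} M = x^{a+b} M = 0$, which is exactly what we wanted.

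I do not anticipate any real obstacle here: the argument is a one-line diagram chase and uses neither the structure theory of finitely generated $R$-modules nor anything special about discrete valuation rings beyond $\frakm$ being principal (and even principality is not essential — the same proof works for any commutative ring with $\frakm$ replaced by an arbitrary ideal and the ``Loewy length'' replaced by the least exponent annihilating the module). The only point worth a passing remark is that the degenerate cases in which one of $N$, $M$, $L$ is zero need no separate treatment, since $\loew(0) = 0$ and the computation $\frakm^{a+b} M = 0$ remains valid and still the thing to be verified.
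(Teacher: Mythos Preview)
Your proof is correct and is essentially identical to the paper's: both show $\frakm^{\loew(L)} M \subseteq N$ (the paper phrases this at the level of submodules, you do it elementwise) and then apply $\frakm^{\loew(N)}$ to conclude $\frakm^{\loew(N)+\loew(L)} M = 0$.
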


\begin{proof}
	We have $\frakm^{\loew(L)}(M/N)=0$ so that $\frakm^{\loew(L)}(M) \subseteq N$. Then 
	\[
		\frakm^{\loew(N)+\loew(L)}M = \frakm^{\loew(N)}(\frakm^{\loew(L)}M) \subseteq \frakm^{\loew(N)} N = 0.
	\]
	Hence $\loew(M) \le \loew(N)+\loew(L)$ as desired.
\end{proof}

We now turn to complexes of $R$-modules.

\begin{Rem}\label{rem:hereditary-splitting}
	Since $R$ is hereditary, every complex $E\in\Der(R)$ is isomorphic to the complex 
	\[
		\bigoplus_{i \in \bbZ} H_i(E)[i]=(\cdots \to H_2(E) \xrightarrow{0} H_1(E) \xrightarrow{0} H_0(E) \xrightarrow{0} H_{-1}(E) \to \cdots)
	\] 
	consisting of the homology of $E$ with zero differentials; see \cite[Section~1.6]{Krause07}, for example. In particular, any pseudo-coherent complex $E \in \Dps(R)$ is isomorphic to a bounded below complex of finitely generated modules with zero differentials.
\end{Rem}

\begin{Rem}\label{rem:dpfl}
	We write $\Dpfl(R) \subset \Dps(R)$ for the full subcategory consisting of complexes homologically bounded on the right with $H_i(E)$ of finite length for each $i\in \bbZ$. It is a prime ideal of $\Dps(R)$. This can be checked directly, or we may observe that it is the tame prime $\tame(\eta)$ associated to the generic point $\eta \in \Spec(R)$. Indeed:
	\begin{align*}
		\tame(\eta)  \coloneqq& \SET{X \in \Dps(R)}{\eta \not\in\Supp(X)} \\
		=&\SET{X \in \Dps(R)}{\eta \not\in \bigcup\nolimits_{i \in \bbZ}\Supp_R(H_i(X))}\\
		=&\Dpfl(R)
	\end{align*}
	by \cref{rem:specialization-closed}, \cref{def:tame} and \cref{lem:finite-length}.
\end{Rem}

\begin{Rem}\label{rem:Dpfl-largest}
	Note that if $X \not\in \Dpfl(R)$ then $X$ must contain a shift of $R$ as a direct summand and hence $\ideal{X} = \Dps(R)$. Thus, the prime ideal $\Dpfl(R)$ is in fact the unique largest proper thick ideal of $\Dps(R)$. Hence much of our discussion will be concerned with understanding the complexes in $\Dpfl(R)$.
\end{Rem}

\begin{Rem}
	We may also consider the thick subcategory $\Dbfl(R)$ consisting of all homologically bounded complexes with finite length homology modules. It is not an ideal in $\Dps(R)$. For example: 
	\[
		R/\frakm \otimes (\cdots \to R \xrightarrow{0} R \xrightarrow{0} R \to 0 \to 0 \to \cdots) \simeq \bigoplus_{i \in \bbN} R/\frakm[i] \not\in \Dbfl(R).
	\]
\end{Rem}

\begin{Lem}\label{lem:cone}
	If $i \le j$ then $\cone(R/x^j \xrightarrow{x^i} R/x^j)\in \Der(R)$ contains $R/x^i$ as a direct summand.
\end{Lem}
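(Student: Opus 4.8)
The plan is to identify the cone explicitly and then read off the desired summand. First I would observe that, since $x^i$ is a nonzerodivisor on $R$, the short exact sequence $0\to R\xrightarrow{x^i}R\to R/x^i\to 0$ shows that $\cone(x^i\colon R\to R)\simeq R/x^i$ in $\Der(R)$ (this is just the Koszul complex on $x^i$). The map $x^i\colon R/x^j\to R/x^j$ is the derived tensor product of $x^i\colon R\to R$ with $R/x^j$, and since forming a mapping cone commutes with $-\otimes_R^{\bbL}R/x^j$, we obtain
\[
	\cone\bigl(x^i\colon R/x^j\to R/x^j\bigr)\;\simeq\; R/x^i\otimes_R^{\bbL}R/x^j .
\]

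Next I would compute the homology of the right-hand side. Because $R$ has global dimension one, the complex $R/x^i\otimes_R^{\bbL}R/x^j$ is concentrated in homological degrees $0$ and $1$, with
\[
	H_0 \cong R/x^i\otimes_R R/x^j \qquad\text{and}\qquad H_1\cong\Tor_1^R(R/x^i,R/x^j).
\]
By \cref{lem:tensor-cyclic}(a) and (b), together with the hypothesis $i\le j$ (so that $\min(i,j)=i$), both homology modules are isomorphic to $R/x^i$.

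Finally, since $R$ is hereditary, \cref{rem:hereditary-splitting} allows me to replace $R/x^i\otimes_R^{\bbL}R/x^j$ by the direct sum of its homology modules placed in degrees $0$ and $1$ with zero differentials, giving
\[
	\cone\bigl(x^i\colon R/x^j\to R/x^j\bigr)\;\simeq\; R/x^i\ \oplus\ (R/x^i)[1].
\]
In particular $R/x^i$ is a direct summand of the cone, as claimed. I do not anticipate a genuine obstacle here; the only place where a little care is needed is the use of $i\le j$ in the tensor and Tor computations of \cref{lem:tensor-cyclic} (equivalently, in identifying the kernel $(x^{j-i})/(x^j)$ and the cokernel $R/(x^i,x^j)$ of multiplication by $x^i$ on $R/x^j$). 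If one prefers to avoid derived tensor products altogether, the same conclusion follows by applying the triangle $R/x^j\xrightarrow{x^i}R/x^j\to C\to (R/x^j)[1]$ and its homology long exact sequence to get $H_1(C)\cong(x^{j-i})/(x^j)\cong R/x^i$ and $H_0(C)\cong R/x^{\min(i,j)}=R/x^i$, and then invoking \cref{rem:hereditary-splitting} exactly as above.
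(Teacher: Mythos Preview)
Your proof is correct and follows essentially the same approach as the paper: identify the cone, use that $R$ is hereditary (\cref{rem:hereditary-splitting}) to split it into its homology, and compute $H_0$ via \cref{lem:tensor-cyclic} to find $R/x^i$ as a summand. The only cosmetic difference is that you first recast the cone as $R/x^i\otimes_R^{\bbL}R/x^j$ and compute $H_1$ as well, obtaining the full decomposition $R/x^i\oplus (R/x^i)[1]$, whereas the paper works directly with the two-term complex and only records $H_0$.
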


\begin{proof}
	Let $M\coloneqq R/x^j$ and consider the morphism of $R$-modules $x^i:M\to M$ regarded as a morphism in the derived category. Its cone is the Koszul complex $\kosz(x;M)=(\cdots\to 0 \to M \xrightarrow{x} M \to 0 \to \cdots)$ concentrated in degrees 1 and 0. Since $R$ is hereditary, it is isomorphic to a complex which has $H_0(\kosz(x;M))\simeq M/x^iM$ as a direct summand. Moreover, $M/x^iM\simeq M\otimes_R R/x^i \simeq R/x^i$ by \cref{lem:tensor-cyclic}.
\end{proof}

\begin{Prop}\label{prop:Dbfl-minimal}
	We have $\Dbfl(R) = \thicksub{R/x} = \thicksub{R/x^i}$ for all $i \ge 1$. This is the smallest nonzero thick subcategory of $\Dps(R)$.
\end{Prop}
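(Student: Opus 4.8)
\section*{Proof sketch}

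The plan is to prove the chain of equalities in three stages: first establish $\thicksub{R/x^i}=\thicksub{R/x}$ for all $i\ge 1$, then identify this common thick subcategory with $\Dbfl(R)$, and finally verify minimality.

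For the first stage, the inclusion $\thicksub{R/x}\subseteq\thicksub{R/x^i}$ follows by applying \cref{lem:cone} to the self-map $x\colon R/x^i\to R/x^i$ (taking ``$i$''$=1$ and ``$j$''$=i$ in the notation there): the cone of this map lies in $\thicksub{R/x^i}$ and, by the lemma, contains $R/x$ as a direct summand, so $R/x\in\thicksub{R/x^i}$ since thick subcategories are closed under retracts. For the reverse inclusion $R/x^i\in\thicksub{R/x}$ I would induct on $i$, using the short exact sequence $0\to R/x\xrightarrow{x^{i}}R/x^{i+1}\to R/x^{i}\to 0$ from \cref{lem:tensor-cyclic}(c), which realizes $R/x^{i+1}$ as the middle term of a distinguished triangle whose two outer terms lie in $\thicksub{R/x}$ by the inductive hypothesis.

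For the second stage, note that $R/x$, viewed as a complex concentrated in degree $0$, is a finite length module and hence lies in the thick subcategory $\Dbfl(R)$, giving $\thicksub{R/x}\subseteq\Dbfl(R)$. Conversely, given $E\in\Dbfl(R)$, I would use \cref{rem:hereditary-splitting} to write $E\cong\bigoplus_i H_i(E)[i]$, a \emph{finite} direct sum since $E$ is homologically bounded; each $H_i(E)$ is a finite length $R$-module, hence torsion over the principal ideal domain $R$ (cf.\ \cref{lem:finite-length}) and therefore a finite direct sum of cyclic modules $R/x^{j}$ with $j\ge 1$, each of which lies in $\thicksub{R/x^{j}}=\thicksub{R/x}$ by the first stage. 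Closure of $\thicksub{R/x}$ under suspension and finite direct sums then yields $E\in\thicksub{R/x}$, so $\Dbfl(R)=\thicksub{R/x}$.

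For minimality, let $\cat T\subseteq\Dps(R)$ be any nonzero thick subcategory and choose a nonzero $E\in\cat T$ together with an integer $n$ with $H_n(E)\neq 0$. By \cref{rem:hereditary-splitting} the summand $H_n(E)[n]$ is a retract of $E$, so $H_n(E)\in\cat T$. If the finitely generated module $H_n(E)$ has a free summand then $R\in\cat T$, whence $R/x=\cone(R\xrightarrow{x}R)\in\cat T$; otherwise $H_n(E)$ has finite length by \cref{lem:finite-length} and admits a summand $R/x^{j}$ with $j\ge 1$, so $R/x\in\thicksub{R/x^{j}}\subseteq\cat T$. In either case $\Dbfl(R)=\thicksub{R/x}\subseteq\cat T$, and since $\Dbfl(R)$ is itself a nonzero thick subcategory it is the smallest one. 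The only step requiring any care is the retract argument extracting $H_n(E)[n]$ from $E$; everything else is an assembly of the preceding lemmas, so I do not anticipate a genuine obstacle.
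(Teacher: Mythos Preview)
Your proof is correct and follows essentially the same route as the paper's own argument: both use \cref{lem:tensor-cyclic}(c) to build $R/x^i$ from $R/x$, \cref{lem:cone} to extract $R/x$ from $R/x^i$, and \cref{rem:hereditary-splitting} together with the structure of finitely generated modules over a PID for both the identification $\Dbfl(R)=\thicksub{R/x}$ and the minimality claim. Your treatment is slightly more explicit in unpacking the $\Dbfl(R)\subseteq\thicksub{R/x}$ direction, but the underlying logic is identical.
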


\begin{proof}
	The short exact sequence in \cref{lem:tensor-cyclic}(c) implies that $\thicksub{R/x}$ contains~$R/x^i$ for all $i \ge 1$ and this implies that $\Dbfl(R)=\thicksub{R/x}$. On the other hand $R/x \in \thicksub{R/x^i}$ by \cref{lem:cone}. The second statement says that $\Dbfl(R)$ is contained in every nonzero thick subcategory. If $0 \neq X \in \Dps(R)$ then by \cref{rem:hereditary-splitting} the object $X$ contains a shift of $R/x^i$ for some $i \ge 1$ or a shift of $R$ as a direct summand. In the former case, we have $\Dbfl(R) \subseteq \thicksub{X}$ by what we have just proved. In the latter case, the short exact sequence $0\to R \xrightarrow{x} R \to R/x \to 0$ shows that $R/x \in \thicksub{R}$ and hence $\Dbfl(R) \subseteq \thicksub{X}$ as well.
\end{proof}

\begin{Rem}\label{rem:truncation}
	Given $X \in \Dpfl(R)$ and any $n \in \bbZ$ we can form the good truncation $X_{\ge n} \in \Dpfl(R)$ which satisfies
	\[
		H_i(X_{\ge n}) = \begin{cases}
			H_i(X) & \text{ for $i\ge n$}\\
			0 & \text{ for $i<n$}.
		\end{cases}
	\]
	Since every complex splits into the direct sum of its homology, we have $X \simeq X_{\ge n} \oplus X'$ with $X' \in \Dbfl(R)$. As noted above, all nonzero bounded complexes generate the same thick subcategory:
	\[
		0 \neq X \in \Dbfl(R) \Longrightarrow \thicksub{X} = \Dbfl(R).
	\]
	On the other hand, if $X$ is not bounded then $\thicksub{X} = \thicksub{X_{\ge n}}$ for any $n \in \bbZ$. The bottom-line is that only the behaviour in high degree is relevant.
\end{Rem}

\begin{Def}[Loewy sequences]\label{def:loewy-seq}
	Let $E \in \Dpfl(R)$ be a complex with $H_0(E) \neq 0$ and $H_n(E) = 0$ for $n<0$. We define a function $\loew_E:\bbN\to\bbN$ by
	\[
		\loew_E(n)\coloneqq \loew(H_n(E))
	\]
	for each $n \in \bbN$. More generally, if $E \in \Dpfl(R)$ is nonzero then $E[-\inf(E)]$ is concentrated in nonnegative degrees as above and we set $\loew_E \coloneqq \loew_{E[-\inf(E)]}$. Finally, we define $\loew_E$ for the zero complex $E$ to be the constant zero function.
\end{Def}

\begin{Cons}\label{cons:Rxf}
	Let $f:\bbN \to \bbN$ be a function. We write $R/x^f$ for the complex
	\[
		\cdots \to R/x^{f(2)} \to R/x^{f(1)} \to R/x^{f(0)} \to 0 \to 0 \to \cdots
	\]
	which has zero differentials and where $R/x^{f(0)}$ sits in degree $0$. Note that $R/x^f \in \Dpfl(R)$ and we have $\loew_{R/x^f} = f$ provided that $f(0)\neq 0$.
\end{Cons}

\begin{Rem}
	For any $k \in \bbN$, we can define $\sigmakf:\bbN \to \bbN$ by $\sigmakf(n) = f(n+k)$ in conformance with \cref{not:sigma-mu}.
	In particular, if $f \neq \underline{0}$ then we can consider 
	\[
		\inf(f) \coloneqq \inf(R/x^f) = \min\{n\in \bbN \mid f(n) \neq 0\}.
	\]
	According to \cref{def:loewy-seq}, $\loew_{R/x^f} = \sigmainff$. Moreover,
	\[
		R/x^{\sigmainff} = R/x^f[-\inf(f)].
	\]
	Thus, the function $f' \coloneqq \sigmainff$ satisfies $\inf(f') = 0$ and we have $\thicksub{R/x^f} = \thicksub{R/x^{f'}}$ since these complexes are just shifts of each other. It follows that in our study of nonzero complexes $R/x^f$, there will often be no loss of generality in assuming that $f(0) \neq 0$.
\end{Rem}

\begin{Lem}\label{lem:pointwise}
	Let $f,g:\bbN\to\bbN$ be functions. If $f(n) \le g(n)$ for all $n \in \bbN$ then $R/x^f \in \thicksub{R/x^g}$.
\end{Lem}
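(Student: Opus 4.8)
The plan is to exhibit $R/x^f$ as a direct summand (retract) of the cone of a single endomorphism of $R/x^g$, thereby sidestepping any appeal to infinite coproduct closure. Recall from \cref{cons:Rxf} and \cref{rem:hereditary-splitting} that $R/x^g$ is literally the complex $\bigoplus_{n\in\bbN} R/x^{g(n)}[n]$ with zero differentials, so any degreewise family of $R$-module homomorphisms between such complexes is automatically a chain map. First I would consider the endomorphism $\phi\colon R/x^g \to R/x^g$ which on the degree-$n$ summand $R/x^{g(n)}$ is multiplication by $x^{f(n)}$; this is well defined precisely because $f(n)\le g(n)$ (and it is the identity on those summands where $f(n)=0$).

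Next I would compute the cone. Since $\phi$ is a direct sum of maps, $\cone(\phi) \cong \bigoplus_{n\in\bbN} \cone\!\big(R/x^{g(n)} \xrightarrow{x^{f(n)}} R/x^{g(n)}\big)[n]$ in $\Der(R)$. By \cref{lem:cone}, applied with $i = f(n) \le g(n) = j$, each summand $\cone(R/x^{g(n)} \xrightarrow{x^{f(n)}} R/x^{g(n)})$ contains $R/x^{f(n)}$ as a direct summand; when $f(n)=0$ both this summand and $R/x^{f(n)}=R/x^0$ are zero, so the claim holds trivially there. Assembling the resulting split monomorphisms and retractions summand by summand shows that $R/x^f = \bigoplus_{n\in\bbN} R/x^{f(n)}[n]$ is a direct summand of $\cone(\phi)$.

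Finally, $\cone(\phi)$ lies in $\thicksub{R/x^g}$, being the cone of a morphism whose source and target are both $R/x^g$; since a thick subcategory is closed under direct summands, it follows that $R/x^f \in \thicksub{R/x^g}$. The one point I would be careful to phrase correctly is that $R/x^f$ is an \emph{infinite} direct sum, which a thick subcategory need not contain in general; the resolution is exactly that we never form this infinite direct sum \emph{inside} $\thicksub{R/x^g}$ — it already appears as a retract of the single object $\cone(\phi)$ — so no infinite coproduct closure is invoked. This is the only genuine subtlety; everything else is a routine unwinding of the definitions together with \cref{lem:cone}.
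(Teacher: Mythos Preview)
Your proof is correct and follows essentially the same approach as the paper: construct the endomorphism of $R/x^g$ given degreewise by multiplication by $x^{f(n)}$, and invoke \cref{lem:cone} to see that $R/x^f$ is a direct summand of its cone. The paper's proof is more terse, but your additional remarks about the $f(n)=0$ case and the non-issue of infinite coproducts are accurate and do no harm.
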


\begin{proof}
	Consider the morphism of complexes $R/x^g \to R/x^g$ which in degree $n$ is $x^{f(n)}:R/x^{g(n)} \to R/x^{g(n)}$. Its cone has $R/x^f = \bigoplus_{n \in \bbN} R/x^{f(n)}[n]$ as a direct summand by \cref{lem:cone}.
\end{proof}

\begin{Lem}\label{lem:grow}
	Let $f,g:\bbN\to \bbN$ be functions. There is a short exact sequence of complexes
	\[
		0 \to R/x^f \to R/x^{f+g} \to R/x^g \to 0.
	\]
\end{Lem}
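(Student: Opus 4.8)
The plan is to construct the sequence one homological degree at a time, using the short exact sequence of \cref{lem:tensor-cyclic}\,(c) as the building block. For each fixed $n\in\bbN$, applying that lemma with $i=f(n)$ and $j=g(n)$ produces a short exact sequence of $R$-modules
\[
	0 \to R/x^{f(n)} \xrightarrow{\,x^{g(n)}\,} R/x^{f(n)+g(n)} \to R/x^{g(n)} \to 0,
\]
where the second map is multiplication by $x^{g(n)}$ and the third is the canonical surjection. This also makes sense, and remains exact, in the degenerate cases $f(n)=0$ or $g(n)=0$, since then the corresponding cyclic module $R/x^{0}$ is simply the zero module. Placing the $n$-th such sequence in homological degree $n$ (i.e.\ applying $[n]$) and taking the direct sum over all $n\in\bbN$ yields a candidate sequence involving exactly the complexes $R/x^f$, $R/x^{f+g}$ and $R/x^g$ of \cref{cons:Rxf}.

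It then remains to observe that the degreewise maps just described genuinely assemble into morphisms of complexes and that the resulting sequence is short exact. Both points are automatic: by construction all three complexes $R/x^f$, $R/x^{f+g}$, $R/x^g$ have zero differentials, so \emph{any} collection of $R$-linear maps, one in each degree, commutes trivially with the differentials and hence defines a chain map; and a sequence of chain complexes is short exact precisely when it is short exact in each degree, which we have arranged by the previous paragraph. There is really no obstacle here — the entire content of the statement is packaged into \cref{lem:tensor-cyclic}\,(c), and the only (trivial) thing one must not overlook is that working with zero-differential complexes makes the "morphism of complexes" and "short exactness" conditions degreewise, so no compatibility has to be checked.
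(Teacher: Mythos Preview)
Your proof is correct and follows exactly the approach the paper intends: the paper's own proof is the single sentence ``This follows from \cref{lem:tensor-cyclic}(c) and the definitions,'' and you have simply unpacked that sentence by applying the degreewise short exact sequence and noting that zero differentials make the chain-map and exactness conditions purely degreewise.
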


\begin{proof}
	This follows from \cref{lem:tensor-cyclic}(c) and the definitions.
\end{proof}

\begin{Exa}\label{exa:join}
	It follows from \cref{lem:pointwise} and \cref{lem:grow} that
	\[
		\thicksub{R/x^f \oplus R/x^g} = \thicksub{R/x^{f \vee g}} = \thicksub{R/x^{f+g}}
	\]
	for any two funtions $f,g:\bbN\to \bbN$.
\end{Exa}

\begin{Prop}\label{prop:asym-direct}
	If $f,g:\bbN\to \bbN$ are two monotonic sequences such that $f\le g$, that is, such that $f$ is asymptotically bounded by $g$, then $R/x^f \in \thicksub{R/x^g}$.
\end{Prop}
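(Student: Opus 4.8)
The plan is to convert the asymptotic inequality $f \le g$ into an honest \emph{pointwise} inequality — at the cost of a multiplicative constant and of discarding finitely many low-degree terms — and then feed this into the elementary closure properties of $\thicksub{R/x^g}$ established above. First I would dispose of the degenerate case: if $g = \underline 0$, then a monotonic sequence with $f \le \underline 0$ is itself identically zero, so both $R/x^f$ and $R/x^g$ are the zero complex and there is nothing to prove. Hence assume $g \neq \underline 0$, so that $R/x^g \neq 0$.

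Now choose, as in \cref{def:asymptotic-ordering}, a positive integer $A$ and an $n_0 \in \bbN$ with $f(n) \le A\,g(n)$ for all $n \ge n_0$. Using \cref{rem:hereditary-splitting} (cf.~\cref{rem:truncation}) I would write
\[
	R/x^f \;\simeq\; R/x^{f'} \,\oplus\, B,
\]
where $f'$ agrees with $f$ in degrees $\ge n_0$ and vanishes in degrees $< n_0$, and $B \coloneqq \bigoplus_{n<n_0} R/x^{f(n)}[n]$ is a bounded complex with finite-length homology, i.e.~$B \in \Dbfl(R)$. Since $R/x^g \neq 0$, \cref{prop:Dbfl-minimal} gives $B \in \Dbfl(R) \subseteq \thicksub{R/x^g}$, so this summand is harmless. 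For the other summand, note that $f'(n) \le A\,g(n)$ holds for \emph{every} $n$ (by the choice of $A$ when $n \ge n_0$, and trivially when $n < n_0$ since $f'(n)=0$), so \cref{lem:pointwise} gives $R/x^{f'} \in \thicksub{R/x^{Ag}}$. Finally, writing $Ag = g + g + \cdots + g$ with $A$ summands and iterating \cref{exa:join} — each application replaces $R/x^{h} \oplus R/x^{g}$ by $R/x^{h+g}$ without changing the generated thick subcategory, so $\thicksub{R/x^{Ag}}=\thicksub{R/x^g \oplus \cdots \oplus R/x^g}=\thicksub{R/x^g}$ — we conclude $R/x^{f'} \in \thicksub{R/x^g}$, and therefore $R/x^f \in \thicksub{R/x^g}$.

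I do not expect a genuine obstacle here; the only two points requiring any care are the reductions. Absorbing the multiplicative constant $A$ is legitimate precisely because $\thicksub{R/x^{h}\oplus R/x^{g}} = \thicksub{R/x^{h+g}}$, which forces $\thicksub{R/x^{Ag}}$ to collapse to $\thicksub{R/x^g}$; and discarding the degrees below $n_0$ is legitimate precisely because the bounded finite-length complexes form the smallest nonzero thick subcategory (\cref{prop:Dbfl-minimal}). Since neither step appeals to anything beyond the lemmas already in place, the whole argument should be very short.
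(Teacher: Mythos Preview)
Your proof is correct and follows essentially the same approach as the paper: reduce to the nonzero case, truncate away the low degrees (absorbing the bounded remainder into $\Dbfl(R) \subseteq \thicksub{R/x^g}$ via \cref{prop:Dbfl-minimal}), apply \cref{lem:pointwise} to the truncation, and collapse the constant $A$ using the extension argument behind \cref{exa:join}/\cref{lem:grow}. The only cosmetic differences are that the paper disposes of the degenerate case by assuming $f \neq \underline 0$ rather than $g \neq \underline 0$, and cites \cref{lem:grow} directly rather than its corollary \cref{exa:join}.
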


\begin{proof}
	The claim is trivial if $f=\underline{0}$. Otherwise, $f$ and hence $g$ is nonzero. Hence $\thicksub{R/x^g} \supseteq \Dbfl(R)$ by \cref{prop:Dbfl-minimal}. By definition, $f\le g$ means that there exist positive integers $A$ and $n_0$ such that $f(n) \le Ag(n)$ for all $n \ge n_0$. Thus the truncation $(R/x^f)_{\ge n_0} \in \thicksub{R/x^{Ag}}$ by \cref{lem:pointwise}. On the other hand, it follows from \cref{lem:grow} that $R/x^{Ag} \in \thicksub{R/x^g}$. Hence $R/x^f \in \thicksub{R/x^g}$ bearing in mind \cref{rem:truncation}.
\end{proof}

We next take the tensor into account.

\begin{Rem}\label{rem:Htensor}
	Let $X,Y\in \Dps(R)$. It follows from the decompositions of \cref{rem:hereditary-splitting} that
	\[
		H_n(X \otimes_R^{\bbL} Y) = \left( \bigoplus_{i+j=n} H_i(X) \otimes_R H_j(Y)\right)\oplus \left(\bigoplus_{i+j=n-1}\Tor_1^R\big(H_i(X),H_j(Y)\big)\right)
	\]
	for each $n \in \bbZ$. We will use this decomposition repeatedly in what follows. Here we have used the notation $\otimes_R^{\mathbb{L}}$ for emphasis, but throughout the paper we are simply writing $\otimes$ for the derived tensor product of complexes.
\end{Rem}

\begin{Rem}
	Recall from \cref{exa:to-mono} that an arbitrary function $f:\bbN\to\bbN$ gives rise to a monotonic function $\mono{f}$.
\end{Rem}

\begin{Prop}\label{prop:ideal-force-mono}
	For any function $f:\bbN\to\bbN$, we have $\ideal{R/x^f} = \ideal{R/x^{\mono{f}}}$.
\end{Prop}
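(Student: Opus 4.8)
The plan is to establish the two inclusions separately. The inclusion $\ideal{R/x^f} \subseteq \ideal{R/x^{\mono f}}$ is the easy half: since $f(n) \le \mono f(n)$ for every $n$ by the definition of $\mono f$ in \cref{exa:to-mono}, \cref{lem:pointwise} gives $R/x^f \in \thicksub{R/x^{\mono f}} \subseteq \ideal{R/x^{\mono f}}$, whence the inclusion of ideals.

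For the reverse inclusion it is enough to show $R/x^{\mono f} \in \ideal{R/x^f}$, and this is where I would use that $\ideal{R/x^f}$ is a \emph{tensor} ideal rather than merely a thick subcategory --- indeed $R/x^{\mono f}$ typically does not lie in $\thicksub{R/x^f}$ (an infinite direct sum cannot be built in finitely many cone steps), so the tensor structure is essential. The key auxiliary object is $Y \coloneqq \bigoplus_{m \ge 0} R[m]$, the complex with $R$ in each non-negative degree and zero differentials; it lies in $\Dps(R)$ since it is homologically bounded on the right with finitely generated homology. Then $R/x^f \otimes Y \in \ideal{R/x^f}$, and because $R$ is flat, tensoring with $Y$ just forms direct sums of shifts; combining this with the splitting of complexes into their homology (\cref{rem:hereditary-splitting}) one computes $R/x^f \otimes Y \simeq \bigoplus_{n \ge 0}\bigl(\bigoplus_{i=0}^{n} R/x^{f(i)}\bigr)[n]$.

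The final step is to read off $R/x^{\mono f}$ as a direct summand: by \cref{lem:loew}(a) the $n$-th homology module $\bigoplus_{i=0}^{n} R/x^{f(i)}$ has Loewy length $\max_{0 \le i \le n} f(i) = \mono f(n)$, so it contains $R/x^{\mono f(n)}$ as a summand; taking the direct sum over all $n$ shows that $R/x^{\mono f} = \bigoplus_{n} R/x^{\mono f(n)}[n]$ is a direct summand of $R/x^f \otimes Y$, hence lies in $\ideal{R/x^f}$ since thick subcategories are closed under summands. The case $f = \underline 0$ is trivial, as then $R/x^f = 0 = R/x^{\mono f}$. I expect no serious obstacle beyond hitting on the right test object $Y$: the whole point is that tensoring $R/x^f$ with $\bigoplus_{m\ge 0} R[m]$ replaces the exponent sequence $f$ by its running maximum $\mono f$, up to direct summands.
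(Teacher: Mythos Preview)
Your proposal is correct and takes essentially the same approach as the paper: the paper introduces exactly your object $Y$ (there denoted $R^{\uparrow}$), computes $H_n(R/x^f \otimes R^{\uparrow}) = \bigoplus_{i=0}^n R/x^{f(i)}$ via \cref{rem:Htensor}, and reads off $R/x^{\mono f}$ as a direct summand. Your observation that the flatness of $R$ kills the $\Tor_1$ contributions is precisely why the formula in \cref{rem:Htensor} simplifies to the one you wrote.
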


\begin{proof}
	We have the $\subseteq$ inclusion by the above results (e.g., pointwise boundedness). We need to prove that $R/x^{\mono{f}} \in \ideal{R/x^f}$. Consider the complex 
	\begin{equation}\label{eq:Ruparrow}
		R^\uparrow \coloneqq \bigoplus_{n \in \bbN} R[n]= (\cdots\to R\to R\to R \to 0\to \cdots)
	\end{equation}
	concentrated in non-negative degrees with zero differentials. From \cref{rem:Htensor}, we see that 
	\[
		H_n(R/x^f\otimes R^{\uparrow}) = \bigoplus_{i=0}^n R/x^{f(i)}.
	\]
	In particular, this contains $R/x^{\mono{f}(n)}$ as a direct summand. Thus $R/x^f \otimes R^{\uparrow}$ contains $R/x^{\mono{f}}$ as a direct summand.
\end{proof}

\section{Constructing thick ideals}\label{sec:thick}

We would like to have some control over the thick ideal $\ideal{X}$ generated by a pseudo-coherent complex $X \in \Dps(R)$. Clarity will be gained by abstracting our method as follows:

\begin{Hyp}\label{hyp:Kn}
	Let $\cat K$ be a tensor-triangulated category which is equipped with subcategories $\{\cat K_{\ge n}\}_{n \in \bbZ}$ satisfying
	\[
		\Sigma \cat K_{\ge n} \subseteq \cat K_{\ge n+1} \subseteq \cat K_{\ge n} \quad\text{ and }\quad\Sigma^{-1}\cat K_{\ge n} \subseteq \cat K_{\ge n-1}
	\]
	for each $n \in \bbZ$. We make the significant additional hypothesis that for every object $x \in \cat K$, there is some $n \in \bbZ$ such that $x \in \cat K_{\ge n}$. Finally, we assume $\unit \in \cat K_{\ge 0}$ although this is just for convenience.
\end{Hyp}

\begin{Exa}\label{exa:Kexa}
	We could take $\cat K=\Dps(R)$ with $\cat K_{\ge n}$ consisting of those $X \in \Dps(R)$ with $H_i(X) = 0$ for $i<n$.
\end{Exa}

\begin{Cons}\label{cons:Cs}
	Let $\cat C$ be a full subcategory of $\cat K$ and $s \in \bbZ$. We write
	\begin{align*}
		\cat C(s) \coloneqq \SET{z}{ z \text{ is a } &\text{$\oplus$-summand} \text{ of $z'$ and there exists an exact triangle} \\ 
		 & a \to z' \to b\otimes w \to \Sigma a 
	   \text{ with } a,b\in \cat C \text{ and } w \in \cat K_{\ge s}}.
	\end{align*}
\end{Cons}

\begin{Rem}\label{rem:Ctensor}
	If $0 \in \cat C$ then $\cat C(s)$ contains all $\oplus$-summands of objects in $\cat C$ and we also have $\cat C \otimes \cat K_{\ge s} \subseteq \cat C(s)$.
\end{Rem}

\begin{Rem}
	Note that $\Sigma^k \unit \in \cat K_{\ge k}$ for any $k \in \bbZ$. So if $0 \in \cat C$ then $\Sigma^k \cat C \subseteq \cat C(k)$.
\end{Rem}

\begin{Rem}
	If $s \le t$ then $\cat C(t) \subseteq \cat C(s)$ simply because $\cat K_{\ge t} \subseteq \cat K_{\ge s}$.
\end{Rem}

\begin{Rem}
	It follows from $\unit \in \cat K_{\ge 0}$ that $\cat C \oplus \cat C \subseteq \cat C(0)$.
\end{Rem}

\begin{Rem}\label{rem:Ccone}
	If $a \to b \to c \to \Sigma a$ is an exact triangle in $\cat K$ with $a,b \in \cat C$ then~$c \in \cat C(1)$. Indeed, by rotation we have an exact triangle
	\[
		b \to c \to \Sigma a \to \Sigma b
	\]
	and $\Sigma a \simeq a \otimes \Sigma \unit$ with $\Sigma \unit \in \cat K_{\ge 1}$.
\end{Rem}

\begin{Not}
	For a sequence $s_1,s_2,\ldots,s_N$ of integers, write 
	\[
		\cat C(s_1,s_2,\ldots,s_N) \coloneqq (\cat C(s_1,s_2,\ldots,s_{N-1}))(s_N).
	\]
\end{Not}

\begin{Rem}
	If $\cat C \subseteq \cat D$ are full subcategories of $\cat K$ then certainly \[\cat C(s_1,\ldots,s_N) \subseteq \cat D(s_1,\ldots,s_N).\]
\end{Rem}

With the above observations in hand, we can describe the thick ideal generated by an object as follows:

\begin{Prop}\label{prop:Cs}
	Let $x \in \cat K$ and let $\cat C\coloneqq \{ \text{all $\oplus$-summands of $x$}\}$. Then
	\begin{equation}\label{eq:generated}
		\ideal{x} = \bigcup_{\stackrel{N \ge 1,}{(s_1,\ldots,s_N)\in\bbZ^N}} \cat C(s_1,\ldots,s_N).
	\end{equation}
\end{Prop}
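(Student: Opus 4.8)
The plan is to prove the two inclusions in \eqref{eq:generated} separately; write $\calF$ for the right-hand side, the union of all the subcategories $\cat C(s_1,\dots,s_N)$. \emph{First, the inclusion $\supseteq$.} I would prove by induction on $N\ge 1$ that $\cat C(s_1,\dots,s_N)\subseteq\ideal{x}$ for every sequence. The basic observation is that if $\cat D\subseteq\ideal{x}$ is any full subcategory and $s\in\bbZ$, then $\cat D(s)\subseteq\ideal{x}$: an object of $\cat D(s)$ is a $\oplus$-summand of some $z'$ sitting in a triangle $a\to z'\to b\otimes w\to\Sigma a$ with $a,b\in\cat D\subseteq\ideal{x}$ and $w\in\cat K$; then $b\otimes w\in\ideal{x}$ since $\ideal{x}$ is an ideal, hence $z'\in\ideal{x}$ since $\ideal{x}$ is triangulated, hence the summand lies in $\ideal{x}$ since $\ideal{x}$ is thick. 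Starting from $\cat C\subseteq\ideal{x}$ (as $\cat C$ consists of summands of $x$ and $\ideal{x}$ is thick) and iterating gives $\cat C(s_1,\dots,s_N)\subseteq\ideal{x}$ for all $N$, hence $\calF\subseteq\ideal{x}$.

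\emph{For the reverse inclusion $\subseteq$}, it suffices to check that $\calF$ is a thick tensor ideal containing $x$, since $\ideal{x}$ is the smallest such. It contains $x$ by taking $a=0$, $b=x$, $w=\unit$ in \cref{cons:Cs}, which places $x$ in $\cat C(0)$; and it is closed under $\oplus$-summands directly from the form of \cref{cons:Cs}. The technical core is the claim that the family $\{\cat C(s_1,\dots,s_N)\}$ is downward directed under inclusion. For this I would record, by easy inductions on the length of the sequence, three monotonicity facts: $\cat C(s_1,\dots,s_N)\subseteq\cat C(s_1,\dots,s_N,s)$ and $\cat C(s_1,\dots,s_N)\subseteq\cat C(s,s_1,\dots,s_N)$ for any $s$, and $\cat C(s_1,\dots,s_N)\subseteq\cat C(s_1',\dots,s_N')$ whenever $s_i'\le s_i$ for all $i$. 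These rest on the degenerate triangle $z\xrightarrow{\id}z\to 0\to\Sigma z$ (so $b=0$, $w=0$), which shows $\cat D\subseteq\cat D(s)$ for any $\cat D$ with $0\in\cat D$ and any $s\in\bbZ$, together with the remarks that $\cat K_{\ge t}\subseteq\cat K_{\ge s}$ for $s\le t$ and that $\cat D\mapsto\cat D(s)$ is monotone in $\cat D$. Given any finite collection of sequences, the first two facts reduce them to a common length and the third then embeds them all into a single $\cat C(\vec u)$ with $\vec u$ the entrywise minimum.

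\emph{Closure of $\calF$ under cones, shifts and tensoring.} With directedness established, closure under cones is immediate from \cref{rem:Ccone}: bring $z_1,z_2\in\calF$ into a common $\cat C(\vec u)$; then $\cone(z_1\to z_2)\in\cat C(\vec u)(1)=\cat C(\vec u,1)\subseteq\calF$. Closure under $\Sigma$, under $\Sigma^{-1}$, and under $-\otimes y$ for an arbitrary $y\in\cat K$ are each proved by induction on $N$: given $z\in\cat C(s_1,\dots,s_N)$ presented as a summand of $z'$ with triangle $a\to z'\to b\otimes w\to\Sigma a$, where $a,b\in\cat C(s_1,\dots,s_{N-1})$ and $w\in\cat K_{\ge s_N}$, apply the relevant exact functor and rewrite $\Sigma(b\otimes w)=b\otimes\Sigma w$, resp.\ $\Sigma^{-1}(b\otimes w)=b\otimes\Sigma^{-1}w$, resp.\ $(b\otimes w)\otimes y=(b\otimes y)\otimes w$. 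In each case exactly one tensor factor, namely $w$, is frozen --- landing in $\cat K_{\ge s_N+1}$, in $\cat K_{\ge s_N-1}$, resp.\ still in $\cat K_{\ge s_N}$ --- while the images of $a$ and $b$ lie in $\calF$ by the inductive hypothesis (in the suspension cases the rewriting does not even move $b$); bringing those into a common $\cat C(\vec u)$, the transformed triangle exhibits the transformed $z'$, hence its summand, in $\cat C(\vec u,s_N+1)$, in $\cat C(\vec u,s_N-1)$, resp.\ in $\cat C(\vec u,s_N)$, all inside $\calF$. The base case $N=1$ instead uses $\Sigma^{\pm1}\cat C\subseteq\cat C(\pm1)$ and $\cat C\otimes\cat K_{\ge m}\subseteq\cat C(m)$ from the remarks preceding the proposition, and for the tensor statement it is essential that every $y\in\cat K$ lies in some $\cat K_{\ge m}$ by \cref{hyp:Kn} --- this is exactly what starts the induction. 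The two-out-of-three property for triangles then follows formally, so $\calF$ is a triangulated subcategory, and combined with tensor-closure and summand-closure it is a thick ideal containing $x$; thus $\ideal{x}\subseteq\calF$.

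I expect the main obstacle to be organizational rather than conceptual: cleanly isolating the directedness facts and, above all, arranging the three closure inductions so that a single tensor factor of each witnessing triangle is frozen while everything else is absorbed into a longer integer sequence. One should also take mild care that all the degenerate triangles used to move objects between the $\cat C(\vec s)$ are genuinely permitted by \cref{cons:Cs}, which they are, since $0\in\cat C$ and $0\in\cat K_{\ge s}$ for every $s$.
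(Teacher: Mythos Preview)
Your proof is correct and follows the same overall two-inclusion strategy as the paper. The direction $\calF\subseteq\ideal{x}$ is identical. For the other direction, however, the paper's argument is considerably leaner than your inductive scheme. The key observation you are missing is that \cref{rem:Ctensor} applies not just to $\cat C$ but to \emph{any} full subcategory containing~$0$: in particular $\cat C(s_1,\dots,s_N)\otimes\cat K_{\ge n}\subseteq\cat C(s_1,\dots,s_N,n)$ directly, with no induction on~$N$. Since every $y\in\cat K$ lies in some $\cat K_{\ge n}$ by \cref{hyp:Kn}, this gives tensor-closure of $\calF$ in one line, and closure under $\Sigma^{\pm1}$ then follows for free by tensoring with $\Sigma^{\pm1}\unit$. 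Your separate inductions for $\Sigma$, $\Sigma^{-1}$ and $-\otimes y$ work, but they are doing by hand what this single remark handles uniformly.

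For cone-closure the paper also avoids your entrywise-minimum directedness manoeuvre: given $a\in\cat C(s_1,\dots,s_N)$ and $b\in\cat C(t_1,\dots,t_M)$, it simply uses $\cat C\subseteq\cat C(s_1,\dots,s_N)$ and monotonicity of $(-)(t_j)$ to place both in $\cat C(s_1,\dots,s_N,t_1,\dots,t_M)$ by \emph{concatenation}, then invokes \cref{rem:Ccone}. Your prepend/append/monotonicity facts are all true and amount to the same thing, but concatenation is the cleanest way to see that the family is directed.
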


\begin{proof}
	We claim that the right-hand side of \eqref{eq:generated} is a thick ideal of $\cat K$. It is a full replete subcategory which contains zero. It follows from our observations (and our hypothesis that every object is ``bounded below'') that it is closed under~$-\otimes \cat K$ and, in particular, closed under the suspension and desuspension --- just observe that if $a \in\cat C(s_1,\ldots,s_N)$ and $b \in \cat K_{\ge n}$ then $a \otimes b \in \cat C(s_1,\ldots,s_N,n)$ by \cref{rem:Ctensor}. It also follows from our observations that it is closed under direct summands. Moreover, it is closed under cones. For this last claim, suppose $a \to b \to c \to \Sigma a$ is an exact triangle in $\cat K$ with $a$ and $b$ contained in the right-hand side of \eqref{eq:generated}. So $a \in \cat C(s_1,\ldots,s_N)$ for some $s_1,\ldots,s_N$ and $b \in \cat C(t_1,\ldots,t_M)$ for some $t_1,\ldots,t_M$. Then certainly $a \in \cat C(s_1,\ldots,s_N,t_1,\ldots,t_M)$ but also $b \in \cat C(s_1,\ldots,s_N,t_1,\ldots,t_M)$ since $\cat C\subseteq \cat C(s_1,\ldots, s_N)$ implies $\cat C(t_1,\ldots,t_M) \subseteq \cat C(s_1,\ldots,s_N,t_1,\ldots,t_M)$. Hence $c \in \cat C(s_1,\ldots,s_N,t_1,\ldots,t_M,1)$ by \cref{rem:Ccone}.

	Thus, the right-hand side of \eqref{eq:generated} is a thick ideal of $\cat K$. It contains $x$ hence contains $\ideal{x}$. On the other hand, if $\cat I$ is any thick ideal then for any full subcategory~$\cat D$, if $\cat D \subseteq \cat I$ then $\cat D(s) \subseteq \cat I$ for any $s \in \bbZ$. Hence, if $\cat I$ is a thick ideal which contains~$x$ then $\cat C \subseteq\cat I$ and hence $\cat C(s_1,\ldots,s_N) \subseteq \cat I$ for any $s_1,\ldots,s_N$. We conclude that the right-hand side of \eqref{eq:generated} is contained in all such $\cat I$ and hence is contained in $\ideal{x}$.
\end{proof}

\begin{Cor}\label{cor:Cs}
	Let $x \in \cat K$ and let $\cat C\coloneqq \{ \text{all $\oplus$-summands of $x$}\}$. Then $y \in \ideal{x}$ if and only if there exist integers $s$ and $N$ such that $y \in \cat C(\underbrace{s,s,\ldots,s}_{N\text{ times}})$.
\end{Cor}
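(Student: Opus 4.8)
The plan is to reduce everything to \cref{prop:Cs}. The ``if'' direction requires no argument: if $y \in \cat C(\underbrace{s,\ldots,s}_{N})$ for some integers $s$ and $N \ge 1$, then $y$ lies in one of the terms of the union \eqref{eq:generated}, hence in $\ideal{x}$.

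For the ``only if'' direction I would start from the description of $\ideal{x}$ in \cref{prop:Cs}: since $y \in \ideal{x}$, there exist $N \ge 1$ and integers $s_1,\ldots,s_N$ with $y \in \cat C(s_1,\ldots,s_N)$. The key observation is that decreasing a shift index can only enlarge the resulting subcategory, so I would set $s \coloneqq \min\{s_1,\ldots,s_N\}$ and prove the inclusion $\cat C(s_1,\ldots,s_N) \subseteq \cat C(\underbrace{s,\ldots,s}_{N})$; applying this immediately gives $y \in \cat C(s,\ldots,s)$, as required.

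The inclusion $\cat C(s_1,\ldots,s_N) \subseteq \cat C(s,\ldots,s)$ I would establish by replacing the indices $s_1,s_2,\ldots,s_N$ by $s$ one at a time, from left to right. At each step two of the remarks preceding \cref{prop:Cs} are combined: the remark that $s \le t$ implies $\cat D(t) \subseteq \cat D(s)$ for any full subcategory $\cat D$ (this handles lowering a single index), and the remark that $\cat D \subseteq \cat D'$ implies $\cat D(t_1,\ldots,t_k) \subseteq \cat D'(t_1,\ldots,t_k)$ (this propagates an improvement made at an early index through the later applications of $(-)(s_j)$). Formally this is a short induction on the number of indices already replaced by $s$.

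I do not expect any genuine obstacle here; the content is entirely bookkeeping. The only point requiring a little care is the order of the rewriting in the last step: one must lower $s_1$ to $s$ first, then push the resulting enlargement through $(-)(s_2),\ldots,(-)(s_N)$ before touching $s_2$, and so on, so that at each stage the two remarks are being applied to legitimately comparable inputs.
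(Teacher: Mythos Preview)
Your proposal is correct and follows the same approach as the paper's proof: both directions reduce to \cref{prop:Cs}, and for the forward direction both take $s \coloneqq \min(s_1,\ldots,s_N)$ and invoke the monotonicity remark that $s \le t$ implies $\cat D(t) \subseteq \cat D(s)$. The paper's proof is simply terser, leaving implicit the inductive replacement of indices that you spell out.
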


\begin{proof}
	The $(\Leftarrow)$ direction is clear from \cref{prop:Cs}. For the $(\Rightarrow)$ direction recall that if $s \le t$ then $\cat C(t) \subseteq \cat C(s)$. By \cref{prop:Cs}, $y \in \cat C(s_1,\ldots,s_N)$ for some $s_1,\ldots,s_N$ and we can just take $s\coloneqq \min(s_1,\ldots,s_N)$. 
\end{proof}

\begin{Hyp}
	We now take $\cat K\coloneqq \Dps(R)$ and let $\cat K_{\ge n}$ be the subcategory of complexes with homology concentrated in degrees $\ge n$ as in \cref{exa:Kexa}. Let~${x \in \Dpfl(R)}$ and let $\cat C\coloneqq \{ \text{all $\oplus$-summands of $x$}\}$.
\end{Hyp}

\begin{Lem}\label{lem:Csloew}
	In the notation of \cref{cons:Cs}, we have
	\[
		\loew(H_n(z)) \le \loew(H_n(z')) \le \loew(H_n(a)) + \loew(H_n(b\otimes w))
	\]
	and
	\[
		\loew(H_n(b\otimes w)) \le \max_{-\infty< i \le n-s} \loew(H_i(b))
	\]
	for all $n \in \bbZ$.
\end{Lem}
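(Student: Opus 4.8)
The plan is to treat the two displayed inequalities separately, using the long exact homology sequence for the first and the Künneth-type decomposition of \cref{rem:Htensor} for the second. First I would record that everything in sight has finite-length homology: since $\cat C$ consists of $\oplus$-summands of $x \in \Dpfl(R)$ and $\Dpfl(R)$ is closed under summands, both $a$ and $b$ lie in $\Dpfl(R)$; moreover, since each $H_i(b)$ has finite length and each $H_j(w)$ is finitely generated, the decomposition of \cref{rem:Htensor} shows that every homology module of $b\otimes w$ is finitely generated and annihilated by a power of $\frakm$, hence of finite length, so $b\otimes w \in \Dpfl(R)$ as well; and then $z'$ and its summand $z$ lie in $\Dpfl(R)$ by thickness. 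In particular, $\loew$ is defined on every module appearing below.

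For the first chain, the inequality $\loew(H_n(z)) \le \loew(H_n(z'))$ is immediate from \cref{lem:loew}(a), since writing $z' \simeq z \oplus z''$ realizes $H_n(z)$ as a direct summand of $H_n(z')$. For the second inequality, I would apply $H_*(-)$ to the exact triangle $a \to z' \to b\otimes w \to \Sigma a$; the resulting long exact sequence produces a short exact sequence $0 \to C \to H_n(z') \to K \to 0$ in which $C$ is a quotient of $H_n(a)$ and $K$ is a submodule of $H_n(b\otimes w)$. Since passing to a submodule or a quotient cannot increase Loewy length, \cref{lem:loewSES} gives $\loew(H_n(z')) \le \loew(C)+\loew(K) \le \loew(H_n(a))+\loew(H_n(b\otimes w))$.

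For the second displayed inequality, I would expand $H_n(b\otimes w)$ by \cref{rem:Htensor} into a direct sum of terms $H_i(b)\otimes_R H_j(w)$ with $i+j=n$ and $\Tor_1^R(H_i(b),H_j(w))$ with $i+j=n-1$. Because $w \in \cat K_{\ge s}$ we have $H_j(w)=0$ for $j<s$, so every nonzero summand has $i \le n-s$. It then suffices to check that $\loew(M\otimes_R N) \le \loew(M)$ and $\loew(\Tor_1^R(M,N)) \le \loew(M)$ whenever $M$ has finite length and $N$ is finitely generated; this follows by writing $M$ as a direct sum of cyclic modules $R/x^{a_k}$ with $\max_k a_k = \loew(M)$ and observing that $R/x^{a_k}\otimes_R N \cong N/x^{a_k}N$ and $\Tor_1^R(R/x^{a_k},N) \cong \{\,n\in N : x^{a_k}n=0\,\}$ are each annihilated by $\frakm^{a_k}$. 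Taking the maximum over the (finitely many) relevant indices $i$ yields $\loew(H_n(b\otimes w)) \le \max_{-\infty<i\le n-s}\loew(H_i(b))$.

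The only point needing genuine care — and the reason \cref{lem:loew} cannot be quoted verbatim for the second inequality — is that $H_j(w)$ need not have finite length, $w$ being merely pseudo-coherent; the mild generalization isolated above (finite-length first factor, finitely generated second factor) handles this, and with it in place the argument is routine bookkeeping with \cref{rem:Htensor}, \cref{lem:loewSES}, and \cref{lem:tensor-cyclic}.
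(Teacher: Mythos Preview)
Your proof is correct and follows the same approach as the paper's: long exact sequence plus \cref{lem:loewSES} for the first chain of inequalities, and the K\"unneth decomposition \cref{rem:Htensor} for the second. The paper is terser and applies \cref{lem:loew}(b,c) directly to $H_i(b)$ and $H_j(w)$, tacitly allowing $\loew(H_j(w))=\infty$ when $H_j(w)$ has a free summand; your explicit bound $\loew(M\otimes_R N)\le\loew(M)$ for finite-length $M$ and finitely generated $N$ sidesteps that convention and is a small gain in rigor.
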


\begin{proof}
	The first statement follows from \cref{lem:loew} and \cref{lem:loewSES} together with \cref{rem:hereditary-splitting}. For the second statement, \cref{rem:Htensor} and \cref{lem:loew} imply that
	\[
		\loew(H_n(b\otimes w)) =\max_{n-1 \le i+j \le n} \min(\loew(H_i(b)),\loew(H_j(w))).
	\]
	Note that $w \in \cat K_{\ge s}$ so $H_j(w)=0$ for $j<s$. It then follows that
	\[
		\loew(H_n(b\otimes w)) \le \max_{-\infty < i \le n-s}\loew(H_i(b))
	\]
	as desired.
\end{proof}

\begin{Rem}\label{rem:dominates}
	Suppose $f:\bbZ \to \bbN$ is a monotonic function which ``dominates'' $\cat C$ in the sense that for each $y \in \cat C$, we have $\loew(H_n(y))\leq f(n)$ for all $n \in \bbZ$. It then follows from \cref{lem:Csloew} that, in the notation of \cref{cons:Cs}, we have
	\begin{align*}
		\loew(H_n(z)) &\leq \loew(H_n(a)) + \max_{-\infty<i \le n-s} \loew(H_i(b))\\
					  &\le f(n) + \max_{-\infty < i \le n-s} f(i)\\
					  &\le f(n)+f(n-s)
	\end{align*}
	for each $n \in \bbZ$. If we then define $f'$ by $f'(n) \coloneqq f(n)+f(n-s)$ then $f'$ is a monotonic function which dominates $\cat C(s)$.
\end{Rem}

\begin{Exa}\label{exa:recurse-dominate}
	Let $s_1,s_2,\ldots$ be a sequence of integers and suppose $f_0:\bbZ \to \bbN$ is a monotonic function which dominates $\cat C_0\coloneqq \cat C$. Then for each $k \ge 1$, the recursively defined function $f_k(n) \coloneqq f_{k-1}(n) + f_{k-1}(n-s_k)$ is a monotonic function which dominates $\cat C_k\coloneqq \cat C(s_1,\ldots,s_k)$. We will utilize this idea in the proof of \cref{thm:loew-thick} below.
\end{Exa}

\begin{Rem}
	Recall from \cref{def:sigma-mu-preorders} that for monotonic sequences $f,g:\bbN \to \bbN$, we defined $f \le_\sigma g$ to mean that $f \le \sigmakg$ for some positive integer $k$. This in turn means that there is a constant $A$ such that $f(n) \le A g(n+k)$ for $n \gg 0$. Although we only defined it for monotonic sequences, the definition of $f \le_\sigma g$ evidently makes sense for arbitrary sequences $f,g:\bbN\to\bbN$. Similarly, the definition of $f \le_\mu g$ also makes sense for arbitrary sequences. The following will be helpful to bear in mind:
\end{Rem}

\begin{Lem}\label{lem:force-mono}
	Suppose $g$ is a monotonic sequence with $g \neq \underline{0}$. Then 
	\begin{enumerate}
		\item $f \le g$ if and only if $\mono{f} \le g$.
		\item $f \le_\sigma g$ if and only if $\mono{f} \le_\sigma g$.
		\item $f \le_\mu g$ if and only if $\mono{f} \le_\mu g$.
	\end{enumerate}
\end{Lem}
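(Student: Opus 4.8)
The plan is to peel off the easy halves first and then reduce everything to part~(a). In each of the three statements the ``$\Leftarrow$'' implication is immediate: by definition $f(n)\le\mono f(n)$ for all $n$, so $f\le\mono f$ pointwise, hence $f\le\mono f$ asymptotically, and therefore also $f\le_\sigma\mono f$ and $f\le_\mu\mono f$ by \cref{rem:mu-sigma}. Since $\le$, $\le_\sigma$ and $\le_\mu$ are pre-orders (and thus transitive) on $\MonSeq$, assuming $\mono f\le g$ (resp.\ $\mono f\le_\sigma g$, $\mono f\le_\mu g$) gives the corresponding bound on $f$. So only the ``$\Rightarrow$'' implications require work, and I will also arrange that (b) and (c) follow formally from (a).

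The heart of the argument is the ``$\Rightarrow$'' direction of (a). Suppose $f\le g$, witnessed by a constant $A$ and an $n_0$ with $f(n)\le Ag(n)$ for all $n\ge n_0$. The point is to absorb the finitely many ``early'' values of $\mono f$ into the asymptotic constant, and this is exactly where the hypothesis $g\neq\underline 0$ enters: since $g$ is monotonic and nonzero there is an $n_1$ with $g(n)\ge 1$ for all $n\ge n_1$. Setting $C\coloneqq \mono f(n_0)$, then for every $n\ge\max(n_0,n_1)$ we can split the maximum at $n_0$ and use monotonicity of $g$:
\[
	\mono f(n)=\max\Bigl(\mono f(n_0),\ \max_{n_0<i\le n}f(i)\Bigr)\le\max\bigl(C,\,Ag(n)\bigr)\le (A+C)\,g(n),
\]
the last step using $g(n)\ge 1$. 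Hence $\mono f\le g$, proving (a).

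For (b) and (c) I would observe that $\sigma$ and $\mu$ send monotonic nonzero sequences to monotonic nonzero sequences (indeed $\sigma h\ge h$ and $\mu h\ge h$ pointwise for monotonic $h$, so $\sigma^k g$ and $\mu^k g$ are monotonic and, since $g\neq\underline 0$, nonzero). Thus for each fixed $k\ge 1$ we may apply part~(a) with $g$ replaced by $\sigma^k g$, respectively $\mu^k g$, to get $f\le\sigma^k g\iff\mono f\le\sigma^k g$ and $f\le\mu^k g\iff\mono f\le\mu^k g$; taking the disjunction over $k\ge 1$ yields (b) and (c) directly from the definition of $\le_\sigma$ and $\le_\mu$. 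I do not anticipate a genuine obstacle here: the only subtlety is the bookkeeping that lets the initial segment of $\mono f$ be swallowed by the constant, which is precisely the role of the assumption $g\neq\underline 0$, and the routine check that $\sigma$ and $\mu$ preserve the hypotheses needed to re-invoke~(a).
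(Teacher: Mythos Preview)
Your proof is correct and follows essentially the same approach as the paper: both dispose of the $(\Leftarrow)$ directions by the pointwise inequality $f\le\mono f$, reduce (b) and (c) to (a) by noting that $\sigma^k g$ and $\mu^k g$ remain nonzero monotonic, and prove the $(\Rightarrow)$ direction of (a) by splitting the maximum defining $\mono f(n)$ at the threshold $n_0$ and absorbing the initial segment into the constant using $g(n)\ge 1$ eventually. The only differences are cosmetic (the paper uses $B=\max\{\mono f(n_0),A\}$ where you use $A+C$, and enlarges $n_0$ rather than introducing a separate $n_1$).
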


\begin{proof}
	The $(\Leftarrow)$ directions are immediate since $f \le \mono{f}$ pointwise. Moreover, parts~$(b)$ and $(c)$ follow from part $(a)$ since if $g \neq \underline{0}$ then $\sigma^k g$ and $\mu^k g$ are also nonzero for all $k\ge 1$. It thus suffices to prove that if $f \le g$ then $\mono{f} \le g$. By definition, $f \le g$ means that there exist positive integers~$A$ and $n_0$ such that $f(n) \le Ag(n)$ for all $n \ge n_0$. Without loss of generality, we can choose $n_0$ large enough so that $g(n_0)\neq 0$. Let $B\coloneqq \max\{\mono{f}(n_0),A\}$. Then for any $n \ge n_0$ we have
	\begin{align*}
		\mono{f}(n) &= \max\{\mono{f}(n_0),\max_{n_0 \le i \le n} f(i)\} \\
		&\le \max\{\mono{f}(n_0),\max_{n_0 \le i \le n} Ag(i)\}\\
		&= \max\{\mono{f}(n_0),Ag(n)\}\\
		&\le \max\{ B, Bg(n)\}\\
		&\le Bg(n)
	\end{align*}
	which establishes that $\mono{f} \le g$.
\end{proof}

\begin{Rem}
	The statement of \cref{lem:force-mono} is not true for $g=\underline{0}$ since $f \le \underline{0}$ is equivalent to $f$ converging to zero whereas $\mono{f} \le \underline{0}$ is equivalent to $f$ being identically zero.
\end{Rem}

\begin{Thm}\label{thm:loew-thick}
	Let $X \in \Dpfl(R)$ and $Y \in \Dps(R)$. If $Y \in \ideal{X}$ then $\mono{\loew_Y} \le_{\sigma} \mono{\loew_X}$.
\end{Thm}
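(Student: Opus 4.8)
The plan is to reduce the statement, via \cref{cor:Cs}, to tracking how Loewy lengths grow under the single operation $\cat C \mapsto \cat C(s)$ of \cref{cons:Cs}, and then to extract an asymptotic bound. First I would dispose of degenerate cases: since $\Dpfl(R)$ is a thick ideal (\cref{rem:dpfl}) containing $X$, we automatically have $Y \in \ideal{X} \subseteq \Dpfl(R)$, so $\loew_Y$ is defined; and if $X=0$ or $Y=0$ the conclusion is trivial. So assume $X,Y \neq 0$ and set $c \coloneqq \inf(X)$, $d \coloneqq \inf(Y)$.

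By \cref{cor:Cs} there are $s \in \bbZ$ and $N \ge 1$ with $Y \in \cat C(s,s,\ldots,s)$ ($N$ copies of $s$), where $\cat C$ is the collection of all $\oplus$-summands of $X$. Next I would produce a monotonic function $g\colon \bbZ \to \bbN$ dominating $\cat C$ in the sense of \cref{rem:dominates}: take $g(n) \coloneqq \mono{\loew_X}(n-c)$ for $n \ge c$ and $g(n) \coloneqq 0$ otherwise. One checks $g$ is monotonic (note $\loew(H_c(X)) \ge 1$ since $H_c(X) \neq 0$) and that it dominates $\cat C$, because any summand $y$ of $X$ satisfies $\loew(H_n(y)) \le \loew(H_n(X)) \le \mono{\loew_X}(n-c) = g(n)$ for $n \ge c$ by \cref{lem:loew}(a), while $H_n(y)=0$ for $n<c$. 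Then \cref{exa:recurse-dominate}, applied with all $s_k = s$, yields monotonic functions $g_0 \coloneqq g$ and $g_k(n) \coloneqq g_{k-1}(n) + g_{k-1}(n-s)$ such that $g_N$ dominates $\cat C(s,\ldots,s) \ni Y$; in particular $\loew(H_j(Y)) \le g_N(j)$ for all $j \in \bbZ$.

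It then remains to convert this $\bbZ$-indexed pointwise bound into $\loew_Y \le_\sigma \mono{\loew_X}$. Since each $g_k$ is monotonic and $n, n-s \le n+|s|$, we get $g_k(n) \le 2\,g_{k-1}(n+|s|)$, hence $g_N(n) \le 2^N g(n+N|s|)$ by an immediate induction. Therefore, for $m \ge 0$,
\[
  \loew_Y(m) = \loew(H_{m+d}(Y)) \le g_N(m+d) \le 2^N\,g(m+d+N|s|),
\]
and once $m$ is large enough that $m+d+N|s| \ge c$ the right-hand side equals $2^N\,\mono{\loew_X}(m+e)$ with $e \coloneqq d+N|s|-c$. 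Setting $k \coloneqq \max(1,e) \ge 1$ and using monotonicity of $\mono{\loew_X}$, this gives $\loew_Y(m) \le 2^N\,\mono{\loew_X}(m+k)$ for $m \gg 0$, i.e.\ $\loew_Y \le \sigma^k \mono{\loew_X}$, so $\loew_Y \le_\sigma \mono{\loew_X}$. Finally, $\mono{\loew_X} \neq \underline{0}$ (as $X \neq 0$), so \cref{lem:force-mono}(b) upgrades this to $\mono{\loew_Y} \le_\sigma \mono{\loew_X}$, as desired.

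I do not expect a serious conceptual obstacle: the heavy lifting was already done by \cref{cor:Cs} (reducing $\ideal{X}$ to a finite uniform iteration of $\cat C \mapsto \cat C(s)$) and \cref{rem:dominates} (recording Loewy-length growth under that operation). The only delicate points are bookkeeping — checking that the candidate $g$ is genuinely monotonic on all of $\bbZ$, keeping track of the shifts $c$, $d$ and $N|s|$ relating the $\bbZ$-indexed domination to the $\bbN$-indexed Loewy sequences, and allowing the net shift $e$ to be negative, which is exactly why one passes to $k = \max(1,e)$ so that an honest positive $\sigma$-exponent appears.
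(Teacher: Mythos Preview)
Your proposal is correct and follows essentially the same route as the paper's proof: define the monotonic $\bbZ$-indexed function $g$ (the paper calls it $f_0$) dominating~$\cat C$, invoke \cref{cor:Cs} and \cref{exa:recurse-dominate} to get the iterated bound, establish the inequality $g_N(n) \le 2^N g(n+N|s|)$ by induction, and then bookkeep the shifts before applying \cref{lem:force-mono}(b). Your handling of the shift via $k=\max(1,e)$ is in fact slightly cleaner than the paper's $t=\max\{0,e\}$, since the definition of $\le_\sigma$ formally requires a positive exponent.
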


\begin{proof}
	Note that if $Y \in \ideal{X}$ then $Y \in \Dpfl(R)$ by \cref{rem:Dpfl-largest} since $X \in \Dpfl(R)$. Thus $\loew_Y$ is well-defined. Define $f_0 \in \bbZ \to \bbN$ by 
	\[
		f_0(n) \coloneqq \begin{cases}
		\max\limits_{\inf(X) \le i \le n} \loew(H_i(X)) & \text{if } n \ge \inf(X) \\
		0 & \text{if } n < \inf(X).
		\end{cases}
	\]
	Note that $f_0(n) = \mono{\loew_X}(n-\inf(X))$ for $n \ge \inf(X)$ according to \cref{exa:to-mono} and the conventions of \cref{def:loewy-seq}. In any case, $f_0$ is a monotonic function which dominates $\cat C\coloneqq \{ \text{all $\oplus$-summands of $X$}\}$ in the sense of \cref{rem:dominates}. By \cref{cor:Cs}, the hypothesis $Y \in \ideal{X}$ implies that there are integers $N$ and $s$ such that $Y \in \cat C(\underbrace{s,s,\ldots,s}_{N\text{ times}})$. Inductively define $f_k$ for $k\ge 1$ by
	\[
		f_k(n) \coloneqq f_{k-1}(n) + f_{k-1}(n-s).
	\]
	By \cref{rem:dominates} and \cref{exa:recurse-dominate}, for any $E \in \cat C(\underbrace{s,s,\ldots,s}_{k\text{ times}})$ we have
	\[
		\loew(H_n(E)) \le f_k(n)
	\]
	for all $n \in \bbZ$. Moreover, we can prove inductively that
	\[
		f_k(n) \le 2^k f_0(n+k|s|)
	\]
	for all $n \in \bbZ$ and $k \ge 0$. This establishes that 
	\[
		\loew(H_n(Y)) \le 2^N f_0(n+N|s|)
	\]
	for all $n \in \bbZ$. In particular, recalling \cref{def:loewy-seq}, we have
	\begin{align*}
		\loew_Y(n) &= \loew(H_{n+\inf(Y)}(Y)) \\ 
				   &\le 2^N f_0(n+\inf(Y) + N|s|) \\
				   &= 2^N \mono{\loew_X}(n-\inf(X)+\inf(Y)+N|s|)
	\end{align*}
	for all $n$ large enough so that $n+\inf(Y)+N|s| \ge \inf(X)$. Setting 
	\[
		t \coloneqq \max\{0,-\inf(X)+\inf(Y)+N|s|\}
	\]
	we thus have
	\[
		\loew_Y(n) \le 2^N \sigma^t \mono{\loew_X}(n)
	\]
	for all sufficiently large $n$. That is, we have the asymptotic bound $\loew_Y \le \sigma^t \mono{\loew_X}$. This proves that $\loew_Y \le_\sigma \mono{\loew_X}$. If $\mono{\loew_X} \neq \underline{0}$ then \cref{lem:force-mono} implies $\mono{\loew_Y} \le_{\sigma} \mono{\loew_X}$. On the other hand, if $\mono{\loew_X}=\underline{0}$ then $X=0$ so that $Y=0$ and hence $\mono{\loew_Y}=\underline{0}$ too.
\end{proof}

\begin{Rem}\label{rem:loew-pref}
	According to \cref{def:loewy-seq}, we have $\loew_X(n) = \loew(H_{n+\inf(X)})$ for all~$n \in \bbN$. An alternative approach would be to consider the function $\loew_X':\bbN\to\bbN$ defined by 
	\begin{equation}
		\loew_X'(n) \coloneqq \loew(H_n(X))
	\end{equation}
	for each $n\in \bbN$. Note that this function only captures the Loewy lengths of the homology groups in non-negative degrees. It is a matter of preference which function is deemed more natural (bearing in mind \cref{rem:truncation}). For the purposes of exposition, either choice has pros and cons. In any case:
\end{Rem}

\begin{Lem}\label{lem:two-defs-equiv}
	For any $X \in \Dpfl(R)$, the monotonic functions $\mono{\loew_X}$ and $\mono{\loew_X'}$ are $\sigma$-equivalent.
\end{Lem}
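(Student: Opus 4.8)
The plan is to unwind both definitions and compare the running maxima $\mono{\loew_X}$ and $\mono{\loew_X'}$ directly, organising the argument according to the sign of $m\coloneqq\inf(X)$. The case $X=0$ is trivial: both $\loew_X$ and $\loew_X'$ are the constant zero sequence, which is $\sigma$-equivalent to itself. So assume $X\neq 0$; by \cref{def:loewy-seq} we then have $\loew_X(n)=\loew(H_{n+m}(X))$ and $\loew_X'(n)=\loew(H_n(X))$ for all $n\in\bbN$, and the point of the lemma is that replacing $\loew_X'$ by $\loew_X$ merely shifts the sequence and perturbs a prefix of bounded length --- operations to which $\mono{(-)}$ is insensitive up to $\sigma$-equivalence.

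When $m\geq 0$, I would first note that $H_i(X)=0$ for $i<m$ forces $\loew_X'(i)=0$ for $0\leq i<m$, while $\loew_X(n)=\loew_X'(n+m)$ for all $n$; thus $\loew_X=\sigma^m(\loew_X')$. Pushing this through the running-max operation and using that the first $m$ values of $\loew_X'$ vanish gives $\mono{\loew_X}(n)=\max_{m\leq j\leq n+m}\loew_X'(j)=\max_{0\leq j\leq n+m}\loew_X'(j)=\mono{\loew_X'}(n+m)$, i.e.\ $\mono{\loew_X}=\sigma^m(\mono{\loew_X'})$. Since $g\leq\sigma g\leq\sigma^k g$ pointwise for any monotonic $g$ and any $k\geq 1$, a monotonic sequence is always $\sigma$-equivalent to each of its shifts, so $\mono{\loew_X}\sim_\sigma\mono{\loew_X'}$.

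When $m<0$, write $p\coloneqq-m>0$. For $n\geq0$ we have $\loew_X'(n)=\loew(H_n(X))=\loew_X(n+p)$, so $\loew_X'=\sigma^p(\loew_X)$; applying running maxima gives $\mono{\loew_X'}(n)=\max_{p\leq j\leq n+p}\loew_X(j)\leq\mono{\loew_X}(n+p)$, hence $\mono{\loew_X'}\leq_\sigma\mono{\loew_X}$. For the reverse bound, the key observation is that the finitely many numbers $\loew_X(0),\dots,\loew_X(p-1)$ --- the Loewy lengths of the homology modules in the negative degrees $-p,\dots,-1$ --- are bounded by some constant $C$, so that for $n\geq p$ one gets $\mono{\loew_X}(n)=\max\bigl(C,\ \mono{\loew_X'}(n-p)\bigr)\leq\max\bigl(C,\ \mono{\loew_X'}(n)\bigr)$. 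If $\loew_X'$ is unbounded then $\mono{\loew_X'}(n)\geq C$ for $n\gg0$ and we conclude $\mono{\loew_X}\leq\mono{\loew_X'}$; if $\loew_X'$ is bounded then $\mono{\loew_X}$ is bounded too, and one reconciles the two bounded sequences using \cref{exa:bounded-seq}. Either way $\mono{\loew_X}\sim_\sigma\mono{\loew_X'}$.

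I expect the step in the $m<0$ case to be the main obstacle: there one must genuinely absorb the constant $C$ coming from negative-degree homology into the asymptotic comparison, and verify that the running-max operation commutes with the shift $\sigma^p$ up to a harmless bounded prefix. A clean way to package the whole argument is to reduce at the outset to the case $\inf(X)\geq0$ using $\loew_X=\loew_{X[-\inf X]}$, and then to compare $\loew_X'$ with $\loew_{X[-\inf X]}'$, which differ by exactly the prefix shift analysed above (bearing in mind \cref{rem:truncation}).
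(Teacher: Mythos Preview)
Your argument is considerably more detailed than the paper's, which simply calls the lemma a ``routine verification from the definitions and \cref{lem:force-mono}(b)''; what you have written is essentially that routine verification spelled out. The $m\geq 0$ case is clean and correct.

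There is, however, a genuine gap in your $m<0$ case. When you write ``if $\loew_X'$ is bounded then $\mono{\loew_X}$ is bounded too, and one reconciles the two bounded sequences using \cref{exa:bounded-seq}'', you are tacitly assuming that $\mono{\loew_X'}$ is \emph{nonzero}. But if $X\neq 0$ has all its homology concentrated in strictly negative degrees --- for instance $X=R/x[-1]$ --- then $\loew_X'=\underline{0}$ while $\mono{\loew_X}=\underline{1}$, and $\underline{0}\not\sim_\sigma\underline{1}$. So \cref{exa:bounded-seq} does not apply and the conclusion in fact fails for such $X$. This is not a defect in your strategy so much as an edge case the lemma's statement does not exclude; the applications in the paper (\cref{exa:loew_Rxf} and the proof of \cref{cor:new-not-prime-mu}) are unaffected, since in those contexts either $\inf(X)\geq 0$ or one is comparing against an unbounded $f$. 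If you want your proof to match the lemma as stated, you should either note this exception explicitly or add the hypothesis that $X$ has some nonzero homology in non-negative degrees.
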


\begin{proof}
	This is a routine verification from the definitions and \cref{lem:force-mono}(b).
\end{proof}

\begin{Exa}\label{exa:loew_Rxf}
	If $f:\bbN\to\bbN$ is a monotonic sequence then $f \sim_\sigma \loew_{R/x^f}$.
\end{Exa}

\begin{Rem}\label{rem:useful-bound-relations}
	It will be helpful to bear in mind that if $f \sim_\sigma f'$ and $g \sim_\sigma g'$ then $f\le_\sigma g$ if and only if $f' \le_\sigma g'$. Moreover, note that if $g$ is $\sigma$-stable (\cref{def:sigma-mu-stable}) then $f \le_\sigma g$ if and only if $f \le g$.
\end{Rem}

\begin{samepage}
\begin{Cor}\label{cor:thickRxf}
	Let $f,g : \bbN \to \bbN$ be monotonic sequences. The following are equivalent:
	\begin{enumerate}
		\item $R/x^f$ is contained in the thick subcategory generated by $R/x^g$.
		\item $R/x^f$ is contained in the thick ideal generated by $R/x^g$.
		\item $f \le_\sigma g$.
	\end{enumerate}
\end{Cor}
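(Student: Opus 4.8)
The plan is to prove the cycle of implications $(c)\Rightarrow(a)\Rightarrow(b)\Rightarrow(c)$. The middle implication $(a)\Rightarrow(b)$ needs no work, since $\thicksub{R/x^g}\subseteq\ideal{R/x^g}$ by definition.

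For $(c)\Rightarrow(a)$ I would first set aside the degenerate case $g=\underline 0$, in which $f\le_\sigma g$ forces the monotonic sequence $f$ to vanish, so $R/x^f=0$ and (a)--(c) hold trivially. Assume then $g\neq\underline 0$. By definition of $\le_\sigma$ there is a $k\ge 1$ with $f\le\sigma^k g$ in the asymptotic ordering, so \cref{prop:asym-direct}, applied to the monotonic sequences $f$ and $\sigma^k g$, gives $R/x^f\in\thicksub{R/x^{\sigma^k g}}$. It then remains to note that $\thicksub{R/x^{\sigma^k g}}=\thicksub{R/x^g}$. Indeed, unwinding \cref{cons:Rxf} one checks that $R/x^{\sigma^k g}$ is precisely the good truncation $(R/x^g)_{\ge k}$ shifted down so as to begin in degree $0$; and since $g\neq\underline 0$ (so $g(n)\neq 0$ for $n\gg 0$) the complex $R/x^g$ has nonzero homology in all sufficiently high degrees and is therefore homologically unbounded, so \cref{rem:truncation} tells us that passing to this shifted truncation leaves the generated thick subcategory unchanged.

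For $(b)\Rightarrow(c)$ I would invoke \cref{thm:loew-thick} with $X\coloneqq R/x^g\in\Dpfl(R)$ and $Y\coloneqq R/x^f$: from $R/x^f\in\ideal{R/x^g}$ it yields $\mono{\loew_{R/x^f}}\le_\sigma\mono{\loew_{R/x^g}}$. For a monotonic sequence $h$ the Loewy sequence $\loew_{R/x^h}$ is itself monotonic — it is a shift of $h$ by \cref{cons:Rxf} — so the monotonisation hats may be deleted (alternatively one can remove them via \cref{lem:force-mono}(b)), and by \cref{exa:loew_Rxf} we have $f\sim_\sigma\loew_{R/x^f}$ and $g\sim_\sigma\loew_{R/x^g}$. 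Feeding these $\sigma$-equivalences into the inequality $\loew_{R/x^f}\le_\sigma\loew_{R/x^g}$ and using \cref{rem:useful-bound-relations} gives $f\le_\sigma g$, as required.

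There is no serious obstacle here: the corollary simply repackages \cref{prop:asym-direct} (for the direction producing membership in a thick subcategory) and \cref{thm:loew-thick} (for the converse, extracting an asymptotic bound from membership in a thick ideal). The only genuine points to check are the elementary identification, in $(c)\Rightarrow(a)$, of $R/x^{\sigma^k g}$ with a shifted truncation of $R/x^g$ together with the separate handling of $g=\underline 0$, and the observation, in $(b)\Rightarrow(c)$, that the monotonisation operators appearing in \cref{thm:loew-thick} are harmless because the relevant Loewy sequences are already monotonic.
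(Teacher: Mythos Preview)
Your proof is correct and follows essentially the same route as the paper: both use \cref{prop:asym-direct} together with the identification of $R/x^{\sigma^k g}$ as a shifted truncation of $R/x^g$ for $(c)\Rightarrow(a)$, and \cref{thm:loew-thick} combined with \cref{exa:loew_Rxf} for $(b)\Rightarrow(c)$. The only differences are cosmetic: the paper observes directly that $R/x^{\sigma^k g}$ is a summand of $R/x^g[-k]$ rather than invoking \cref{rem:truncation}, and you are more explicit than the paper about the degenerate case $g=\underline 0$ and about removing the monotonisation hats.
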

\end{samepage}

\begin{proof}
	The implication $(a)\Rightarrow (b)$ is immediate.

	$(b) \Rightarrow (c)$:
	Since $f$ and $g$ are monotonic, we have $\loew_{R/x^f} \le_\sigma \loew_{R/x^g}$ by \cref{thm:loew-thick}. Hence $f \le_\sigma g$ by \cref{exa:loew_Rxf}.

	$(c)\Rightarrow (a)$:
	By definition, $(c)$ states that $f \le \sigmakg$ for some $k \ge 1$. \Cref{prop:asym-direct} then implies that $R/x^f \in \thicksub{R/x^{\sigmakg}}$. On the other hand, $R/x^{\sigmakg}$ is a direct summand of $R/x^g[-k]$, namely it is the truncation $(R/x^g[-k])_{\ge 0}$ of \cref{rem:truncation}. Thus $R/x^f \in \thicksub{R/x^g}$.
\end{proof}

\begin{Rem}
	It is natural to ask whether the converse of \cref{thm:loew-thick} holds. That is, to ask whether $\ideal{X} = \SET{Y\in \Dpfl(R)}{\mono{\loew_Y} \le_{\sigma} \mono{\loew_X}}$ for any $X \in \Dpfl(R)$. In particular, do we have $\ideal{R/x^f} = \SET{Y \in \Dpfl(R)}{\mono{\loew_Y} \le_{\sigma} f}$ for any monotonic sequence $f$? To explore these questions we will need a new idea explored in the next section; see \cref{thm:f-explodable} and \cref{cor:summary-ideal-Rxf}.
\end{Rem}

\section{Explodable complexes}\label{sec:explodable}

We are still considering a discrete valuation ring $R$.

\begin{Cons}
	Let $a: \bbN\to \bbN$ be a function. For any complex $X \in \Der(R)$, define the complex
	\[
		X^{\oplus a} \coloneqq \bigoplus_{n \in \bbN} H_n(X)^{\oplus a(n)}[n]
	\]
	which has zero differentials. For example, if $X \in \Der_{\ge 0}(R)$ is concentrated in non-negative degrees then $X^{\oplus \underline{1}} \cong X$.
\end{Cons}

\begin{Def}\label{def:explodable}
	A pseudo-coherent complex $X$ is said to be \emph{explodable} if $X^{\oplus a} \in \ideal{X}$ for every function $a:\bbN \to \bbN$.
\end{Def}

\begin{Exa}
	Every bounded pseudo-coherent complex is explodable. Indeed, if~$X\neq0$ is bounded then $X^{\oplus a}$ is also bounded for any $a$ and hence is contained in the thick subcategory generated by $X$ by \cref{rem:truncation} and \cref{rem:Dpfl-largest}. Moreover, the statement is trivially true for $X=0$.
\end{Exa}

\begin{Rem}
	The above construction is most naturally applied to complexes which are concentrated in non-negative degrees. Indeed, $X^{\oplus a} = (X_{\ge 0})^{\oplus a}$ where $X_{\ge 0}$ is the truncation of $X$. It follows from this observation and \cref{rem:truncation} that $X$ is explodable if and only if its truncation $X_{\ge 0}$ is explodable.
\end{Rem}

\begin{Rem}\label{rem:a-force-mono}
	If $\mono{a}$ is the monotonic sequence associated to $a$ (as in \cref{exa:to-mono}) then~$X^{\oplus a}$ is a direct summand of $X^{\oplus \mono{a}}$. It follows that $X$ is explodable if and only if $X^{\oplus a} \in \ideal{X}$ for every \emph{monotonic} sequence $a:\bbN\to\bbN$.
\end{Rem}

\begin{Lem}\label{lem:sigma-explodable}
	Let $f:\bbN\to\bbN$ be monotonic. If the complex $R/x^f$ is explodable then the complex $R/x^{\sigmakf}$ is explodable for all $k \ge 1$.
\end{Lem}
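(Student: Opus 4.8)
The plan is to verify the defining property of explodability (\cref{def:explodable}) for $R/x^{\sigmakf}$ directly, reducing each instance of it to an instance of explodability for $R/x^f$. So fix an arbitrary function $a:\bbN\to\bbN$; the goal is to show $(R/x^{\sigmakf})^{\oplus a}\in\ideal{R/x^{\sigmakf}}$. The case $\sigmakf=\underline 0$ is trivial, since then $R/x^{\sigmakf}=0$, so I will assume $\sigmakf\neq\underline 0$ (hence also $f\neq\underline 0$).

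The key trick is to \emph{pad} the weight function $a$ by $k$ slots on the left: set $a'(n)\coloneqq a(n-k)$ for $n\ge k$ and $a'(n)\coloneqq 1$ for $n<k$. Since $H_n(R/x^f)=R/x^{f(n)}$, unwinding the definitions and reindexing the terms in degrees $\ge k$ via $m=n-k$ yields a direct-sum decomposition
\[
	(R/x^f)^{\oplus a'} \;\cong\; B \;\oplus\; (R/x^{\sigmakf})^{\oplus a}[k],
\]
where $B\coloneqq\bigoplus_{0\le n<k}(R/x^{f(n)})[n]$ is a bounded complex. Because $R/x^f$ is explodable, the left-hand side lies in $\ideal{R/x^f}$; since a thick ideal is closed under direct summands and shifts, it follows that $(R/x^{\sigmakf})^{\oplus a}\in\ideal{R/x^f}$.

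It remains to replace $\ideal{R/x^f}$ by $\ideal{R/x^{\sigmakf}}$. Here I will use that $f$ is monotonic, so $f(n)\le f(n+k+1)=(\sigma^{k+1}\hspace{-.25ex}f)(n)$ for all $n$, which is exactly the assertion $f\le_\sigma\sigmakf$. By \cref{cor:thickRxf} (implication $(c)\Rightarrow(b)$, applied with $g=\sigmakf$) this gives $R/x^f\in\ideal{R/x^{\sigmakf}}$, hence $\ideal{R/x^f}\subseteq\ideal{R/x^{\sigmakf}}$. Combining with the previous paragraph, $(R/x^{\sigmakf})^{\oplus a}\in\ideal{R/x^{\sigmakf}}$, and since $a$ was arbitrary, $R/x^{\sigmakf}$ is explodable.

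This is essentially a bookkeeping argument, so I do not anticipate a serious obstacle; the only point requiring care is getting the indices right in the decomposition above — in particular the shift by $[k]$ and the identification of the padded part as a bounded complex, whose presence is harmless precisely because thick ideals are closed under summands. The one conceptual ingredient is the observation that passing from $R/x^f$ to $R/x^{\sigmakf}$ does not shrink the generated thick ideal, which is immediate from \cref{cor:thickRxf} since $f$ and $\sigmakf$ are $\sigma$-equivalent.
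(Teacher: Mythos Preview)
Your proof is correct and follows essentially the same approach as the paper: both pad the weight function by $k$ slots to obtain the decomposition $(R/x^f)^{\oplus a'}\cong B\oplus (R/x^{\sigmakf})^{\oplus a}[k]$ with $B$ bounded, and then conclude via $\ideal{R/x^f}=\ideal{R/x^{\sigmakf}}$. The only cosmetic difference is that the paper extracts this last equality by specializing the same decomposition to $a=\underline 1$ (using \cref{rem:truncation}) rather than invoking \cref{cor:thickRxf}.
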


\begin{proof}
	The statement is trivial if $f=\underline{0}$ so suppose $f \neq \underline{0}$. This implies $\sigmakf(n) \neq 0$ for $n \gg 0$ since $f$ is monotonic. Consider any monotonic function $a:\bbN\to\bbN$. By \cref{rem:a-force-mono}, it suffices to prove
	\[
		(R/x^{\sigmakf})^{\oplus a} \in \ideal{R/x^{\sigmakf}}.
	\]
	This is trivial if $a = \underline{0}$ so assume $a \neq \underline{0}$. Hence $a(n) \neq 0$ for $n \gg 0$ since $a$ is monotonic. Next note that $a=\sigma^k b$ for the function $b:\bbN\to\bbN$ defined by $b(n)=1$ for $n<k$ and $b(n)=a(n-k)$ for $n \ge k$. Observe that 
	\begin{equation}\label{eq:sigma-explodable-eq}
		(R/x^f)^{\oplus b}[-k] \simeq (R/x^{\sigmakf})^{\oplus a} \oplus E
	\end{equation}
	where $E \in \Dbfl(R)$. Since $(R/x^{\sigmakf})^{\oplus a} \neq 0$, we have $E \in \thicksub{(R/x^{\sigmakf})^{\oplus a}}$ by \cref{rem:truncation}. Thus \eqref{eq:sigma-explodable-eq} implies that
	\[
		\thicksub{(R/x^f)^{\oplus b}}
		=\thicksub{(R/x^{\sigmakf})^{\oplus a}}.
	\]
	In particular, applied to $a=b=\underline{1}$, we have
	\[
		\thicksub{R/x^f} = \thicksub{R/x^{\sigmakf}}.
	\]
	Thus, if $R/x^f$ is explodable then
	\[
		(R/x^{\sigmakf})^{\oplus a} \in \ideal{(R/x^f)^{\oplus b}} \subseteq \ideal{R/x^f} = \ideal{R/x^{\sigmakf}}
	\]
	as desired.
\end{proof}

\begin{Lem}\label{lem:Af-explodable}
	Let $f:\bbN \to \bbN$ be a function. For any constant $A$ and function $a:\bbN\to\bbN$, the complex $(R/x^{Af})^{\oplus a}$ is contained in the thick subcategory generated by $(R/x^f)^{\oplus a}$.
\end{Lem}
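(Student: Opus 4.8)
The plan is a straightforward induction on $A$, which we take to be a positive integer (the statement is vacuous for $A=0$, both sides being the zero complex, and a non-integer ``constant'' may always be replaced by its ceiling, which only enlarges $Af$ pointwise and hence can only help, by \cref{lem:pointwise}). The base case $A=1$ is immediate, since $(R/x^{f})^{\oplus a}$ lies in the thick subcategory it generates.

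For the inductive step, apply \cref{lem:grow} with $g\coloneqq(A-1)f$, so that $f+g=Af$, to obtain a short exact sequence of complexes
\[
	0 \to R/x^{f} \to R/x^{Af} \to R/x^{(A-1)f} \to 0 .
\]
All three complexes have zero differentials and are concentrated in non-negative degrees, so applying the construction $(-)^{\oplus a}$ simply replaces the module sitting in degree $n$ by $a(n)$ copies of itself (cf.~\cref{rem:hereditary-splitting}); since direct sums are exact and preserve vanishing differentials, this yields another short exact sequence of complexes
\[
	0 \to (R/x^{f})^{\oplus a} \to (R/x^{Af})^{\oplus a} \to (R/x^{(A-1)f})^{\oplus a} \to 0 .
\]
The associated distinguished triangle in $\Der(R)$ then shows that $(R/x^{Af})^{\oplus a}$ lies in the thick subcategory generated by $(R/x^{f})^{\oplus a}$ and $(R/x^{(A-1)f})^{\oplus a}$; invoking the induction hypothesis $(R/x^{(A-1)f})^{\oplus a}\in\thicksub{(R/x^{f})^{\oplus a}}$ finishes the argument.

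I do not expect a genuine obstacle. The one point deserving a moment's attention is that $(-)^{\oplus a}$ sends the short exact sequence of \cref{lem:grow} to another short exact sequence of complexes; this is clear once one observes that $(-)^{\oplus a}$ acts degreewise by an exact operation on homology modules and leaves all differentials zero. One could equally well avoid the induction by noting that $R/x^{Af(n)}$ carries a length-$A$ filtration each of whose subquotients is isomorphic to $R/x^{f(n)}$ and assembling these filtrations degreewise, but the route via \cref{lem:grow} is the most economical.
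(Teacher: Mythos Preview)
Your proof is correct and uses essentially the same idea as the paper: assemble the degreewise short exact sequences of \cref{lem:tensor-cyclic}(c) (equivalently \cref{lem:grow}) into a short exact sequence of complexes after applying $(-)^{\oplus a}$. The only difference is cosmetic: the paper doubles repeatedly to reach $(R/x^{2^N\hspace{-0.3ex}f})^{\oplus a}$ for some $2^N\ge A$ and then invokes \cref{lem:cone} to cut back down to $(R/x^{Af})^{\oplus a}$, whereas your direct induction on $A$ lands exactly on the target and avoids that final cone step.
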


\begin{proof}
	For each $n \in \bbN$, we have a short exact sequence
	\[
		0 \to (R/x^{f(n)})^{\oplus a(n)} \to (R/x^{2f(n)})^{\oplus a(n)} \to (R/x^{f(n)})^{\oplus a(n)} \to 0
	\]
	by \cref{lem:tensor-cyclic}(c), which assemble into a short exact sequence of complexes
	\[
		0 \to (R/x^f)^{\oplus a} \to (R/x^{2f})^{\oplus a} \to (R/x^f)^{\oplus a} \to 0.
	\]
	Choose an $N$ such that $2^N \ge A$. Thus, with $N$ extensions we can build $(R/x^{2^N\hspace{-0.3ex}f})^{\oplus a}$. This has an endomorphism whose cone contains $(R/x^{Af})^{\oplus a}$ as a direct summand; see \cref{lem:cone}.
\end{proof}

\begin{Lem}\label{lem:Rxfexplodable}
	Let $X \in \Dzfl(R)$ and let $f:\bbN\to \bbN$ be a monotonic function such that $\loew(H_n(X)) \le f(n)$ for all $n \in \bbN$. Then $X \in \thicksub{(R/x^f)^{\oplus a}}$ where $a(n)$ denotes the number of indecomposable summands in $H_n(X)$.
\end{Lem}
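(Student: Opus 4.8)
The plan is to exploit \cref{rem:hereditary-splitting}: every object in sight is the direct sum of its homology modules placed in the appropriate degrees, with zero differentials. Consequently a single cleverly chosen degreewise endomorphism of $Y \coloneqq (R/x^f)^{\oplus a}$ will have a cone that contains $X$ as a direct summand, which suffices since $\thicksub{Y}$ is closed under cones and direct summands.

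First I would set up the bookkeeping. Since $R$ is a PID and each $H_n(X)$ has finite length, write $H_n(X) \cong \bigoplus_{j=1}^{a(n)} R/x^{c_{n,j}}$ with $c_{n,j}\ge 1$; here $a(n)$ is precisely the number of indecomposable summands and $\loew(H_n(X)) = \max_j c_{n,j}$, so the hypothesis gives $c_{n,j} \le f(n)$ for all $n$ and $j$ (in particular, if $f(n)=0$ then $H_n(X)=0$). On the other hand $Y = \bigoplus_n (R/x^{f(n)})^{\oplus a(n)}[n]$ with zero differentials, and $X \cong \bigoplus_n H_n(X)[n]$ because $X \in \Dzfl(R)$.

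Next, define an endomorphism $\phi\colon Y \to Y$ whose degree-$n$ component $\phi_n$ is $\bigoplus_{j=1}^{a(n)} x^{c_{n,j}}\colon (R/x^{f(n)})^{\oplus a(n)} \to (R/x^{f(n)})^{\oplus a(n)}$, i.e.\ multiplication by $x^{c_{n,j}}$ on the $j$-th copy of $R/x^{f(n)}$ (legitimate since $c_{n,j}\le f(n)$). As $Y$ has zero differentials, $\phi$ is automatically a chain map, and since it respects the decomposition $Y = \bigoplus_n H_n(Y)[n]$ we get $\cone(\phi) \cong \bigoplus_n \cone(\phi_n)[n]$. Because the cone commutes with finite direct sums, $\cone(\phi_n) \cong \bigoplus_{j=1}^{a(n)} \cone\bigl(R/x^{f(n)} \xrightarrow{x^{c_{n,j}}} R/x^{f(n)}\bigr)$, and by \cref{lem:cone} each summand contains $R/x^{c_{n,j}}$ as a direct summand. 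Hence $\cone(\phi_n)$ contains $H_n(X)$ as a direct summand, so $\cone(\phi)$ contains $X \cong \bigoplus_n H_n(X)[n]$ as a direct summand.

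Finally, $\cone(\phi) \in \thicksub{Y}$ as the cone of an endomorphism of $Y$, and thick subcategories are closed under direct summands, so $X \in \thicksub{Y} = \thicksub{(R/x^f)^{\oplus a}}$, as claimed. There is no serious obstacle here: the only care needed is the elementary bookkeeping (verifying $c_{n,j}\le f(n)$, including the degenerate cases) and checking that $\phi$ genuinely respects the homology decomposition so that its cone splits degreewise — the zero-differential hypothesis, via \cref{rem:hereditary-splitting}, is doing all the real work.
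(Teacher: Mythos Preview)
Your proof is correct and is essentially the same argument as the paper's: both construct a diagonal endomorphism of $(R/x^f)^{\oplus a}$ given degreewise by multiplication by the appropriate powers $x^{c_{n,j}}$, invoke \cref{lem:cone} to see that each degreewise cone contains $H_n(X)$ as a summand, and then use that cones commute with the coproduct decomposition to conclude $X$ is a summand of $\cone(\phi)$. The only cosmetic difference is that the paper first isolates the module-level statement (a single $M$ with $\loew(M)\le\ell$ is a summand of the cone of a diagonal endomorphism of $(R/x^\ell)^{\oplus m}$) before assembling these into the complex-level map, whereas you work directly with the complex from the start.
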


\begin{proof}
	Suppose $M$ is a finite length $R$-module with decomposition
	\[
		M\simeq R/x^{k_1} \oplus R/x^{k_2} \oplus \cdots \oplus R/x^{k_m}.
	\]
	It follows from \cref{lem:cone} that if $\ell \ge \loew(M) = \max\{k_i \mid 1\le i \le m\}$ then $M$ is a direct summand of the cone of the diagonal map
	\[
		(R/x^\ell)^{\oplus m} \to (R/x^{\ell})^{\oplus m}
	\]
	which in degree $i$ is multiplication by $x^{k_i}$. Since $f(n) \ge \loew(H_n(X))$, we have for each $n \in \bbN$ a map
	\[
		(R/x^{f(n)})^{\oplus a(n)} \xrightarrow{g_n} (R/x^{f(n)})^{\oplus a(n)}
	\]
	whose cone contains $H_n(X)$ as a direct summand. The cone of the morphism
	\[
		(R/x^{f})^{\oplus a} = \coprod_{n \ge 0} (R/x^{f(n)})^{\oplus a(n)}[n] \xrightarrow{\coprod_{n\ge 0}g_n[n]} \coprod_{n \ge 0} (R/x^{f(n)})^{\oplus a(n)}[n] = (R/x^{f})^{\oplus a}
	\]
	is $\coprod_{n \ge 0} \cone(g_n)[n]$ and hence contains $\coprod_{n \ge 0}H_n(X)[n]$ as a direct summand. Thus,~$X$ is contained in the thick subcategory generated by $(R/x^{f})^{\oplus a}$.
\end{proof}

\begin{Prop}\label{prop:RxfexplodableXX}
	Let $X \in \Dpfl(R)$ and let $f:\bbN\to\bbN$ be a monotonic function satisfying $\mono{\loew_X} \le_\sigma f$. If the complex $R/x^f$ is explodable then $X \in \ideal{R/x^f}$.
\end{Prop}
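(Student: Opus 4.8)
The plan is to reduce, by a shift and a truncation, to the case $X\in\Dzfl(R)$ with $\loew_X(n)=\loew(H_n(X))$, and then to explode $X$ degreewise over a suitable modification of $R/x^f$, using the three lemmas of this section.

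First I would dispose of the degenerate cases. If $X=0$ there is nothing to prove. If $f=\underline{0}$ then $\mono{\loew_X}\le_\sigma\underline{0}$ means $\mono{\loew_X}\le\underline{0}$ asymptotically, and since $\mono{\loew_X}$ is monotonic this forces $\mono{\loew_X}=\underline{0}$; hence $\loew_X=\underline{0}$, every homology module of $X$ vanishes, and $X=0\in\ideal{R/x^f}$. So assume $X\neq0$ and $f\neq\underline{0}$. Replacing $X$ by $X[-\inf(X)]$ changes neither $\ideal{X}$ nor $\loew_X$ (\cref{def:loewy-seq}), so we may assume $\inf(X)=0$; then $\loew_X(n)=\loew(H_n(X))$ for all $n\in\bbN$.

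Next I would unwind the hypothesis: $\mono{\loew_X}\le_\sigma f$ means there is some $k\ge1$ and constants $A,n_0$ with
\[
  \loew(H_n(X))\le\mono{\loew_X}(n)\le A\,f(n+k)\qquad\text{for all }n\ge n_0.
\]
Set $g\coloneqq\sigma^k f$, a nonzero monotonic sequence. By \cref{lem:sigma-explodable}, $R/x^g$ is again explodable; and $R/x^g=(R/x^f[-k])_{\ge0}$ is a direct summand of the shifted complex $R/x^f[-k]$, so $R/x^g\in\thicksub{R/x^f}$ and therefore $\ideal{R/x^g}\subseteq\ideal{R/x^f}$ (the same argument appears in the proof of \cref{cor:thickRxf}). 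It thus suffices to show $X\in\ideal{R/x^g}$, and we now have $\loew(H_n(X))\le A\,g(n)$ for all $n\ge n_0$.

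To take care of the low degrees, write $X\simeq X_{\ge n_0}\oplus X'$ with $X'\in\Dbfl(R)$ (\cref{rem:truncation}). Since $f\neq\underline{0}$, we have $\Dbfl(R)=\thicksub{R/x}\subseteq\thicksub{R/x^f}\subseteq\ideal{R/x^f}$ by \cref{prop:Dbfl-minimal}, so $X'$ lies in the target ideal and it remains to treat $Y\coloneqq X_{\ge n_0}$. For $Y$ one has $\loew(H_n(Y))\le A\,g(n)$ for \emph{all} $n\in\bbN$ (the inequality holds for $n\ge n_0$, while $H_n(Y)=0$ for $n<n_0$), and $A\,g$ is a monotonic function. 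Now I would chain the three lemmas: \cref{lem:Rxfexplodable} applied with the monotonic bound $A\,g$ gives $Y\in\thicksub{(R/x^{Ag})^{\oplus a}}$, where $a(n)$ is the number of indecomposable summands of $H_n(Y)$; \cref{lem:Af-explodable} gives $(R/x^{Ag})^{\oplus a}\in\thicksub{(R/x^{g})^{\oplus a}}$; and since $R/x^g$ is explodable, $(R/x^g)^{\oplus a}\in\ideal{R/x^g}$. Combining these, $Y\in\ideal{R/x^g}\subseteq\ideal{R/x^f}$, and hence $X=Y\oplus X'\in\ideal{R/x^f}$.

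I do not expect a serious obstacle here: given the three explodability lemmas, the argument is essentially bookkeeping. The one point that must be handled in the right order is the interplay between the relation $\le_\sigma$ and those lemmas: \cref{lem:Af-explodable} absorbs the multiplicative constant but not the shift $\sigma^k$, whereas \cref{lem:sigma-explodable} absorbs the shift --- so one must pass from $f$ to $g=\sigma^k f$ (restoring explodability after shifting) before invoking \cref{lem:Af-explodable}. The only other mild subtlety is the truncation at $n_0$, which is needed precisely because the hypothesis $\mono{\loew_X}\le_\sigma f$ constrains the Loewy lengths of $X$ only in large homological degrees.
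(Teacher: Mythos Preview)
Your proof is correct and follows essentially the same route as the paper's own argument: shift $X$ to start at degree $0$, replace $f$ by $\sigma^k f$ (using \cref{lem:sigma-explodable} to preserve explodability and the direct-summand observation to stay inside $\ideal{R/x^f}$), truncate to make the pointwise bound hold everywhere, then chain \cref{lem:Rxfexplodable}, \cref{lem:Af-explodable}, and explodability. The only cosmetic difference is that the paper absorbs the shift into $f$ and works with the modified $f$ throughout, whereas you keep both $f$ and $g=\sigma^k f$ in play; the content is identical.
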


\begin{proof}
	The statement is trivially true if $X=0$ so suppose $X \neq 0$. Observe that $\loew_{X'} = \loew_X$ for $X' \coloneqq X[-\inf(X)] \in \Dzfl(R)$. Moreover, $X \in \thicksub{(R/x^f)^{\oplus a}}$ if and only if $X' \in \thicksub{(R/x^f)^{\oplus a}}$. Thus, replacing $X$ by $X'$ we may assume $X \in \Dzfl(R)$ with $\loew_X(n) = \loew(H_n(X))$ for all $n \ge 0$. By hypothesis, $\mono{\loew_X} \le \sigmakf$ for some $k \ge 1$. If $\underline{f} =\underline{0}$ then $\loew_X = \underline{0}$ so that $X = 0$. Thus, we may assume $f \neq \underline{0}$. By \cref{lem:sigma-explodable} and \cref{cor:thickRxf}, replacing $f$ by $\sigmakf$, we may assume without loss of generality that $\mono{\loew_X} \le f$. Thus there are constants $A$ and $n_0$ so that $\mono{\loew_X}(n) \le Af(n)$ for all $n \ge n_0$. Since $R/x^f \neq 0$, \cref{rem:truncation} implies that $X \in \ideal{R/x^f}$ if and only if $X_{\ge n_0} \in \ideal{R/x^f}$. Replacing $X$ by $X_{\ge n_0}$ we may assume, without loss of generality, that $\loew(H_n(X)) \le Af(n)$ for all $n \in\bbN$. Let $a:\bbN\to\bbN$ be the function with $a(n)$ equal to the number of indecomposable summands in $H_n(X)$. \Cref{lem:Rxfexplodable} then implies that $X$ is contained in the thick subcategory generated by $(R/x^{Af})^{\oplus a}$. \Cref{lem:Af-explodable} then implies that $X$ is contained in the thick subcategory generated by $(R/x^{f})^{\oplus a}$. If the complex $R/x^f$ is explodable then $(R/x^f)^{\oplus a} \in \ideal{R/x^f}$ so that~$X \in
\ideal{R/x^f}$. \end{proof}

\begin{Thm}\label{thm:f-explodable}
	Let $f$ be a monotonic sequence such that $R/x^f$ is explodable. The following hold:
	\begin{enumerate}
		\item $\ideal{R/x^f} = \SET{E \in \Dpfl(R)}{\mono{\loew_E}\le_\sigma f}$.
		\item $\ideal{R/x^f} = \ideal{E}$ for all $E \in \Dpfl(R)$ with $\mono{\loew_E} \sim_\sigma f$.
		\item Any complex $E$ satisfying $\mono{\loew_E} \sim_\sigma f$ is explodable.
	\end{enumerate}
\end{Thm}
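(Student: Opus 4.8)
The plan is to bootstrap everything from \cref{prop:RxfexplodableXX}, \cref{thm:loew-thick} and \cref{cor:thickRxf}. Part~$(a)$ is almost immediate: the inclusion $\supseteq$ \emph{is} \cref{prop:RxfexplodableXX}, and for $\subseteq$ I would take $E\in\ideal{R/x^f}$, note that $E\in\Dpfl(R)$ because $\Dpfl(R)$ is a thick ideal containing $R/x^f$ (\cref{rem:Dpfl-largest}), apply \cref{thm:loew-thick} to get $\mono{\loew_E}\le_\sigma\mono{\loew_{R/x^f}}$, and then combine $\loew_{R/x^f}\sim_\sigma f$ (\cref{exa:loew_Rxf}) with \cref{lem:force-mono}(b) and transitivity of $\le_\sigma$ to conclude $\mono{\loew_E}\le_\sigma f$; the degenerate case $f=\underline 0$, where $R/x^f=0$, is trivial.

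For part~$(b)$, one inclusion is free: $\mono{\loew_E}\sim_\sigma f$ gives $\mono{\loew_E}\le_\sigma f$, so part~$(a)$ yields $E\in\ideal{R/x^f}$ and hence $\ideal E\subseteq\ideal{R/x^f}$. The reverse inclusion is where I expect the real work, since I must produce $R/x^f$ inside $\ideal E$ from the asymptotic Loewy data of $E$ alone. The plan is to put $g\coloneqq\mono{\loew_E}$, reduce to $\inf(E)=0$ by a harmless shift (which changes neither $\ideal E$ nor $\loew_E$), and tensor $E$ with the free complex $R^\uparrow=\bigoplus_{n\in\bbN}R[n]$ of \eqref{eq:Ruparrow}. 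Because $R^\uparrow$ is free, \cref{rem:Htensor} gives $H_n(E\otimes R^\uparrow)=\bigoplus_{i=0}^nH_i(E)$ with no $\Tor$ contribution, so $\loew(H_n(E\otimes R^\uparrow))=g(n)$ by \cref{lem:loew}(a) and therefore $R/x^{g(n)}$ is a direct summand of $H_n(E\otimes R^\uparrow)$. Via the splitting \cref{rem:hereditary-splitting} this exhibits $R/x^g=\bigoplus_nR/x^{g(n)}[n]$ as a direct summand of $E\otimes R^\uparrow\in\ideal E$, so $R/x^g\in\ideal E$. Finally $g\sim_\sigma f$ forces $\ideal{R/x^g}=\ideal{R/x^f}$ by two applications of \cref{cor:thickRxf}, giving $\ideal{R/x^f}\subseteq\ideal E$, hence equality.

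For part~$(c)$, given $E$ with $\mono{\loew_E}\sim_\sigma f$ and any $a:\bbN\to\bbN$, I would note that $E^{\oplus a}\in\Dpfl(R)$ and $\loew(H_n(E^{\oplus a}))\le\loew(H_n(E))$ for all $n$; a brief bookkeeping with the shift by $\inf$ (as in the proof of \cref{thm:loew-thick}) together with \cref{lem:force-mono}(b) then gives $\mono{\loew_{E^{\oplus a}}}\le_\sigma\mono{\loew_E}\sim_\sigma f$, hence $\mono{\loew_{E^{\oplus a}}}\le_\sigma f$. Part~$(a)$ then puts $E^{\oplus a}$ in $\ideal{R/x^f}$, and part~$(b)$ identifies $\ideal{R/x^f}=\ideal E$, so $E^{\oplus a}\in\ideal E$; as $a$ was arbitrary, $E$ is explodable. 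The one delicate step in the whole argument is the tensoring-with-$R^\uparrow$ trick in part~$(b)$, which ``monotonizes'' the homology and thereby surfaces $R/x^{\mono{\loew_E}}$ as a direct summand of an object of $\ideal E$; everything else is either cited verbatim or routine manipulation of $\sigma$-equivalence and degree shifts.
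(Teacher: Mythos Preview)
Your argument is correct and follows the same route as the paper. The only cosmetic difference is in part~(b): rather than tensoring $E$ with $R^\uparrow$ directly, the paper observes that $R/x^{\loew_E}$ is already a direct summand of $E[-\inf(E)]$ (by \cref{rem:hereditary-splitting}) and then invokes \cref{prop:ideal-force-mono}, which packages your $R^\uparrow$ trick.
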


\begin{proof}
	$(a)$: The $\subseteq$ inclusion follows from \cref{thm:loew-thick} bearing mind \cref{exa:loew_Rxf} and the $\supseteq$ inclusion follows from \cref{prop:RxfexplodableXX}.

	$(b)$: Part $(a)$ provides the $\supseteq$ inclusion. On the other hand, $E[-\inf(E)]$ contains~$R/x^{\loew_E}$ as a direct summand. It then follows from \cref{prop:ideal-force-mono} that the thick ideal $\ideal{E}$ contains $R/x^{\mono{\loew_E}}$.

	$(c)$: For any $a:\bbN\to\bbN$, $\mono{\loew_{E^{\oplus a}}} = \mono{\loew_E}$ so $(a)$ and $(b)$ imply that $E^{\oplus a} \in \ideal{E}$.
\end{proof}

\begin{Rem}
	We now want to understand for which monotonic sequences $f$ is the complex $R/x^f$ explodable. The following constructions are taken from \cite{MatsuiTakahashi17}:
\end{Rem}

\begin{Def}
	Let $E$ be a complex with zero differentials. Let $\Esplit$ denote the complex
	\[
		\cdots \to 0 \to E_2 \to 0 \to E_1\to 0 \to E_0\to 0\to \cdots
	\]
	with $E_i$ in degree $2i$, let $\Eeven$ denote the complex
	\[
		\cdots \to E_6 \to E_4 \to E_2 \to E_0 \to E_{-2} \to \cdots
	\]
	with $E_{2i}$ in degree $i$, and let $\Eodd$ denote the complex
	\[
		\cdots \to E_7 \to E_5 \to E_3 \to E_1 \to E_{-1} \to \cdots
	\]
	with $E_{2i+1}$ in degree $i$.
\end{Def}

Matsui--Takahashi asked whether a complex $E$ satisfies the following properties:

\begin{Def}
	We say that a pseudo-coherent complex $E$ satisfies property

	\smallskip
	\begin{enumerate}
		\itemindent=1em
		\setlength\itemsep{1ex}
		 \item[(MT1)] if $\Eeven,\Eodd \in \ideal{E}$;
		 \item[(MT2)] if $E \in \ideal{\Esplit}$.
	\end{enumerate}
\end{Def}

\begin{Prop}[Matsui--Takahashi]\label{prop:MT1-explodable}
	If (MT1) holds for $E$ then $E$ is explodable.
\end{Prop}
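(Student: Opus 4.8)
The plan is to verify the definition directly: for an arbitrary $a\colon\bbN\to\bbN$ I will exhibit $E^{\oplus a}$ as a direct summand of an object visibly contained in $\ideal{E}$, built from $\Eeven$ and $\Eodd$ (which lie in $\ideal{E}$ by (MT1)). By \cref{rem:a-force-mono} it suffices to treat \emph{monotonic} $a$, and by \cref{rem:hereditary-splitting} I may assume $E$ has zero differentials (and, harmlessly, that it is concentrated in non-negative degrees; cf.~\cref{rem:truncation}). Write $M_n\coloneqq H_n(E)$, so that $\Eeven=\bigoplus_{i\ge0}M_{2i}[i]$ and $\Eodd=\bigoplus_{i\ge0}M_{2i+1}[i]$, both with zero differentials.

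First I would form the pseudo-coherent free complex $Y\coloneqq\bigoplus_{j\ge0}R^{\oplus a(2j)}[j]$ with zero differentials, and consider $Z\coloneqq(\Eeven\oplus\Eodd)\otimes Y$. Since $\Eeven,\Eodd\in\ideal{E}$ and a thick tensor ideal is closed under tensoring with arbitrary pseudo-coherent complexes, $Z\in\ideal{E}$. The heart of the matter is the homology of $Z$: as $Y$ is free there is no $\Tor_1$ contribution, so \cref{rem:Htensor} yields a degreewise direct sum decomposition $H_n(Z)=\bigoplus_{i+j=n}(M_{2i}\oplus M_{2i+1})^{\oplus a(2j)}$ for each $n\ge0$. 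For even $n=2q$ the summand with $(i,j)=(q,q)$ equals $M_{2q}^{\oplus a(2q)}=M_n^{\oplus a(n)}$; for odd $n=2q+1$ the summand with $(i,j)=(q,q+1)$ equals $M_{2q+1}^{\oplus a(2q+2)}$, which contains $M_{2q+1}^{\oplus a(2q+1)}=M_n^{\oplus a(n)}$ as a summand because $a$ is monotonic. Thus $M_n^{\oplus a(n)}$ is a direct summand of $H_n(Z)$ for every $n\ge0$. Since $Z$ has zero differentials it splits degreewise, $Z\cong\bigoplus_n H_n(Z)[n]$, so $E^{\oplus a}=\bigoplus_{n\ge0}M_n^{\oplus a(n)}[n]$ is a direct summand of $Z$; as thick ideals are closed under direct summands, $E^{\oplus a}\in\ideal{E}$, and $E$ is explodable.

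The single step that genuinely uses the hypothesis — and the conceptual crux, rather than a real obstacle — is tensoring the \emph{compressed} complexes $\Eeven,\Eodd$ with $Y$ rather than tensoring $E$ itself. Compression carries $M_{2q}$ from degree $2q$ down to degree $q$, so that after tensoring with $Y$ it returns to degree $2q$ with multiplicity equal to the rank $a(2q)$ of $Y$ in degree $q$; by contrast, tensoring $E$ with any free complex $\bigoplus_j R^{\oplus c(j)}[j]$ only ever places $M_n$ back in degree $n$ with the \emph{constant} multiplicity $c(0)$, which is useless once $a$ is unbounded. (If $E\notin\Dpfl(R)$ the assertion is vacuous since then $\ideal{E}=\Dps(R)$ by \cref{rem:Dpfl-largest}; the argument above needs no such hypothesis.) Beyond this observation, the proof is routine bookkeeping with homology of tensor products of zero-differential complexes over the hereditary ring $R$.
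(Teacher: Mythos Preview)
Your argument is correct. The paper itself does not give a proof but simply cites \cite[Corollary~7.3]{MatsuiTakahashi17}, so you have supplied a self-contained argument where the paper defers to the literature. The key idea --- tensoring the ``compressed'' complex $\Eeven\oplus\Eodd$ (which carries $M_{2i}\oplus M_{2i+1}$ in degree~$i$) against a free complex $Y$ whose ranks are $a(0),a(2),a(4),\dots$ in degrees $0,1,2,\dots$, so that decompression lands $M_n$ back in degree~$n$ with multiplicity at least $a(n)$ --- is exactly the right manoeuvre, and your explanation of why tensoring $E$ itself cannot achieve this (the multiplicity would be the constant $c(0)$) is the conceptual point. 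Two minor remarks: the reduction to $E$ concentrated in non-negative degrees is valid but unnecessary, since your tensor computation goes through verbatim for any bounded-below $E$ with zero differentials (you only ever extract summands in degrees $n\ge 0$, which is all that $E^{\oplus a}$ involves); and rather than asserting that $Z$ literally has zero differentials, it is enough to invoke \cref{rem:hereditary-splitting} once more to split $Z\cong\bigoplus_n H_n(Z)[n]$ in $\Der(R)$.
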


\begin{proof}
	This follows from \cite[Corollary~7.3]{MatsuiTakahashi17}.
\end{proof}

Our next task is to gain an understanding of when condition (MT1) holds.

\begin{Lem}\label{lem:MT1a}
	Let $f$ be a monotonic sequence. Condition (MT1) holds for the complex $R/x^f$ if and only if the following two conditions hold:
	\begin{enumerate}
		\item $\mu f \le_\sigma f$, and
		\item $\mu \sigma f \le_\sigma f$.
	\end{enumerate}
\end{Lem}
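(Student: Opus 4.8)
The plan is to recognize the complexes $\Eeven$ and $\Eodd$ explicitly in the case $E = R/x^f$, and then to invoke \cref{cor:thickRxf}. Since $R/x^f$ has zero differentials, so do $(R/x^f)_{\mathrm{even}}$ and $(R/x^f)_{\mathrm{odd}}$, and each is again a complex of the shape $R/x^{(-)}$ concentrated in non-negative degrees; so the content is simply to identify the relevant sequence.

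First I would unwind the definitions. The degree-$i$ term of $(R/x^f)_{\mathrm{even}}$ is the degree-$(2i)$ term of $R/x^f$, namely $R/x^{f(2i)} = R/x^{(\mu f)(i)}$ for $i \ge 0$ and $0$ for $i<0$; hence $(R/x^f)_{\mathrm{even}} = R/x^{\mu f}$. Likewise the degree-$i$ term of $(R/x^f)_{\mathrm{odd}}$ is the degree-$(2i+1)$ term of $R/x^f$, which is $R/x^{f(2i+1)}$ for $i \ge 0$ and $0$ for $i<0$ (the slot $i=-1$ contributes nothing since $2i+1 = -1 < 0$); since $f(2i+1) = (\sigma f)(2i) = (\mu\sigma f)(i)$, this gives $(R/x^f)_{\mathrm{odd}} = R/x^{\mu\sigma f}$. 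Note that $\mu f$ and $\mu\sigma f$ are monotonic sequences, because $\mu$ and $\sigma$ preserve monotonicity.

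With these identifications, condition (MT1) for $R/x^f$ asserts precisely that $R/x^{\mu f} \in \ideal{R/x^f}$ and $R/x^{\mu\sigma f} \in \ideal{R/x^f}$. Applying \cref{cor:thickRxf} to the monotonic sequences $\mu f$ and $\mu\sigma f$ against $f$, the first membership is equivalent to $\mu f \le_\sigma f$ and the second to $\mu\sigma f \le_\sigma f$, which gives the claimed equivalence.

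The only delicate point — essentially the only thing to get right — is the bookkeeping of the degree re-indexing: one must check that $\Eodd$ for $E = R/x^f$ acquires no term in degree $-1$, and that the re-indexing yields $\mu\sigma f$ (with $(\mu\sigma f)(n) = f(2n+1)$) rather than $\sigma\mu f$ (with $(\sigma\mu f)(n) = f(2n+2)$). Everything else is a direct appeal to \cref{cor:thickRxf}.
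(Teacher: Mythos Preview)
Your proof is correct and follows essentially the same approach as the paper: identify $(R/x^f)_{\mathrm{even}} = R/x^{\mu f}$ and $(R/x^f)_{\mathrm{odd}} = R/x^{\mu\sigma f}$, then apply \cref{cor:thickRxf}. You have simply spelled out the degree bookkeeping more carefully than the paper does.
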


\begin{proof}
	For the complex $E=R/x^f$, we have $\Eeven = R/x^{\mu f}$ and $\Eodd = R/x^{\mu \sigma f}$. \Cref{cor:thickRxf} then implies that $\Eeven \in \ideal{E}$ if and only if $\mu f \le_\sigma f$ and $\Eodd \in \ideal{E}$ if and only if $\mu \sigma f \le_\sigma f$.
\end{proof}

\begin{Lem}\label{lem:MT1b}
	Let $f$ be a monotonic sequence. The following conditions are equivalent:
	\begin{enumerate}
		\item $\mu f \le_{\sigma} f$
		\item $\mu \sigma f \le_{\sigma} f$
		\item $f$ is $\mu$-stable.
	\end{enumerate}
\end{Lem}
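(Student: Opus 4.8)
The plan is to prove the cycle of implications (c) $\Rightarrow$ (b) $\Rightarrow$ (a) $\Rightarrow$ (c). The first two implications are elementary and will rest on the pointwise inequalities $\mu f \le \mu\sigma f \le \sigma\mu f$, all immediate from the monotonicity of $f$ (since $f(2n)\le f(2n+1)\le f(2n+2)$), together with the fact that $\sigma$ preserves the asymptotic order (\cref{rem:mu-sigma} and the proof of \cref{lem:sigma-auto}). The implication (a) $\Rightarrow$ (c) is the only part requiring an idea.

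For (c) $\Rightarrow$ (b): if $f$ is $\mu$-stable then $\mu f \le f$, so applying $\sigma$ gives $\sigma\mu f \le \sigma f$, and combining with $\mu\sigma f \le \sigma\mu f$ one gets $\mu\sigma f \le \sigma f$, hence $\mu\sigma f \le_\sigma f$. For (b) $\Rightarrow$ (a): since $\mu f \le \mu\sigma f$ pointwise, the hypothesis $\mu\sigma f \le_\sigma f$ immediately yields $\mu f \le_\sigma f$.

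The heart of the matter is (a) $\Rightarrow$ (c). Suppose $\mu f \le_\sigma f$, say $\mu f \le \sigma^k f$ for some $k \ge 1$; unwinding the definition of the asymptotic order, there are constants $A$ and $n_0$ with $f(2n) \le A\, f(n+k)$ for all $n \ge n_0$. I would substitute $n = p-k$ to obtain $f(2p-2k) \le A\, f(p)$ for all $p \ge n_0+k$. Now as soon as $p \ge 3k$ one has $2p-2k \ge p+k$, so monotonicity of $f$ gives $f(p+k) \le f(2p-2k) \le A\, f(p)$ for all $p \gg 0$; that is, $\sigma^k f \le f$. Combined with the hypothesis this gives $\mu f \le \sigma^k f \le f$, so $\mu f \le f$ and $f$ is $\mu$-stable, as desired.

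I expect the one nonroutine point to be this last manoeuvre: recognizing that a bound on $\mu f$ by a finitely shifted copy $\sigma^k f$ can be folded back into a genuine comparison $f(p+k)\le A\,f(p)$ by exploiting that the doubled index $2p-2k$ eventually overtakes the shifted index $p+k$. Everything else is bookkeeping with the pointwise inequalities among $f$, $\sigma f$, $\mu f$ and the fact that $\sigma$ is order-preserving. One could instead try to prove (b) $\Rightarrow$ (c) directly, but since $\mu f \le \mu\sigma f$ pointwise this reduces at once to (a) $\Rightarrow$ (c), so nothing is gained by that route.
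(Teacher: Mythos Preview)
Your proof is correct, but you run the cycle in the opposite direction from the paper: you prove $(c)\Rightarrow(b)\Rightarrow(a)\Rightarrow(c)$, whereas the paper proves $(a)\Rightarrow(b)\Rightarrow(c)\Rightarrow(a)$. Accordingly, the nontrivial implication is different in each. The paper's substantive step is $(b)\Rightarrow(c)$: from $f(2n+1)\le Af(n+k)$ it first extracts $\sigma$-stability $\sigma f\le f$ via an even/odd case split (for $n$ even, $f(n+1)\le Af(n/2+k)\le Af(n)$; for $n$ odd, $f(n+1)\le f(n+2)\le Af((n+1)/2+k)\le Af(n)$), and then chains $\mu f\le\mu\sigma f\le\sigma^k f\le f$. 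Your substantive step is $(a)\Rightarrow(c)$: from $f(2n)\le Af(n+k)$ you substitute $n=p-k$ and use $2p-2k\ge p+k$ for $p\ge 3k$ to obtain $\sigma^k f\le f$ directly, avoiding any parity case analysis. Your route is slightly cleaner in that it yields $\sigma^k f\le f$ in one stroke rather than first reducing to $\sigma f\le f$; the paper's route has the minor advantage of explicitly recording $\sigma$-stability along the way. Both arguments are short and rest on the same pointwise inequalities $\mu f\le\mu\sigma f\le\sigma\mu f$.
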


\begin{proof}
	$(a)\Rightarrow (b)$: Observe that
	\[
		(\mu\sigma f)(n) = f(2n+1) \le f(2n+2) = \mu f(n+1) = (\sigma \mu f)(n)
	\]
	so $\mu\sigma f \le_\sigma \sigma \mu f \le_\sigma \sigma f \le_\sigma f$.

	$(b)\Rightarrow (c)$: We first establish that $f$ is $\sigma$-stable. By assumption, there exist positive integers $A$ and $k$ such that $f(2n+1) \le Af(n+k)$ for $n \gg 0$. This implies that for $n \gg 0$ even, $f(n+1) \le A f(n/2 +k) \le Af(n)$ while for $n \gg 0$ odd, $f(n+1)\le f(n+2) \le A f((n+1)/2 +k) \le Af(n)$. Thus, $\sigma f \le f$. It then follows using $(b)$ that
	\[
		\mu f \le \mu \sigma f \le \sigmakf \le f
	\]
	which demonstrates that $f$ is $\mu$-stable.

	$(c) \Rightarrow (a)$: This follows immediately from $\mu f \le f$.
\end{proof}

\begin{Prop}\label{prop:MT1-mu-stable}
	Let $f$ be a monotonic sequence. Condition (MT1) holds for the complex $E=R/x^f$ if and only if $f$ is $\mu$-stable.
\end{Prop}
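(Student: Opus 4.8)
The plan is simply to combine the two preceding lemmas. By \cref{lem:MT1a}, condition (MT1) holds for the complex $E = R/x^f$ if and only if both $\mu f \le_\sigma f$ and $\mu\sigma f \le_\sigma f$ hold. By \cref{lem:MT1b}, each of these two conditions is in turn equivalent to $f$ being $\mu$-stable; in particular, their conjunction is equivalent to $f$ being $\mu$-stable.

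So the argument would run as follows. First I would invoke \cref{lem:MT1a} to reduce (MT1) for $R/x^f$ to the pair of asymptotic inequalities $\mu f \le_\sigma f$ and $\mu \sigma f \le_\sigma f$. Then I would apply \cref{lem:MT1b}: since condition $(a)$ there ($\mu f \le_\sigma f$) is equivalent to condition $(c)$ ($f$ is $\mu$-stable), it follows that (MT1) for $R/x^f$ implies $f$ is $\mu$-stable; conversely, if $f$ is $\mu$-stable then $(c)$ holds, hence so do $(a)$ and $(b)$ of \cref{lem:MT1b}, which are precisely the two conditions of \cref{lem:MT1a}, so (MT1) holds for $R/x^f$.

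There is no real obstacle here: all the work has been front-loaded into \cref{lem:MT1a} (which rewrites the even/odd subcomplexes of $R/x^f$ as $R/x^{\mu f}$ and $R/x^{\mu\sigma f}$ and then applies \cref{cor:thickRxf}) and \cref{lem:MT1b} (the genuinely substantive step, where one shows $\mu\sigma f \le_\sigma f$ forces $\sigma f \le f$ and hence $\mu$-stability). The present proposition is a one-line corollary of these two results.

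\begin{proof}
	By \cref{lem:MT1a}, condition (MT1) holds for $E = R/x^f$ if and only if $\mu f \le_\sigma f$ and $\mu\sigma f \le_\sigma f$. By \cref{lem:MT1b}, these two conditions are each equivalent to $f$ being $\mu$-stable. Hence (MT1) holds for $R/x^f$ if and only if $f$ is $\mu$-stable.
\end{proof}
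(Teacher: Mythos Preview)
The proposal is correct and is exactly the paper's approach: the paper's proof of this proposition is the single sentence ``This immediately follows from \cref{lem:MT1a} and \cref{lem:MT1b}.''
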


\begin{proof}
	This immediately follows from \cref{lem:MT1a} and \cref{lem:MT1b}.
\end{proof}

\begin{Exa}
	The complex $R/x^f$ satisfies (MT1) for the polynomial~$f(n)=n^d$. Matsui--Takahashi \cite[Example~7.5]{MatsuiTakahashi17} established the $d=1$ case.
\end{Exa}

\begin{Exa}\label{exa:not-MT1}
	The proposition establishes that (MT1) does not hold in general. For example, it doesn't hold for the complex $R/x^f$ with $f(n)=2^n$, since this $f$ is not $\mu$-stable. This provides a negative answer to \cite[Question~7.4]{MatsuiTakahashi17}.
\end{Exa}

\begin{Cor}\label{cor:mu-stable-explodable}
	If $f$ is $\mu$-stable then the complex $R/x^f$ is explodable.
\end{Cor}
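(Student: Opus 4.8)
This is an immediate consequence of the two preceding results. The plan is simply to chain \cref{prop:MT1-mu-stable} and \cref{prop:MT1-explodable}. First I would invoke \cref{prop:MT1-mu-stable}: since $f$ is assumed $\mu$-stable, condition (MT1) holds for the complex $E = R/x^f$, i.e.\ both $\Eeven = R/x^{\mu f}$ and $\Eodd = R/x^{\mu\sigma f}$ lie in $\ideal{R/x^f}$. Then I would feed this into \cref{prop:MT1-explodable} (which rests on \cite[Corollary~7.3]{MatsuiTakahashi17}) to conclude that $R/x^f$ is explodable, i.e.\ $(R/x^f)^{\oplus a} \in \ideal{R/x^f}$ for every function $a:\bbN\to\bbN$.

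There is no real obstacle here; all the work has already been done in establishing \cref{lem:MT1a}, \cref{lem:MT1b}, \cref{prop:MT1-mu-stable}, and \cref{prop:MT1-explodable}. If one wanted a fully self-contained argument one would have to reprove that (MT1) implies explodability, but since we may assume the results stated earlier in the excerpt, the corollary follows formally. One could also note for emphasis that combining this with \cref{thm:f-explodable} then yields the description $\ideal{R/x^f} = \SET{E\in\Dpfl(R)}{\mono{\loew_E}\le_\sigma f}$ for $\mu$-stable $f$, but that observation is not needed for the statement itself.
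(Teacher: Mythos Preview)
Your proposal is correct and matches the paper's own proof exactly: the paper simply says ``This follows from \cref{prop:MT1-explodable} and \cref{prop:MT1-mu-stable}.'' Your additional remarks (unwinding what (MT1) means and pointing toward \cref{thm:f-explodable}) are accurate but not needed for the corollary itself.
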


\begin{proof}
	This follows from \cref{prop:MT1-explodable} and \cref{prop:MT1-mu-stable}.
\end{proof}

Next we consider condition (MT2).

\begin{Rem}
	Note that $(R/x^f)_{\mathrm{split}} = R/x^{\fsplit}$ where $\fsplit:\bbN \to \bbN$ is defined by
	\[
		\fsplit(n) = \begin{cases}
		f(n/2) & \text{if $n$ is even}\\
		0 & \text{if $n$ is odd}.
		\end{cases}
	\]
	Observe that $\mono{\fsplit}(n) = f(\floor{n/2})$ for each $n \in \bbN$.
\end{Rem}

\begin{Lem}\label{lem:auto-sigma-stable}
	Let $f$ be a monotonic sequence. If $f \le_\sigma \mono{\fsplit}$ then $f$ is $\sigma$-stable.
\end{Lem}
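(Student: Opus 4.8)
The plan is to unwind the definition of $\le_\sigma$ into a concrete recursive inequality on the values of $f$ and then let monotonicity do the rest. By \cref{def:sigma-mu-preorders}, the hypothesis $f \le_\sigma \mono{\fsplit}$ means $f \le \sigma^k \mono{\fsplit}$ for some positive integer $k$; that is, there exist constants $A$ and $n_0$ with $f(n) \le A\,\mono{\fsplit}(n+k)$ for all $n \ge n_0$. Using the identity $\mono{\fsplit}(m) = f(\lfloor m/2 \rfloor)$ recorded in the preceding Remark, this translates into the self-referential bound
\[
  f(n) \;\le\; A\, f\!\left(\left\lfloor \tfrac{n+k}{2} \right\rfloor\right) \qquad\text{for all } n \ge n_0 .
\]

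Next I would apply this inequality with the index shifted by one. For $n$ large enough that $n+1 \ge n_0$ it gives $f(n+1) \le A\, f(\lfloor (n+1+k)/2 \rfloor)$. The elementary point is that $\lfloor (n+1+k)/2 \rfloor \le n$ as soon as $n \ge k$, since then $n+1+k \le 2n+1 < 2(n+1)$. Monotonicity of $f$ therefore yields $f(\lfloor (n+1+k)/2 \rfloor) \le f(n)$, and hence $f(n+1) \le A\, f(n)$ for every $n \ge \max(n_0,k)$. This is precisely the statement $\sigma f \le f$ in $\AsymSeq$, i.e.\ that $f$ is $\sigma$-stable.

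I do not expect a genuine obstacle: once the definition is unpacked, the argument is a single-step manipulation, with monotonicity carrying all the weight. The only things requiring a moment of care are (i) passing correctly through the formula $\mono{\fsplit}(m) = f(\lfloor m/2 \rfloor)$, so that ``$\sigma$-boundedness by $\mono{\fsplit}$'' becomes an honest recursion for $f$ itself, and (ii) the floor estimate $\lfloor (n+1+k)/2 \rfloor \le n$ that allows monotonicity to collapse the right-hand side. The degenerate case $f = \underline{0}$ needs no separate treatment, since the same inequalities apply and $\underline{0}$ is $\sigma$-stable in any case.
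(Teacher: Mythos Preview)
Your proof is correct and follows essentially the same route as the paper's: unpack $f \le_\sigma \mono{\fsplit}$ to $f(n) \le A\,\mono{\fsplit}(n+k)$, shift the index by one, and use monotonicity together with the identity $\mono{\fsplit}(m)=f(\lfloor m/2\rfloor)$ to conclude $f(n+1)\le A f(n)$ for large $n$. The only cosmetic difference is that the paper applies monotonicity to $\mono{\fsplit}$ directly (writing $\mono{\fsplit}(n+k+1)\le\mono{\fsplit}(2n)=Af(n)$) whereas you first substitute the formula and then apply monotonicity of $f$; these amount to the same computation.
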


\begin{proof}
	By definition, $f \le_\sigma \mono{\fsplit}$ means that there is an $A$, $k$ and $n_0$ such that $f(n) \le A \mono{\fsplit}(n+k)$ for $n \ge n_0$. Hence, for $n \ge \max(n_0,k+1)$ we have $f(n+1) \le A\mono{\fsplit}(n+k+1) \le A\mono{\fsplit}(2n) = A f(n)$.
\end{proof}

\begin{Prop}\label{prop:MT2-mu-stable}
	Let $f$ be a monotonic sequence. Condition (MT2) holds for the complex $E=R/x^f$ if and only if $f$ is $\mu$-stable.
\end{Prop}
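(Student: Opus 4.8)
The plan is to reduce condition (MT2) for $E=R/x^f$ to a purely asymptotic statement about $f$ and then recognize that statement as $\mu$-stability, using the lemmas already established. First I would recall from the Remark preceding the proposition that $\Esplit=R/x^{\fsplit}$ with $\mono{\fsplit}(n)=f(\lfloor n/2\rfloor)$. By \cref{prop:ideal-force-mono} applied to the function $\fsplit$, we have $\ideal{\Esplit}=\ideal{R/x^{\fsplit}}=\ideal{R/x^{\mono{\fsplit}}}$, so (MT2) for $R/x^f$ is precisely the assertion that $R/x^f\in\ideal{R/x^{\mono{\fsplit}}}$. Since $f$ and $\mono{\fsplit}$ are both monotonic sequences, \cref{cor:thickRxf} converts this containment into the asymptotic bound $f\le_\sigma\mono{\fsplit}$. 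Thus the whole proposition comes down to the equivalence: $f\le_\sigma\mono{\fsplit}$ if and only if $f$ is $\mu$-stable.

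For the ``if'' direction I would argue directly. Assuming $\mu f\le f$, choose $A$ with $f(2m)\le A f(m)$ for $m\gg 0$; then for $n\gg 0$, monotonicity of $f$ gives $f(n)\le f\bigl(2(\lfloor n/2\rfloor+1)\bigr)\le A\,f(\lfloor n/2\rfloor+1)=A\,\mono{\fsplit}(n+2)$, so $f\le\sigma^{2}\mono{\fsplit}$ and hence $f\le_\sigma\mono{\fsplit}$. For the ``only if'' direction, assume $f\le_\sigma\mono{\fsplit}$; then \cref{lem:auto-sigma-stable} immediately yields that $f$ is $\sigma$-stable. Writing $f(n)\le A\,\mono{\fsplit}(n+k)=A\,f(\lfloor(n+k)/2\rfloor)$ for $n\gg 0$ and substituting $n=2m$ gives $f(2m)\le A\,f(m+\lfloor k/2\rfloor)$; $\sigma$-stability of $f$ then bounds $f(m+\lfloor k/2\rfloor)$ by $B\,f(m)$ for $m\gg 0$, whence $\mu f\le f$, i.e.\ $f$ is $\mu$-stable. (One could alternatively observe that $f=\mu\mono{\fsplit}$ pointwise and feed the monotonic sequence $\mono{\fsplit}$ into \cref{lem:MT1b}.)

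I expect the only point requiring care to be the ``only if'' direction: from the $\sigma$-shifted bound $f\le_\sigma\mono{\fsplit}$ one must know that the shifts are harmless — that $f$ does not grow so erratically that a bounded shift alters its asymptotics — and this is exactly the content of \cref{lem:auto-sigma-stable}, which is why that lemma was isolated. Everything else is bookkeeping with the identifications from \cref{sec:DVR} and \cref{sec:thick} and the definition of $\mu$-stability; in particular the degenerate case $f=\underline{0}$ needs no separate treatment since \cref{cor:thickRxf} already covers it.
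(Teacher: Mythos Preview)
Your proof is correct and follows essentially the same approach as the paper: both reduce (MT2) to the asymptotic condition $f\le_\sigma\mono{\fsplit}$ via \cref{prop:ideal-force-mono} and \cref{cor:thickRxf}, then use \cref{lem:auto-sigma-stable} for the ``only if'' direction and a direct estimate for the ``if'' direction. The only cosmetic difference is that the paper obtains the slightly sharper bound $f\le\mono{\fsplit}$ (rather than $f\le\sigma^2\mono{\fsplit}$) in the $\mu$-stable $\Rightarrow$ (MT2) direction, but this is immaterial since only $\le_\sigma$ is needed.
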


\begin{proof}
	It follows from \cref{prop:ideal-force-mono} and \cref{cor:thickRxf} that $E \in \ideal{\Esplit}$ if and only if $f \le_\sigma \mono{\fsplit}$. In other words, (MT2) holds if and only if $f \le_\sigma \mono{\fsplit}$. One readily checks that $f \le_\sigma \mono{\fsplit}$ implies $\mu f \le_\sigma f$. Since $f$ is also $\sigma$-stable (by \cref{lem:auto-sigma-stable}), this implies $\mu f \le f$, so that $f$ is $\mu$-stable. Conversely, suppose $f$ is $\mu$-stable. Thus there exist constants $A$ and $n_0$ such that $f(2n) \le A f(n)$ for $n \ge n_0$. Consider an $n > 2n_0 +1$. If $n$ is even then $f(n) \le Af(n/2)$ while if $n$ is odd then $f(n) \le f(n+1) \le Af(\floor{n/2}+1) \le A f(2\floor{n/2}) \le A^2f(\floor{n/2})$. Thus we have $f(n) \le A^2f(\floor{n/2}) = A^2 \mono{\fsplit}(n)$ for all $n > 2n_0 +1$. So that $f \le \mono{\fsplit}$ which of course implies $f \le_\sigma \mono{\fsplit}$.
\end{proof}

\begin{Thm} \label{thm:MT1-MT2-mustable}
	If $f$ is a monotonic sequence, then the following are equivalent:
	\begin{enumerate}
		\item (MT1) holds for $R/x^f$;
		\item (MT2) holds for $R/x^f$;
		\item $f$ is $\mu$-stable.
	\end{enumerate}
\end{Thm}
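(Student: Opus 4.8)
The plan is simply to assemble the two preceding propositions, since all the substantive work has already been carried out. Indeed, \cref{prop:MT1-mu-stable} shows that condition (MT1) holds for $R/x^f$ if and only if $f$ is $\mu$-stable, and \cref{prop:MT2-mu-stable} shows that condition (MT2) holds for $R/x^f$ if and only if $f$ is $\mu$-stable. Chaining these two biconditionals through the common middle term ``$f$ is $\mu$-stable'' immediately yields the equivalence of (a), (b), and (c). There is essentially no obstacle at this stage; the theorem is a formal corollary.

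If one instead wished to give a more self-contained argument not routed through the $\mu$-stability reformulation, the natural route would be to prove (a)$\Rightarrow$(c)$\Rightarrow$(b)$\Rightarrow$(c)$\Rightarrow$(a), where: (a)$\Rightarrow$(c) uses \cref{lem:MT1a} to translate (MT1) into $\mu f\le_\sigma f$ and $\mu\sigma f\le_\sigma f$, then invokes the $(a)\Rightarrow(c)$ implication of \cref{lem:MT1b} to get $\mu$-stability; (c)$\Rightarrow$(a) runs \cref{lem:MT1b} and \cref{lem:MT1a} in reverse; and the equivalence of (c) with (b) is exactly the content of \cref{prop:MT2-mu-stable}, whose harder direction (``$f$ $\mu$-stable $\Rightarrow$ (MT2)'') rests on the pointwise estimate $f(n)\le A^2 f(\lfloor n/2\rfloor) = A^2\mono{\fsplit}(n)$ for $n\gg 0$, together with \cref{prop:ideal-force-mono} and \cref{cor:thickRxf} to pass from this asymptotic bound to membership $R/x^f\in\ideal{\Esplit}$. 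Either way, the only place where genuine care is needed — the interplay between $\sigma$-stability and $\mu$-stability, and the arithmetic with $\lfloor n/2\rfloor$ and $2\lceil n/2\rceil\le n+1$ type inequalities — has already been isolated in \cref{lem:MT1b}, \cref{lem:auto-sigma-stable}, and \cref{lem:tame-stable}, so the present theorem itself requires no new ideas.
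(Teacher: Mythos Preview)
Your proposal is correct and matches the paper's own proof exactly: the paper simply writes ``This follows from \cref{prop:MT1-mu-stable} and \cref{prop:MT2-mu-stable},'' which is precisely your first paragraph. The additional expository material in your second paragraph is accurate but unnecessary for the proof itself.
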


\begin{proof}
	This follows from \cref{prop:MT1-mu-stable} and \cref{prop:MT2-mu-stable}.
\end{proof}

\begin{Cor}\label{cor:summary-ideal-Rxf}
	If $f$ is a $\mu$-stable monotonic sequence then
	\[
		\ideal{R/x^f} 
		= \SET{E \in \Dpfl(R)}{\mono{\loew_E}\le_\sigma f}
		= \SET{E \in \Dpfl(R)}{\mono{\loew_E}\le f}.
	\]
\end{Cor}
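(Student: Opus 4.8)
The plan is to recognize that, by this point in the paper, essentially all the work has been done, so this corollary should follow by combining two earlier results. The two ingredients I would use are \cref{cor:mu-stable-explodable}, which says that $\mu$-stability of $f$ forces $R/x^f$ to be explodable, and \cref{thm:f-explodable}, which identifies $\ideal{R/x^f}$ with $\SET{E\in\Dpfl(R)}{\mono{\loew_E}\le_\sigma f}$ as soon as $R/x^f$ is explodable.

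Concretely, the first step is to invoke \cref{cor:mu-stable-explodable}: since $f$ is $\mu$-stable, the complex $R/x^f$ is explodable, and then \cref{thm:f-explodable}(a) gives the left-hand equality $\ideal{R/x^f} = \SET{E\in\Dpfl(R)}{\mono{\loew_E}\le_\sigma f}$ immediately. The second step is to collapse $\le_\sigma$ to $\le$: every $\mu$-stable sequence is $\sigma$-stable (\cref{def:sigma-mu-stable}), so by \cref{lem:tame-stable} applied with $\tame=\sigma$ (equivalently, by the observation recorded in \cref{rem:useful-bound-relations}), for any monotonic sequence $g$ one has $g\le_\sigma f$ if and only if $g\le f$. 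Applying this with $g=\mono{\loew_E}$ for each $E\in\Dpfl(R)$ shows the two sets on the right-hand side coincide, which finishes the argument.

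I do not expect a genuine obstacle here — the substantive content lives in \cref{thm:loew-thick}, \cref{prop:RxfexplodableXX} and the chain leading to \cref{thm:f-explodable}, together with the explodability statement \cref{cor:mu-stable-explodable} (which in turn rests on property (MT1) and the Matsui--Takahashi input \cref{prop:MT1-explodable}). The only minor point to check is bookkeeping: $\mono{\loew_E}$ is a genuine monotonic sequence indexed by $\bbN$ whereas \cref{lem:tame-stable} is phrased for asymptotic monotonic sequences, but since the relations $\le_\sigma$ and $\le_\mu$ descend to $\AsymSeq$ (\cref{rem:mu-sigma}) this causes no trouble; the degenerate case $f=\underline 0$ is likewise already covered by the cited statements.
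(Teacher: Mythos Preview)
Your proposal is correct and follows essentially the same route as the paper. The only cosmetic difference is that the paper cites the two ingredients \cref{thm:MT1-MT2-mustable} and \cref{prop:MT1-explodable} separately rather than the packaged statement \cref{cor:mu-stable-explodable}, and it phrases the collapse of $\le_\sigma$ to $\le$ as the general observation that $\mu$-stability of $f$ gives $g\le f \Leftrightarrow g\le_\sigma f \Leftrightarrow g\le_\mu f$ for any monotonic $g$, rather than invoking \cref{lem:tame-stable} explicitly.
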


\begin{proof}
	This follows from \cref{thm:MT1-MT2-mustable}, \cref{prop:MT1-explodable} and \cref{thm:f-explodable} bearing in mind that, since $f$ is $\mu$-stable, we have $g \le f \Leftrightarrow g \le_\sigma f \Leftrightarrow g \le_\mu f$ for any monotonic sequence $g$.
\end{proof}

\section{Convolutions and radical ideals}\label{sec:convolutions}

In the previous sections, we have established several results concerning the thick ideal $\ideal{E}$ generated by a pseudo-coherent complex $E$ with a particular emphasis on complexes of the form $E=R/x^f$. The strongest results were obtained for complexes which are explodable (\cref{def:explodable}). In this section, we will provide more satisfactory results concerning the \emph{radical} thick ideal $\radideal{E}$ generated by a pseudo-coherent complex $E$. The results we obtain for $\radideal{E}$ are more complete than those we have established for $\ideal{E}$. One reason for this is that while we do not know if the complex $R/x^f$ is always explodable, we can prove that it is always ``explodable up to tensor powers'' meaning that $(R/x^f)^{\oplus a} \in \radideal{R/x^f}$ for any sequence $a$; see \cref{prop:almost-explodable}. This phenomenon leads to \cref{thm:rid-full} which explicitly describes the principal radical ideal $\radideal{R/x^f}$ for any monotonic sequence $f$.

\begin{Def}
	We define the \emph{convolution} $f\ast g$ of two sequences $f,g:\bbN\to\bbN$ by
	\[
		(f\ast g)(n) \coloneqq \max_{0\le i\le n}\min(f(i),g(n-i)).
	\]
\end{Def}

\begin{Rem}\label{rem:conv-properties}
	The following facts are readily verified:
	\begin{enumerate}
		\item Convolution is commutative: $f\ast g = g \ast f$ pointwise.
		\item Convolution is associative: $f\ast (g\ast h) = (f\ast g)\ast h$ pointwise.\footnote{Both $(f\ast (g\ast h))(n)$ and $( (f\ast g)\ast h)(n)$ are the maximum value the function~$\min(f(x),g(y),h(z))$ takes over the simplex $x+y+z=n$, $x,y,z\ge 0$.}
		\item $f\ast g$ is monotonic if either $f$ or $g$ is monotonic.
		\item $f\ast g \le f$ if $f$ is monotonic.
	\end{enumerate}
\end{Rem}

\begin{Lem}\label{lem:loew-conv}
	Let $f$ be a monotonic sequence with $f(0)\neq 0$ and let $E \in \Dpfl(R)$ be a nonzero complex with $\inf(E)=0$. Then
	\[
		\loew_{R/x^f \otimes E} = f \ast \loew_{E}.
	\]
\end{Lem}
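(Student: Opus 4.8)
The plan is to compute $\loew_n(R/x^f \otimes E)$ directly from the homology decomposition in \cref{rem:Htensor}, using the hereditary splitting of \cref{rem:hereditary-splitting} to treat both $R/x^f$ and $E$ as complexes with zero differentials. Since $\inf(E)=0$ and $f(0)\neq 0$, we have $\inf(R/x^f \otimes E)=0$ by \cref{rem:koszul-trick}, so $\loew_{R/x^f \otimes E}(n)=\loew(H_n(R/x^f\otimes E))$ for all $n$, and there is no shift bookkeeping to worry about.

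First I would write out, for fixed $n$, the module $H_n(R/x^f \otimes E)$ using \cref{rem:Htensor}: it is a direct sum of terms $R/x^{f(i)} \otimes_R H_j(E)$ over $i+j=n$ together with terms $\Tor_1^R(R/x^{f(i)}, H_j(E))$ over $i+j=n-1$. By \cref{lem:loew}(a), the Loewy length of this direct sum is the maximum of the Loewy lengths of the summands, and by \cref{lem:loew}(b),(c) each $R/x^{f(i)}\otimes_R H_j(E)$ and each $\Tor_1^R(R/x^{f(i)},H_j(E))$ has Loewy length $\min(f(i), \loew(H_j(E))) = \min(f(i), \loew_E(j))$. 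Hence
\[
\loew(H_n(R/x^f\otimes E)) = \max\Bigl(\max_{i+j=n}\min(f(i),\loew_E(j)),\ \max_{i+j=n-1}\min(f(i),\loew_E(j))\Bigr).
\]
The first inner maximum is exactly $(f\ast \loew_E)(n)$ by definition of convolution. The plan is then to show the second inner maximum, which equals $(f\ast \loew_E)(n-1)$, does not exceed $(f\ast\loew_E)(n)$, so the outer maximum collapses to $(f\ast\loew_E)(n)$. This is where monotonicity enters: since $f$ is monotonic and $\loew_E$ is monotonic (homology of a complex in $\Dpfl(R)$ after splitting — though here I should be slightly careful, as $\loew_E$ need not be monotonic in general; however $f\ast\loew_E$ is monotonic by \cref{rem:conv-properties}(c) because $f$ is monotonic), we get $(f\ast\loew_E)(n-1)\le (f\ast\loew_E)(n)$ directly. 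One subtlety: for $n=0$ the $\Tor$ terms involve $i+j=-1$ which is vacuous, so the formula degenerates correctly to $\loew(H_0)=\min(f(0),\loew_E(0))=(f\ast\loew_E)(0)$, and since $\inf(E)=0$ means $\loew_E(0)=\loew(H_0(E))\neq 0$ and $f(0)\neq 0$, this is genuinely nonzero, consistent with $\inf(R/x^f\otimes E)=0$.

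The main obstacle, such as it is, is bookkeeping rather than conceptual: making sure the index ranges in the $\Tor$-contribution are handled correctly (in particular the edge case $n=0$ and the fact that we only get the degree-$(n-1)$ convolution from the $\Tor_1$ part), and confirming that the monotonicity of $f$ alone — not $\loew_E$ — suffices to make $f\ast\loew_E$ monotonic so the outer max collapses. I would close by noting that $R/x^f\otimes E\neq 0$ (which holds since $R$ has no nonzero tensor-nilpotents, or directly since $H_0\neq 0$), so $\loew_{R/x^f\otimes E}$ is well-defined and equals $f\ast\loew_E$ pointwise as claimed.
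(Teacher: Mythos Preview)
Your proof is correct and follows essentially the same approach as the paper: compute $H_n(R/x^f\otimes E)$ via \cref{rem:Htensor}, apply \cref{lem:loew} to reduce to $\max\bigl((f\ast\loew_E)(n),(f\ast\loew_E)(n-1)\bigr)$, and then collapse the outer max using the monotonicity of $f\ast\loew_E$ from \cref{rem:conv-properties}(c). Your self-correction that only the monotonicity of $f$ (not of $\loew_E$) is needed is exactly the point the paper makes in its final line.
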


\begin{proof}
	For notational simplicity, let $g(n) \coloneqq \loew_E(n) = \loew(H_n(E))$. It follows from \cref{rem:Htensor}, \cref{lem:loew} and \cref{rem:koszul-trick} that for any $n \in \bbN$ we have
	\begin{align*}
		(\loew_{R/x^f\otimes E})(n) &= \loew(H_n(R/x^f \otimes E)) \\
								  &= \max\big(\max_{i+j=n} \min(f(i),g(j)),\max_{i+j=n-1} \min(f(i),g(j))\big) \\
										&= \max\big( (f\ast g)(n), (f\ast g)(n-1)\big) \\
										&= (f\ast g)(n)
	\end{align*}
	since $f\ast g$ is monotonic (since $f$ is monotonic).
\end{proof}

\begin{Lem}\label{lem:fast}
	For any monotonic sequence $f$, we have 
	\[
		(f\ast f)(n) = f(\floor{n/2})
	\]
	for all $n \in \bbN$.
\end{Lem}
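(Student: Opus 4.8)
The plan is to unfold the definition of convolution and exploit monotonicity directly. Starting from $(f\ast f)(n) = \max_{0 \le i \le n} \min\big(f(i),f(n-i)\big)$, I would first observe that since $f$ is monotonic, $\min\big(f(i),f(n-i)\big) = f\big(\min(i,n-i)\big)$ for every $i$ with $0 \le i \le n$. This reduces the claim to showing that $\max_{0 \le i \le n} f\big(\min(i,n-i)\big) = f(\lfloor n/2\rfloor)$.

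Next I would analyze the range of the index function $i \mapsto \min(i,n-i)$ as $i$ runs over $\{0,1,\ldots,n\}$. One checks that this function takes all integer values from $0$ up to $\lfloor n/2\rfloor$ and no larger value, attaining its maximum $\lfloor n/2\rfloor$ at $i = \lfloor n/2\rfloor$ (and also at $i = \lceil n/2\rceil$). Since $f$ is monotonic, $f\big(\min(i,n-i)\big) \le f(\lfloor n/2\rfloor)$ for all such $i$, with equality achieved at $i = \lfloor n/2\rfloor$; hence the maximum equals $f(\lfloor n/2\rfloor)$, as desired.

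There is no genuine obstacle here --- the argument is a short elementary manipulation --- so the only thing to be careful about is the two small monotonicity bookkeeping points: the identity $\min(f(i),f(n-i)) = f(\min(i,n-i))$ (valid precisely because $f$ is order-preserving) and the elementary fact that $\min(i,n-i)$ achieves its maximum $\lfloor n/2\rfloor$ over $0\le i\le n$. Both can be stated in a sentence each. This lemma will presumably be used together with \cref{lem:loew-conv} to compute $\loew_{R/x^f \otimes R/x^f} = f\ast f$, i.e.\ to show that tensoring $R/x^f$ with itself has the same effect on Loewy sequences as applying $\mu$ (up to the floor), which is the key computation relating tensor powers to the $\mu$-scaling operator and hence the bridge to radical ideals.
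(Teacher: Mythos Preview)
Your proof is correct and slightly cleaner than the paper's. The paper introduces the auxiliary function $g(i) \coloneqq f(n-i)$, observes that $f$ is increasing while $g$ is decreasing on $\{0,\ldots,n\}$ and that their graphs are symmetric about $n/2$, then splits the maximum into the ranges $0 \le i \le \lfloor n/2\rfloor$ and $\lceil n/2\rceil \le i \le n$ and evaluates each piece separately. Your single identity $\min(f(i),f(n-i)) = f(\min(i,n-i))$ (valid by monotonicity) collapses both of those cases at once and reduces the problem immediately to maximizing $\min(i,n-i)$, which is the elementary fact you state. The two arguments are the same in spirit---both exploit that the smaller of $f(i)$ and $f(n-i)$ is determined by which of $i, n-i$ is smaller---but your formulation is more economical. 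Your closing remark about the role of this lemma (relating tensor squares to the $\mu$-operator via \cref{lem:loew-conv}) is also on target.
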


\begin{proof}
	For $0 \le i \le n$, define $g(i) \coloneqq f(n-i)$. Thus, 
	\[
		(f \ast f)(n) = \max_{0\le i \le n} \min(f(i),g(i)).
	\]
	Observe that while $f$ is monotonically increasing, the function $g$ is monotonically decreasing on the interval $0,\ldots,n$. Moreover, note that the ``graphs'' of $f$ and $g$ are symmetric about $x=n/2$. More precisely, for any $0 \le k \le \floor{n/2}$, we have
	\[
		g(\floor{n/2}-k) = f(\ceil{n/2}+k).
	\]
	Note that for such $k$ we have $\floor{n/2}-k \le \ceil{n/2}+k$. Hence $g(\floor{n/2}-k) = f(\ceil{n/2}+k) \ge f(\floor{n/2}-k)$. It follows that 
	\[
		\max_{0 \le i \le \floor{n/2}} \min(f(i),g(i)) = \max_{0 \le i \le \floor{n/2}} f(i) = f(\floor{n/2}).
	\]
	On the other hand, for such $k$ we also have $f(\ceil{n/2}+k) = g(\ceil{n/2}-i) \ge g(\ceil{n/2}+k)$. It follows that 
	\[
		\max_{\ceil{n/2} \le i \le n} \min(f(i),g(i)) = \max_{\ceil{n/2}\le i \le n} g(i) = g(\ceil{n/2}).
	\]
	Finally, just observe that $g(\ceil{n/2}) = f(\floor{n/2})$.
\end{proof}

\begin{Lem}\label{lem:convmu}
	Let $f$ and $g$ be monotonic sequences. Then $f \ast g \le f\wedge g \le \mu(f\ast g)$ pointwise. Hence $f\ast g$ and $f \wedge g$ are $\mu$-equivalent.
\end{Lem}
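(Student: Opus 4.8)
The plan is to verify the two pointwise inequalities directly and then read off the $\mu$-equivalence; there is no real obstacle, as each direction unwinds to a one-line estimate.

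\emph{The inequality $f\ast g\le f\wedge g$.} Fix $n\in\bbN$ and consider a term $\min(f(i),g(n-i))$ with $0\le i\le n$. Since $f$ is monotonic and $i\le n$ we have $f(i)\le f(n)$, and since $g$ is monotonic and $n-i\le n$ we have $g(n-i)\le g(n)$; hence $\min(f(i),g(n-i))\le\min(f(n),g(n))=(f\wedge g)(n)$. Taking the maximum over $i$ gives $(f\ast g)(n)\le(f\wedge g)(n)$. (Equivalently, this is just \cref{rem:conv-properties}(d) applied to both $f$ and $g$, using the commutativity in \cref{rem:conv-properties}(a): $f\ast g\le f$ and $f\ast g=g\ast f\le g$.)

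\emph{The inequality $f\wedge g\le\mu(f\ast g)$.} Again fix $n\in\bbN$ and evaluate the definition $(f\ast g)(2n)=\max_{0\le i\le 2n}\min(f(i),g(2n-i))$ at the particular index $i=n$: the corresponding term is $\min(f(n),g(n))=(f\wedge g)(n)$, so that $\mu(f\ast g)(n)=(f\ast g)(2n)\ge(f\wedge g)(n)$.

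\emph{Conclusion.} Combining the two displays gives the pointwise chain $f\ast g\le f\wedge g\le\mu(f\ast g)$. From $f\ast g\le f\wedge g$ we get $f\ast g\le_\mu f\wedge g$ by \cref{rem:mu-sigma}, while $f\wedge g\le\mu(f\ast g)$ says precisely that $f\wedge g\le_\mu f\ast g$; hence $f\ast g\sim_\mu f\wedge g$. The only points to double-check are the trivial edge case $n=0$ and that monotonicity of both $f$ and $g$ is used only through the evident inequalities $f(i)\le f(n)$ and $g(j)\le g(j')$ for $j\le j'$, which it is.
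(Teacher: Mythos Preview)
Your proof is correct and follows essentially the same approach as the paper: the first inequality is \cref{rem:conv-properties}(d) applied to both arguments (which you spell out), and the second inequality is obtained by evaluating the defining maximum for $(f\ast g)(2n)$ at the single index $i=n$, exactly as in the paper.
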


\begin{proof}
	The first inequality $f \ast g \le f \wedge g$ holds by \cref{rem:conv-properties}(d). On the other hand, we have
	\[
		(f \wedge g)(n) = \min(f(n),g(n)) \le \max_{i+j=2n} \min(f(i),g(j)) = \mu(f\ast g)(n)
	\]
	for all $n\in \bbN$. It follows that $f\ast g$ and $f \wedge g$ are $\mu$-equivalent (\cref{def:sigma-mu-preorders}).
\end{proof}

\begin{Lem}\label{lem:mufast}
	For any monotonic sequence $f$, we have $\mu(f\ast f)=f$ pointwise.
\end{Lem}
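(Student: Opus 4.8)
The plan is to reduce immediately to \cref{lem:fast}, which identifies the self-convolution of a monotonic sequence: $(f\ast f)(n) = f(\floor{n/2})$ for all $n \in \bbN$. Given this, the claim becomes a one-line computation using the definition of the scaling operator $\mu$, namely $(\mu g)(n) = g(2n)$ for any monotonic sequence $g$ and all $n \in \bbN$.

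Concretely, I would argue as follows. Fix $n \in \bbN$. Applying the definition of $\mu$ to the monotonic sequence $f\ast f$ (monotonicity of $f \ast f$ is guaranteed by \cref{rem:conv-properties}(c) since $f$ is monotonic, so the expression makes sense), we get
\[
	(\mu(f\ast f))(n) = (f\ast f)(2n).
\]
Now invoke \cref{lem:fast} with argument $2n$ in place of $n$:
\[
	(f\ast f)(2n) = f\!\left(\floor{2n/2}\right) = f(n).
\]
Combining the two displayed equalities gives $(\mu(f\ast f))(n) = f(n)$ for every $n \in \bbN$, which is exactly the pointwise identity $\mu(f\ast f) = f$.

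There is no real obstacle here: all the work has already been done in establishing \cref{lem:fast}, and the remaining content is simply the observation that $\floor{2n/2} = n$ for integers $n \ge 0$. If anything, the only point worth a word of care is confirming that $f\ast f$ is itself monotonic so that the statement is not vacuous or ill-posed, but this is immediate from \cref{rem:conv-properties}(c).
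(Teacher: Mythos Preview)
Your proposal is correct and matches the paper's approach exactly: the paper's proof is the single sentence ``This immediately follows from \cref{lem:fast},'' and you have simply spelled out that immediate deduction.
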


\begin{proof}
	This immediately follows from \cref{lem:fast}
\end{proof}

\begin{Lem}\label{lem:fastcons}
	Let $f$ and $g$ be monotonic sequences. If $f \le \mu g$ then $f \ast f \le g$.
\end{Lem}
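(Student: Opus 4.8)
The plan is to reduce everything to the closed formula for $f\ast f$ and then chase constants. By \cref{lem:fast}, we have $(f\ast f)(n) = f(\floor{n/2})$ for all $n\in\bbN$, so the desired conclusion $f\ast f\le g$ is exactly the assertion that the sequence $n\mapsto f(\floor{n/2})$ is asymptotically bounded by $g$ in the sense of \cref{def:asymptotic-ordering}.

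First I would unpack the hypothesis: $f\le\mu g$ means there are constants $A$ and $n_0$ with $f(k)\le A\,g(2k)$ for all $k\ge n_0$. Applying this with $k=\floor{n/2}$ (which satisfies $k\ge n_0$ as soon as $n\ge 2n_0$), and using that $2\floor{n/2}\le n$ together with the monotonicity of $g$, I get
\[
	(f\ast f)(n) = f(\floor{n/2}) \le A\,g\big(2\floor{n/2}\big) \le A\,g(n)
\]
for all $n\ge 2n_0$. This is precisely the statement $f\ast f\le g$ (with constant $A$ and threshold $2n_0$), so we are done.

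There is essentially no obstacle here: once \cref{lem:fast} is in hand the argument is a one-line substitution, the only point requiring (trivial) care being the bookkeeping of the asymptotic constant and threshold, and noting that the inequality $2\floor{n/2}\le n$ is what lets monotonicity of $g$ absorb the loss from replacing $2\floor{n/2}$ by $n$. (The degenerate case $g=\underline0$, where $\mu g=\underline0$ forces $f$ to vanish eventually and hence $f\ast f=\underline0$, is also covered by the same computation.)
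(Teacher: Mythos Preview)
Your argument is correct and is exactly the routine verification the paper has in mind: apply \cref{lem:fast} to write $(f\ast f)(n)=f(\floor{n/2})$, then use the hypothesis $f(k)\le A\,g(2k)$ with $k=\floor{n/2}$ and monotonicity of $g$ to conclude. The parenthetical about $g=\underline{0}$ is harmless but unnecessary, since as you note the main computation already covers it.
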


\begin{proof}
	This is a routine consequence of \cref{lem:fast}.
\end{proof}

\begin{Lem}\label{lem:almost-explodable3}
	For any monotonic sequences $f$ and $a$, the complex $((R/x^f)^{\oplus a})^{\otimes 2}$ is contained in the thick subcategory generated by the pseudo-coherent complex with zero differentials $E \in \Dzfl(R)$ defined by
	\begin{equation}\label{eq:Edef}
		E_n \coloneqq \bigoplus_{0 \le i \le \floor{n/2}} (R/x^{f(i)})^{\oplus {a(i)a(n-i)}}
	\end{equation}
	for all $n \in \bbN$.
\end{Lem}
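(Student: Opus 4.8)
The plan is to compute $\bigl((R/x^f)^{\oplus a}\bigr)^{\otimes 2}$ explicitly and to recognize it, up to finitely many shifts and direct summands, as the complex $E$. Set $X \coloneqq (R/x^f)^{\oplus a} = \bigoplus_{n\in\bbN}(R/x^{f(n)})^{\oplus a(n)}[n]$, a complex with zero differentials concentrated in non-negative degrees. Since $R$ is hereditary, $X^{\otimes 2}$ is isomorphic to its homology with zero differentials (\cref{rem:hereditary-splitting}), so it suffices to identify the modules $H_n(X^{\otimes 2})$. By \cref{rem:Htensor} we have $H_n(X^{\otimes 2}) = T_n \oplus S_n$ with $T_n = \bigoplus_{i+j=n} H_i(X)\otimes_R H_j(X)$ and $S_n = \bigoplus_{i+j=n-1}\Tor_1^R(H_i(X),H_j(X))$. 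Using \cref{lem:tensor-cyclic} and the monotonicity of $f$ (so that $\min(f(i),f(j)) = f(\min(i,j))$), each of $T_n$ and $S_n$ becomes a finite direct sum of cyclic torsion modules of the form $R/x^{f(k)}$.

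The first step is to show that $T_n$ is a direct summand of $E_n^{\oplus 2}$. The involution $(i,j)\mapsto (j,i)$ on the pairs with $i+j=n$ identifies the $i<j$ part of $T_n$ with the $i>j$ part, so $T_n$ is the double of $\bigoplus_{0\le i<n/2}(R/x^{f(i)})^{\oplus a(i)a(n-i)}$ together with a single middle block $(R/x^{f(n/2)})^{\oplus a(n/2)^2}$ when $n$ is even. Comparing with $E_n = \bigoplus_{0\le i\le\floor{n/2}}(R/x^{f(i)})^{\oplus a(i)a(n-i)}$ shows $T_n$ is a direct summand of $E_n^{\oplus 2}$. The identical argument, with $n-1$ in place of $n$, shows $S_n$ is a direct summand of $E_{n-1}^{\oplus 2}$.

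The second step compares $E_{n-1}$ with $E_n$: since $\floor{(n-1)/2}\le\floor{n/2}$ and, by monotonicity of $a$, $a(i)a(n-1-i)\le a(i)a(n-i)$ for all $i$, the module $E_{n-1}$ is a direct summand of $E_n$. Assembling the pieces, $X^{\otimes 2}\simeq\left(\bigoplus_n T_n[n]\right)\oplus\left(\bigoplus_n S_n[n]\right)$, where $\bigoplus_n T_n[n]$ is a direct summand of $\bigoplus_n E_n^{\oplus 2}[n]\cong E^{\oplus 2}$, and $\bigoplus_n S_n[n]$ is a direct summand of $\bigoplus_n E_{n-1}^{\oplus 2}[n]\cong (E[1])^{\oplus 2}$ (using $E_m=0$ for $m<0$). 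Hence $X^{\otimes 2}$ is a direct summand of $E^{\oplus 2}\oplus(E[1])^{\oplus 2}\in\thicksub{E}$, and since thick subcategories are closed under direct summands we conclude $X^{\otimes 2}\in\thicksub{E}$.

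The only real content is the multiplicity bookkeeping, and the one point requiring care is that the $\Tor_1^R$ terms naturally pair indices summing to $n-1$ and so land in degree $n$; these must be absorbed using $E_{n-1}$ rather than $E_n$, which is exactly where monotonicity of $a$ is needed. The possible vanishing of individual values $a(i)$ causes no trouble for this summand-based argument.
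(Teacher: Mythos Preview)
Your proof is correct and follows essentially the same route as the paper: compute $H_n(X^{\otimes 2})$ via \cref{rem:Htensor} and \cref{lem:tensor-cyclic}, use monotonicity of $f$ to rewrite $\min(f(i),f(n-i))=f(\min(i,n-i))$, and exploit the symmetry $(i,j)\mapsto(j,i)$ to recognize everything as a direct summand of finitely many copies of $E$ (or a shift).

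One small point of organization: your ``second step'' showing $E_{n-1}$ is a summand of $E_n$ is not actually used in your final assembly, since you absorb the $\Tor$ terms into $(E[1])^{\oplus 2}$ rather than into $E^{\oplus 2}$. The paper instead uses exactly this $E_{n-1}\hookrightarrow E_n$ observation (via monotonicity of $a$) to conclude that $X^{\otimes 2}$ is a summand of $E^{\oplus 4}$, with no shift. Your $E[1]$ route is a perfectly good alternative and in fact does not require $a$ to be monotonic, so your closing remark that ``this is exactly where monotonicity of $a$ is needed'' is misleading for the argument as you have written it. Either drop the second step and the remark, or use the second step in the assembly and conclude $X^{\otimes 2}$ is a summand of $E^{\oplus 4}$ as the paper does.
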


\begin{proof}
	By \cref{rem:Htensor} and \cref{lem:tensor-cyclic} we have
	\begin{align}\label{eq:Rxfta}
		H_n((R/x^f)^{\oplus a} \otimes (R/x^f)^{\oplus a}) &= \bigoplus_{0 \le i \le n} (R/x^{\min(f(i),f(n-i))})^{\oplus a(i)a(n-i)} \\
															&\oplus \bigoplus_{0 \le i \le n-1} (R/x^{\min(f(i),f(n-1-i))})^{\oplus a(i)a(n-1-i)}\nonumber
	\end{align}
	for each $n \in \bbN$. As explained in the proof of \cref{lem:fast}, 
	\[
		\min(f(i),f(n-i)) = \begin{cases}
		f(i) & \text{for } 0 \le i \le \floor{n/2}\\
		f(n-i) & \text{for } \ceil{n/2} \le i \le n.
		\end{cases}
	\]
	It follows that the first term of \eqref{eq:Rxfta} coincides with 
	\begin{equation*}
		\bigoplus_{0 \le i \le \floor{n/2}} (R/x^{f(i)})^{\oplus a(i)a(n-i)} \oplus \bigoplus_{\ceil{n/2} \le i \le n} (R/x^{f(n-i)})^{\oplus a(i)a(n-i)}
	\end{equation*}
	which in turn coincides with
	\begin{equation}\label{eq:blah}
		\left(\bigoplus_{0 \le i \le \floor{n/2}} (R/x^{f(i)})^{\oplus a(i)a(n-i)}\right)
		\oplus
		\left(\bigoplus_{0 \le i \le \floor{n/2}} (R/x^{f(i)})^{\oplus a(i)a(n-i)}\right).
	\end{equation}
	One similarly shows that the second term of \eqref{eq:Rxfta} coincides with the analogue of \eqref{eq:blah} in which $n$ is replaced by $n-1$; since $a$ is monotonic, this is a direct summand of \eqref{eq:blah}. In summary, $(R/x^f)^{\oplus a} \otimes (R/x^f)^{\oplus a}$ is a direct summand of~$E^{\oplus 4}$ where $E \in \Dzfl(R)$ is the complex with zero differentials defined by 
	\[
		E_n = \bigoplus_{0 \le i \le \floor{n/2}} (R/x^{f(i)})^{a(i)a(n-i)}.
	\]
	Thus, $(R/x^f)^{\oplus a} \otimes (R/x^f)^{\oplus a}$ is contained in $\thicksub{E^{\oplus 4}} = \thicksub{E}$.
\end{proof}

\begin{Prop}\label{prop:tensast}
	For any monotonic sequence $f$, we have
	\[
		(R/x^f)^{\otimes 2} \in \ideal{R/x^{f \ast f}}.
	\]
\end{Prop}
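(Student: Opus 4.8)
The plan is to reduce the claim, via \cref{lem:almost-explodable3}, to a concrete degree-by-degree direct-summand comparison against the pseudo-coherent complex $R^{\uparrow}$ of \eqref{eq:Ruparrow}. Since $R/x^f$ is concentrated in non-negative degrees we have $(R/x^f)^{\oplus\underline 1}\cong R/x^f$, so \cref{lem:almost-explodable3} applied with $a=\underline 1$ yields
\[
	(R/x^f)^{\otimes 2}\in\thicksub{E},
\]
where $E\in\Dzfl(R)$ is the zero-differential complex with $E_n=\bigoplus_{0\le i\le\floor{n/2}}R/x^{f(i)}$. It therefore suffices to prove that $E\in\ideal{R/x^{f\ast f}}$.

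For this I would establish the sharper statement that $E$ is a direct summand of $R/x^{f\ast f}\otimes R^{\uparrow}$; since $R^{\uparrow}$ is pseudo-coherent, that complex lies in $\ideal{R/x^{f\ast f}}$, and we are done. Using that $R$ is hereditary (\cref{rem:hereditary-splitting}) I may replace $R/x^{f\ast f}\otimes R^{\uparrow}$ by the direct sum of its homology, and by \cref{rem:Htensor} and \cref{lem:tensor-cyclic} (this is exactly the computation in the proof of \cref{prop:ideal-force-mono}) its degree-$n$ homology is $\bigoplus_{0\le i\le n}R/x^{(f\ast f)(i)}$. As both complexes have zero differentials it is then enough to verify, for each $n$, that $E_n$ is a direct summand of $\bigoplus_{0\le i\le n}R/x^{(f\ast f)(i)}$.

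The one step needing actual care is this degree-wise comparison. By \cref{lem:fast} we have $(f\ast f)(i)=f(\floor{i/2})$, and the assignment $i\mapsto 2i$ is an injection of $\{0,1,\dots,\floor{n/2}\}$ into $\{0,1,\dots,n\}$ satisfying $f(\floor{(2i)/2})=f(i)$. Hence the multiset of cyclic summands of $E_n$, namely $\{f(i):0\le i\le\floor{n/2}\}$, is a sub-multiset of $\{(f\ast f)(i):0\le i\le n\}$, so by the Krull--Schmidt property for finite-length $R$-modules $E_n$ is a direct summand of $\bigoplus_{0\le i\le n}R/x^{(f\ast f)(i)}$, as required. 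The remaining passages --- through $\thicksub{-}$ and the inclusion $\thicksub{R/x^{f\ast f}\otimes R^{\uparrow}}\subseteq\ideal{R/x^{f\ast f}}$ --- are formal. The real subtlety has already been packaged into \cref{lem:almost-explodable3}: the collapse $\min(f(i),f(n-i))=f(\min(i,n-i))$ together with the $\Tor_1^R$ degree shift is precisely what makes $E$ have only about $n/2$ cyclic summands in degree $n$, few enough to embed as a summand of $R/x^{f\ast f}\otimes R^{\uparrow}$.
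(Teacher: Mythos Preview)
Your proposal is correct and follows essentially the same route as the paper: reduce via \cref{lem:almost-explodable3} (with $a=\underline 1$) to the complex $E$ with $E_n=\bigoplus_{0\le i\le \floor{n/2}}R/x^{f(i)}$, then exhibit $E$ as a direct summand of $R/x^{f\ast f}\otimes R^{\uparrow}$ by computing the latter's homology and using \cref{lem:fast} together with the injection $i\mapsto 2i$. The paper phrases the last step simply as ``focusing on the even $i$'', which is exactly your multiset embedding.
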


\begin{proof}
	By \cref{lem:almost-explodable3}, it suffices to prove that $E \in \ideal{R/x^{f \ast f}}$ where $E\in \Dzfl(R)$ is the complex with zero differentials defined by
	\[
		E_n = \bigoplus_{0 \le i \le \floor{n/2}} R/x^{f(i)}
	\]
	for each $n \in \bbN$. Now recall the complex $R^{\uparrow}$ from \eqref{eq:Ruparrow}. For each $n \in \bbN$, we have
	\[
		H_n(R/x^{f \ast f} \otimes R^{\uparrow}) = \bigoplus_{0 \le i \le n} R/x^{(f \ast f)(i)}
		= \bigoplus_{0 \le i \le n} R/x^{f(\floor{i/2})}
	\]
	where the last equality is from \cref{lem:fast}. Focusing on the even $i$, we see that $E$ is a direct summand of $R/x^{f \ast f} \otimes R^{\uparrow}$. Hence $E \in \ideal{R/x^{f\ast f}}$.
\end{proof}

\begin{Cor}\label{cor:tensastiterated}
	For any monotonic sequence $f$, we have
	\[
		(R/x^f)^{\otimes 2^k} \in \ideal{R/x^{f^{\ast 2^k}}}
	\]
	for each $k \ge 1$.
\end{Cor}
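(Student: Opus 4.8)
The plan is to proceed by induction on $k$, with the base case $k=1$ being exactly \cref{prop:tensast}. For the inductive step I would first record a standard piece of tensor-triangular bookkeeping: in any tensor-triangulated category, if $A \in \ideal{B}$ and $A' \in \ideal{B'}$ then $A \otimes A' \in \ideal{B \otimes B'}$. This follows by applying, twice, the observation that $\SET{X}{X \otimes C \in \ideal{B\otimes C}}$ is a thick tensor ideal containing $B$ (hence containing $\ideal{B}$, since $-\otimes C$ is exact and $\ideal{B \otimes C}$ is an ideal): from $A \in \ideal{B}$ we get $A \otimes A' \in \ideal{B \otimes A'}$, and from $A' \in \ideal{B'}$ we get $B \otimes A' \in \ideal{B \otimes B'}$, so $\ideal{B\otimes A'} \subseteq \ideal{B \otimes B'}$ and therefore $A \otimes A' \in \ideal{B \otimes B'}$.

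Now assume $(R/x^f)^{\otimes 2^k} \in \ideal{R/x^{f^{\ast 2^k}}}$. Writing $g \coloneqq f^{\ast 2^k}$, which is again monotonic by \cref{rem:conv-properties}(c), the observation above (with $A = A' = (R/x^f)^{\otimes 2^k}$ and $B = B' = R/x^g$) yields
\[
	(R/x^f)^{\otimes 2^{k+1}} = (R/x^f)^{\otimes 2^k} \otimes (R/x^f)^{\otimes 2^k} \in \ideal{(R/x^g)^{\otimes 2}}.
\]
Applying \cref{prop:tensast} to the monotonic sequence $g$ gives $(R/x^g)^{\otimes 2} \in \ideal{R/x^{g \ast g}}$, and by associativity of convolution (\cref{rem:conv-properties}(b)) we have $g \ast g = f^{\ast 2^k} \ast f^{\ast 2^k} = f^{\ast 2^{k+1}}$. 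Hence $\ideal{(R/x^g)^{\otimes 2}} \subseteq \ideal{R/x^{f^{\ast 2^{k+1}}}}$, and chaining the two inclusions completes the induction.

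There is no serious obstacle here: the entire content lies in \cref{prop:tensast}, and this corollary is a formal bootstrap up the exponential tower. The only points requiring a moment's care are that \cref{prop:tensast} is being applied at each stage to an honestly monotonic sequence --- guaranteed because a convolution involving a monotonic sequence is monotonic (\cref{rem:conv-properties}(c)) --- and that the convolution powers compose as expected, which is just associativity of $\ast$.
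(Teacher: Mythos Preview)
Your proof is correct and matches the paper's approach essentially verbatim: induction on $k$ with base case \cref{prop:tensast}, the inductive step using the bookkeeping fact $A \in \ideal{B} \Rightarrow A\otimes A \in \ideal{B\otimes B}$ followed by another application of \cref{prop:tensast} to $g = f^{\ast 2^k}$. You spell out slightly more than the paper does (the justification of the bookkeeping fact and the observation that $g$ remains monotonic), but the argument is the same.
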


\begin{proof}
	First note that if $a \in \ideal{b}$ in a tensor-triangulated category, then a standard thick subcategory argument implies that $a \otimes a \in \ideal{b \otimes b}$. We then prove the claim by induction. The base $k=1$ case is \cref{prop:tensast}. For the inductive step, observe that
	\[
		(R/x^f)^{\otimes 2^{k+1}} = 
		(R/x^f)^{\otimes 2^k}
		\otimes
		(R/x^f)^{\otimes 2^k}
		\in \ideal{R/x^{f^{\ast 2^k}}\otimes R/x^{f^{\ast 2^k}}} \subseteq \ideal{R/x^{f^{\ast 2^{k+1}}}}
	\]
	by the inductive hypothesis and another application of \cref{prop:tensast}.
\end{proof}

\begin{Thm}\label{thm:rid}
	Let $f$ and $g$ be monotonic sequences. We have $R/x^f \in \radideal{R/x^g}$ if and only if $f \le_\mu g$.
\end{Thm}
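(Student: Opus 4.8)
The plan is to deduce the equivalence from the convolution estimates of this section, the key numerical input being the identity
\[
	f^{\ast 2^k}(n) = f(\floor{n/2^k}) \qquad\text{for all } n \in \bbN \text{ and } k \ge 1,
\]
which follows by iterating \cref{lem:fast} and the associativity of $\ast$ (\cref{rem:conv-properties}); in particular $\mu^k(f^{\ast 2^k}) = f$ pointwise. The mechanism linking the two sides is \cref{cor:tensastiterated}, which says $(R/x^f)^{\otimes 2^k} \in \ideal{R/x^{f^{\ast 2^k}}}$ for each $k \ge 1$; combined with \cref{prop:asym-direct} this reduces the problem to comparing $f^{\ast 2^k}$ with $g$ asymptotically. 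Since $\ideal{R/x^g}$ is a thick ideal, any tensor power $(R/x^f)^{\otimes n}$ lying in it can be enlarged to $(R/x^f)^{\otimes 2^k}$ with $k \ge 1$, so $R/x^f \in \radideal{R/x^g}$ if and only if $(R/x^f)^{\otimes 2^k} \in \ideal{R/x^g}$ for some $k \ge 1$. The cases $f = \underline{0}$ or $g = \underline{0}$ are immediate (using \cref{rem:no-nilpotents} for the latter), and since shifting a complex $R/x^f$ changes neither its generated thick ideal, nor its generated radical ideal, nor the $\mu$-equivalence class of $f$, we may assume throughout that $f(0) \ne 0$ and $g(0) \ne 0$.

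For the ``only if'' direction, suppose $R/x^f \in \radideal{R/x^g}$, so $(R/x^f)^{\otimes 2^k} \in \ideal{R/x^g}$ for some $k \ge 1$. Since $\inf(R/x^f) = 0$, an induction on $m$ using \cref{lem:loew-conv} and \cref{rem:koszul-trick} shows $\loew_{(R/x^f)^{\otimes m}} = f^{\ast m}$, which is a monotonic sequence by \cref{rem:conv-properties}. Applying \cref{thm:loew-thick} with $X = R/x^g$, together with \cref{exa:loew_Rxf}, gives $f^{\ast 2^k} \le_\sigma g$, that is, $f^{\ast 2^k} \le \sigma^j g$ for some $j \ge 1$. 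Restricting this asymptotic inequality to indices $n = 2^k t$ and using $f^{\ast 2^k}(2^k t) = f(t)$ together with $2^k t + j \le 2^{k+1} t$ for $t \gg 0$, we conclude $f \le \mu^{k+1} g$, hence $f \le_\mu g$.

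For the ``if'' direction, suppose $f \le_\mu g$, say $f \le \mu^k g$ with $k \ge 1$. The displayed identity gives $f^{\ast 2^k}(n) = f(\floor{n/2^k}) \le A\, g(2^k\floor{n/2^k}) \le A\, g(n)$ for $n \gg 0$ and a suitable constant $A$ (alternatively, iterate \cref{lem:fastcons}), so $f^{\ast 2^k} \le g$. Then \cref{prop:asym-direct} gives $R/x^{f^{\ast 2^k}} \in \thicksub{R/x^g} \subseteq \ideal{R/x^g}$, and combining with \cref{cor:tensastiterated} yields
\[
	(R/x^f)^{\otimes 2^k} \in \ideal{R/x^{f^{\ast 2^k}}} \subseteq \ideal{R/x^g},
\]
whence $R/x^f \in \radideal{R/x^g}$ by definition of the radical thick ideal.

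The genuine content has all been developed earlier in this section --- especially \cref{prop:tensast}/\cref{cor:tensastiterated} and \cref{lem:loew-conv} --- so the remaining difficulty is bookkeeping: one must track how the scaling operators $\sigma$ and $\mu$ interact with the floor functions appearing in $f^{\ast 2^k}(n) = f(\floor{n/2^k})$, and handle the degenerate cases where $f$ or $g$ vanishes. The one point requiring a genuine (if routine) verification is the inductive identification $\loew_{(R/x^f)^{\otimes m}} = f^{\ast m}$ via \cref{lem:loew-conv}, whose hypothesis $\inf(\,\cdot\,) = 0$ is maintained by the induction precisely because we have arranged that $f(0) \ne 0$.
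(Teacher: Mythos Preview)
Your proof is correct and follows essentially the same route as the paper's: both directions hinge on \cref{cor:tensastiterated}, the computation $\loew_{(R/x^f)^{\otimes m}} = f^{\ast m}$ via \cref{lem:loew-conv}, and \cref{thm:loew-thick}. The only cosmetic difference is that you make the identity $f^{\ast 2^k}(n) = f(\floor{n/2^k})$ explicit and argue directly with it, whereas the paper packages the same content into \cref{lem:mufast} (to get $f \le_\mu f^{\ast 2^k}$) and \cref{lem:fastcons} (to get $f^{\ast 2^k} \le g$); you also handle the $g = \underline{0}$ case explicitly.
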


\begin{proof}
	The claim is trivial if $f=\underline{0}$ so we may assume $f \neq \underline{0}$. If we then define~$f'$ by $f'(n) \coloneqq f(n+\inf(f))$ then $R/x^{f'} = R/x^f[-\inf(f)]$ and $f' \sim_\sigma f$. In particular, $f' \sim_\mu f$. Thus, replacing $f$ by $f'$ we may assume without loss of generality that $f(0) \neq 0$. Hence $\loew(R/x^f)=f$.

	$(\Rightarrow)$: Suppose $R/x^f \in \radideal{R/x^g}$. Thus $(R/x^f)^{\otimes m} \in \ideal{R/x^g}$ for some $m \ge 1$. It follows that $(R/x^f)^{\otimes 2^k} \in \ideal{R/x^g}$ for some $k \ge 1$. By an inductive application of \Cref{lem:loew-conv}, we have $\loew_{(R/x^f)^{\otimes 2^k}} = (\loew_{R/x^f})^{\ast 2^k} = f^{\ast 2^k}$. In particular, this Loewy sequence is monotonic. \Cref{thm:loew-thick} then implies that $\loew_{(R/x^f)^{\otimes 2^k}} \le_{\sigma} g$, that is, $\smash{f^{\ast 2^k}} \le_\sigma g$. Hence $\smash{f^{\ast 2^k}} \le_\mu g$. On the other hand, \cref{lem:mufast} implies that $h \le_\mu h \ast h$ for any monotonic sequence $h$. Thus, $f \le_\mu f^{\ast 2^k}$ so that $f \le_\mu g$.

	$(\Leftarrow)$: If $f \le_\mu g$ then $f \le \mu^k g$ for some $k \ge 1$. \Cref{lem:fastcons} then implies $f^{\ast 2^k} \le g$. Hence $R/x^{f^{\ast 2^k}} \in \ideal{R/x^g}$ by \cref{cor:thickRxf}. It then follows from \cref{cor:tensastiterated} that $(R/x^f)^{\otimes 2^k} \in \ideal{R/x^g}$.
\end{proof}

\begin{Prop}\label{prop:almost-explodable}
	For any monotonic sequence $f$, we have
	\[
		(R/x^f)^{\oplus a} \otimes (R/x^f)^{\oplus a} \in \ideal{R/x^{f}}
	\]
	for any function $a:\bbN\to\bbN$.
\end{Prop}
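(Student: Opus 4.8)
The strategy is to use the preceding lemmas to reduce the claim to exhibiting, inside the ideal $\ideal{R/x^f}$, a single zero-differential complex that ``dominates'' a suitably exploded copy of $R/x^{f\ast f}$; the crucial point is that such a complex can be built by tensoring $R/x^f$ \emph{itself} with a sufficiently inflated copy of the complex $R^\uparrow$ of \eqref{eq:Ruparrow}. Assuming $R/x^f\ne 0$ (the other case being trivial), I would first reduce to $a$ monotonic: by \cref{rem:a-force-mono} the complex $(R/x^f)^{\oplus a}$ is a direct summand of $(R/x^f)^{\oplus\mono{a}}$, so $(R/x^f)^{\oplus a}\otimes(R/x^f)^{\oplus a}$ is a direct summand of $(R/x^f)^{\oplus\mono{a}}\otimes(R/x^f)^{\oplus\mono{a}}$ and it suffices to treat the latter. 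With $a$ monotonic, \cref{lem:almost-explodable3} puts $(R/x^f)^{\oplus a}\otimes(R/x^f)^{\oplus a}$ into $\thicksub{E}$ for the zero-differential complex $E\in\Dzfl(R)$ with $E_n=\bigoplus_{0\le i\le\floor{n/2}}(R/x^{f(i)})^{\oplus a(i)a(n-i)}$, so it is enough to prove $E\in\ideal{R/x^f}$. Since $f$ is monotonic, $\loew(E_n)\le\max_{0\le i\le\floor{n/2}}f(i)=f(\floor{n/2})=(f\ast f)(n)$ by \cref{lem:fast}, and $f\ast f$ is monotonic by \cref{rem:conv-properties}(c); letting $e(n)$ be the finite number of indecomposable summands of $E_n$, \cref{lem:Rxfexplodable} gives $E\in\thicksub{(R/x^{f\ast f})^{\oplus e}}$. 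Thus it remains to show $(R/x^{f\ast f})^{\oplus e}\in\ideal{R/x^f}$.

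For this I would choose any $c:\bbN\to\bbN$ with $c(\ceil{n/2})\ge e(n)$ for all $n$ --- for instance $c(m):=\max\{e(k)\mid 0\le k\le 2m+1\}$, which is finite since each $e(k)$ is --- and set $Z:=R/x^f\otimes(R^\uparrow)^{\oplus c}$. This is a pseudo-coherent complex, and $Z\in\ideal{R/x^f}$ because the latter is a tensor ideal. By \cref{rem:Htensor} together with $\Tor_1^R(R/x^{f(i)},R)=0$ one has $H_n(Z)=\bigoplus_{i=0}^n(R/x^{f(i)})^{\oplus c(n-i)}$; the $i=\floor{n/2}$ summand is $(R/x^{(f\ast f)(n)})^{\oplus c(\ceil{n/2})}$, which contains $(R/x^{(f\ast f)(n)})^{\oplus e(n)}$ as a direct summand by the choice of $c$. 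Hence $(R/x^{(f\ast f)(n)})^{\oplus e(n)}$ is a direct summand of $H_n(Z)$ for every $n$; since $R$ is hereditary, $Z\simeq\bigoplus_n H_n(Z)[n]$ by \cref{rem:hereditary-splitting}, so $(R/x^{f\ast f})^{\oplus e}=\bigoplus_n(R/x^{(f\ast f)(n)})^{\oplus e(n)}[n]$ is a direct summand of $Z$ and therefore lies in $\thicksub{Z}\subseteq\ideal{R/x^f}$. Chaining the inclusions yields $(R/x^f)^{\oplus a}\otimes(R/x^f)^{\oplus a}\in\ideal{R/x^f}$, as desired.

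The only step that is not pure bookkeeping is the choice of comparison complex. It is tempting to use $R/x^{f\ast f}\otimes R^\uparrow$ (as in the proof of \cref{prop:tensast}), but the homology of that complex grows too slowly: the Loewy length $(f\ast f)(n)=f(\floor{n/2})$ appears in its homology only near degree $n$, where a multiplicity injected via $(R^\uparrow)^{\oplus c}$ would come from $c$ evaluated near $0$ and so cannot be made large. Using $R/x^f$ itself as the tensor factor puts the Loewy length $f(\floor{n/2})$ in the ``middle'' degree $\floor{n/2}$, where the injected multiplicity is $c(\ceil{n/2})$ and is under our control. The remaining manipulations are routine applications of the homology-of-tensor formula \cref{rem:Htensor} and the splitting \cref{rem:hereditary-splitting}.
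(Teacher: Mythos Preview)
Your proof is correct and follows essentially the same strategy as the paper: after the common reduction via \cref{lem:almost-explodable3}, both arguments tensor $R/x^f$ with an inflated copy of $R^\uparrow$ to produce a complex in $\ideal{R/x^f}$ that contains the needed object as a direct summand. The only difference is that you insert an extra step through \cref{lem:Rxfexplodable}, replacing $E$ by the simpler $(R/x^{f\ast f})^{\oplus e}$ before embedding, whereas the paper embeds $E$ itself directly into $R/x^f\otimes(R^\uparrow)^{\oplus b}$ using the explicit multiplicity $b(n)=\sum_{j=0}^{2n}a(j)a(2n-j)$ and the inequality $b(n-i)\ge a(i)a(n-i)$ for $0\le i\le\lfloor n/2\rfloor$. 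Your detour trades a slightly longer chain of reductions for a simpler final embedding (only one cyclic summand per degree to locate rather than all of $E_n$); both executions of the same idea work.
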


\begin{proof}
	Replacing $a$ by $\mono{a}$, we may assume without loss of generality that $a$ is monotonic. By \cref{lem:almost-explodable3}, it suffices to prove that $E \in \ideal{R/x^f}$ where $E\in \Dzfl(R)$ is the complex with zero differentials defined by
	\[
		E_n = \bigoplus_{0 \le i \le \floor{n/2}} (R/x^{f(i)})^{a(i)a(n-i)}
	\]
	for $n \in \bbN$. To this end, define $b :\bbN \to \bbN$ by
	\[
		b(n) \coloneqq \sum_{j=0}^{2n} a(j)a(2n-j)
	\]
	and observe that
	\begin{equation}\label{eq:blah2}
		H_n( R/x^f \otimes (R^{\uparrow})^{\oplus b}) = \bigoplus_{0 \le i\le n}(R/x^{f(i)})^{\oplus b(n-i)}
	\end{equation}
	where $R^{\uparrow}$ is the complex from \eqref{eq:Ruparrow}. For each  $0 \le i \le \floor{n/2}$, we have
	\[
		b(n-i) \ge \sum_{j=0}^{n} a(j)a(2n-2i-j)\ge \sum_{j=0}^n a(j)a(n-j) \ge a(i)a(n-i).
	\]
	Hence \eqref{eq:blah2} has $E_n$ as a direct summand. It follows that 
	\[
		E \in \thicksub{R/x^f \otimes (R^{\uparrow})^{\oplus b}} \subseteq \ideal{R/x^f}
	\]
	as desired.
\end{proof}

\begin{Cor}\label{cor:EE}
	For any $E \in \Dpfl(R)$, we have $E\otimes E \in \ideal{R/x^{\loew_E}}$.
\end{Cor}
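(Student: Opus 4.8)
The plan is to obtain this as a combination of \cref{lem:Rxfexplodable}, \cref{prop:almost-explodable} and \cref{prop:ideal-force-mono}. The statement is trivial for $E = 0$. For $E \neq 0$, I would first reduce to the case where $E$ is concentrated in non-negative degrees: replacing $E$ by $E[-\inf(E)]$ changes neither $\loew_E$ (by the conventions in \cref{def:loewy-seq}) nor the truth of the conclusion, since $\ideal{-}$ is stable under suspension and $E[-\inf(E)] \otimes E[-\inf(E)] \simeq (E \otimes E)[-2\inf(E)]$. Thus I may assume $E \in \Dzfl(R)$ with $\loew_E(n) = \loew(H_n(E))$ for all $n$.

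Now set $f \coloneqq \mono{\loew_E}$, which is monotonic and satisfies $\loew(H_n(E)) \le f(n)$ for every $n \in \bbN$. Letting $a(n)$ be the number of indecomposable summands of the finite length module $H_n(E)$, \cref{lem:Rxfexplodable} gives $E \in \thicksub{(R/x^f)^{\oplus a}} \subseteq \ideal{(R/x^f)^{\oplus a}}$. Applying the standard thick-subcategory argument recalled in the proof of \cref{cor:tensastiterated} (namely, $y \in \ideal{z}$ implies $y \otimes y \in \ideal{z \otimes z}$) yields
\[
	E \otimes E \in \ideal{(R/x^f)^{\oplus a} \otimes (R/x^f)^{\oplus a}}.
\]
By \cref{prop:almost-explodable} the generator $(R/x^f)^{\oplus a} \otimes (R/x^f)^{\oplus a}$ already lies in $\ideal{R/x^f}$, so $\ideal{(R/x^f)^{\oplus a} \otimes (R/x^f)^{\oplus a}} \subseteq \ideal{R/x^f}$. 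Finally \cref{prop:ideal-force-mono} identifies $\ideal{R/x^f} = \ideal{R/x^{\mono{\loew_E}}} = \ideal{R/x^{\loew_E}}$, which finishes the argument.

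I do not expect any genuine obstacle, since the hard work is already contained in \cref{lem:Rxfexplodable} and \cref{prop:almost-explodable}; the only points requiring attention are the bookkeeping in the reduction to $\inf(E) = 0$ and, crucially, invoking \cref{prop:ideal-force-mono} to pass from the possibly non-monotonic sequence $\loew_E$ to the monotonic sequence $\mono{\loew_E}$ --- it is precisely this replacement that makes \cref{lem:Rxfexplodable} and \cref{prop:almost-explodable}, which require a monotonic exponent sequence, applicable. As an alternative to citing the ideal-level tensor-square fact, one could check directly that $E \in \thicksub{Y}$ implies $E \otimes E \in \thicksub{Y \otimes Y}$ by verifying that $\SET{Z}{Z \otimes Z' \in \thicksub{Y \otimes Y} \text{ for all } Z' \in \thicksub{Y}}$ is a thick subcategory containing $Y$, and then combining with the inclusion $\thicksub{(R/x^f)^{\oplus a} \otimes (R/x^f)^{\oplus a}} \subseteq \ideal{R/x^f}$.
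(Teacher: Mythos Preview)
The proposal is correct and follows essentially the same approach as the paper: reduce to $\inf(E)=0$, set $f=\mono{\loew_E}$, use \cref{lem:Rxfexplodable} to get $E\in\thicksub{(R/x^f)^{\oplus a}}$, tensor-square via the standard thick subcategory argument, and then invoke \cref{prop:almost-explodable} and \cref{prop:ideal-force-mono}. The only cosmetic difference is that the paper works with $\thicksub{-}$ rather than $\ideal{-}$ at the tensor-square step, which is slightly sharper but makes no material difference here.
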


\begin{proof}
	Note that $E[-\inf(E)]\otimes E[-\inf(E)] \cong (E\otimes E)[-2\inf(E)]$ and $\loew_E = \loew_{E[-\inf(E)]}$ by definition. Thus, replacing $E$ by $E[-\inf(E)]$ we may assume that $E \in \Dzfl(R)$ with $H_0(E) \neq 0$. Then $\loew_E(n) = \loew(H_n(E))$ for all $n \in \bbN$.

	Let $f \coloneqq \mono{\loew_E}$. By \cref{prop:ideal-force-mono}, it suffices to prove that $E\otimes E \in \ideal{R/x^f}$. Moreover, by \cref{lem:Rxfexplodable}, we have $E \in \thicksub{(R/x^f)^{\oplus a}}$ where $a(n)$ is the number of indecomposable summands in $H_n(E)$. It follows that 
	\[
		E \otimes E \in \thicksub{(R/x^f)^{\oplus a} \otimes (R/x^f)^{\oplus a}}
	\]
	by a standard thick subcategory argument. Hence $E \otimes E \in \ideal{R/x^f}$ by \cref{prop:almost-explodable}.
\end{proof}

\begin{Thm}\label{thm:rid-full}
	For any monotonic sequence $f$, we have
	\[
		\radideal{R/x^f} = \SET{E \in \Dpfl(R)}{\mono{\loew_E}\le_\mu f}.
	\]
\end{Thm}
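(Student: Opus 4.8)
The plan is to derive the theorem quickly from \cref{thm:rid} and \cref{cor:EE}, which together reduce the problem to the correspondence between principal radical ideals and the $\le_\mu$-ordering of monotonic sequences already established in \cref{thm:rid}. The crucial point is that although a general $E \in \Dpfl(R)$ need not be explodable or of the form $R/x^g$, the complex $E \otimes E$ always lies in $\ideal{R/x^{\loew_E}}$ by \cref{cor:EE}, while conversely $R/x^{\loew_E}$ always lies in $\thicksub{E}$; this sandwiches $\radideal{E}$ between ideals we understand.

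For the inclusion $\supseteq$, I would take $E \in \Dpfl(R)$ with $\mono{\loew_E} \le_\mu f$ and combine \cref{cor:EE} with \cref{prop:ideal-force-mono} to get $E \otimes E \in \ideal{R/x^{\loew_E}} = \ideal{R/x^{\mono{\loew_E}}}$. Since $\mono{\loew_E}$ and $f$ are monotonic with $\mono{\loew_E} \le_\mu f$, \cref{thm:rid} gives $R/x^{\mono{\loew_E}} \in \radideal{R/x^f}$, hence $\ideal{R/x^{\mono{\loew_E}}} \subseteq \radideal{R/x^f}$; therefore $E \otimes E \in \radideal{R/x^f}$, and since that ideal is radical, $E \in \radideal{R/x^f}$.

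For the reverse inclusion $\subseteq$, I would take $E \in \radideal{R/x^f}$; since $\Dpfl(R)$ is a thick ideal containing $R/x^f$, automatically $E \in \Dpfl(R)$, so $\loew_E$ is defined. By \cref{rem:hereditary-splitting} and \cref{def:loewy-seq}, the shift $E[-\inf(E)]$ has $R/x^{\loew_E}$ as a direct summand, so $R/x^{\loew_E} \in \thicksub{E}$, and then \cref{prop:ideal-force-mono} gives
\[
	R/x^{\mono{\loew_E}} \in \ideal{R/x^{\loew_E}} \subseteq \ideal{E} \subseteq \radideal{E} \subseteq \radideal{R/x^f}.
\]
Applying \cref{thm:rid} to the monotonic sequences $\mono{\loew_E}$ and $f$ then yields $\mono{\loew_E} \le_\mu f$, as required; the degenerate case $f = \underline{0}$ (where both sides collapse to the zero ideal, using that $\Dps(R)$ has no nonzero tensor-nilpotents) is handled uniformly by the same argument. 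I do not expect any real difficulty at this stage: all the delicate input — the convolution identities of \cref{lem:fast} and \cref{lem:mufast}, the ``explodable up to tensor powers'' phenomenon of \cref{prop:almost-explodable}, and the resulting \cref{thm:rid} and \cref{cor:EE} — has already been assembled, and what remains is only to orchestrate it.
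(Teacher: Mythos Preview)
Your proposal is correct and follows essentially the same route as the paper's own proof: both directions are obtained by combining \cref{thm:rid}, \cref{cor:EE}, and \cref{prop:ideal-force-mono} in exactly the way you describe, with the observation that $R/x^{\loew_E}$ is a direct summand of $E[-\inf(E)]$. The only cosmetic difference is that the paper explicitly excludes $E=0$ before invoking $\inf(E)$, whereas you fold this (and the $f=\underline{0}$ case) into a parenthetical remark; either way the degenerate cases are trivial.
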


\begin{proof}
	If $0\neq E \in \radideal{R/x^f}$ then $E'\coloneqq E[-\inf(E)] \in \radideal{R/x^f}$ and $R/x^{\loew_{E'}}$ is a direct summand of $E'$. Hence $R/x^{\mono{\loew_{E'}}} \in \ideal{E'} \subseteq \radideal{R/x^f}$ by \cref{prop:ideal-force-mono}. Hence $\mono{\loew_E} \coloneqq\mono{\loew_{E'}} \le_\mu f$ by \cref{thm:rid}. Conversely, if $E \in \Dpfl(R)$ satisfies $\mono{\loew_E} \le_\mu f$ then $R/x^{\mono{\loew_E}} \in \radideal{R/x^f}$ by \cref{thm:rid} and $E \otimes E \in \ideal{R/x^{\loew_E}} = \ideal{R/x^{\mono{\loew_E}}}$ by \cref{cor:EE} and \cref{prop:ideal-force-mono}.  Hence $E\otimes E \in \radideal{R/x^f}$
	and therefore $E \in \radideal{R/x^f}$ as well.
\end{proof}

\begin{Cor}\label{cor:rid-fullfull}
	For any $X \in \Dpfl(R)$, we have 
	\[
		\radideal{X} = {\textstyle \radideal{R/x^{\loew_X}}} = \SET{E \in \Dpfl(R)}{\mono{\loew_E}\le_\mu \mono{\loew_X}}.
	\]
\end{Cor}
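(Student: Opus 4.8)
The plan is to reduce everything to \cref{thm:rid-full}, which already computes $\radideal{R/x^f}$ for a monotonic sequence $f$, by first showing that $\radideal{X}$ agrees with $\radideal{R/x^{\loew_X}}$. The statement is trivial when $X=0$, so I would assume $X\neq 0$ and set $X'\coloneqq X[-\inf(X)]$, so that $X'\in\Dzfl(R)$ with $\loew_{X'}=\loew_X$ by \cref{def:loewy-seq}.

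For the inclusion $\radideal{R/x^{\loew_X}}\subseteq\radideal{X}$, I would use \cref{rem:hereditary-splitting}: $X'$ is isomorphic to the direct sum of its homology modules with zero differentials, and since $\loew(H_n(X'))$ is by definition the largest $i$ for which $R/x^i$ is a direct summand of $H_n(X')$, the complex $R/x^{\loew_X}$ is a direct summand of $X'$. Hence $R/x^{\loew_X}\in\thicksub{X'}=\thicksub{X}\subseteq\ideal{X}\subseteq\radideal{X}$, which yields the desired inclusion of radical ideals. For the reverse inclusion I would invoke \cref{cor:EE}, which gives $X\otimes X\in\ideal{R/x^{\loew_X}}\subseteq\radideal{R/x^{\loew_X}}$; by the definition of the radical ideal this forces $X\in\radideal{R/x^{\loew_X}}$, whence $\radideal{X}\subseteq\radideal{R/x^{\loew_X}}$. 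This establishes the first equality $\radideal{X}=\radideal{R/x^{\loew_X}}$.

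Finally, \cref{prop:ideal-force-mono} gives $\ideal{R/x^{\loew_X}}=\ideal{R/x^{\mono{\loew_X}}}$, hence the two radical ideals coincide, and since $\mono{\loew_X}$ is a monotonic sequence I would apply \cref{thm:rid-full} with $f=\mono{\loew_X}$ to obtain
\[
	\radideal{R/x^{\mono{\loew_X}}}=\SET{E\in\Dpfl(R)}{\mono{\loew_E}\le_\mu\mono{\loew_X}}.
\]
Chaining this with the first equality completes the proof. The argument is essentially bookkeeping, built on \cref{cor:EE} and \cref{thm:rid-full}; the only place requiring a bit of care is the identification of $R/x^{\loew_X}$ as a direct summand of the $\inf$-shifted complex $X'$ and the passage from $\loew_X$ to $\mono{\loew_X}$, but neither presents a genuine obstacle.
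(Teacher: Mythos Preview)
Your proof is correct and follows essentially the same approach as the paper: reduce to $X'\coloneqq X[-\inf(X)]$, observe that $R/x^{\loew_X}$ is a direct summand of $X'$ to get one inclusion, and then use \cref{prop:ideal-force-mono} together with \cref{thm:rid-full} to identify $\radideal{R/x^{\loew_X}}$ with the displayed set. The only cosmetic difference is that you invoke \cref{cor:EE} explicitly to place $X$ in $\radideal{R/x^{\loew_X}}$, whereas the paper simply reads this off from the description $\SET{E}{\mono{\loew_E}\le_\mu\mono{\loew_X}}$ furnished by \cref{thm:rid-full}; since the proof of \cref{thm:rid-full} itself rests on \cref{cor:EE}, this is the same argument either way.
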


\begin{proof}
	The statement is true if $X=0$. Otherwise, replacing $X$ by $X[-\inf(X)]$, we may assume without loss of generality that $\loew_X(n) = \loew(H_n(X))$ for all $n \in \bbZ$. Hence, $R/x^{\loew_X}$ is a direct summand of $X$ so that $R/x^{\loew_X} \in \radideal{X}$. On the other hand,
	\[
		\radideal{R/x^{\loew_X}} = \radideal{R/x^{\smash{\mono{\loew_X}}\vphantom{\loew_X}}} = \SET{E \in \Dpfl(R)}{\mono{\loew_E}\le_\mu \mono{\loew_X}}
	\]
	by \cref{prop:ideal-force-mono} and \cref{thm:rid-full}. In particular, $X \in \radideal{R/x^{\loew_X}}$.
\end{proof}

\begin{Cor}\label{cor:rid-meet}
	For any two monotonic sequences $f$ and $g$, we have
	\begin{equation}\label{eq:rid-meet}
		\radideal{R/x^f \otimes R/x^g} = \radideal{R/x^{f\ast g}} =  \radideal{R/x^{f \wedge g}} = \radideal{R/x^f} \cap \radideal{R/x^g\vphantom{R/x^f}}.
	\end{equation}
\end{Cor}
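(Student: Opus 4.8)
The plan is to show that all four thick tensor ideals appearing in \eqref{eq:rid-meet} coincide with the single subcategory
\[
	\cat I \coloneqq \SET{E \in \Dpfl(R)}{\mono{\loew_E} \le_\mu f \wedge g},
\]
by identifying each of them through the descriptions of principal radical ideals furnished by \cref{thm:rid-full} and \cref{cor:rid-fullfull}. First I would dispose of the degenerate case: if $f = \underline{0}$ or $g = \underline{0}$, then $R/x^f \otimes R/x^g = 0$, $f \ast g = f \wedge g = \underline{0}$, and every ideal in \eqref{eq:rid-meet} is the radical of the zero ideal, which equals $(0)$ because $\Dps(R)$ has no nonzero tensor-nilpotent objects (\cref{rem:no-nilpotents}). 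So from now on $f,g$ — and hence also $f \ast g$ and $f \wedge g$ — are nonzero monotonic sequences.

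The two middle ideals are handled immediately. \Cref{thm:rid-full} gives $\radideal{R/x^{f \wedge g}} = \cat I$ on the nose; and since $f \ast g$ and $f \wedge g$ are $\mu$-equivalent by \cref{lem:convmu}, we have $\mono{\loew_E} \le_\mu f \ast g \iff \mono{\loew_E} \le_\mu f \wedge g$, so \cref{thm:rid-full} also yields $\radideal{R/x^{f \ast g}} = \cat I$. For the tensor-product ideal, \cref{cor:rid-fullfull} reduces the problem to identifying the $\mu$-equivalence class of $\loew_{R/x^f \otimes R/x^g}$. Shifting the two complexes down so that their bottom homology modules sit in degree $0$ (which alters neither $\loew_{R/x^f \otimes R/x^g}$ nor the radical ideal), \cref{lem:loew-conv} applies with $E = R/x^g$ and computes $\loew_{R/x^f \otimes R/x^g}$ as $f \ast g$ up to a shift of indices; the shift is immaterial for the $\mu$-equivalence class, so $\loew_{R/x^f \otimes R/x^g} \sim_\mu f \ast g \sim_\mu f \wedge g$ by \cref{lem:convmu}, and \cref{cor:rid-fullfull} gives $\radideal{R/x^f \otimes R/x^g} = \cat I$ as well.

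It remains to treat the intersection. By \cref{thm:rid-full}, $\radideal{R/x^f} = \SET{E}{\mono{\loew_E} \le_\mu f}$ and likewise for $g$, so the claim reduces to the elementary assertion that for a monotonic sequence $h$ one has $h \le_\mu f$ and $h \le_\mu g$ if and only if $h \le_\mu f \wedge g$. The ``if'' direction follows from $f \wedge g \le f$ and $f \wedge g \le g$ pointwise; for ``only if'', choose $k$ so large that $h \le \mu^k f$ and $h \le \mu^k g$ (possible since $\mu^j f \le \mu^{j'} f$ pointwise whenever $j \le j'$, $f$ being monotonic), and then $h \le \mu^k f \wedge \mu^k g = \mu^k(f \wedge g)$. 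Taking $h = \mono{\loew_E}$ identifies $\radideal{R/x^f} \cap \radideal{R/x^g}$ with $\cat I$, completing the argument. The only step carrying genuine content is the convolution identity computing $\loew_{R/x^f \otimes R/x^g}$ in terms of $f \ast g$ (\cref{lem:loew-conv}); everything else is routine bookkeeping with $\mu$-equivalence, and the indexing shift introduced by normalizing $\inf(f)$ and $\inf(g)$ is harmless precisely because the radical ideal $\radideal{R/x^{(-)}}$ depends only on the $\mu$-equivalence class of the exponent sequence.
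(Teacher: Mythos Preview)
Your proof is correct and follows essentially the same approach as the paper: both dispose of the degenerate case, normalize $f$ and $g$ so that \cref{lem:loew-conv} applies, invoke \cref{lem:convmu} for the $\mu$-equivalence of $f\ast g$ and $f\wedge g$, and reduce the intersection to the elementary fact that $h\le_\mu f$ and $h\le_\mu g$ iff $h\le_\mu f\wedge g$. Your organization around the common target $\cat I$ is slightly tidier, and you spell out the intersection step a bit more fully than the paper does, but the content is the same.
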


\begin{proof}
	The statement is true if either $f=\underline{0}$ or $g=\underline{0}$. Otherwise, replacing $f$ by $\sigma^{\inf(f)} f$ and $g$ by $\sigma^{\inf(g)} g$ we may assume that $f(0)\neq 0$ and $g(0)\neq 0$. Then $\loew_{R/x^f} = f$ and $\loew_{R/x^g}=g$. \Cref{lem:loew-conv} then asserts that $\loew_{R/x^f \otimes R/x^g} = f \ast g$. Hence the first equality in \eqref{eq:rid-meet} follows from \cref{cor:rid-fullfull}. On the other hand,	$f\ast g$ and $f \wedge g$ are $\mu$-equivalent by \cref{lem:convmu}. Hence the second equality in~\eqref{eq:rid-meet} follows from \cref{thm:rid}. Finally, the last equality also follows from \cref{cor:rid-fullfull} since if $E \in \radideal{R/x^f} \cap \radideal{R/x^g}$ then $\mono{\loew_E} \le_\mu f$ and $\mono{\loew_E} \le_\mu g$ which implies that $\mono{\loew_E} \le_\mu f \wedge g$.
\end{proof}

\begin{Cor}\label{cor:Rxf-radical}
	Let $f$ be a monotonic sequence. The ideal $\ideal{R/x^f}$ is radical if and only if $f$ is $\mu$-stable.
\end{Cor}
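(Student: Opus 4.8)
\textit{Proof proposal.}
The plan is to derive both implications directly from the explicit descriptions of $\langle R/x^f\rangle$ and $\radideal{R/x^f}$ obtained in the previous two sections. Recall that by \cref{thm:rid-full} we have $\radideal{R/x^f} = \SET{E \in \Dpfl(R)}{\mono{\loew_E} \le_\mu f}$, and that $\langle R/x^f\rangle$ is radical precisely when $\radideal{R/x^f} \subseteq \langle R/x^f\rangle$, the reverse inclusion being automatic. So the corollary reduces to comparing this explicit radical ideal with $\langle R/x^f\rangle$.

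For the ``if'' direction I would assume $f$ is $\mu$-stable and invoke \cref{cor:summary-ideal-Rxf}, which gives $\langle R/x^f\rangle = \SET{E \in \Dpfl(R)}{\mono{\loew_E} \le f}$. Since $f$ is $\mu$-stable, \cref{lem:tame-stable} shows that $\mono{\loew_E} \le f$ if and only if $\mono{\loew_E} \le_\mu f$; hence $\langle R/x^f\rangle = \SET{E \in \Dpfl(R)}{\mono{\loew_E} \le_\mu f} = \radideal{R/x^f}$ by \cref{thm:rid-full}, so the ideal is radical.

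For the ``only if'' direction I would assume $\langle R/x^f\rangle$ is radical. Since $\mu f$ is monotonic whenever $f$ is (\cref{rem:tame-lattice-hom}) and $\mu f \le_\mu f$ trivially, \cref{thm:rid} shows that $R/x^{\mu f} \in \radideal{R/x^f}$. Radicality then forces $R/x^{\mu f} \in \langle R/x^f\rangle$, and \cref{cor:thickRxf} translates this membership into the inequality $\mu f \le_\sigma f$. Finally \cref{lem:MT1b} upgrades $\mu f \le_\sigma f$ to $\mu f \le f$, that is, $f$ is $\mu$-stable.

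There is no serious obstacle: the corollary is a formal consequence of \cref{thm:rid-full}, \cref{cor:summary-ideal-Rxf}, \cref{cor:thickRxf} and \cref{lem:MT1b}. The one step deserving a moment's attention is the last, namely that the weaker bound $\mu f \le_\sigma f$ already implies the full $\mu$-stability $\mu f \le f$ --- but this is exactly the content of \cref{lem:MT1b}.
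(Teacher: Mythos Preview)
Your proposal is correct and follows essentially the same approach as the paper's proof: the $(\Rightarrow)$ direction is identical (using \cref{thm:rid}, \cref{cor:thickRxf}, and \cref{lem:MT1b}), and the $(\Leftarrow)$ direction combines \cref{thm:rid-full} and \cref{cor:summary-ideal-Rxf} exactly as the paper does, with your explicit citation of \cref{lem:tame-stable} making transparent the step the paper leaves implicit.
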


\begin{proof}
	$(\Rightarrow)$: Observe that $R/x^{\mu f} \in \radideal{R/x^f}$ by \cref{thm:rid} since ${\mu f \le_{\mu} f}$. Hence $R/x^{\mu f} \in \ideal{R/x^f}$ if the ideal $\ideal{R/x^f}$ is radical. This implies that $\mu f \le_{\sigma} f$ by \cref{cor:thickRxf} which is equivalent to $f$ being $\mu$-stable by \cref{lem:MT1b}.

	$(\Leftarrow)$: If $E \in \radideal{R/x^f}$ then $\mono{\loew_E} \le_{\mu} f$ by \cref{thm:rid-full}. This implies $\mono{\loew_E} \le f$ if $f$ is $\mu$-stable. Hence $E \in \ideal{R/x^f}$ by \cref{cor:summary-ideal-Rxf}.
\end{proof}

\section{The spectrum for a discrete valuation ring}\label{sec:the-spectrum}

We have developed the tools needed to compute the Balmer spectrum of $\Dps(R)$. Recall from \cref{exa:balmer-spectrum} that $\PRad(\Dps(R))$ denotes the bounded distributive lattice of principal radical ideals of $\Dps(R)$.

\begin{Thm}\label{thm:main}
	Let $R$ be a discrete valuation ring. We have an isomorphism of bounded distributive lattices
	\[
		(\AsymSeq/\mu)_+ \xrightarrow{\sim} \PRad(\Dps(R))
	\]
	given by $[f]_{\mu} \mapsto \radideal{R/x^f}$ and $\infty \mapsto \Dps(R)=\radideal{R}$.
\end{Thm}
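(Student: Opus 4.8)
The plan is to exhibit the claimed assignment --- call it $\Phi\colon(\AsymSeq/\mu)_+\to\PRad(\Dps(R))$, sending $[f]_\mu\mapsto\radideal{R/x^f}$ and $\infty\mapsto\Dps(R)$ --- as a bijective morphism of bounded distributive lattices, drawing on the structural results of the preceding sections; most of the genuine work is already done, so the argument is largely assembly.

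I would first invoke \cref{thm:rid} to settle well-definedness, injectivity, and order behaviour in one stroke: since $R/x^f\in\radideal{R/x^g}$ precisely when $f\le_\mu g$, we get $\radideal{R/x^f}\subseteq\radideal{R/x^g}\iff f\le_\mu g$, and hence $\radideal{R/x^f}=\radideal{R/x^g}\iff f\sim_\mu g$. This shows that $\Phi$ is well-defined on $\mu$-equivalence classes and restricts to an order-embedding $\AsymSeq/\mu\hookrightarrow\PRad(\Dps(R))$, $[f]_\mu\le[g]_\mu\iff\radideal{R/x^f}\subseteq\radideal{R/x^g}$. To handle the adjoined top I would note that $\radideal{R/x^f}\subseteq\Dpfl(R)\subsetneq\Dps(R)$ for every nonzero monotonic $f$, since $\Dpfl(R)$ is a proper thick ideal containing $R/x^f$ (recall \cref{rem:dpfl}); thus $\infty$ is the only element of $(\AsymSeq/\mu)_+$ mapping to the top element $\Dps(R)$, and $\Phi$ is injective on all of $(\AsymSeq/\mu)_+$.

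Next I would prove surjectivity. Given $X\in\Dps(R)$: if $X\notin\Dpfl(R)$, then $X$ contains a shift of $R$ as a summand, so $\radideal{X}=\ideal{X}=\Dps(R)=\Phi(\infty)$ by \cref{rem:Dpfl-largest}; and if $X\in\Dpfl(R)$, then $\radideal{X}=\radideal{R/x^{\loew_X}}=\radideal{R/x^{\mono{\loew_X}}}$ by \cref{cor:rid-fullfull} together with \cref{prop:ideal-force-mono}, so $\radideal{X}=\Phi([\mono{\loew_X}]_\mu)$. Either way $\radideal{X}$ lies in the image of $\Phi$, so $\Phi$ is bijective.

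Finally I would check that $\Phi$ is a lattice morphism. Preservation of $0$ and $1$ is immediate: $[\underline{0}]_\mu\mapsto\radideal{R/x^{\underline 0}}=\radideal{0}=(0)$ and $\infty\mapsto\Dps(R)$. For joins, $\radideal{R/x^f}\vee\radideal{R/x^g}=\radideal{R/x^f\oplus R/x^g}$ in $\PRad(\Dps(R))$ by \cref{exa:balmer-spectrum}, and $\ideal{R/x^f\oplus R/x^g}=\ideal{R/x^{f\vee g}}$ by \cref{exa:join}, matching $[f]_\mu\vee[g]_\mu=[f\vee g]_\mu$; joins involving $\infty$ are trivial. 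For meets, $\radideal{R/x^f}\wedge\radideal{R/x^g}=\radideal{R/x^f\otimes R/x^g}=\radideal{R/x^{f\wedge g}}$ by \cref{cor:rid-meet}, matching $[f]_\mu\wedge[g]_\mu$; meets involving $\infty$ reduce to $R\otimes R/x^f\simeq R/x^f$. A bijective morphism of bounded distributive lattices is an isomorphism, which completes the proof. The only thing to be careful about is the bookkeeping around the adjoined top element and the passage from $\loew_X$ to $\mono{\loew_X}$; the real content --- that $\mu$-asymptotic boundedness is exactly what controls containment and comparison of the radical ideals $\radideal{R/x^f}$ --- is carried entirely by \cref{thm:rid}, \cref{cor:rid-fullfull} and \cref{cor:rid-meet}, which are the true obstacles and have already been cleared.
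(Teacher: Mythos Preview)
Your proposal is correct and follows essentially the same route as the paper's proof: both invoke \cref{thm:rid} for well-definedness and injectivity, \cref{rem:Dpfl-largest} and \cref{cor:rid-fullfull} for surjectivity, and \cref{exa:join} together with \cref{cor:rid-meet} for preservation of joins and meets. Your version is simply more explicit about the bookkeeping around the adjoined top element and the passage to $\mono{\loew_X}$, but the argument is the same.
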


\begin{proof}
	It follows from \cref{thm:rid} that the map is well-defined, injective and order-preserving. Moreover, recall from \cref{rem:Dpfl-largest} that if $E \in \Dps(R)\setminus \Dpfl(R)$ then $\radideal{E} = \radideal{R}=\Dps(R)$. Thus \cref{cor:rid-fullfull} implies that the map is surjective. It evidently preserves the bottom and top elements. Moreover, it preserves binary joins by \cref{exa:join} and it preserves binary meets by \cref{cor:rid-meet}. Hence, it is an isomorphism of bounded distributive lattices.
\end{proof}

\begin{Cor}\label{cor:main}
	Let $R$ be a discrete valuation ring. We have an isomorphism
	\[
		\Spc(\Dps(R)) \cong \Specplus(\AsymSeq/\mu)^{\vee}.
	\]
\end{Cor}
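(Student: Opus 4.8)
The plan is to deduce this immediately from \cref{thm:main} together with the description of the Balmer spectrum via Stone duality recalled in \cref{exa:balmer-spectrum}. First I would recall that, by the theorem of Kock and Pitsch \cite{KockPitsch17} as stated in \cref{exa:balmer-spectrum}, there is a homeomorphism $\Spc(\Dps(R)) \cong \Spec(\PRad(\Dps(R)))^{\vee}$, identifying the Balmer spectrum of $\Dps(R)$ with the Hochster dual of the spectral space associated, via Stone duality, to the bounded distributive lattice $\PRad(\Dps(R))$ of principal radical thick tensor ideals.

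Next I would invoke \cref{thm:main}, which provides an isomorphism of bounded distributive lattices $(\AsymSeq/\mu)_+ \xrightarrow{\sim} \PRad(\Dps(R))$. Since $\Spec\colon\BDLat\op \to \Spectral$ is (the object part of) an anti-equivalence of categories (\cref{rem:stone-duality}), it carries this lattice isomorphism to a homeomorphism of spectral spaces $\Spec(\PRad(\Dps(R))) \cong \Spec((\AsymSeq/\mu)_+) = \Specplus(\AsymSeq/\mu)$. Passing to Hochster duals --- which under Stone duality amounts simply to reversing the order on the lattice (\cref{rem:hochster-duality}), and is in particular a functorial endo-operation on $\Spectral$ --- yields the desired homeomorphism $\Spc(\Dps(R)) \cong \Specplus(\AsymSeq/\mu)^{\vee}$.

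In this sense there is no real obstacle remaining: the entire content of the corollary has already been absorbed into \cref{thm:main}, whose proof in turn rests on the structural results of \cref{sec:convolutions} --- principally \cref{thm:rid} and \cref{cor:rid-fullfull} --- describing the principal radical ideals of $\Dps(R)$ in terms of $\mu$-boundedness of Loewy sequences. The only thing left to verify is that each step above is purely formal, which it is: the Kock--Pitsch identification is natural in $\cat K$, $\Spec$ is an equivalence, and Hochster duality is an endofunctor of $\Spectral$ that commutes with the equivalence by construction.
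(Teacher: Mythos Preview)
Your proposal is correct and follows exactly the same approach as the paper's proof, which is the one-liner: ``This follows from \cref{thm:main} by Stone duality (\cref{rem:stone-duality}) bearing in mind \cref{exa:balmer-spectrum}.'' You have simply spelled out in more detail the formal steps that the paper compresses into that single sentence.
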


\begin{proof}
	This follows from \cref{thm:main} by Stone duality (\cref{rem:stone-duality}) bearing in mind \cref{exa:balmer-spectrum}.
\end{proof}

\begin{Exa}
	Recall that the points of $\Specplus(\AsymSeq/\mu)^{\vee}$ are the prime ideals of the lattice $(\AsymSeq/\mu)_+$. There is a unique largest prime ideal, namely the collection of all asymptotic sequences $\AsymSeq/\mu \subsetneq (\AsymSeq/\mu)_+$. Bearing in mind that the specialization order is flipped under Hochster duality, this largest prime ideal is the unique generic point of $\Specplus(\AsymSeq/\mu)^{\vee}$.
\end{Exa}

\begin{Rem}
	In general, under the isomorphism of \cref{cor:main}, a prime ideal $\cat P \in \Spc(\Dps(R))$ corresponds to the prime ideal $\SET{[f]_\mu}{R/x^f \in \cat P} \in \Specplus(\AsymSeq/\mu)^{\vee}$. For example, we see that the generic point $\tame(\eta)=\Dpfl(R)$ indeed corresponds to~$\AsymSeq/\mu$.
\end{Rem}

\begin{Prop}\label{prop:prime-is-prime}
	Let $f$ be a monotonic sequence. Then $\radideal{R/x^f}$ is a prime ideal of $\Dps(R)$ if and only if $[f]_\mu$ is a prime element of the lattice $(\AsymSeq/\mu)_+$.
\end{Prop}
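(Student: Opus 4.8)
The plan is to reduce the proposition to a general fact about tensor-triangulated categories, exploiting the isomorphism of bounded distributive lattices $\Phi\colon(\AsymSeq/\mu)_+\xrightarrow{\sim}\PRad(\Dps(R))$ from \cref{thm:main}, under which $[f]_\mu\mapsto\radideal{R/x^f}$. Since $\Phi$ is a lattice isomorphism, it preserves \emph{and} reflects prime elements: being a prime element (\cref{exa:prime-element}) is a property phrased purely in terms of the meet, the top element, and the order, all of which $\Phi$ respects in both directions (using surjectivity of $\Phi$ to range over all pairs of elements of the target lattice). Hence it suffices to prove the following general claim: for any object $a$ in an essentially small tt-category $\cat K$, the principal radical ideal $\radideal{a}$ is a prime thick tensor ideal of $\cat K$ if and only if $\radideal{a}$ is a prime element of the lattice $\PRad(\cat K)$.

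Both directions of this claim are short, and the only structural input needed is that the meet in $\PRad(\cat K)$ is $\radideal{a}\wedge\radideal{b}=\radideal{a\otimes b}$ (\cref{exa:balmer-spectrum}), together with the tautology that $\radideal{x\otimes y}$ is the smallest radical thick tensor ideal containing $x\otimes y$. For $(\Leftarrow)$: if $\radideal{a}$ is a prime element then $\radideal{a}\neq\cat K$, so $\radideal{a}$ is proper; and if $x\otimes y\in\radideal{a}$ then $\radideal{x}\wedge\radideal{y}=\radideal{x\otimes y}\subseteq\radideal{a}$, whence $\radideal{x}\subseteq\radideal{a}$ or $\radideal{y}\subseteq\radideal{a}$, i.e.\ $x\in\radideal{a}$ or $y\in\radideal{a}$; so $\radideal{a}$ is a prime thick tensor ideal. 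For $(\Rightarrow)$: if $\radideal{a}$ is a prime thick tensor ideal it is proper, hence $\neq\cat K=1$ in $\PRad(\cat K)$; and if $\radideal{x}\wedge\radideal{y}\subseteq\radideal{a}$, then $x\otimes y\in\radideal{x\otimes y}=\radideal{x}\wedge\radideal{y}\subseteq\radideal{a}$, so by primeness $x\in\radideal{a}$ or $y\in\radideal{a}$, i.e.\ $\radideal{x}\le\radideal{a}$ or $\radideal{y}\le\radideal{a}$; so $\radideal{a}$ is a prime element of $\PRad(\cat K)$.

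Applying this with $\cat K=\Dps(R)$ and $a=R/x^f$, and transporting the conclusion along $\Phi$, yields the proposition. (Concretely, \cref{thm:rid-full} places $\radideal{R/x^f}$ inside $\Dpfl(R)$, so it is always a proper ideal, and correspondingly $[f]_\mu\neq\infty$; thus the ``$\neq 1$'' clauses are automatic and all the content lies in the tensor-prime condition.) I do not anticipate a serious obstacle: the statement is essentially a dictionary entry translating the tt-theoretic notion of a prime ideal into the lattice-theoretic notion of a prime element via \cref{thm:main}. The one point to keep in mind is that an arbitrary prime thick tensor ideal of $\Dps(R)$ need not be principal, but we only ever invoke the correspondence for the ideals $\radideal{R/x^f}$, which are principal radical ideals by construction, so this causes no difficulty.
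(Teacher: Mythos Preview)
Your proposal is correct and takes essentially the same approach as the paper: the paper's proof simply states the general fact that a principal radical ideal $\radideal{a}\in\PRad(\cat K)$ is a prime ideal of $\cat K$ if and only if it is a prime element of the lattice $\PRad(\cat K)$, and then invokes \cref{thm:main}. You have additionally spelled out the proof of this general fact, which the paper leaves implicit.
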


\begin{proof}
	In general, a principal radical ideal $\radideal{a} \in \PRad(\cat K)$ is a prime ideal of $\cat K$ if and only if $\radideal{a}$ is a prime element of the lattice $\PRad(\cat K)$. Thus, the claim follows from \cref{thm:main}.
\end{proof}

\begin{Exa}\label{exa:zero-prime}
	The zero sequence $[\underline{0}]_\mu$ is evidently a prime in $(\AsymSeq/\mu)_+$ and hence, since it is the bottom element, the singleton $\{[\underline{0}]_\mu\}$ is a prime ideal. It is the smallest prime ideal and the unique closed point in $\Specplus(\AsymSeq/\mu)^{\vee}$. Of course, we evidently see that it corresponds to the closed point $\tame(\frakm)=(0)$ of $\Spc(\Dps(R))$.
\end{Exa}

\begin{Prop}\label{prop:bounded-is-prime}
	The bounded sequence $[\underline{1}]_\mu$ is a prime element of $(\AsymSeq/\mu)_+$. Hence $\{ [\underline{0}]_\mu, [\underline{1}]_\mu \}$ is the smallest nonzero prime ideal of $(\AsymSeq/\mu)_+$.
\end{Prop}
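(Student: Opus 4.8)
The plan is to check directly that $[\underline1]_\mu$ satisfies the definition of a prime element of the bounded distributive lattice $(\AsymSeq/\mu)_+$ recalled in \cref{exa:prime-element}: that it is not the top element $\infty$, and that $[f]_\mu \wedge [g]_\mu \le [\underline1]_\mu$ forces $[f]_\mu \le [\underline1]_\mu$ or $[g]_\mu \le [\underline1]_\mu$.

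First I would isolate the elementary fact that, for a monotonic sequence $h$, one has $h \le_\mu \underline1$ if and only if $h$ is a bounded sequence. Indeed $\mu \underline1 = \underline1$ pointwise, so $h \le_\mu \underline1$ is equivalent to $h \le \underline1$ in the asymptotic ordering, i.e.\ to $h(n) \le A$ for all $n \gg 0$ and some constant $A$, which for a monotonic sequence is the same as boundedness. Translating into $(\AsymSeq/\mu)_+$: $[h]_\mu \le [\underline1]_\mu$ precisely when $h$ is bounded. In particular $[\underline1]_\mu \ne \infty$, since (for instance) the identity sequence is unbounded.

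Next I would prove the meet condition by contraposition. Suppose $[f]_\mu$ and $[g]_\mu$ lie in $\AsymSeq/\mu$ and that neither $f$ nor $g$ is bounded. Being monotonic and unbounded, $f(n)$ and $g(n)$ both tend to $\infty$, hence so does $\min(f(n),g(n))$; thus $f \wedge g = \min(f,g)$ is unbounded. Since the quotient $\MonSeq \twoheadrightarrow \AsymSeq/\mu$ is a lattice morphism, $[f]_\mu \wedge [g]_\mu = [f\wedge g]_\mu$, which by the previous paragraph is not $\le [\underline1]_\mu$. The remaining cases, where $[f]_\mu$ or $[g]_\mu$ equals the adjoined top $\infty$, are automatic, because then the meet equals the other factor (and $\infty \wedge \infty = \infty \not\le [\underline1]_\mu$). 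This shows $[\underline1]_\mu$ is a prime element.

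For the final assertion, once $[\underline1]_\mu$ is known to be prime, its principal down-set $([\underline1]_\mu)^{\downarrow}$ is a prime ideal of $(\AsymSeq/\mu)_+$ by \cref{exa:prime-element}, and by the first step together with \cref{exa:bounded-seq} it is exactly $\{[\underline0]_\mu, [\underline1]_\mu\}$ (a nonzero bounded sequence is asymptotically, hence $\mu$-, equivalent to $\underline1$). To see it is the smallest \emph{nonzero} prime ideal, note that any prime ideal $P$ containing some nonzero class $[f]_\mu$ must also contain $[\underline1]_\mu$, since $[\underline1]_\mu \le [f]_\mu$ for every nonzero $[f]_\mu$ by \cref{exa:bounded-seq} (transported through the order-preserving quotient $\AsymSeq \twoheadrightarrow \AsymSeq/\mu$) and prime ideals are down-sets; hence $\{[\underline0]_\mu, [\underline1]_\mu\} \subseteq P$. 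I do not anticipate a real obstacle: the only point requiring care is keeping the bottom element $[\underline0]_\mu$ separate from the class $[\underline1]_\mu$ of nonzero bounded sequences, and confirming that the needed facts about $\AsymSeq$ descend along the $\mu$-quotient — both routine.
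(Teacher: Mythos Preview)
Your proposal is correct and follows essentially the same approach as the paper: reduce $\le_\mu \underline{1}$ to boundedness (using $\mu\underline{1}=\underline{1}$) and argue by contraposition that the pointwise minimum of two unbounded monotonic sequences is unbounded, then deduce the second statement from the fact that $[\underline{1}]_\mu$ is the smallest nonzero element. Your version is a bit more explicit about the top element $\infty$ and the passage from $\AsymSeq$ to $\AsymSeq/\mu$, but the core argument is the same.
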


\begin{proof}
	The first statement amounts to proving that if $f$ and $g$ are monotonic sequences that are not asymptotically bounded, then $g \wedge h$ is not asymptotically bounded. Indeed, for any positive integers $A$ and $n$, there exists an $n_0 \ge n$ such that $f(n_0) > A$ and there exists an $n_1 \ge n$ such that $g(n_1) > A$. Since $f$ and $g$ are monotonic, we have $(f \wedge g)(m) > A$ for $m=\max(n_0,n_1)$. The second statement follows from the fact that $[\underline{1}]$ is the smallest nonzero asymptotic monotonic sequence (\cref{exa:bounded-seq}) so the principal ideal it generates is $\{ [\underline{0}], [\underline{1}] \}$.
\end{proof}

\begin{Cor}
	The thick ideal $\ideal{R/x} = \SET{E \in \Dpfl(R)}{\loew_E \text{ is bounded}}$ is a prime ideal. It is the smallest nonzero prime ideal of $\Dps(R)$.
\end{Cor}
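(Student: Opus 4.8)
The plan is to deduce everything from results already established for complexes of the form $R/x^f$ with $f$ a monotonic sequence. The one point to be careful about at the outset is that $R/x$, viewed as a complex concentrated in degree $0$, is $R/x^g$ for the \emph{non-monotonic} function $g$ with $g(0)=1$ and $g(n)=0$ for $n\ge 1$; since $\mono{g}=\underline{1}$, \cref{prop:ideal-force-mono} immediately yields $\ideal{R/x}=\ideal{R/x^{\underline{1}}}$, and from here on one works with the monotonic sequence $\underline{1}$, which is $\mu$-stable by \cref{exa:stable}.

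For primality, I would argue as follows. Because $\underline{1}$ is $\mu$-stable, \cref{cor:Rxf-radical} gives that $\ideal{R/x^{\underline{1}}}$ is a radical ideal, that is, $\ideal{R/x}=\radideal{R/x^{\underline{1}}}$. Since $[\underline{1}]_\mu$ is a prime element of $(\AsymSeq/\mu)_+$ by \cref{prop:bounded-is-prime}, \cref{prop:prime-is-prime} then says that $\radideal{R/x^{\underline{1}}}$ is a prime ideal of $\Dps(R)$. This proves the first assertion.

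For the explicit description, applying \cref{cor:summary-ideal-Rxf} to the $\mu$-stable sequence $\underline{1}$ gives $\ideal{R/x}=\ideal{R/x^{\underline{1}}}=\SET{E\in\Dpfl(R)}{\mono{\loew_E}\le\underline{1}}$. It then remains to observe that, since $\mono{\loew_E}$ is a monotonic sequence, the asymptotic bound $\mono{\loew_E}\le\underline{1}$ holds precisely when $\mono{\loew_E}$ is bounded, and that $\mono{\loew_E}$ is bounded precisely when $\loew_E$ is bounded (as $\mono{\loew_E}(n)=\max_{0\le i\le n}\loew_E(i)$). This identifies $\ideal{R/x}$ with $\SET{E\in\Dpfl(R)}{\loew_E\text{ is bounded}}$.

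For minimality among nonzero primes, I would use \cref{prop:Dbfl-minimal}, which says $\Dbfl(R)=\thicksub{R/x}$ is the smallest nonzero thick subcategory of $\Dps(R)$: any nonzero thick ideal $\cat P$ is in particular a nonzero thick subcategory, hence contains $\Dbfl(R)$, hence contains $R/x$, hence contains $\ideal{R/x}$; and $\ideal{R/x}$ is itself nonzero since $R/x\neq 0$. Thus $\ideal{R/x}$ is the smallest nonzero thick ideal, and a fortiori the smallest nonzero prime ideal. I do not anticipate any genuine obstacle here: all the substantive content lives in \cref{cor:Rxf-radical}, \cref{prop:bounded-is-prime}, \cref{cor:summary-ideal-Rxf} and \cref{prop:Dbfl-minimal}, and the only care needed is the bookkeeping reduction of $R/x$ to $R/x^{\underline{1}}$ and the elementary fact that a monotone sequence is asymptotically bounded if and only if it is bounded.
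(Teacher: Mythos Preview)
Your proposal is correct and follows essentially the same approach as the paper: both reduce $\ideal{R/x}$ to $\ideal{R/x^{\underline{1}}}$ via \cref{prop:ideal-force-mono}, invoke $\mu$-stability of $\underline{1}$ together with \cref{cor:Rxf-radical}, \cref{prop:bounded-is-prime}, \cref{prop:prime-is-prime} for primality, and \cref{prop:Dbfl-minimal} for minimality. The only minor difference is that for the explicit description you cite \cref{cor:summary-ideal-Rxf} (giving the $\le$ condition directly), whereas the paper cites \cref{thm:rid-full} (the $\le_\mu$ version) and then unwinds via \cref{lem:two-defs-equiv}, \cref{rem:useful-bound-relations} and \cref{lem:force-mono}; your route is arguably a touch more direct here.
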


\begin{proof}
	It follows from \cref{prop:bounded-is-prime} and \cref{prop:prime-is-prime} that the radical ideal $\radideal{R/x^{\underline{1}}}$ is a prime ideal of $\Dps(R)$. This coincides with $\ideal{R/x^{\underline{1}}}$ by \cref{cor:Rxf-radical} since $\underline{1}$ is $\mu$-stable. Moreover, $\ideal{R/x^{\underline{1}}} = \ideal{R/x}$ by \cref{prop:ideal-force-mono}. The equality in the statement follows from \cref{thm:rid-full} bearing in mind \cref{lem:two-defs-equiv}, \cref{rem:useful-bound-relations} and \cref{lem:force-mono}. It is the smallest nonzero prime ideal (in fact, the smallest nonzero thick ideal) by \cref{prop:Dbfl-minimal}.
\end{proof}

\begin{Rem}\label{rem:comp-first}
	In summary, we have the following picture:
	\[
	\begin{tikzpicture}
		\node (B) at (3,2.5) {$\Spc(\Dps(R))$};
		\node (C) at (6,2.5) {$\Spec(R)$};
		\draw[->>] (B) -- (C) node[anchor=south,midway] {$\rho$};
		\filldraw[fill=cyan!20,snake=bumps]
			(3,1.5cm) -- (3.5cm,1.0) -- (3.5cm,0.5cm) -- (3.0,0.0) -- (2.5,0.5) -- (2.5,1.0) -- (3,1.5);
		\node (b) at (3,-.025cm) {$\bullet$};
		\node (b) at (3,1.525cm) {$\bullet$};
		\node (b) at (3,2.0cm) {$\bullet$};
		\draw (3,2.0cm) -- (3,1.5cm);
		\draw[snake=brace] (3.8,1.5)--(3.8,0.0);
		\draw[|->] (4.2,0.75) -- (5.6,0.75);
		\node (b) at (6,0.75cm) {$\bullet$};
		\node (b) at (6,2.0cm) {$\bullet$};
		\draw (6,2.0cm) -- (6,0.75cm);
	\end{tikzpicture}
	\]
	On the left, we have three explicitly specified points, namely (from top to bottom) the zero ideal $(0)=\tame(\frakm)$, the smallest nonzero prime ideal $\ideal{R/x}$ and the largest prime ideal $\Dpfl(R)=\tame(\eta)$.
\end{Rem}

\section{The complexity of asymptotic sequences}\label{sec:complexity}

The lattice of $\mu$-equivalence classes of asymptotic sequences is extremely complicated and correspondingly the cyan region of $\Spec(\Dps(R))$ depicted above is quite complicated. We want to obtain more explicit information about it.

\begin{Rem}
	Recall from \cref{rem:sublattices} that we have an inclusion of distributive lattices $\AsymSeq^\mu \hookrightarrow \AsymSeq/\mu$. Hence we have a surjective spectral map
	\begin{equation}\label{eq:to-mu-stable}
		\Spc(\Dps(R)\cong\Specplus(\AsymSeq/\mu)^\vee \twoheadrightarrow \Specplus(\AsymSeq^\mu)^\vee.
	\end{equation}
	We can gain information about the former space by investigating the latter space. Thus, we want to study the lattice of $\mu$-stable sequences further.
\end{Rem}

\begin{Rem}
	Recall that we have discussed some examples of $\sigma$-stable sequences and $\mu$-stable sequences in \cref{exa:stable}. In particular, polynomials are $\mu$-stable (and hence $\sigma$-stable) while exponential functions are $\sigma$-stable but not $\mu$-stable. Considering these examples, one might wonder whether sequences fail to be $\mu$-stable or $\sigma$-stable simply because they grow too fast. With this in mind we can ask: If $f$ is asymptotically bounded by a $\sigma$-stable (respectively, $\mu$-stable) sequence $g$, does it follow that $f$ is itself $\sigma$-stable (respectively, $\mu$-stable)? Here is a counterexample:
\end{Rem}

\begin{Exa}
	Let $f(n) = \max\{k! \mid k! \le n\}$ and let $g(n) = n$. Then $f \le g$ since $f(n)=g(n)$ if $n=k!$ for some $k$ and $f(n) < g(n)$ if $n \neq k!$ for all $k$. Also, $g$ is $\mu$-stable. However, we claim that $f$ is not $\sigma$-stable (and hence not $\mu$-stable either). In fact, for each positive integer $k$, we have $f(k!-1) = (k-1)!$ and $(\sigma f)(k!-1) = f(k!)=k!$ for all $k$. For any fixed $A$, choosing any $k > A$, we have
	\[
		(\sigma f)(k!-1) = k! > A(k-1)! = Af(k!-1)
	\]
	which shows that $f$ is not $\sigma$-stable. This shows that $\AsymSeq^\sigma$ and $\AsymSeq^\mu$ are not ideals of $\AsymSeq$ since they are not down-sets.
\end{Exa}

\begin{Rem}
	While the lattice $\AsymSeq^\mu$ is complicated, we can generalize the polynomials of \cref{exa:polynomial} to obtain a more manageable sublattice.
\end{Rem}

\begin{Def}[Power sequences]\label{def:power-seq}
	For any non-negative real number $\alpha \in \bbR_{\ge 0}$, define $f_\alpha(n) \coloneqq \floor{n^{\alpha}}$. We call $f_\alpha$ a \emph{power sequence}. It is straightforward to check that power sequences are $\mu$-stable. Also note that $f_0$ is the bounded sequence $\underline{1}$.
\end{Def}

\begin{Rem}
	If we set $g_\alpha(n) \coloneqq \ceil{n^\alpha}$ then clearly $f_\alpha(n) \le g_\alpha(n) \le f_\alpha(n)+1 \le g_\alpha(n)+1$, which implies $f_\alpha \sim g_\alpha$. Thus, choosing the floor (over the ceiling) does not change the nature of the power sequence. 
\end{Rem}

\begin{Rem}
	Let $\alpha,\beta \in \bbR_{\ge 0}$. We have $f_\alpha \le f_\beta$ if and only if $f_\alpha \le_\mu f_\beta$ if and only if $\alpha \le \beta$. In particular, two power sequences $f_\alpha$ and $f_\beta$ are asymptotically equivalent if and only if they are $\mu$-equivalent if and only if $\alpha=\beta$. There is thus a continuum of asymptotic equivalence classes of power sequences, indexed by non-negative real numbers, namely $\SET{[f_\alpha]}{\alpha \in \bbR_{\ge 0}}$. Consequently, $\AsymSeq^\mu$ and hence $\AsymSeq^\sigma$, $\AsymSeq$, $\AsymSeq/\mu$ and $\AsymSeq/\sigma$ all contain uncountably many elements.
\end{Rem}

\begin{Def}
	We write $\PowerSeq$ for the subset of $\AsymSeq^\mu$ consisting of the power sequences together with $[\underline{0}]$. It is a totally ordered sublattice of $\AsymSeq^\mu$. We can thus augment \eqref{eq:to-mu-stable} by considering the surjections
	\[
		\Spc(\Dps(R))\cong\Specplus(\AsymSeq/\mu)^\vee \twoheadrightarrow \Specplus(\AsymSeq^\mu)^\vee \twoheadrightarrow \Specplus(\PowerSeq)^\vee.
	\]
\end{Def}

\begin{Rem}\label{rem:pseq-spec}
	We can describe the spectrum $\Specplus(\PowerSeq)^\vee$ by working through \cref{exa:totally-ordered}. Since the lattice $\PowerSeq_+$ is totally ordered, every element (excluding~$\infty$) is prime. In particular, we have the smallest prime ideal $\mathfrak{o} \coloneqq \langle\underline{0}\rangle =\{\underline{0}\}$ and for each $0 \le \alpha < \infty$ we have the prime ideal $\frakp_\alpha \coloneqq \langle f_\alpha \rangle = \{ \underline{0}\} \cup \SET{f_\beta}{0\le\beta \le \alpha}$, including $\frakp_0 = \langle f_0 \rangle =  \langle \underline{1}\rangle = \{\underline{0}, \underline{1}\}$. In addition, for each $0 < \alpha \le \infty$, we have a prime ideal $\frakq_\alpha \coloneqq \{\underline{0}\} \cup \SET{f_\beta}{\beta < \alpha}$, including the largest prime ideal $\frakq_\infty = \PowerSeq$. These are all the prime ideals, since they are all the nonempty proper down-sets. Since Hochster duality flips the specialization order, we have $\frakp \rightsquigarrow \frakq$ if and only $\frakq \subseteq \frakp$. We may visualize this space as follows:
	\[\begin{tikzpicture}
		\node (a) at (-1.8,0.75) {$\Specplus(\PowerSeq)^\vee =$};
		\draw [pattern color=red,thin,pattern={Lines[angle=170,distance=4pt]}] (0,0.75) ellipse (0.375cm and 0.75cm);
		\draw [semithick] (0,0.75) ellipse (0.375cm and 0.75cm);
		\node (b) at (0,0cm) {$\bullet$};
		\node (b) at (0,1.5cm) {$\bullet$};
		\node (b) at (0,2.0cm) {$\bullet$};
		\draw (0,2.0cm) -- (0,1.5cm);
	\end{tikzpicture}\]
	The top point (the closed point) is $\langle \underline{0}\rangle$, the next point is $\frakp_0 = \langle \underline{1} \rangle$, and the bottom point (the generic point) is $\frakq_\infty=\PowerSeq$. The points on the left-hand branch are the points $\frakp_\alpha$, $0<\alpha<\infty$, going downwards as $\alpha$ increases. Similarly, the points on the right-hand branch are the points $\frakq_\alpha$, $0<\alpha<\infty$, also going downwards as $\alpha$ increases. The red diagonal lines hint at the specialization order. We have 
	\begin{align*}
		\overbar{\{\frakp_\alpha\}} &= \{\mathfrak{o}\}\cup \SET{\frakp_\beta}{0\le \beta \le \alpha}\cup\SET{\frakq_\beta}{0<\beta \le \alpha}, \text{ and}\\
		\overbar{\{\frakq_\alpha\}} &= \{\mathfrak{o}\}\cup \SET{\frakp_\beta}{0\le \beta < \alpha}\cup\SET{\frakq_\beta}{0<\beta \le \alpha}.
	\end{align*}
	In particular, for each $\alpha >0$, we have $\frakp_\alpha \rightsquigarrow \frakq_\alpha$. The diagonal lines in the figure indicate that, in contrast, $\frakq_\alpha \not\rightsquigarrow \frakp_\alpha$ but rather $\frakq_\alpha \rightsquigarrow \frakp_\beta$ for each $\beta < \alpha$. Thus, the depiction does not tell the whole story, since there are also specializations going from the left branch to the right branch. The nonempty Thomason closed subsets are the sets
	\[
		V(\alpha) \coloneqq \{\mathfrak{o}\} \cup \SET{\frakp_{\beta}}{\beta < \alpha} \cup \SET{\frakq_\beta}{\beta \le \alpha}
	\]
	for $0 \le \alpha \le \infty$. We see that the visible points are precisely the $\frakq_{\alpha}$, $0<\alpha\le \infty$. In particular, the point $\langle [\underline{1}]\rangle = \frakp_0$ is not visible. The space is not noetherian.
\end{Rem}

\begin{Rem}
	One can readily check that the spectral map corresponding to the sublattice inclusion $\{\underline{0},\underline{1}\}_+ \subseteq \PowerSeq_+ \subseteq (\AsymSeq/\mu)_+$ can be identified, under the homeomorphism $\Spc(\Dps(R)) \cong \Specplus(\AsymSeq/\mu)^\vee$, with the comparison map $\rho:\Spc(\Dps(R)) \to \Spec(R)$. Recalling \cref{rem:comp-first}, we have
	\[
	\begin{tikzpicture}
		\node (A) at (0,2.5) {$\Spc(\Dps(R))$};
		\node (B) at (3,2.5) {$\Specplus(\PowerSeq)^\vee$};
		\node (C) at (6,2.5) {$\Spec(R)$};
		\draw[->>] (A) -- (B);
		\draw[->>] (B) -- (C);
		\filldraw[fill=cyan!20,snake=bumps]
			(0,1.5cm) -- (0.5cm,1.0) -- (0.5cm,0.5cm) -- (0.0,0.0) -- (-0.5,0.5) -- (-0.5,1.0) -- (0,1.5);
		\node (b) at (0,-.025cm) {$\bullet$};
		\node (b) at (0,1.525cm) {$\bullet$};
		\node (b) at (0,2.0cm) {$\bullet$};
		\draw (0,2.0cm) -- (0,1.5cm);
	%
		\draw [pattern color=red,thin,pattern={Lines[angle=170,distance=4pt]}] (3,0.75) ellipse (0.375cm and 0.75cm);
		\draw [semithick] (3,0.75) ellipse (0.375cm and 0.75cm);
		\node (b) at (3,0cm) {$\bullet$};
		\node (b) at (3,1.5cm) {$\bullet$};
		\node (b) at (3,2.0cm) {$\bullet$};
		\draw (3,2.0cm) -- (3,1.5cm);
		\draw[snake=brace] (3.8,1.5)--(3.8,0.0);
		\draw[|->] (4.2,0.75) -- (5.6,0.75);
		\node (b) at (6,0.75cm) {$\bullet$};
		\node (b) at (6,2.0cm) {$\bullet$};
		\draw (6,2.0cm) -- (6,0.75cm);
	\end{tikzpicture}
	\]
\end{Rem}

\begin{Rem}\label{rem:not-noetherian}
	This discussion proves that $\Spc(\Dps(R))$ for a discrete valuation ring is not noetherian since it surjects onto a non-noetherian space. Moreover, it has at least $2^{\aleph_0}$ many points, while $\Spec(R)$ has only two points. Matsui \cite[Theorem~3.1]{Matsui19} proves that if $\Spc(\Dps(R))$ is noetherian then $\Spec(R)$ is finite. The converse is thus far from true.
\end{Rem}

\begin{Rem}
	Recall from \cref{prop:prime-is-prime} that the principal radical ideal $\radideal{R/x^f}$ is a prime ideal of $\Dps(R)$ if and only if $[f]_\mu$ is a prime element of the lattice $(\AsymSeq/\mu)_+$. We will establish, however, that the lattice of $\mu$-equivalence classes of monotonic sequences has only two prime elements, namely $[\underline{0}]$ and $[\underline{1}]$. It will follow that, although $\Dps(R)$ has at least a continuum many prime ideals, only two of them are finitely generated; see \cref{cor:two-fg} below. The proof of this result is somewhat intricate and will be given in the next section.
\end{Rem}

\section{Technical constructions}\label{sec:technical}

In order to prove that the bounded distributive lattice $(\AsymSeq/\mu)_+$ has few prime elements, we need to construct some bizarre monotonic sequences.

\begin{Cons}\label{cons:general}
	Let $k_0 < k_1 < k_2 < \cdots$ be a strictly increasing sequence of natural numbers. Suppose $f_0$ and $f_1$ are monotonic sequences which satisfy
	\begin{enumerate}
		\item $f_0(k_{i+1}-1) \le f_1(k_{i+1})$ and $f_1(k_{i+1}-1) \le f_0(k_{i+1})$ for each $i \in \bbN$; and
		\item $f_0(k_0-1) \le f_1(k_0)$ if $k_0 > 0$.
	\end{enumerate}
	Note that for each $n \ge k_0$, there is a unique $i \in \bbN$ such that $k_i \le n < k_{i+1}$. For any subset $S\subseteq\bbN$, we can define
	\begin{equation}
		f_S(n) \coloneqq \begin{cases}
			f_1(n) & \text{if } k_i \le n < k_{i+1} \text{ and } i \in S\\
			f_0(n) & \text{if } k_i \le n < k_{i+1} \text{ and } i \not\in S\\
			f_0(n) & \text{if } n < k_0
		\end{cases}
	\end{equation}
	The hypotheses ensure that $f_S$ is monotonic for any $S \subseteq \bbN$. Observe that $f_{\emptyset} = f_0$ while $f_{\bbN}(n) = f_1(n)$ for $n \ge k_0$. The construction is simplest when $k_0=0$ but we will need the extra flexibility which allows the indexing sequence to start at $k_0 > 0$.
\end{Cons}

\begin{Lem}\label{lem:f-intersection}
	In the context of \cref{cons:general}, suppose that $f_0 \le f_1$. Then
	\begin{enumerate}
		\item $f_{S_1} \wedge f_{S_2} \sim f_{S_1 \cap S_2}$ and
		\item $f_{S_1} \vee f_{S_2} \sim f_{S_1 \cup S_2}$
	\end{enumerate}
	for any two subsets $S_1,S_2 \subseteq \bbN$.
\end{Lem}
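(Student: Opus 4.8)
The plan is to reduce everything to a direct case analysis on the defining formula for $f_S$ in \cref{cons:general}. The key observation is that for each $n \ge k_0$ there is a unique index $i = i(n) \in \bbN$ with $k_i \le n < k_{i+1}$, and then $f_S(n) = f_1(n)$ if $i(n) \in S$ and $f_S(n) = f_0(n)$ if $i(n) \notin S$. In particular, for every $S$ and every $n \ge k_0$ the value $f_S(n)$ lies in $\{f_0(n), f_1(n)\}$ and depends on $S$ only through the single condition ``$i(n)\in S$''. Since all the sequences $f_S$ agree with $f_0$ on the finite range $n < k_0$, this range is irrelevant for asymptotic equivalence, and it suffices to compare values for $n \ge k_0$.

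Next I would record what $f_0 \le f_1$ buys us: there exist a constant $A \ge 1$ and an $n_0 \in \bbN$ with $f_0(n) \le A f_1(n)$ for all $n \ge n_0$, and hence also $\min(f_0(n), f_1(n)) \ge f_0(n)/A$ and $\max(f_0(n), f_1(n)) \le A f_1(n)$ for $n \ge n_0$. For part (a) I split on whether $i(n) \in S_1 \cap S_2$. If it is, then $f_{S_1}(n) = f_{S_2}(n) = f_1(n) = f_{S_1 \cap S_2}(n)$, so the pointwise minimum equals $f_{S_1 \cap S_2}(n)$ exactly. If it is not, then $i(n) \notin S_1$ or $i(n) \notin S_2$, so at least one of $f_{S_1}(n), f_{S_2}(n)$ equals $f_0(n) = f_{S_1\cap S_2}(n)$; hence $(f_{S_1}\wedge f_{S_2})(n) \le f_{S_1\cap S_2}(n)$, while both values lie in $\{f_0(n),f_1(n)\}$, so $(f_{S_1}\wedge f_{S_2})(n) \ge \min(f_0(n),f_1(n)) \ge f_{S_1\cap S_2}(n)/A$ for $n \ge n_0$. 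In either case $\tfrac{1}{A} f_{S_1\cap S_2}(n) \le (f_{S_1}\wedge f_{S_2})(n) \le f_{S_1\cap S_2}(n)$ for $n \ge \max(k_0,n_0)$, which gives (a). Part (b) is completely dual: splitting on whether $i(n) \in S_1 \cup S_2$, in the negative case all of $f_{S_1}(n), f_{S_2}(n), f_{S_1\cup S_2}(n)$ equal $f_0(n)$, and in the positive case at least one of $f_{S_1}(n), f_{S_2}(n)$ equals $f_1(n) = f_{S_1\cup S_2}(n)$, so $f_{S_1\cup S_2}(n) \le (f_{S_1}\vee f_{S_2})(n) \le \max(f_0(n),f_1(n)) \le A f_{S_1\cup S_2}(n)$ for $n \ge n_0$.

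I do not expect a genuine obstacle here — the argument is bookkeeping. The only point that needs a little care, and the reason the conclusion is stated with $\sim$ rather than with pointwise equality, is that $f_0 \le f_1$ is merely an \emph{asymptotic} bound: on an interval where $f_{S_1}$ follows $f_0$ and $f_{S_2}$ follows $f_1$, the pointwise minimum need not be exactly $f_0$, and the estimates $\min(f_0,f_1) \ge f_0/A$ and $\max(f_0,f_1) \le A f_1$ are precisely what absorb this discrepancy into the asymptotic constant. It is worth noting in passing that (a) and (b) together say that $S \mapsto [f_S]$ is a meet- and join-preserving map from the power set $\mathcal{P}(\bbN)$ into $\AsymSeq$ (and, after the later reductions, into $\AsymSeq/\mu$); this lattice-theoretic feature is what will be exploited in the subsequent constructions witnessing the failure of primality.
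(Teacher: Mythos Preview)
Your proof is correct and follows essentially the same approach as the paper: a case analysis on the membership of $i(n)$ in $S_1$ and $S_2$, using the asymptotic bound $f_0 \le f_1$ to absorb the discrepancy between $\min(f_0,f_1)$ and $f_0$ (resp.\ $\max(f_0,f_1)$ and $f_1$) into the constant $A$. The only cosmetic difference is that the paper splits into the three cases $i\in S_1\cap S_2$, $i\notin S_1\cup S_2$, $i$ in exactly one, and in the last case further splits on the sign of $f_0(n)-f_1(n)$, whereas you streamline this by first recording the inequalities $\min(f_0,f_1)\ge f_0/A$ and $\max(f_0,f_1)\le A f_1$ and then using only a two-way split.
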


\begin{proof}
	By hypothesis, there are positive integers $A$ and $N$ such that $f_0(n) \le A f_1(n)$ for all $n \ge N$. Without loss of generality we may take $N \ge k_0$. We will establish that 
	\[
		(f_{S_1} \wedge f_{S_2})(n) \le f_{S_1 \cap S_2}(n) \le A(f_{S_1} \wedge f_{S_2})(n)
	\]
	and 
	\[
		(f_{S_1 \cup S_2})(n) \le (f_{S_1} \vee f_{S_2})(n) \le A(f_{S_1 \cup S_2})(n)
	\]
	for all $n \ge N$. For any such $n$, there is a unique $i$ such that $k_i \le n < k_{i+1}$. If $i \in S_1 \cap S_2$ then $f_{S_1 \cap S_2}(n) = f_{S_1}(n)=f_{S_2}(n) = f_1(n)$ while if $i \not\in S_1 \cup S_2$ then $f_{S_1 \cup S_2}(n) = f_{S_1}(n) = f_{S_2}(n) = f_0(n)$. In both of these cases,
	\[
		f_{S_1 \cap S_2}(n) = (f_{S_1} \wedge f_{S_2})(n) = (f_{S_1}\vee f_{S_2})(n) = f_{S_1 \cup S_2}(n).
	\]
	It thus remains to consider the case where $i$ belongs to precisely one of the subsets. Suppose $i \in S_1 \setminus S_2$ or $i \in S_2 \setminus S_1$. Then $(f_{S_1} \wedge f_{S_2})(n) = \min(f_1(n),f_0(n))$ and $(f_{S_1} \vee f_{S_2})(n) = \max(f_1(n),f_0(n))$. If $f_0(n) \le f_1(n)$ then
	\[
		(f_{S_1} \wedge f_{S_2})(n) = f_0(n) = f_{S_1 \cap S_2}(n)
		\;\text{ and }\;
		f_{S_1 \cup S_2}(n) = f_1(n) = (f_{S_1} \vee f_{S_2})(n).
	\]
	Otherwise, if $f_1(n) < f_0(n)$ then
	\[
		(f_{S_1} \wedge f_{S_2})(n) = f_1(n) < f_0(n) = f_{S_1 \cap S_2}(n)
		 = f_0(n) \le A f_1(n) = A (f_{S_1} \wedge f_{S_2})(n)
	\]
	and
	\[
		f_{S_1 \cup S_2}(n) = f_1(n) < f_0(n) = (f_{S_1} \vee f_{S_2})(n)
		= f_0(n) \le A f_1(n) = A f_{S_1 \cup S_2}(n)
	\]
	which finishes the proof.
\end{proof}

\begin{Cons}\label{cons:new-together}
	Let $f_0$ be an unbounded monotonic sequence. We construct a strictly increasing sequence $k_0 < k_1 < \cdots$ as follows. Let $k_0 \coloneqq \min\{n  \mid f_0(n) \neq 0\}$ and inductively define
	\[
		k_{i+1} \coloneqq \min\{ n \mid f_0(n) > 2^i f_0(k_i)\}.
	\]
	Now define $f_1 :\bbN\to\bbN$ by 
	\[
		f_1(n) \coloneqq \begin{cases}
			2^i f_0(k_i) & \text{if $k_i \le n < k_{i+1}$ for some (unique) $i \ge 1$, and}\\
			0 & \text{if $n < k_0$}.
		\end{cases}
	\]
	This sequence is monotonic and constructed so that $f_0 \le f_1$ and $f_1 \not\le f_0$. Moreover, the sequences $f_0$ and $f_1$ satisfy the hypotheses of \cref{cons:general}. Thus, we have a well-defined monotonic sequence $f_S$ for every $S \subseteq \bbN$.
\end{Cons}

\begin{Lem}\label{lem:new-bdded-finite}
	In the situation of \cref{cons:new-together}, the sequence $f_S$ is asymptotically bounded by $f_0$ if and only if the subset $S \subseteq \bbN$ is finite.
\end{Lem}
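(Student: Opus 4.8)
The statement is an equivalence, so the plan is to prove the two implications separately, in each case simply unwinding the definition of $f_S$ from \cref{cons:new-together} and the meaning of asymptotic boundedness from \cref{def:asymptotic-ordering}. For the direction ``$S$ finite $\Rightarrow f_S \le f_0$'', the key observation is that $f_S$ differs from $f_0$ only on the blocks $[k_i,k_{i+1})$ with $i \in S$: when $S = \emptyset$ we have $f_S = f_0$ outright (since $f_\emptyset = f_0$ by \cref{cons:general}), and when $S$ is finite and nonempty, setting $m \coloneqq \max S$, for every $n \ge k_{m+1}$ the unique index $i$ with $k_i \le n < k_{i+1}$ satisfies $i > m$, hence $i \notin S$ and $f_S(n) = f_0(n)$. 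Thus $f_S$ agrees with $f_0$ for all $n \gg 0$, which gives $f_S \le f_0$ (with constant $1$).

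For the direction ``$S$ infinite $\Rightarrow f_S \not\le f_0$'' I would argue by contradiction. Assume $f_S(n) \le A f_0(n)$ for some constant $A$ and all $n \ge n_0$. Since $S$ is infinite and $k_0 < k_1 < \cdots$ is strictly increasing, I can pick an index $i \in S$ with $i \ge 1$, $k_i \ge n_0$, and $2^i > A$ simultaneously. Evaluating at $n = k_i$, which lies in block $i$ with $i \in S$, the definition of $f_S$ gives $f_S(k_i) = f_1(k_i) = 2^i f_0(k_i)$; because $k_i \ge k_0 = \min\{n \mid f_0(n) \neq 0\}$ and $f_0$ is monotonic, $f_0(k_i)$ is a positive integer, so the assumed bound forces $2^i \le A$, contradicting the choice of $i$. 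Hence $f_S \not\le f_0$, and combining the two implications yields the lemma.

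I do not expect any real obstacle; the only two points that deserve a moment's care are (i) that $f_0(k_i) > 0$, which is exactly why the indexing sequence in \cref{cons:new-together} is started at $k_0 = \min\{n \mid f_0(n) \neq 0\}$ — this is what lets one cancel $f_0(k_i)$ from the inequality; and (ii) in the infinite case, that one can find a single index $i$ meeting all three requirements at once, which is possible precisely because $S$ is infinite while each of the constraints ``$k_i \ge n_0$'' and ``$2^i > A$'' only excludes finitely many $i$.
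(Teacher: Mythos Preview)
Your proof is correct and follows essentially the same approach as the paper's: for finite $S$ observe $f_S(n)=f_0(n)$ for $n\gg 0$, and for infinite $S$ choose $i\in S$ large enough that $k_i$ exceeds the threshold and $2^i>A$, then evaluate at $n=k_i$ to get $f_S(k_i)=2^i f_0(k_i)>A f_0(k_i)$. Your explicit remark that $f_0(k_i)>0$ is needed to pass from the inequality to a contradiction is a welcome clarification that the paper leaves implicit.
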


\begin{proof}
	If $S$ is finite then $f_S(n) = f_0(n)$ for $n \gg 0$ and hence $f_S \le f_0$. Now suppose that $S$ is infinite. For any positive integers $A$ and $N$, we may choose $i \in S$ large enough so that $k_i > N$ and $2^i > A$. Then $f_S(k_i)=f_1(k_i) = 2^i f_0(k_i) > Af_0(k_i)$. This establishes that $f_S \not\leq f_0$.
\end{proof}

\begin{Thm}\label{thm:not-prime-not-mu}
	Let $f$ be an unbounded monotonic sequence. Its asymptotic equivalence class $[f]$ is not a prime element in the lattice $\AsymSeq_+$.
\end{Thm}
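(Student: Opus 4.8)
The plan is to exhibit two asymptotic monotonic sequences whose meet is $[f]$ but neither of which is asymptotically bounded by $f$; this directly contradicts the defining property of a prime element recalled in \cref{exa:prime-element} (note that $[f]\neq\infty$ automatically, since $[f]\in\AsymSeq$ while $\infty\notin\AsymSeq$). Concretely, I would apply \cref{cons:new-together} with $f_0\coloneqq f$, which is legitimate precisely because $f$ is unbounded by hypothesis. This produces a monotonic sequence $f_1$ with $f_0\le f_1$, together with a monotonic sequence $f_S$ for every subset $S\subseteq\bbN$, and the defining formula of \cref{cons:general} gives $f_\emptyset=f_0$ on the nose.

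Next I would choose two \emph{disjoint infinite} subsets of $\bbN$, for instance the even numbers $S_1\coloneqq 2\bbN$ and the odd numbers $S_2\coloneqq 2\bbN+1$. Since $S_1\cap S_2=\emptyset$, \cref{lem:f-intersection}(a) yields $f_{S_1}\wedge f_{S_2}\sim f_{S_1\cap S_2}=f_\emptyset=f_0=f$, so that $[f_{S_1}]\wedge[f_{S_2}]=[f]\le[f]$ in $\AsymSeq$, hence in $\AsymSeq_+$. On the other hand, $S_1$ and $S_2$ are infinite, so \cref{lem:new-bdded-finite} gives $f_{S_1}\not\le f$ and $f_{S_2}\not\le f$, i.e.\ $[f_{S_1}]\not\le[f]$ and $[f_{S_2}]\not\le[f]$.

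Putting these together: $[f]$ is not the greatest element of $\AsymSeq_+$, yet the meet of $[f_{S_1}]$ and $[f_{S_2}]$ lies below $[f]$ while neither factor does. By \cref{exa:prime-element}, this means $[f]$ is not a prime element of $\AsymSeq_+$, which is the claim.

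The main obstacle has in fact already been dispatched by the preceding constructions and lemmas (\cref{cons:general}, \cref{cons:new-together}, \cref{lem:f-intersection}, \cref{lem:new-bdded-finite}); the argument above is then a short deduction. The only point worth double-checking is that $f_\emptyset$ equals $f_0$ exactly (rather than merely being asymptotically bounded by it), so that the meet of the two witnesses is asymptotically $f$ itself and not some strictly smaller class — but this is immediate from the case $n<k_0$ and the fact that $\emptyset\cap(k_i,k_{i+1})$ is empty for all $i$ in the definition of $f_S$.
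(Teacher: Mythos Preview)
Your proof is correct and follows essentially the same approach as the paper: apply \cref{cons:new-together} with $f_0\coloneqq f$, take $S_1$ and $S_2$ to be the even and odd natural numbers, and then invoke \cref{lem:f-intersection}(a) and \cref{lem:new-bdded-finite} to obtain two sequences whose meet is $[f]$ but neither of which is bounded by $[f]$. Your write-up is slightly more explicit about the verification that $f_\emptyset=f_0$ and that $[f]\neq\infty$, but the argument is the same.
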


\begin{proof}
	Using \cref{cons:general} and \cref{cons:new-together} above for $f_0 \coloneqq f$, we can consider $f_{\mathrm{even}} \coloneqq f_{S_1}$ where $S_1 = \bbN_{\mathrm{even}}$ is the subset of even natural numbers and $f_{\mathrm{odd}} \coloneqq f_{S_2}$ where $S_2 = \bbN_{\mathrm{odd}}$ is the subset of natural numbers. By \cref{lem:new-bdded-finite}, $f_{\mathrm{even}} \not\le f$ and $f_{\mathrm{odd}} \not\le f$. However, by \cref{lem:f-intersection}, $f_{\mathrm{even}}\wedge f_{\mathrm{odd}} \sim f_\emptyset = f$. In other words, $[f_{\mathrm{even}}] \wedge [f_\mathrm{odd}] \le [f]$ and yet $[f_{\mathrm{even}}] \not\le [f]$ and $[f_{\mathrm{odd}}] \not\le [f]$. That is, $[f]$ is not a prime element.
\end{proof}

To prove the analogous statement for $(\AsymSeq/\mu)_+$ we make a slight modification:

\begin{Cons}\label{cons:new-together-mu}
	Let $f_0$ be an unbounded monotonic sequence. We construct a strictly increasing sequence $k_0 < k_1 < \cdots$ as follows. Let $k_0 \coloneqq \min\{n  \mid f_0(n) \neq 0\}$ and inductively define
	\[
		k_{i+1} \coloneqq \min\{ n \mid f_0(n) > 2^i f_0(2^i k_i)\}.
	\]
	Now define $f_1 :\bbN\to\bbN$ by 
	\[
		f_1(n) \coloneqq \begin{cases}
		2^i f_0(2^i k_i) & \text{if $k_i \le n < k_{i+1}$ for some (unique) $i \ge 1$, and}\\
		0 & \text{if $n < k_0$}.
		\end{cases}
	\]
	This sequence is monotonic and constructed so that $f_0 \le f_1$ and $f_1 \not\le_\mu f_0$. Moreover, the sequences $f_0$ and $f_1$ satisfy the hypotheses of \cref{cons:general}. Thus, we have a well-defined monotonic sequence $f_S$ for every $S \subseteq \bbN$.
\end{Cons}

\begin{Lem}\label{lem:new-bdded-finite-mu}
	In the situation of \cref{cons:new-together-mu}, we have $f_S \le_\mu f_0$ if and only if the subset $S \subseteq \bbN$ is finite.
\end{Lem}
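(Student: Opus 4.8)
The plan is to follow the proof of \cref{lem:new-bdded-finite} almost verbatim, adjusting the estimates to absorb an extra application of $\mu$. The key identity to keep in mind is that $(\mu^j f_0)(n) = f_0(2^j n)$ for every $j \ge 1$.

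For the ``if'' direction, suppose $S$ is finite and set $M \coloneqq \max S$. For every $n \ge k_{M+1}$, the unique index $i$ with $k_i \le n < k_{i+1}$ satisfies $i \ge M+1$, hence $i \notin S$, so $f_S(n) = f_0(n)$. Thus $f_S$ agrees with $f_0$ for $n \gg 0$, whence $f_S \le f_0$ and therefore $f_S \le_\mu f_0$ by \cref{rem:mu-sigma}.

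For the ``only if'' direction, suppose $S$ is infinite; I will show $f_S \not\le \mu^j f_0$ for every $j \ge 1$, which gives $f_S \not\le_\mu f_0$. Fix $j \ge 1$ and arbitrary positive integers $A$ and $N$. Since $S$ is infinite and $k_0 < k_1 < \cdots$ is strictly increasing, I can choose an index $i \in S$ with $i \ge j$, $2^i > A$, and $k_i \ge N$ all at once. Evaluating at $n = k_i$, the construction gives $f_S(k_i) = f_1(k_i) = 2^i f_0(2^i k_i)$. Because $i \ge j$ we have $2^i k_i \ge 2^j k_i$, so monotonicity of $f_0$ yields $f_0(2^i k_i) \ge f_0(2^j k_i)$; moreover $f_0(2^j k_i) \ge f_0(k_0) \ge 1$ since $k_0 = \min\{n \mid f_0(n) \ne 0\}$ and $2^j k_i \ge k_0$. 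Combining these,
\[
	f_S(k_i) = 2^i f_0(2^i k_i) \ge 2^i f_0(2^j k_i) > A\, f_0(2^j k_i) = A\,(\mu^j f_0)(k_i).
\]
As $A$ and $N$ were arbitrary, this shows $f_S \not\le \mu^j f_0$; and since $j$ was arbitrary, $f_S \not\le_\mu f_0$.

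The only points requiring care are bookkeeping ones: that an index $i \in S$ meeting the three requirements can be chosen simultaneously (immediate from $|S| = \infty$ together with $k_i \to \infty$), and that $f_0(2^j k_i) > 0$ so that the common factor may be cancelled in the last inequality. I do not anticipate any genuine obstacle beyond getting the order of quantifiers right, exactly as in \cref{lem:new-bdded-finite}.
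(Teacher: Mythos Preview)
Your proof is correct and follows essentially the same approach as the paper: both directions are handled identically, and in the infinite case you evaluate at $n=k_i$ for a suitably large $i\in S$ to defeat any given $A$, $N$, and power of $\mu$. Your added justification that $f_0(2^j k_i)>0$ is a nice bit of care that the paper leaves implicit.
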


\begin{proof}
	As before, if $S$ is finite then $f_S(n) = f_0(n)$ for $n \gg 0$ and hence $f_S \le f_0$. Now suppose that $S$ is infinite. We claim that $f_S \not\le_{\mu} f_0$. In other words, $f_S \not\le \mu^k\hspace{-0.30ex}f_0$ for every $k \ge 1$. For any positive integers $k$, $A$ and $N$, since $S$ is infinite, we may choose $i \in S$ large enough so that $k_i > N$, $2^i >A$ and $i>k$. Then 
	\[
		f_S(k_i)=f_1(k_i) = 2^i f_0(2^i k_i) > Af_0(2^k k_i) = A\mu^k \hspace{-.30ex}f_0(k_i).
	\]
	This establishes that $f_S \not\leq \mu^k\hspace{-0.30ex}f_0$.
\end{proof}

\begin{Thm} \label{thm:not-prime-mu}
	Let $f$ be an unbounded monotonic sequence. Its $\mu$-equivalence class~$[f]_{\mu}$ is not a prime element in $(\AsymSeq/\mu)_+$.
\end{Thm}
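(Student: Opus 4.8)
The plan is to mimic the proof of \cref{thm:not-prime-not-mu}, replacing every ingredient by its $\mu$-theoretic counterpart. First I would apply \cref{cons:new-together-mu} with $f_0 \coloneqq f$ --- legitimate since $f$ is unbounded and monotonic --- to obtain a monotonic sequence $f_1$ with $f_0 \le f_1$, with $f_1 \not\le_\mu f_0$, and satisfying the hypotheses of \cref{cons:general}. This produces a well-defined monotonic sequence $f_S$ for each $S \subseteq \bbN$, with $f_\emptyset = f_0 = f$.

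Next I would specialize to $S_1 \coloneqq \bbN_{\mathrm{even}}$ and $S_2 \coloneqq \bbN_{\mathrm{odd}}$, writing $f_{\mathrm{even}} \coloneqq f_{S_1}$ and $f_{\mathrm{odd}} \coloneqq f_{S_2}$. Since $S_1$ and $S_2$ are both infinite, \cref{lem:new-bdded-finite-mu} gives $f_{\mathrm{even}} \not\le_\mu f$ and $f_{\mathrm{odd}} \not\le_\mu f$, that is, $[f_{\mathrm{even}}]_\mu \not\le [f]_\mu$ and $[f_{\mathrm{odd}}]_\mu \not\le [f]_\mu$ in $\AsymSeq/\mu$. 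On the other hand $S_1 \cap S_2 = \emptyset$, so \cref{lem:f-intersection}(a) --- applicable because $f_0 \le f_1$ --- yields $f_{\mathrm{even}} \wedge f_{\mathrm{odd}} \sim f_\emptyset = f$, and hence $[f_{\mathrm{even}}]_\mu \wedge [f_{\mathrm{odd}}]_\mu = [f]_\mu$ in $\AsymSeq/\mu$.

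Finally I would conclude: setting $a \coloneqq [f_{\mathrm{even}}]_\mu$ and $b \coloneqq [f_{\mathrm{odd}}]_\mu$ in $(\AsymSeq/\mu)_+$, we have $a \wedge b = [f]_\mu \le [f]_\mu$ while $a \not\le [f]_\mu$ and $b \not\le [f]_\mu$; since moreover $[f]_\mu \neq \infty$ (it is represented by an honest monotonic sequence), the element $[f]_\mu$ fails the defining condition of a prime element recalled in \cref{exa:prime-element}. Therefore $[f]_\mu$ is not prime, as claimed.

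The main obstacle has already been absorbed into \cref{cons:new-together-mu} and \cref{lem:new-bdded-finite-mu}: the delicate point is that the ``jump'' values of $f_1$ must be large enough to escape not merely $f_0$ but every iterate $\mu^k f_0$ simultaneously, which is exactly why the thresholds $k_{i+1}$ are defined in terms of $f_0(2^i k_i)$ rather than $f_0(k_i)$, and why in the proof of \cref{lem:new-bdded-finite-mu} one chooses $i \in S$ with $i > k$ as well as $2^i > A$. Granting those two results, the remainder is a purely formal transcription of the $\AsymSeq$ case.
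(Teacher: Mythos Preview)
Your proposal is correct and is exactly the approach the paper takes: the paper's proof simply says that the argument of \cref{thm:not-prime-not-mu} goes through verbatim using \cref{cons:new-together-mu}, \cref{lem:new-bdded-finite-mu} and $\le_\mu$ in place of their non-$\mu$ counterparts, which is precisely what you have spelled out.
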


\begin{proof}
	The proof of \cref{thm:not-prime-not-mu} goes through verbatim using \cref{cons:new-together-mu}, \cref{lem:new-bdded-finite-mu} and $\le_\mu$ instead of \cref{cons:new-together}, \cref{lem:new-bdded-finite} and $\le$.
\end{proof}

\begin{Cor} \label{cor:new-not-prime}
	Let $f$ be an unbounded monotonic sequence. The radical ideal
	\[
		\radideal{R/x^f} = \SET{ E \in \Dpfl(R) }{ \mono{\loew_E} \le f }
	\]
	is not a prime ideal of $\Dps(R)$.
\end{Cor}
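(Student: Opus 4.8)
The plan is to deduce this immediately from the lattice-theoretic results already established. The explicit description of $\radideal{R/x^f}$ appearing in the statement is nothing other than \cref{thm:rid-full}, so the only thing to verify is that this radical ideal fails to be prime. By \cref{prop:prime-is-prime}, the principal radical ideal $\radideal{R/x^f}$ is a prime ideal of $\Dps(R)$ if and only if the class $[f]_\mu$ is a prime element of the bounded distributive lattice $(\AsymSeq/\mu)_+$. Thus the assertion is equivalent to saying that $[f]_\mu$ is not a prime element whenever $f$ is unbounded, which is precisely \cref{thm:not-prime-mu}.

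Concretely, the proof is a short chain of citations. First I would invoke \cref{thm:rid-full} to identify the set $\SET{E \in \Dpfl(R)}{\mono{\loew_E}\le f}$ on the right-hand side with $\radideal{R/x^f}$. Next I would apply \cref{prop:prime-is-prime} to translate the primality of the thick ideal $\radideal{R/x^f}$ into the primality of the lattice element $[f]_\mu \in (\AsymSeq/\mu)_+$. Finally, since $f$ is unbounded, \cref{thm:not-prime-mu} shows that $[f]_\mu$ is not a prime element of $(\AsymSeq/\mu)_+$, and therefore $\radideal{R/x^f}$ is not a prime ideal of $\Dps(R)$. No additional computation is required at this stage.

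The genuine difficulty is not in this corollary but in \cref{thm:not-prime-mu}, whose proof relies on the delicate constructions of \cref{sec:technical}: starting from $f_0 = f$ one builds (via \cref{cons:new-together-mu}) a monotonic sequence $f_1$ with $f_0 \le f_1$ but $f_1 \not\le_\mu f_0$, forms the interpolating family $\{f_S\}_{S\subseteq\bbN}$ of \cref{cons:general}, and checks (using \cref{lem:f-intersection} and \cref{lem:new-bdded-finite-mu}) that $f_{\mathrm{even}} \wedge f_{\mathrm{odd}} \sim f$ while neither $f_{\mathrm{even}}$ nor $f_{\mathrm{odd}}$ is $\le_\mu f$. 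That combinatorial construction is the real obstacle; once it is granted, the present corollary follows formally, as described above.
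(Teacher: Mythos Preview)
Your proposal is correct and follows essentially the same route as the paper: invoke \cref{prop:prime-is-prime} to translate primality of $\radideal{R/x^f}$ into primality of $[f]_\mu$ in $(\AsymSeq/\mu)_+$, and then appeal to \cref{thm:not-prime-mu}. One small caveat: the displayed description in the statement uses $\le$ rather than $\le_\mu$, so strictly speaking \cref{thm:rid-full} gives $\mono{\loew_E}\le_\mu f$, not $\le f$; this does not affect the primality argument, which is the actual content of the corollary.
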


\begin{proof}
	If this were a prime ideal then \cref{prop:prime-is-prime} implies that $[f]_\mu$ would be a prime element of the bounded distributive lattice $(\AsymSeq/\mu)_+$ which is not the case by \cref{thm:not-prime-mu}.
\end{proof}

\begin{Cor}\label{cor:new-not-prime-mu}
	Let $f$ be an unbounded $\mu$-stable sequence. The radical ideal
	\begin{align*} 
		\ideal{R/x^f} &= \SET{ E \in \Dpfl(R) }{ \loew(H_n(E)) \text{ is asymptotically bounded by $f$}}
	\end{align*}
	is not a prime ideal of $\Dps(R)$.
\end{Cor}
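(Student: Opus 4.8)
The plan is to obtain this corollary as an immediate consequence of \cref{cor:new-not-prime} once we know that $\ideal{R/x^f}$ is radical. First I would invoke \cref{cor:Rxf-radical}: since $f$ is $\mu$-stable, the thick ideal $\ideal{R/x^f}$ is radical, so $\ideal{R/x^f} = \radideal{R/x^f}$. Because $f$ is unbounded, \cref{cor:new-not-prime} (which in turn rests on \cref{thm:not-prime-mu} via \cref{prop:prime-is-prime}) tells us that $\radideal{R/x^f}$ is not a prime ideal of $\Dps(R)$. Combining these two observations yields the assertion that $\ideal{R/x^f}$ is not prime.

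It then remains only to justify the displayed description of $\ideal{R/x^f}$. Since $f$ is $\mu$-stable, \cref{cor:summary-ideal-Rxf} already gives
\[
	\ideal{R/x^f} = \SET{E \in \Dpfl(R)}{\mono{\loew_E} \le f},
\]
and I would rewrite the membership condition $\mono{\loew_E}\le f$ as ``$\loew(H_n(E))$ is asymptotically bounded by $f$'' by the same bookkeeping already carried out for $\ideal{R/x}$ in \cref{sec:the-spectrum}: since $f \neq \underline{0}$, \cref{lem:force-mono}(a) lets us drop the monotonization and replace $\mono{\loew_E}\le f$ with $\loew_E \le f$, while \cref{lem:two-defs-equiv} together with \cref{rem:useful-bound-relations} lets us pass between $\loew_E$ and the function $\loew_E'(n) = \loew(H_n(E))$ of \cref{rem:loew-pref}, using $\mu$-stability of $f$ to collapse $\le_\sigma$ to $\le$.

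The content here is entirely bookkeeping — matching up the several conventions for the Loewy sequence (indexed from $\inf(E)$ versus from degree $0$, and their monotonizations) so that the displayed set genuinely coincides with $\ideal{R/x^f}$ — and the only substantive input is \cref{cor:new-not-prime}, of which this corollary is just the specialization to the situation where the ambient thick ideal is already radical. It is recorded separately because, together with \cref{cor:new-not-prime}, it shows that the thick ideals claimed to be prime in \cite[Theorem~F]{MatsuiTakahashi17} for the polynomials $f(n)=n^d$ with $d \ge 1$ are in fact not prime.
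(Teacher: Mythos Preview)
Your proposal is correct and essentially matches the paper's proof: both invoke \cref{cor:Rxf-radical} for radicality, appeal to \cref{cor:new-not-prime} for non-primality, and then do the same bookkeeping (\cref{lem:force-mono}, \cref{lem:two-defs-equiv}, $\mu$-stability) to rewrite the membership condition. The only cosmetic difference is that you derive the displayed description from \cref{cor:summary-ideal-Rxf} (which already gives $\mono{\loew_E}\le f$), whereas the paper starts from \cref{thm:rid-full} (which gives $\mono{\loew_E}\le_\mu f$) and then collapses $\le_\mu$ to $\le$ via $\mu$-stability; either route works.
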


\begin{proof}
	This is a special case of \cref{cor:new-not-prime}. The ideal $\ideal{R/x^f}$ is radical by \cref{cor:Rxf-radical} and the displayed equality follows from \cref{thm:rid-full} bearing in mind that 
	\begin{flalign*}
		\kern10em	\mono{\loew_E} \le_\mu f &\Longleftrightarrow \loew_E \le_\mu f &\text{(\cref{lem:force-mono})}\\
								 &\Longleftrightarrow \loew_E' \le_\mu f & \text{(\cref{lem:two-defs-equiv})}\\
							 &\Longleftrightarrow \loew_E' \le f & (\text{$f$ is $\mu$-stable})
	\end{flalign*}
	where $\loew_E'(n) = \loew(H_n(E))$ was defined in \cref{rem:loew-pref}.
\end{proof}

\begin{Rem}\label{rem:MT-error}
	It is claimed in \cite[Theorem~F and Theorem~7.11]{MatsuiTakahashi17} that for each integer $c \ge 1$, the subcategory
	\[
		\cat L_c \coloneqq \SET{E \in \Dpfl(R)}{ \loew(H_n(E)) \text{ is asymptotically bounded by } n^{c-1}}
	\]
	is a prime ideal of $\Dps(R)$. This is true for $c=1$ but \cref{cor:new-not-prime-mu} establishes that this is false for $c \ge 2$. These $\cat L_c$ are \emph{not} prime for $c \ge 2$. The error lies in the proof of \cite[Theorem~7.11]{MatsuiTakahashi17}. The offending statement is: ``As $a_e > te^{c-1}$, we must have $a_e > b_{n-e}$, and $b_{n-e} \le tn^{c-1}$ for all $n \ge e$.'' This logic is faulty, however. There is no reason \emph{a priori} why we couldn't have $a_e \le tn^{c-1}$ for some $n > e$ and hence could have $b_{n-e} > tn^{c-1}$ for some $n > e$. Translated to our point of view, their argument amounts to an attempt to prove that $h(n) = n^{c-1}$ is prime; more specifically, that $f\ast g \le h$ implies $f \le h$ or $g\le h$. (Since $h$ is $\mu$-stable, this is equivalent to saying that $[h]_\mu$ is a prime element of $(\AsymSeq/\mu)_+$ bearing in mind \cref{lem:convmu}.) Here is the analogue of the incorrect argument using our notation:
	\begin{quotation}
		{\small
		Suppose $f \ast g \le h$. So there exists $A$ and $n_0$ such that $(f\ast g)(n) \le Ah(n)$ for all $n \ge n_0$. Suppose $f\not\le h$. So there exists $e \ge n_0$ such that $f(e) > Ah(e)$. Now, for any $n \ge e$ we have
		\[
			\min(f(e),g(n-e)) \le (f\ast g)(n) \le Ah(n).
		\]
		Since $f(e) > Ah(e)$ we must have $g(n-e) \le Ah(n)$ for all $n \ge e$. (This previous sentence is unjustified.) Hence $g(n) \le Ah(n+e)$ for $n \gg 0$. That is, $g \le \sigma^e h$. Since $h$ is $\mu$-stable, it is $\sigma$-stable, hence $g \le h$.
		}
	\end{quotation}
	In summary, the argument is faulty and we have in fact proved that the statement is false. Nevertheless, we found Matsui and Takahashi's study of this example very intriguing. It served as the inspiration for the current paper.
\end{Rem}

\begin{Cor}\label{cor:two-fg}
	There are at least $2^{\aleph_0}$ prime ideals in $\Dps(R)$ but only two of these prime ideals are finitely generated.
\end{Cor}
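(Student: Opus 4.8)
The statement splits into two independent claims, both of which are essentially assembled from results already in hand, so the plan is bookkeeping rather than new mathematics. For the cardinality claim, I would invoke the surjection of spectral spaces $\Spc(\Dps(R))\twoheadrightarrow\Specplus(\PowerSeq)^\vee$ constructed in \cref{sec:complexity}: by \cref{rem:pseq-spec} the target contains the point $\frakp_\alpha$ for every $\alpha\in\bbR_{\ge 0}$, so it has at least $2^{\aleph_0}$ points, and surjectivity forces $\Spc(\Dps(R))$ to have at least $2^{\aleph_0}$ points as well. (Alternatively one can simply quote \cref{rem:not-noetherian}.) Since the points of $\Spc(\Dps(R))$ are by definition the prime ideals of $\Dps(R)$, this gives the first half.

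For the second half I would first reduce ``finitely generated'' to ``principal'': if a prime ideal is $\ideal{E_1,\dots,E_n}$ then it equals $\ideal{E}$ for $E\coloneqq E_1\oplus\cdots\oplus E_n$, so it suffices to classify the prime ideals of the form $\ideal{E}$ for a single object $E$. If $E\notin\Dpfl(R)$ then $\ideal{E}=\Dps(R)$ by \cref{rem:Dpfl-largest}, which is not a proper ideal, hence not prime; so $E\in\Dpfl(R)$. Every prime ideal is radical, so $\ideal{E}=\radideal{E}$, and then \cref{cor:rid-fullfull} together with \cref{prop:ideal-force-mono} gives $\ideal{E}=\radideal{R/x^{\mono{\loew_E}}}$ with $\mono{\loew_E}$ a genuine monotonic sequence. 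Now \cref{prop:prime-is-prime} tells us that this radical ideal is prime if and only if $[\mono{\loew_E}]_\mu$ is a prime element of the lattice $(\AsymSeq/\mu)_+$.

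The punchline is then \cref{thm:not-prime-mu}: an unbounded monotonic sequence never produces a prime element, so $\mono{\loew_E}$ must be bounded. Either $\mono{\loew_E}=\underline 0$, forcing $E=0$ and $\ideal{E}=(0)$, or $\mono{\loew_E}$ is a nonzero bounded sequence, so $[\mono{\loew_E}]_\mu=[\underline 1]_\mu$ and $\ideal{E}=\radideal{R/x^{\underline 1}}=\ideal{R/x}$ (using \cref{cor:Rxf-radical} and \cref{prop:ideal-force-mono}). Conversely, both candidates really are finitely generated prime ideals: $(0)$ is prime by \cref{prop:local-or-domain}(a) and is generated by the zero object, while $\ideal{R/x}$ is prime by the corollary immediately preceding this one and is generated by $R/x$. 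Hence there are exactly two finitely generated prime ideals, which together with the first half proves the corollary. There is no real obstacle beyond the already-proved \cref{thm:not-prime-mu}; the only point demanding care is the reduction in the middle paragraph, namely making sure that an \emph{arbitrary} finitely generated prime ideal is captured by a single monotonic Loewy sequence before \cref{prop:prime-is-prime} can be applied.
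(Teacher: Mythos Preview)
Your proposal is correct and follows essentially the same route as the paper's proof: reduce finitely generated to principal, use \cref{cor:rid-fullfull} to write any proper principal radical ideal as $\radideal{R/x^f}$, then rule out unbounded $f$ via \cref{thm:not-prime-mu} (the paper cites the packaged form \cref{cor:new-not-prime} instead), leaving only $[\underline 0]_\mu$ and $[\underline 1]_\mu$. One small slip: the result that $\ideal{R/x}$ is prime is not ``the corollary immediately preceding this one'' but rather the corollary following \cref{prop:bounded-is-prime} in \cref{sec:the-spectrum}; the mathematics is unaffected.
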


\begin{proof}
	Since $\cat P \in \Spc(\Dps(R))$ is radical, it is finitely generated as a thick ideal if and only if it is finitely generated as a radical ideal. Moreover, being finitely generated is equivalent to being generated by a single object. Thus, if $\cat P$ is finitely generated then it is a principal radical ideal. We proved in \cref{cor:rid-fullfull} that every proper principal radical ideal $\radideal{E}$ of $\Dps(R)$ is of the form $\radideal{R/x^f}$ for some monotonic sequence~$f$. This sequence cannot be unbounded by \cref{cor:new-not-prime}. The only remaining cases are the zero sequence $[\underline{0}]$ and the bounded sequence~$[\underline{1}]$, which do indeed provide finitely generated prime ideals (\cref{exa:zero-prime} and \cref{prop:bounded-is-prime}).
\end{proof}

\newcommand{\etalchar}[1]{$^{#1}$}

\end{document}